\title[Translation of paper of Benoist-Quint]{Translation of the paper 
    `Mesures stationnaires et ferm\'es invariants des espaces
    homog\'enes I', by Yves Benoist and Jean-Fran\c cois Quint,
  Ann. Math. 174 (2011), translated by Barak Weiss}  
\font\sn = cmssi8 scaled \magstep0
\long\def\combarak#1{\ifdraft{\color{red}\sn #1 }\else\ignorespaces\fi}
\numberwithin{equation}{section}
\newif\ifdraft\drafttrue
\newcommand\name[1]{\label{#1}{\ifdraft{\sn [#1]}\else\ignorespaces\fi}}
\newcommand\eq[2]{{\ifdraft{\ \tt [#1]}\else\ignorespaces\fi}\begin{equation}\label{#1}{#2}\end{equation}}
\newcommand {\equ}[1]{\eqref{#1}}
\newcommand{\MM}{{\mathcal{M}}}
\newcommand{\Q}{{\mathbb {Q}}}
\newcommand{\R}{{\mathbb{R}}}
\newcommand{\TT}{{\mathbb{T}}}
\newcommand{\Z}{{\mathbb{Z}}}
\newcommand{\C}{{\mathbb{C}}}
\newcommand{\BB}{{\mathcal{B}}}
\newcommand{\A}{{\mathcal{A}}}
\newcommand{\CC}{{\mathcal{C}}}
\newcommand{\Gr}{{\mathrm{Gr}}}
\newcommand{\GG}{{\mathcal{G}}}
\newcommand{\N}{{\mathbb{N}}}
\newcommand{\Ad}{{\operatorname{Ad}}}
\newcommand{\GL}{\operatorname{GL}}
\newcommand{\SL}{\operatorname{SL}}
\newcommand{\Lie}{\operatorname{Lie}}
\newcommand{\End}{{\rm End}}
\newcommand {\ignore}[1]  {}
\newcommand{\rank}{{\mathrm{rank}}}
\newcommand{\EE}{{\mathbb{E}}}
\newcommand{\QQ}{{\mathcal Q}}
\newcommand{\PP}{{\mathcal P}}
\newcommand{\XX}{{\mathcal X}}
\newcommand{\YY}{{\mathcal Y}}
\newcommand{\ZZ}{{\mathcal Z}}
\newcommand{\FF}{{\mathcal{F}}}
\newcommand{\til}{\widetilde}
\newcommand{\supp}{{\rm supp}}
\newcommand{\sm}{\smallsetminus}
\newcommand{\vre}{\varepsilon}
\newcommand{\PPP}{\mathbb{P}}
\newcommand{\Hom}{\operatorname{Hom}}
\newtheorem{thm}{Theorem}[section]
\newtheorem{lem}[thm]{Lemma}
\newtheorem{prop}[thm]{Proposition}
\newtheorem{cor}[thm]{Corollary}
\newtheorem{remark}[thm]{Remark}
\newtheorem{dfn}[thm]{Definition}
\begin{document}
\date{\today}
\maketitle

%These are words to search for and remove when the document is
%finished: neighbourhood 
%centred, parametrised, realised. Please don't delete this list.

\begin{abstract}
A translation of the famous paper of Benoist and Quint. 

\medskip

The original
abstract in English: 

{\bf Stationary measures and closed invariant subsets of homogeneous
  spaces.} Let $G$
be a real simple Lie group, $\Lambda$ be  a  lattice  of
$G$
and $\Gamma$ be  a  Zariski  dense  subsemigroup  of
$G$. We prove that every infinite $\Gamma$-invariant subset in the quotient
$X =
G / \Lambda$ is dense.
Let $\mu$ 
be a probability measure on $G$
whose support is compact and  spans  a  Zariski  dense  subgroup  of
$G$. We prove that every 
atom free $\mu$-stationary probability measure on $X$ is $G$-invariant.
We also prove similar results for the torus $X=\TT^d$. 
\end{abstract}

\section{Introduction}
The goal of this text is to introduce a new technique in the study of
stationary measures on homogeneous spaces, which we call the
`exponential drift.' 

\subsection{Motivation and principal results}  
We will use it to prove:

\begin{thm}\name{thm: 1.1}
Let $G$ be a connected  almost simple real Lie group, $\Lambda$ a
lattice in $G$, $X= G/\Lambda$ and $\mu$ a probability measure on $G$
with compact support, such that $\supp \, \mu$  generates a
Zariski-dense subgroup of $G$. Then any non-atomic $\mu$-stationary Borel
probability measure on $X$ is the Haar measure on $X$. 
\end{thm}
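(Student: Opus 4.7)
The plan is to reduce to the ergodic case via ergodic decomposition of $\mu$-stationary measures, and then show that any $\mu$-ergodic non-atomic stationary measure $\nu$ must be the $G$-invariant Haar measure on $X$. The main device is the skew product: let $B = G^{\N}$ with $\beta = \mu^{\otimes \N}$, and consider $T(b,x) = (\sigma b, b_1 x)$ on $B \times X$, which preserves $\beta \times \nu$ by stationarity. Because $\supp \mu$ generates a Zariski-dense subgroup of $G$, the Lyapunov spectrum of the random walk acting on $\goth{g}$ via the adjoint representation is simple (Guivarc'h--Raugi, Goldsheid--Margulis), yielding a Lyapunov decomposition $\goth{g} = \bigoplus_i \goth{g}^{\lambda_i}$ with associated horospherical unipotent subgroups.

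Next I would study the conditional measures $\nu_x^U$ of $\nu$ along orbits of the expanding unipotent subgroup $U$ corresponding to positive Lyapunov exponents (the leafwise measures in the sense of Einsiedler--Katok--Lindenstrauss). The goal is to show these are translation-invariant (hence Haar). Combined with the analogous statement for the opposite horospherical subgroup, or via Ratner's measure classification for unipotent actions together with the stationarity, this would force $\nu$ to be $G$-invariant.

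The heart of the matter is the exponential drift. Suppose for contradiction that $\nu$ is not Haar, so $\nu_x^U$ is not translation-invariant. I would pick two generic points $x,\ x' = ux$ close together on a common $U$-leaf and follow them under the walk $g_n = b_n \cdots b_1$. Their relative position evolves as $g_n u g_n^{-1}$; by choosing a stopping time $n = n(b,x,x')$ adapted to the Lyapunov spectrum, one can arrange that this quantity has bounded, nonzero size with principal component in a prescribed Lyapunov subspace $\goth{g}^{\lambda_i}$. Passing to a limit, using a Lusin-type regularity statement so that the leafwise measures vary measurably, and using a two-sided extension of the walk so the ``past'' can realign $x$ and $x'$ with typical points, should produce a nontrivial direction under which the leafwise measures are invariant. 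Bootstrapping this extra invariance gives translation-invariance under an algebraic subgroup, and stationarity together with Zariski-density then promotes this to full $G$-invariance.

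The hardest step will be making the drift limit rigorous. One must control leafwise measures uniformly along the walk; choose the stopping time $n$ so that the produced drift vector is genuinely new and does not already lie in the stabilizer of $\nu_x^U$; and rule out degenerate collapse where the drift is always absorbed into a trivial direction. The non-atomicity hypothesis, together with entropy or dimensional lower bounds on the leafwise measures, will be what prevents such collapse, while the Zariski-density of $\supp\mu$ is what provides enough distinct Lyapunov exponents to maneuver the drift into a chosen direction.
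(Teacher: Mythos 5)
Your outline follows the same architecture as the paper (skew product, leafwise measures along an expanding direction, exponential drift with a stopping time, then Ratner plus stationarity), but the two steps you yourself flag as hardest are precisely where the paper's essential ideas live, and your sketches of them would not go through as stated. First, the realignment of the drifted points with typical points: you invoke ``a two-sided extension of the walk so the past can realign,'' but the actual mechanism is to condition on the future. One builds a suspension flow $T^\tau_\ell$ over the Bernoulli shift (with roof function cohomologous to the top Lyapunov cocycle), computes the conditional expectation with respect to $\QQ^{\tau,X}_\ell=(T^{\tau,X}_\ell)^{-1}\BB^{\tau,X}$ explicitly as an integral over \emph{resampled prefixes} of the word (the ``law of the last jump,'' Proposition \ref{prop: 2.3}), and applies Doob's martingale convergence theorem to $\EE(1_K\mid\QQ^{\tau,X}_\ell)\to\EE(1_K\mid\QQ^{\tau,X}_\infty)$ to guarantee that a proportion $1-\alpha$ of the resampled fiber points land back in the Lusin set $K$. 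It is also essential that the leafwise-measure map $\sigma$ be measurable with respect to the tail $\sigma$-algebra $\QQ^{\tau,X}_\infty$ (Corollary \ref{cor: 6.13}), which is what lets you transport the invariance produced at the limit point $(c',x')$ back to the original point $(c,x)$. None of this is present in your plan, and without it the ``passing to a limit'' step has no content: you have no control on where the two drifted points land or on how their leafwise measures relate to those at $(c,x)$.

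Second, how non-atomicity enters. You propose ``entropy or dimensional lower bounds on the leafwise measures,'' but no such bounds are established (and proving them would be at least as hard as the theorem). What is actually needed, and what the paper proves, is that for a.e.\ $(b,x)$ the limit measure $\nu_b$ gives zero mass to the fast-contracting leaf $\exp(W_b)x$, so that the second point $y=\exp(u)x$ can be chosen with $u\notin W_c$; this is obtained from an Eskin--Margulis contraction inequality $A_\mu v\le av+C$ for a proper function off the diagonal of $X\times X$, combined with the Chacon--Ornstein theorem (Propositions \ref{prop: 3.9} and \ref{prop: 6.1}). Two further points: your appeal to \emph{simplicity} of the Lyapunov spectrum is false in general (for Zariski-dense $\mu$ the multiplicities equal the weight multiplicities of the adjoint representation, and the top weight space $V_0$ may have dimension $>1$); and you cannot ``maneuver the drift into a prescribed Lyapunov subspace $\goth{g}^{\lambda_i}$'' --- the resampled prefix forces the drift vector to align with the top attracting subspace $V_{c'}$ of the new word (Corollary \ref{cor: 5.5}), which is why the leafwise measures must be taken along exactly these moving subspaces, identified with a fixed $V_0$ via the suspension. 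Finally, the last step also has a gap: after Ratner gives a decomposition into homogeneous pieces with nontrivial connected stabilizers, ruling out proper pieces requires the non-existence of $\mu$-stationary measures on $G/H$ for $H$ unimodular and non-normal (Proposition \ref{prop: 6.7}) together with the countability of the relevant family of orbits; ``Zariski-density promotes this to full $G$-invariance'' is not by itself an argument.
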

We now explain some of the (well-known) terminology used in the
statement above. A real Lie group is {\em almost simple} if its Lie
algebra is simple. A probability measure $\nu$ on $X$ is called {\em
  $\mu$-stationary} if $\nu = \mu * \nu.$ It is called {\em non-atomic}
if $\nu(\{x\})=0$ for any $x \in X$. In case $G$ is not a linear
group, when we say that $\Gamma$ is {\em Zariski dense} we mean that
$\Ad(\Gamma)$ is Zariski dense in the linear group $\Ad(G)$ (where
$\Ad: G \to \GL(\mathfrak{g})$ is the adjoint representation). By {\em
  Haar measure} on $X$ we mean the unique $G$-invariant probability
measure on $X$ induced by the Haar measure of $G$. 

This theorem verifies a condition of {\em stiffness} of group actions
introduced by Furstenberg \cite{ref 14} . 

Ratner's theorems describe the measures on homogeneous spaces
invariant and ergodic under a connected group generated by unipotents,
as well as the orbit-closures. Shah and Margulis raised the
question of extending these results to disconnected groups. We deduce
an extension of Ratner's results for Zariski dense subgroups $\Gamma$,
namely: 

\begin{cor}\name{cor: 1.2}
Let $G, \Lambda, X, \Gamma$ be as in Theorem \ref{thm: 1.1}. Then:
\begin{itemize}
\item[a)] Any $\Gamma$-invariant non-atomic measure $\nu$ is the
 Haar measure on $X$. 
\item[b)] Any closed $\Gamma$-invariant infinite set $F \subset X$ is
  equal to $X$. 
\item[c)] Any sequence of distinct finite $\Gamma$-orbits $X_n \subset
  X$ is equidistributed with respect to the Haar measure on $X$. 
\end{itemize}
\end{cor}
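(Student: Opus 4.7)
I plan to deduce the three statements from Theorem \ref{thm: 1.1} in the order (a), (c), (b), in each case by producing an auxiliary probability measure on $X$ and invoking the theorem. Fix once and for all a finitely supported probability measure $\mu$ on $G$ with $\supp\mu\subset\Gamma$ and $\langle\supp\mu\rangle$ Zariski-dense in $G$; such $\mu$ exists because the Noetherian property of the Zariski topology guarantees that any Zariski-dense subset contains a finite Zariski-dense one.

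For (a), every $\Gamma$-invariant probability measure $\nu$ is automatically $\mu$-stationary since $\supp\mu\subset\Gamma$, so Theorem \ref{thm: 1.1} identifies the non-atomic $\nu$ with the Haar measure on $X$.

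For (c), each uniform probability $\nu_n$ on $X_n$ is $\Gamma$-invariant, and any weak-$*$ subsequential limit $\nu$ is again $\Gamma$-invariant by continuity of the $\Gamma$-action on $X$. Decompose $\nu$ into continuous and atomic parts, both $\Gamma$-invariant: the continuous part is proportional to Haar by (a), while a mass-balance argument shows that atoms of a $\Gamma$-invariant probability must lie in finite $\Gamma$-orbits. To conclude that the atomic part vanishes, so that $\nu$ itself equals Haar, I invoke a rigidity/finiteness input: only finitely many finite $\Gamma$-orbits of any given cardinality exist, since their stabilizers are subsemigroups of bounded finite index in $\Gamma$ and, after reducing to a finitely generated Zariski-dense subsemigroup of $\Gamma$ (which has only finitely many subsemigroups of bounded index) and using that $N_G(\Lambda)/\Lambda$ is finite, these orbits form a finite set; a rigidity argument for Zariski-dense actions then precludes the distinct $X_n$ from accumulating on any one of them.

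For (b), let $F$ be closed, $\Gamma$-invariant and infinite. A $\mu$-ergodic stationary measure with any atom must be concentrated on a single finite $\Gamma$-orbit: the set of atoms of maximal mass is finite and $\Gamma$-invariant (by stationarity together with invertibility of the $G$-action on $X$), and ergodicity localizes the mass to one $\Gamma$-orbit inside it. Consequently the ergodic decomposition of any $\mu$-stationary measure on $F$ splits into a non-atomic part and a convex combination of uniform measures on finite $\Gamma$-orbits in $F$. Markov-Kakutani supplies such a measure. If its non-atomic part is nonzero, Theorem \ref{thm: 1.1} identifies that part with Haar and forces $F=X$; otherwise every $\mu$-stationary measure on $F$ is supported on the set $F_0\subset F$ of points lying in finite $\Gamma$-orbits. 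If $F_0$ is infinite, part (c) equidistributes its constituent orbits to Haar, giving $F=X$; if $F_0$ is finite but $F\ne F_0$, picking $x\in F\setminus F_0$ and applying Zorn's lemma and the above dichotomy to minimal closed $\Gamma$-invariant subsets of $\overline{\Gamma x}$ forces every such minimal subset to be a finite $\Gamma$-orbit lying in $F_0$, after which a dynamical argument based on the random walk driven by $\mu$ started from $x$, together with the ergodic theorem for $\mu$-stationary measures, yields the required contradiction.

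The main obstacles are the rigidity/finiteness input in (c) (ruling out accumulation of distinct finite orbits on a smaller finite orbit) and the closely related dynamical step at the end of (b) (ruling out a strict infinite extension $F\supsetneq F_0$ when $F_0$ is finite); both require information about the action of $\Gamma$ on $X$ beyond what Theorem \ref{thm: 1.1} supplies directly.
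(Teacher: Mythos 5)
Part (a) of your proposal is correct and is exactly the paper's argument: pass to a finitely generated Zariski-dense subsemigroup, take $\mu$ uniform on its generators, observe that a $\Gamma$-invariant measure is $\mu$-stationary, and apply Theorem \ref{thm: 1.1}. The overall architecture of (b) and (c) -- weak-$*$ limits of orbit measures or of Ces\`aro averages of the walk, plus Theorem \ref{thm: 1.1} -- is also the right one. But there is a genuine gap at precisely the two places you flag as ``obstacles,'' and it is the same gap in both: you need a quantitative recurrence statement for the $\mu$-walk away from finite $\Gamma$-invariant sets \emph{and} away from infinity, and no ``rigidity argument for Zariski-dense actions'' of the kind you gesture at will substitute for it. The paper supplies this as Proposition \ref{prop: 6.4}: for each finite $\Gamma$-invariant $F_i$ there is a compact $K_i\subset X\sm F_i$ such that $A^n_\mu(1_{K_i})(x)\ge 1-\vre$ for all large $n$, uniformly on compacta of $X\sm F_i$. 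This is proved by an Eskin--Margulis drift function $u_{F_i}:X\sm F_i\to[0,\infty)$ which is \emph{proper} (it blows up both near $F_i$ and at infinity in $X$) and satisfies $A_\mu u_{F_i}\le a\,u_{F_i}+C$ with $a<1$ (Lemmas \ref{lem: 6.2}, \ref{lem: 6.3}, \ref{lem: 6.6}). Applied to the measures $\nu_j$ (either the uniform measures on the distinct orbits $X_j$ in (c), or the Birkhoff--Kakutani averages $\frac1{n_j}\sum_{n\le n_j}\mu^{*n}*\delta_{x_j}$ with $x_j\in F\sm F_j$ in (b)), this gives $\nu_j(K_i^c)\le 2/i$ for $j\ge i$, which simultaneously yields tightness of $(\nu_j)$ and the vanishing of the limit on every $F_i$, hence non-atomicity of the limit via Lemma \ref{lem: 8.3}. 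Your proposal never addresses tightness at all: $X=G/\Lambda$ is in general non-compact, so a weak-$*$ limit of the $\nu_n$ in (c) could a priori be a sub-probability or even zero, and your decomposition into continuous and atomic parts of ``the limit probability'' is not available before this is settled.

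Two further concrete failures in (b). First, ``Markov--Kakutani supplies such a measure'': the space of probability measures on a closed but non-compact $F$ is not weak-$*$ compact, so there need not exist any $\mu$-stationary probability measure supported on $F$ by soft arguments alone; the paper instead manufactures one as a limit of the averages above, using the recurrence estimate to prevent escape of mass. Second, even granting such a measure, your endgame (the case $F_0$ finite, $F\ne F_0$) is exactly the situation where the empirical measures of the walk started at $x\in F\sm F_0$ could, absent Proposition \ref{prop: 6.4}, concentrate entirely on $F_0$ or escape to infinity; the ``dynamical argument'' you defer to is the missing proposition itself. (The countability/finiteness bookkeeping you attempt via stabilizers and $N_G(\Lambda)/\Lambda$ is also not how the paper proceeds: Lemma \ref{lem: 8.6} gets countability of finite invariant sets from strong irreducibility of the linearized action, which forces fixed points of finite-index subsemigroups to be isolated.) In short: the structure is right, (a) is complete, but (b) and (c) require the Eskin--Margulis recurrence machinery of \S\ref{sec: 6}, which your proposal identifies as needed but does not provide.
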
 

A closed subset $F \subset X$ is {\em $\Gamma$-invariant} if for any
  $\gamma \in \Gamma$, $\gamma F \subset F.$ Point c) means that the
  sequence of measures $\nu_n := \frac{1}{\# X_n} \sum_{x \in X_n}
  \delta_x$ converges to the Haar measure on $X$ with respect to the
  weak-* topology. The simplest example in which one can apply the
  above results is for $G = \SL_d(\R), \, \Lambda =\SL_d(\Z), \, d
  \geq 2,$ with $\mu = \frac{1}{2}( \delta_{g_1} + \delta_{g_2})$
  where the semigroup $\Gamma$ generated by $g_1, g_2$ is
  Zariski-dense. The space $X$ is then the space of unimodular
  lattices in $\R^d$. Part c) generalizes previous results on
  equidistribution of Hecke orbits, obtained by Clozel-Oh-Ullmo. 

Our method can be adapted to handle a larger class of homogeneous
spaces. For instance, it makes it possible generalize a result of
Bourgain, Furman, Lindenstrauss on Mozes as follows (in \cite{ref 3} the
existence of proximal elements was assumed): 

\begin{thm} \name{thm: 1.3}
Let $\Gamma$ be a sub-semigroup of $\SL_d(\Z)$ acting on $\R^d$
strongly irreducibly. Let $\mu$ be a measure on $\SL_d(\Z)$ whose
finite support generates $\Gamma$. Then any non-atomic $\mu$-stationary
probability measure on $X=\TT^d$ is the Haar measure of $X$. 
\end{thm}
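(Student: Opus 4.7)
The plan is to apply the exponential drift technique developed in this paper to the linear action of $\Gamma$ on $\R^d$ and its quotient $\TT^d$. By ergodic decomposition of $\nu$ with respect to the Markov operator $f \mapsto \mu * f$, one may assume $\nu$ is $\mu$-ergodic. The target is to show that the closed subgroup
\[
H_\nu := \{w \in \TT^d : (T_w)_* \nu = \nu\}
\]
of translations preserving $\nu$ is all of $\TT^d$. The identity component $H_\nu^0$ comes from a real subspace of $\R^d$; one argues it is $\Gamma$-invariant, so by strong irreducibility it is either trivial or all of $\R^d$, and the finite component group $H_\nu/H_\nu^0$ is likewise $\Gamma$-invariant, forcing (with non-atomicity of $\nu$) that $H_\nu = \TT^d$ once $H_\nu^0$ is shown to be non-trivial.

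To produce a non-trivial $H_\nu^0$ one runs the exponential drift. By Furstenberg's positivity theorem, strong irreducibility of the finitely supported $\mu$ implies that the top Lyapunov exponent $\lambda_1$ of the random product $g_n(\omega) = \xi_n \cdots \xi_1$ on $\R^d$ is positive. Fix a positive Lyapunov exponent $\lambda$ and the corresponding Lyapunov subspace $V_\lambda(\omega)$. For a $\nu$-typical $x$ and a Rokhlin disintegration of $\nu$ along a foliation tangent to $V_\lambda(\omega)$, one selects a nearby point $x'$ with small non-zero drift $v = x - x' \in V_\lambda(\omega)$; such $x'$ exist in abundance precisely because $\nu$ is non-atomic. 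The norm $\|g_n(\omega) v\|$ grows like $e^{n\lambda}$, and one introduces a stopping time $n = n(\omega, v)$ at which $\|g_n v\|$ is of order unity. Along a suitable subsequence,
\[
g_{n_k}(\omega_k)\, v_k \pmod{\Z^d} \; \longrightarrow \; w \in \TT^d \setminus \{0\},
\]
and $\mu$-stationarity of $\nu$, together with the convergence of the joint distribution of $(g_n x, g_n x')$ to a coupling of $\nu$ with $(T_w)_* \nu$, gives $(T_w)_* \nu = \nu$, i.e., $w \in H_\nu \setminus \{0\}$.

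The principal difficulty is the strongly irreducible but non-proximal setting, which is the novelty of the statement over \cite{ref 3}. In the proximal case, $g_n v / \|g_n v\|$ asymptotically concentrates on a deterministic line, which tightly controls the limit $w$. Without proximality, the Lyapunov subspace $V_\lambda$ may have dimension larger than one and the direction of $g_n v$ remains random; one must coordinate the conditional disintegrations of $\nu$ along the Lyapunov filtration with the choice of the drift vector and the stopping time. The most delicate technical points are (i) ensuring the limiting vector $w$ is non-zero with positive probability, which uses non-atomicity of $\nu$ together with a Fubini-type estimate along the Markov chain, and (ii) verifying that the closed subgroup of translations so produced has non-trivial identity component. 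These are precisely the issues that the exponential drift technique is designed to resolve, and they account for the bulk of the work.
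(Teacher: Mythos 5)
Your outline correctly identifies the overall strategy (exponential drift along the expanding Lyapunov directions), but as written it has two genuine gaps, both at points the paper singles out as the crux. First, the assertion that a nearby point $x'$ with non-zero displacement $v=x-x'$ transverse to the contracted directions ``exists in abundance precisely because $\nu$ is non-atomic'' is false as stated: non-atomicity of $\nu$ does not prevent the limit measures $\nu_b$ from being atomic, nor does it prevent $\nu_b$ from concentrating on a single stable leaf $\exp(W_b)x$, and in either case the drift cannot start. The paper devotes a substantial argument to exactly this point: one must verify the contraction hypothesis (HC) via Eskin--Margulis type functions on $\TT^d\times\TT^d$ (Proposition \ref{prop: 6.1}), and combine it with the Chacon--Ornstein theorem to show that $\nu_b$ is non-atomic and gives zero mass to the relative stable leaves (Lemmas \ref{lem: 3.10}--\ref{lem: 3.12}, Proposition \ref{prop: 3.9}, Lemma \ref{lem: 6.14}, Corollary \ref{cor: 6.15}). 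Relatedly, your limiting step --- that the joint law of $(g_nx,g_nx')$ converges to a coupling of $\nu$ with $(T_w)_*\nu$ --- is not correct for forward products; the paper replaces this by conditional expectations with respect to the $\sigma$-algebras $\QQ_\ell^{\tau,X}$, computed via the law of the last jump (Proposition \ref{prop: 2.3}) and controlled by the martingale convergence theorem, and the invariance obtained is of the \emph{leafwise} measures $\sigma(c,x)$ (which are tail-measurable, Corollary \ref{cor: 6.13}), not of $\nu$ itself. This is also how the paper upgrades a single invariant translation to a non-discrete stabilizer, i.e.\ your difficulty (ii), which you flag but do not resolve: one shows the stabilizer of $\sigma(c,x)$ modulo normalization contains non-zero vectors of arbitrarily small norm, hence a non-trivial subspace (Proposition \ref{prop: 7.4}).

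Second, the concluding step is flawed: since $\nu$ is only stationary and not $\Gamma$-invariant, the identity $\gamma\circ T_w=T_{\gamma w}\circ\gamma$ shows only that $\gamma H_\nu\subset H_{\gamma_*\nu}$, so $H_\nu^0$ has no reason to be $\Gamma$-invariant and the appeal to strong irreducibility does not apply to it. What the drift actually produces is a family of subspaces $V_{b,x}$ with the equivariance $V_{b,x}=b_0 V_{Tb,\,b_0^{-1}x}$ and conditional measures $\nu_{b,x}$ invariant under $V_{b,x}$ (Propositions \ref{prop: 7.5}, \ref{prop: 7.6}). One then decomposes these into ergodic components, each a translate of Haar measure on a non-trivial subtorus, obtains a $\mu$-stationary measure on the countable set of subtori, and invokes the maximum principle (Lemma \ref{lem: 8.3}) to get a finite $\Gamma$-invariant family of tangent subspaces, which strong irreducibility forces to be all of $\R^d$ (Lemma \ref{lem: 8.2}). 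Your proposal needs this detour through the conditional measures; the direct argument on $H_\nu$ does not close.
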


Recall that the action of $\Gamma$ on $\R^d$ is called {\em strongly
  irreducible} if any finite index subgroup of the group
generated by $\Gamma$, acts irreducibly on $\R^d$. Note that in case a 
$\mu$-stationary measure $\nu$ is atomic, it can be separated into a
non-atomic and purely atomic part, and both measures in this
decomposition are also $\mu$-stationary. Thus applying Theorem
\ref{thm: 1.1} or Theorem \ref{thm: 1.3} we see that the non-atomic
part is Haar. Regarding the purely atomic part of $\nu$, we will see
(see Lemma \ref{lem: 8.3}) that it is a sum of a family of finitely
supported $\mu$-stationary measures. 

\begin{cor}\name{cor: 1.4}
Let $\Gamma$ be a subsemigroup of $\SL_d(\Z)$ acting strongly
irreducibly on $\R^d$. Then:
\begin{itemize}
\item[a)] The only non-atomic $\Gamma$-invariant probability measure on
  $X$ is the Haar measure. 
\item[b)] The only closed $\Gamma$-invariant infinite subset $F
  \subset X$ is equal to $X$. 
\item[c)] Any sequence of distinct finite $\Gamma$-invariant sets
  $X_n$ becomes equidistributed in $X$ with respect to Haar measure. 

\end{itemize}
\end{cor}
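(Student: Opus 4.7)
The plan is to derive (a), (b), (c) from Theorem \ref{thm: 1.3}, mirroring the way Corollary \ref{cor: 1.2} is deduced from Theorem \ref{thm: 1.1}. I begin with a preliminary reduction to finite generation. Since algebraic subgroups of $\GL_d$ satisfy the descending chain condition on Zariski closures, there exist finitely many $\gamma_1,\dots,\gamma_k\in\Gamma$ whose generated sub-semigroup $\Gamma_0$ has the same Zariski closure as $\Gamma$, and hence $\Gamma_0$ still acts strongly irreducibly on $\R^d$. Letting $\mu$ be the uniform probability on $\{\gamma_1,\dots,\gamma_k\}$, any $\Gamma$-invariant probability on $\TT^d$ is in particular $\mu$-stationary (at the level of measures, invariance under $\Gamma$ automatically lifts to invariance under $\langle\Gamma\rangle$ since each $\gamma\in\SL_d(\Z)$ is invertible on $\TT^d$). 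Part (a) is then immediate: a non-atomic $\Gamma$-invariant probability is a non-atomic $\mu$-stationary probability, and Theorem \ref{thm: 1.3} identifies it with the Haar measure.

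For (c), I would set $\nu_n\df\frac{1}{|X_n|}\sum_{x\in X_n}\delta_x$; each $\nu_n$ is $\Gamma$-invariant. By weak-$*$ compactness it suffices to show that every subsequential limit $\nu$ equals Haar, and since $\nu$ is automatically $\Gamma$-invariant, by (a) it is enough to verify that $\nu$ is non-atomic. A first observation is that $|X_n|\to\infty$: for each $k$, only finitely many $\Gamma$-orbits have cardinality at most $k$, because such an orbit is pointwise fixed by a subgroup of index $\leq k$ in $\langle\Gamma\rangle$, and by strong irreducibility any such subgroup contains an element without eigenvalue $1$, whose fixed set in $\TT^d$ is finite. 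Since the $X_n$ are distinct, eventually their cardinalities exceed any prescribed $k$, so the maximum atom of $\nu_n$ has mass $\leq 1/|X_n|\to 0$. The key additional point, and what I expect to be the main obstacle, is to upgrade this to non-atomicity of the limit: if $\nu$ had an atom $x_0$ of mass $c>0$, then $\Gamma$-invariance of $\nu$ would force $|\Gamma x_0|\leq 1/c$, so the atomic part of $\nu$ is concentrated on finitely many finite $\Gamma$-orbits; one must then rule out concentration of $X_n$ in small neighborhoods of these orbits, using the expansion produced by the strong irreducibility of $\Gamma$.

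For (b), let $F$ be closed, $\Gamma$-invariant, and infinite. If $F$ contains infinitely many distinct finite $\Gamma$-orbits, part (c) yields that their uniform measures converge weak-$*$ to Haar, and closedness forces $F=\TT^d$. Otherwise $F$ contains some $x$ with $|\Gamma x|=\infty$, and any weak-$*$ limit point of the Cesaro averages $\frac{1}{N}\sum_{k=0}^{N-1}\mu^{*k}*\delta_x$ is a $\mu$-stationary probability $\nu$ supported in $\overline{\Gamma x}\subset F$. Lemma \ref{lem: 8.3} decomposes $\nu=\nu_{\text{na}}+\nu_{\text{at}}$ where $\nu_{\text{at}}$ is a countable sum of finitely supported $\mu$-stationary measures, hence supported on finite orbits in $F$; by our hypothesis there are only finitely many such orbits. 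Because $\Gamma x$ is infinite, a positive proportion of the Cesaro mass cannot stay inside any fixed neighborhood of these finitely many finite orbits, which forces $\nu_{\text{na}}\neq 0$; Theorem \ref{thm: 1.3} then identifies $\nu_{\text{na}}$ with (a multiple of) Haar and hence $F=\TT^d$.

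The principal difficulty, as highlighted above, is the non-atomicity step in (c) and its companion non-vanishing of $\nu_{\text{na}}$ in (b): both require controlling, in quantitative terms, how much mass an (almost) $\Gamma$-invariant measure can concentrate near the countable set of periodic points of the action, and it is precisely here that the full strength of strong irreducibility enters beyond what is already packaged inside Theorem \ref{thm: 1.3}.
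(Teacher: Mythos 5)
Your reduction to a finitely generated Zariski-dense $\Gamma_0$ and the deduction of (a) from Theorem \ref{thm: 1.3} match the paper. But for (b) and (c) you have correctly isolated, and then left unproved, the one step that carries all the weight: ruling out concentration of the measures $\nu_j$ (resp.\ of the Ces\`aro averages $\frac1N\sum_k\mu^{*k}*\delta_x$) near finite $\Gamma$-orbits. Writing ``one must then rule out concentration \dots using the expansion produced by strong irreducibility'' and ``a positive proportion of the Ces\`aro mass cannot stay inside any fixed neighborhood of these finitely many finite orbits'' is naming the obstacle, not overcoming it; neither assertion follows from $|X_n|\to\infty$ or from $|\Gamma x|=\infty$ alone, since an almost-invariant sequence of measures could a priori pile up mass on shrinking neighborhoods of a periodic point.

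The paper closes exactly this gap with Proposition \ref{prop: 6.4} (recurrence off finite orbits), whose proof is the Eskin--Margulis drift-function mechanism: for a finite $\Gamma$-invariant set $F$ one builds (Lemma \ref{lem: 6.6}) a proper function $u_F:X\sm F\to[0,\infty)$, essentially $d_0(x)^{-\delta}$ with $d_0$ the linearized distance to $F$, satisfying $A_\mu(u_F)\le a\,u_F+C$ with $a<1$; the contraction comes from Lemma \ref{lem: 6.2}, i.e.\ positivity of the top Lyapunov exponent for the strongly irreducible action on $V=\R^d$. Foster's criterion (Lemma \ref{lem: 6.5}) then gives compact sets $K_i\subset F_i^c$ with $(\mu^{*n}*\delta_{x})(K_i^c)\le 1/i$ for $n$ large, uniformly on compacta of $X\sm F_i$; this is what forces every weak-$*$ limit in (b) and (c) to be a probability measure giving zero mass to each $F_i$, hence non-atomic (an atom of a $\Gamma$-invariant limit lies on a finite orbit, which is contained in some $F_i$), after which Theorem \ref{thm: 1.3} applies. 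Note also that the paper's Lemma \ref{lem: 8.6} (countability of finite invariant sets, via isolation of fixed points of finite-index subgroups) is what lets one exhaust all finite orbits by an increasing sequence $F_1\subset F_2\subset\cdots$ and run the argument for (b) without your case split. Until you supply the analogue of Lemma \ref{lem: 6.6}, parts (b) and (c) are not proved.
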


Assertion b) in Corollary \ref{cor: 1.4} is due to Muchnik and to
Guivarc'h-Starkov. 

The approach of \cite{ref 3} is based on a delicate study of the
Fourier coefficients of $\nu$. Our approach is purely
ergodic-theoretic. For that reason it can be readily generalized to
the case of homogeneous spaces. For example, Theorem \ref{thm: 1.1}
and Corollary \ref{cor: 1.2} can be extended, with no significant
change to the proof, to $p$-adic Lie groups $G$. 

\subsection{Strategy}
Our approach is based on the study of the random walk on $X=G/\Lambda$
(resp. $X = \TT^d$) induced by the random walk with law $\mu$ on the
group $G$ (resp., on $\SL_d(\Z)$). In order to study the random walk
we introduce a non-invertible dynamical system which we denote
$\left (B^{\tau, X}, \BB^{\tau, X}, \beta^{\tau, X}, T_\ell^{\tau, X}
\right)$.  Without entering into too many details, we note that this
dynamical system is fibered \combarak{measurable analogue of a fiber
  bundle?}, with fiber $X$, over a suspension $\left( 
  B^\tau, \BB^\tau, \beta^\tau, T^\tau\right)$ of a Bernoulli shift
associated to $\mu$, and thus the space $B^{\tau, X}$ is the
\combarak{(measurably) } product
$B^\tau \times X$. The idea of using such a suspension was inspired by
a paper of Lalley \cite{ref 22}. 

This dynamical system has two properties. Firstly, very simple
formulas express the conditional expectation $\phi_\ell := \EE(\varphi |
\QQ^{\tau,X}_\ell)$ of a bounded $\BB^{\tau, X}$-measurable function
$\varphi$ on $B^{\tau, X}$ relative to the $\sigma$-algebra
$\QQ^{\tau,X}_\ell = \left( T^{\tau,X}_\ell\right)^{-1}\BB^{\tau, X}$
of events after a time $\ell$. Secondly, one has good control of the
norm of products of elements of $G$ associated with words appearing in
these formulas of conditional expectation. In order to construct this
dynamical system, one uses various classical theorems about random
walks due in large part to Furstenberg: positivity of the first
Lyapunov exponent, proximality of the walk induced on the flag
variety, existence of limit probabilities $\nu_b$ for the
probabilities obtained as the image of the stationary measure $\nu$
under a random word $b$. 

Our main argument, which we call the {\em exponential drift}, is
reminiscent of Ratner's idea which uses the Birkhoff ergodic theorem,
replacing that theorem with Doob's Martingale convergence theorem. Its
use was inspired by a paper of Bufetov \cite{ref 4}. This theorem
allows us to assert that the sequence $\varphi_{c, \ell}$ converges,
for $\beta^{\tau, X}$-a.e. $(c,x)$ in $B^{\tau,X}$, to
$\varphi_\infty(c,x)$ where $\varphi_\infty = \EE \left(\varphi |
\QQ_\infty^{\tau, X} \right)$ is the conditional expectation of
$\varphi$ with respect to the tail $\sigma$-algebra $\QQ_\infty^{\tau,
  X} = \bigcap_{\ell \geq 0} \QQ_{\ell}^{\tau, X}$. The idea is to
compare $\varphi_\ell(c,x)$ and $\varphi_\ell(c,y)$  for two points
$x,y$ which are very close to each other and carefully chosen for the
time $\ell$. 

In order to start the drift argument, it is necessary to show that one
may choose, when $\nu$ is non-atomic, two points $(c,x)$ and $(c,y)$
which are not on the same stable leaf relative to the factor
$B^{\tau, X} \to B^\tau$. This is a crucial point in our argument. It
shows, roughly speaking, that the relative entropy of the fibered
system is nonzero. In order to demonstrate this we exhibit a
recurrence phenomenon for the random walk on $X$, analogous to the
work of Eskin and Margulis \cite{ref 9}, and combine this phenomenon
with the ergodic theorem of Chacon-Ornstein.  

In order to develop our exponential drift argument, it is necessary to
obtain good control of norms of products of random matrices with law
$\mu$, in the vector space $V = \Lie(G)$ (resp. $V= \R^d$). The
existence, due to Furstenberg, of an attracting limit subspace $V_b$
is very useful. 

When applying our drift argument, work remains. Unlike Ratner's
argument, our argument only yields very patchy invariance
properties for the stationary measures. For this reason we introduce
a function which associates to each point $(c,x)$, a conditional
measure $\sigma(c,x)$ of the limit probability $\nu_c$ along the
foliation given by some limit subspace $V_c$. We identify all the
spaces $V_c$ thus constructed with the action of a unique vector space
$V_0$, an action which we call the {\em horocyclic flow} and denote by
$\Phi_v$. This point is important because it makes it possible to
consider, as in \cite{ref 8}, the function $\sigma$ as a map
taking values in a fixed vector space, the space of Radon measures on
$V_0$  up to normalization. It is this map $\sigma$ to which
we apply our drift argument. A crucial point is that the map
$\sigma$ is $\QQ^{\tau, X}_\infty$-measurable. This results in
commutation relations between $\Phi_v$ and $T^{\tau, X}_\ell$,
relations analogous to those existing in the hyperbolic plane between
the geodesic and horocyclic flow. 

The drift argument implies that the connected component $J(c,x)$ of
the stabilizer of $\sigma(c,x)$ in $V_0$ is almost surely
nontrivial. This makes it possible to view the probability $\nu_c$,
and hence $\nu$, as an average of probabilities $\nu_{c,x}$ which are
invariant under a nontrivial subspace $J(c,x)$ of $V_0$. 

In the case of the torus, one then deduces that the probabilities
$\nu_{c,x}$, and hence $\nu$, are averages of probability measures
supported on nontrivial subtori. Since the support of $\mu$ acts
strongly irreducibly on $\R^d$, $\nu$ is necessarily the Haar measure
on $\TT^d$. 

In the case of a homogeneous space, an application of Ratner's
theorems makes it possible to express $\nu_{c,x}$ as an average of
probability measures supported on orbits of nontrivial closed connected
subgroups $H$ of $G$. The 
$G$-invariance of $\nu$ is deduced, thanks to a phenomenon of
non-existence of $\mu$-stationary measures on the homogeneous space
$G/H$ with unimodular non-discrete stabilizer. 

It is remarkable that our drift argument works even without it being
necessary to explicitly describe the tail $\sigma$-algebra
$\QQ_\infty^{\tau,X}$. However, we will describe this tail
$\sigma$-algebra in a forthcoming work and employ to this end the
works of Blanchard, Conze, Guivarc'h, Raugi and Rohlin. 

\subsection{Structure of the paper}
Chapters 2-5 collect the constructions and the properties of the
dynamical systems associated with random walks that we will need. 

Chapters 6-8 are devoted to the study of stationary measures on the
spaces $X = G/\Lambda$ and $X = \TT^d$. These two cases will be
treated simultaneously. We suggest to the reader to focus primarily on
the case that $X$ is the torus $\TT^2$. Almost all of the arguments we
shall develop are indispensable even for this case. 

The goal of chapter 2 is formulas for the conditional expectation of
fibrations and suspensions over non-invertible dynamical systems,
including the remarkable `law of the last jump.' Chapter 3 deals with
some properties of stationary measures on Borel spaces equipped with a
Borel action: existence of limit measures and the very useful
phenomenon of recurrence off the diagonal. In chapter 4 we recall the
construction of conditional measures \combarak{usually called
  leaf-wise measures} along the orbits of a Borel
action with discrete stabilizers. In chapter 5 we study linear
strongly irreducible random walks. We recall the results of
Furstenberg and introduce the dynamical system $\left(B^\tau,
  \BB^\tau, \beta^\tau, T^\tau \right)$ which is a suspension over a
Bernoulli shift. 

In chapter 6 we introduce the fibered dynamical system $\left(B^{\tau,X},
  \BB^{\tau,X}, \beta^{\tau,X}, T^{\tau,X} \right)$ associated to the
random walk on $X=G/\Lambda$ or $X = \TT^d$. We check that this random
walk satisfies not only the properties of recurrence off the diagonal
which we will need in order to initiate the drift, but also the
recurrence outside finite orbits which we will need in order
to obtain topological consequences.  We will also show non-existence
of stationary measures on certain homogeneous spaces of semi-simple
Lie groups, which we will require at the end of our study, for the
space $X= G/\Lambda$. At the end of the chapter we will introduce the
horocyclic flow $\Phi_v$ on $B^{\tau, X}$ and the conditional horocyclic
map $\sigma$, and check that $\sigma$ is $Q_{\infty}^{\tau,
  X}$-measurable. 

In chapter 7 we will present our general drift argument, and apply it to the
map $(c,x) \mapsto \sigma(c,x)$. In section 8 we exploit the
invariance properties of stationary measures, which follow from the
drift argument, enabling us to conclude the proofs of Theorems
\ref{thm: 1.1} and \ref{thm: 1.3}. We then easily deduce Corollaries
\ref{cor: 1.2} and \ref{cor: 1.4}.  

\subsection{Acknowledgements}
We thank Y. Hu, F. Ledrappier, H. Oh, R. Spatzier and J.-P. Thouvenot
for interesting discussions of this subject and the Brown University
Dept. of Mathematics for
its hospitality.

\section{Suspensions and extensions}
{\em The goal of this chapter is to obtain formulas for the
  conditional expectations with respect to the tail $\sigma$-algebras
  in suspensions and fibrations over non-invertible
  dynamical systems (Proposition \ref{prop: 2.3} and Lemma \ref{lem:
    2.5}). }

\medskip

\subsection{Cohomologous functions}
{\em The following lemma makes it possible to restrict our attention
  to suspensions with positive roof functions.}

\begin{lem}\name{lem: 2.1}
Let $\left(B, \BB, \beta \right)$ be a Lebesgue probability space,
equipped with an ergodic measure preserving transformation $T$. Let
$\theta: B \to \R$ be an integrable function (that is $\int_B |\theta|
d \beta < \infty$) with $\int_B \theta
d\beta >0$. Then there is a positive function $\varphi$ which is 
almost surely finite, and a positive integrable function $\tau$, such
that 
$$
\theta - \varphi \circ T + \varphi = \tau.
$$
The function $\tau$ can be chosen to be bounded below by a constant
$\vre_0>0$. The function $\tau$ can be chosen to be bounded if
$\theta$ is bounded. 
\end{lem}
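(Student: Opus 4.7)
The plan is to exhibit $\varphi$ explicitly as (minus) the infimum of the Birkhoff sums of a shifted cocycle. This is the standard ``coboundary subtraction'' trick that makes a cocycle with positive mean bounded below by a positive constant.

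First I would pick $\vre_0 > 0$ small enough that $\int_B (\theta - \vre_0)\, d\beta > 0$, which is possible since $\int \theta \, d\beta > 0$. Set $\psi \df \theta - \vre_0$ and denote the Birkhoff sums $S_n \psi \df \sum_{k=0}^{n-1} \psi \circ T^k$, with $S_0 \psi \df 0$. By Birkhoff's ergodic theorem applied to $\psi$, one has $\frac{1}{n} S_n \psi \to \int \psi\, d\beta > 0$ almost surely, so $S_n \psi \to +\infty$ $\beta$-a.s.; in particular the infimum $\inf_{n \geq 0} S_n \psi(x)$ is finite for $\beta$-a.e.\ $x$.

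Now define
$$
\varphi(x) \df - \inf_{n \geq 0} S_n \psi(x).
$$
Since $S_0\psi = 0$ is included, $\varphi \geq 0$, and by the previous paragraph $\varphi$ is finite a.s. Using the cocycle identity $S_n \psi(Tx) = S_{n+1}\psi(x) - \psi(x)$, one computes
$$
\inf_{n \geq 0} S_n \psi(Tx) \;=\; \inf_{n \geq 1} S_n \psi(x) \,-\, \psi(x),
$$
and since $\inf_{n \geq 0} S_n \psi(x) = \min\bigl(0,\, \inf_{n \geq 1} S_n \psi(x)\bigr)$, subtracting yields
$$
\varphi(x) - \varphi(Tx) \;=\; \max\!\Bigl(\inf_{n \geq 1} S_n \psi(x),\, 0\Bigr) \,-\, \psi(x).
$$
Hence
$$
\tau(x) \;\df\; \theta(x) + \varphi(x) - \varphi(Tx) \;=\; \vre_0 + \max\!\Bigl(\inf_{n \geq 1} S_n \psi(x),\, 0\Bigr) \;\geq\; \vre_0,
$$
so $\tau$ is bounded below by the prescribed positive constant.

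It remains to verify integrability. Bounding the infimum by the $n=1$ term gives $\inf_{n\geq 1} S_n \psi(x) \leq \psi(x)$, so
$$
0 \;\leq\; \max\!\Bigl(\inf_{n \geq 1} S_n \psi(x),\, 0\Bigr) \;\leq\; \max(\psi(x), 0) \;\leq\; |\theta(x)| + \vre_0,
$$
which shows $\tau \leq 2\vre_0 + |\theta|$; hence $\tau$ is integrable, and bounded if $\theta$ is bounded. This finishes the construction.

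The only genuine subtlety is the algebraic manipulation giving the formula for $\varphi - \varphi \circ T$; once one recognizes that $\inf_{n \geq 0}$ equals $\min(0, \inf_{n \geq 1})$, everything falls into place, and the strict positivity of $\tau$ is essentially forced by the fact that the index $n=0$ (the zero sum) is retained in the infimum defining $\varphi$. Note that I do not claim $\varphi$ itself is integrable, only that $\tau = \theta + \varphi - \varphi\circ T$ is; this is why the lemma is stated with $\varphi$ merely finite almost surely.
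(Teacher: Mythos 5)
Your proof is correct and is essentially the paper's own argument: the authors likewise apply the construction to $\theta-\vre_0$, set $\psi=\inf_{p\ge 1}\theta_p$, and take $\tau=\max(\psi,0)$, $\varphi=-\min(\psi,0)$, which coincides with your $\varphi=-\inf_{n\ge 0}S_n\psi$ since the $n=0$ term just replaces $\min(\psi,0)$ by $\min(0,\inf_{n\ge1}S_n\psi)$. The verification of the cocycle identity, the lower bound $\tau\ge\vre_0$, and the integrability bound $\tau\le\max(\theta,0)+\text{const}$ all match.
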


In other words, the function $\theta$ is cohomologous to $\tau$ via
$\varphi.$ 

\begin{proof}
For $p \geq 1$, denote $\theta_p = \theta + \theta \circ T + \cdots + \theta \circ
T^{p-1}$ and let
$$
\psi = \inf_{p \geq 1} \theta_p, \ \ \tau = \max (\psi, 0),  \ \
\varphi = -\min(\psi, 0).
$$ 
By the Birkhoff ergodic theorem, for $\beta$-a.e. $b$ in $B$,
$\theta_p (b) \to_{p \to \infty} \infty$. This implies that for almost
all $b$, the $\inf$
in the definition of $\psi(b)$ is a min and $\varphi(b)$ is
finite. Since $\psi \leq \theta$, we find $\tau \leq \max 
(\theta, 0)$ and hence $\tau$ is integrable. Finally, by definition, 
$$
\tau - \varphi = \psi = \min(\theta, \theta+\psi \circ T) = \theta +
\min(0, \psi \circ T) = \theta - \varphi \circ T.
$$
In order to obtain $\tau$ which is bounded below by $\vre_0$, apply
the previous reasoning to the function $\theta - \vre_0$. This is
possible whenever $\vre_0 < \int_B \theta d\beta$. The function $\tau$
given in the construction is bounded when $\theta$ is. 
\end{proof} 

\subsection{Suspension of a non-invertible system}
\name{subsec: 2.2}
{\em We define in this section the suspension of a dynamical system
  where the roof function has a factor taking values in a compact group.}

Let $\left(B, \BB, \beta \right)$ be a Lebesgue probability space,
equipped with an ergodic measure preserving transformation $T$. Let $M$
be a compact metrizable topological group and 
$$
\tau = (\tau_\R, \tau_M): B \to \R \times M
$$
a measurable map such that $\tau_\R: B \to \R$ is a positive
integrable function. For any $p \geq 0$, and for $\beta$-a.e. $b$ in
$B$, denote
$$
\tau_{\R, p} = \tau_{\R} (T^{p-1}b) + \cdots + \tau_\R (b)
$$
and 
$$
\tau_{M,p}(b) = \tau_M(T^{p-1}b) \cdots \tau_M(b).
$$
Define the suspension $\left(B^\tau, \BB^\tau, \beta^\tau, T^\tau
\right)$ as follows. The space $B^\tau$ is 
$$
B^\tau  = \{c = (b,k,m) \in B \times \R \times M : 0 \leq k < \tau_\R(b)\},
$$
the measure $\beta^\tau$ is obtained by normalizing the restriction to
$B^\tau$ of the product measure of $\beta$ and the Haar measure of $\R
\times M$, the $\sigma$-algebra $\BB^\tau$ is the product
$\sigma$-algebra, and for almost every $\ell \in \R_+$ and $c =
(b,k,m) \in B^\tau$, 
$$
T^\tau_\ell(c) = \left(T^{p_\ell(c)}b, k+\ell - \tau_{\R,
    p_\ell(c)}(b), \tau_{M, p_\ell(c)}m \right)
$$
where
$$
p_\ell(c) = \max\{ p \in \N: k+\ell - \tau_{\R, p}(b) \geq 0\}.
$$
The flow $T^\tau_\ell$ is then defined for all positive
times. \combarak{It is ergodic.}
\begin{lem}\name{lem: 2.2}
The semigroup $(T^\tau_\ell)$ of transformations of $B^\tau$ preserves
the measure $\beta^\tau$. 
\end{lem}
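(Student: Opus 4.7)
The plan is to realize $T^\tau_\ell$ as the descent of a translation flow on a larger ambient measure space.

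I would set $Y \df B \times \R \times M$ equipped with the (infinite) product measure $\mu_Y \df \beta \otimes \lambda_\R \otimes \lambda_M$, where $\lambda_\R$ is Lebesgue on $\R$ and $\lambda_M$ is normalized Haar on $M$. Introduce two commuting operations: the translation flow
$$R_\ell(b, k, m) \df (b,\, k + \ell,\, m),$$
which preserves $\mu_Y$ by translation invariance of Lebesgue, and the skew product
$$S(b, k, m) \df (Tb,\, k - \tau_\R(b),\, \tau_M(b)\cdot m),$$
which preserves $\mu_Y$ by Fubini: for each fixed $b$ the fiber map $(k, m) \mapsto (k - \tau_\R(b), \tau_M(b) m)$ preserves $\lambda_\R \otimes \lambda_M$ (Lebesgue is translation invariant and Haar is left invariant under multiplication by $\tau_M(b) \in M$), while $T$ preserves the $b$-marginal $\beta$. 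A direct substitution gives $R_\ell \circ S = S \circ R_\ell$.

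Next I would verify that $B^\tau$ is, modulo null sets, a measurable section for the forward $S$-orbits on the half-region $\{k \geq 0\} \subset Y$. Since $\tau_\R > 0$ is integrable with $\int \tau_\R \, d\beta > 0$, Birkhoff gives $\tau_{\R, p}(b) \to \infty$ as $p \to \infty$ for $\beta$-a.e.\ $b$. Using the iterate formula $S^p(b, k, m) = (T^p b,\, k - \tau_{\R, p}(b),\, \tau_{M, p}(b)\, m)$, this implies that for each such $b$ and each $k \geq 0$ there is a unique nonnegative integer $p$ with $S^p(b, k, m) \in B^\tau$, namely the $p$ satisfying $\tau_{\R, p}(b) \leq k < \tau_{\R, p+1}(b)$. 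Applying this to the point $R_\ell(c) = (b, k + \ell, m)$ for $c = (b, k, m) \in B^\tau$ identifies the integer $p_\ell(c)$ from the paper's definition and yields the identity
$$T^\tau_\ell(c) = S^{p_\ell(c)}\bigl(R_\ell(c)\bigr).$$

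Since $\beta^\tau$ is the normalization of $\mu_Y$ restricted to the section $B^\tau$ (with normalizing constant $\mu_Y(B^\tau) = \int_B \tau_\R \, d\beta$), and $R_\ell$ preserves $\mu_Y$ and commutes with $S$, the translation flow $R_\ell$ descends through the quotient $\{k \geq 0\} \to B^\tau$ (defined by $S$-translation to the section) to a measure-preserving transformation on $B^\tau$, which by the identity above is $T^\tau_\ell$. Concretely, for a measurable $E \subset B^\tau$ one would partition $(T^\tau_\ell)^{-1}(E) = \bigsqcup_{p \geq 0} \bigl\{c \in B^\tau : p_\ell(c) = p,\ S^p R_\ell(c) \in E\bigr\}$ and use $\mu_Y$-invariance of each $S^p R_\ell$, together with the section property that each $S$-orbit meets $B^\tau$ exactly once, to conclude $\mu_Y((T^\tau_\ell)^{-1}(E)) = \mu_Y(E)$. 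The only mild obstacle is the verification of the section property, which reduces to $\tau_\R > 0$ and the divergence $\tau_{\R, p}(b) \to \infty$; if $T$ is not invertible, one may first pass to the natural extension of $(B, \BB, \beta, T)$ to make $S$ invertible and $B^\tau$ a genuine fundamental domain for the induced $\Z$-action, without affecting the measure-theoretic content of the argument.
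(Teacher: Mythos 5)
Your argument is correct and is essentially the paper's own proof: Benoist--Quint likewise realize $B^\tau$ as a fundamental domain for the skew transformation $S$ on $B\times\R\times M$, observe that the translation flow preserves the product measure and induces $T^\tau_\ell$ on the quotient, and reduce to invertible $T$ via the natural extension. The only difference is cosmetic --- the paper performs the natural-extension reduction first, whereas you defer it to the end, where it is in fact genuinely needed (in the application $T$ is a one-sided shift, and without it your ``partition'' step is delicate because $S^{-p}(E)\cap R_\ell(B^\tau)$ need not be a full $S^{-p}$-preimage).
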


\begin{proof}
The simplest approach is to avoid all calculations and consider $\left(B^\tau,
  \BB^\tau, \beta^\tau, T^\tau \right)$ as a factor of the suspension
$\left(\til B^\tau,
  \til \BB^\tau, \til \beta^\tau, \til T^\tau \right)$ of the natural
extension  $\left(\til B,
  \til \BB, \til \beta, \til T \right)$ of  $\left(B,
  \BB, \beta, T \right)$ and reduce to the case when $T$ is
invertible. 

When $T$ is invertible, one can identify the suspended dynamical
system as the quotient of the product $B \times \R \times M$ by the
transformation $S: (b,k,m) \mapsto (Tb, k - \tau_\R(b),
\tau_M(b)m)$. The flow $T^\tau_\ell $ is induced by the flow $\til
T^\tau_\ell$ preserving the product measure on $B \times \R \times
M$. Therefore $T^\tau_\ell$ preserves $\beta^\tau$. 
\end{proof}

We remark finally, that it follows from the Birkhoff ergodic theorem,
that for $\beta^\tau$-almost every $c \in B^\tau$, 
\eq{eq: 2.1}{
\lim_{p \to \infty} \frac{1}{p} \tau_{\R, p} (b) = \int_B \tau_\R d\beta,
\ \ \lim_{\ell \to \infty} \frac{1}{\ell} p_\ell(c) = \frac{1}{\int_B
  \tau_\R d\beta}.
}

\subsection{The law of the last jump}\name{subsec: 2.3}
{\em We now establish the law of the last jump which plays a crucial
  role in controlling the drift, in \S \ref{subsec: 7.1}. This law is
  an explicit formula for the conditional expectation of an event in
  $B^\tau$ relative to $(T^\tau_{\ell})^{-1}(\BB^\tau)$ when the base
  system is a Bernoulli shift.
}

Let $\left(A, \A, \alpha\right)$ be a Lebesgue probability space and
$\left(B, \BB, \beta, T\right)$ a one-sided Bernoulli shift on the
alphabet $\left(A, \A, \alpha\right)$, that is $B = A^{\N}, \beta =
\alpha^{\otimes \N}, \BB$ is the product $\sigma$-algebra
$\A^{\otimes \N}$ and $T$ is the right shift which sends $b = (b_0,
b_1, \ldots) \in B$ to $Tb=(b_1, b_2, \ldots)$. Let $M$ be a
metrizable compact topological group, let $\tau = (\tau_\R, \tau_M): B
\to \R \times M$ be a measurable map such that $\tau_\R: B \to \R$ is
positive and integrable, and let $\left(B^\tau, \BB^\tau,
  \beta^\tau, T^\tau \right)$ be the suspension defined in \S
\ref{subsec: 2.2}. 

We will require notation to parameterize the branches of the inverses
of $T^\tau_\ell$. For $q \geq 0$ and $a,b \in B$, we denote by $a[q]$
the beginning of the word $a$ written from right to left as
$a[q]=(a_{q-1}, \ldots, a_1, a_0)$ and $a[q]b \in B$ the concatenated
word 
$$a[q]b=(a_{q-1}, \ldots, a_1, a_0, b_0, b_1, \ldots, b_p, \ldots).$$
For $c = (b,k,m) \in B^\tau$ and $\ell$ in $\R_+$, let $q_{\ell,c}:
B\to \N$ and $h_{\ell, c}: B \to B^{\tau}$ the maps given, for $a \in
B$, by 
$$
q_{\ell, c} = \til q_{\ell, c'} \ \ \mathrm{and} \ \ h_{\ell, c} =
\til h_{\ell, c'}, \ \ \mathrm{where} \ \ c' = T^\tau_\ell(c)
$$ 
and 
$$
\til q_{\ell, c}(a) = \min\{q \in \N: k-\ell+\tau_{\R,q}(a[q]b) \geq 0\},
$$
$$
\til h_{\ell, c}(a) = (a[q]b, k-\ell+\tau_{\R, q}(a[q]b),
\tau_{M,q}(a[q]b)^{-1}m) \ \ \mathrm{with} \ \ q = \til q_{\ell, c}(a).
$$
By Birkhoff's theorem applied to the two-sided shift, for $\beta$
a.e. $a \in B$, and $\beta^\tau$ a.e. $c \in B^\tau$, one has the
equality
$$
\lim_{q \to \infty} \frac{1}{q} \tau_{\R, q}(a[q]b) = \int_B \tau_\R d\beta>0.
$$
\combarak{In the paper the rhs was $\tau$ and not $\tau_\R$}
Hence the function $\til q_{\ell, c}$ is almost surely finite and the
image of the map $\til h_{\ell,c}$ is the fiber
$(T^\tau_\ell)^{-1}(c)$. The function $q_{\ell,c}$ is thus also almost
surely finite. In addition, for $\beta$-a.e. $a \in B$, for every $q
\geq 1$, the function $b \mapsto \tau_{\R,q}(a[q]b)$ is
$\beta$-integrable. Therefore by Birkhoff's ergodic theorem, for
$\beta^\tau$-a.e. $c \in B^\tau$, one has
$$
\lim_{p \to \infty} \frac{1}{p} \tau_{\R,q}(a[q]T^pb)=0
$$
\combarak{here is my interpretation, but using the ergodic theorem
  seems a bit like overkill: if
  $\psi(b) = \tau_{\R, q}(a[q]b)$, then 
\[\begin{split}
\lim_{p \to \infty} \frac{1}{p} \psi(T^pb) -0  & = \lim_{p \to \infty}
\frac{1}{p} ( \psi(T^pb) - \psi(b))  \\ 
&  = \lim_{p \to \infty}
\frac{1}{p}(\sum_1^p \psi(T^ib) - \sum_0^{p-1} \psi(T^ib))   \\ & = \int
\psi d\beta - \int \psi d\beta =0. 
\end{split}\]
}
and hence, by \equ{eq: 2.1}, we have 
\eq{eq: 2.2}{
\lim_{\ell \to \infty} q_{\ell, c}(a) = \infty.
}
\combarak{It seems to me that this follows from an analogue of
  \equ{eq: 2.1} for the shift in the backward direction, and has
  nothing to do with the preceding line. 
That is 
\[\begin{split}
q_{\ell,c}(a) & = \min\{q: \tau_{\R, q}(c') \geq \ell - k\} \\
& = \min\{q: \tau_{\R, q}(a_{q-1} \ldots a_0 T^\tau_\ell c) \geq \ell
- k\} \\
& = \min\{q: \sum_{i=0}^{q-1}\tau_{\R, q}(a_{i} a_{i-1}  \ldots a_0b_{q'(\ell)}
b_{q'(\ell+1)} \ldots ) \geq \ell - k\} 
\end{split}\]
where $q'(\ell)$ satisfies $c' T^{\tau}_\ell(c)= (b', k,m), b' =
T^{q'(\ell)}(b)$. Then by \equ{eq: 2.1} we have that $q'(\ell) \to
\infty$ and therefore $q_{\ell, c} \to \infty$. 
} 

Finally, the image of the map $h_{\ell, c} $ is the fiber of
$T^\tau_\ell$ passing through $c$:
$$
\{c'' \in B^\tau : T^\tau_\ell(c'') = T^\tau_\ell(c)\}, 
$$
that is the atom of $c$ in the partition associated with the
$\sigma$-algebra $(T^\tau_\ell)^{-1}(\BB^\tau)$. 
\begin{prop}\name{prop: 2.3}
The conditional expectation with respect to the $\sigma$-algebra
$(T^\tau_\ell)^{-1}(\BB^\tau)$ is given, for any positive measurable
function $\varphi$ on $B^\tau$ and for $\beta^\tau$-a.e. $c = (b,k,m)
\in B^\tau$, by 
$$
\EE\left( \varphi |(T^\tau_\ell)^{-1}(\BB^\tau) 
 \right) (c) = \int_B \varphi(h_{\ell, c}(a)) d\beta(a).
$$
\end{prop}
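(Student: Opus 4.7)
The plan is to verify the formula directly from the defining property of conditional expectation, reducing it to a measure-preserving change of variables on $B^\tau \times B$. First note that by construction $h_{\ell,c} = \til h_{\ell,\, T^\tau_\ell(c)}$, so $c \mapsto \int_B \varphi(h_{\ell,c}(a))\,d\beta(a)$ depends on $c$ only through $c' = T^\tau_\ell(c)$ and is therefore automatically $(T^\tau_\ell)^{-1}(\BB^\tau)$-measurable. It then remains to show that for every bounded measurable $g$ on $B^\tau$,
\[
\int_{B^\tau} \varphi(c)\, g(T^\tau_\ell c)\, d\beta^\tau(c) \;=\; \int_{B^\tau}\int_B \varphi(\til h_{\ell, c'}(a))\, g(c')\, d\beta(a)\, d\beta^\tau(c'),
\]
where I have used the $T^\tau_\ell$-invariance of $\beta^\tau$ (Lemma \ref{lem: 2.2}) to transfer the ``outer'' factor $g$ from $T^\tau_\ell c$ to $c'$ on the right-hand side.

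The key construction is the map
\[
\Theta \colon B^\tau \times B \longrightarrow B^\tau \times B, \qquad \Theta(c',a) = \bigl(\til h_{\ell, c'}(a),\ T^{\til q_{\ell,c'}(a)} a\bigr),
\]
which I aim to show is an (a.e.) bijection preserving $\beta^\tau \times \beta$. Its inverse sends $(\hat c, \til a)$ to $(T^\tau_\ell \hat c,\ a)$, where, writing $\hat c = (\hat b, \hat k, \hat m)$ and $p = p_\ell(\hat c)$, one sets $a = (\hat b_{p-1}, \hat b_{p-2}, \ldots, \hat b_0, \til a_0, \til a_1, \ldots)$; that this is a two-sided inverse follows by unwinding the definitions and checking that $\til q_{\ell, T^\tau_\ell \hat c}(a) = p_\ell(\hat c)$. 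Granting $\Theta_*(\beta^\tau \times \beta) = \beta^\tau \times \beta$, the right-hand side of the display transforms, using $T^\tau_\ell \circ \til h_{\ell,c'}(a) = c'$, into $\int_{B^\tau}\int_B \varphi(\hat c)\, g(T^\tau_\ell \hat c)\, d\beta^\tau(\hat c)\, d\beta(\til a)$; marginalising out $\til a$ by Fubini recovers the left-hand side, as required.

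To prove $\Theta_*(\beta^\tau \times \beta) = \beta^\tau \times \beta$, I would partition $B^\tau \times B$ by the value $p = \til q_{\ell, c'}(a) \in \N$ and carry out the change of variables on each slice. Writing $c' = (b', k', m')$ and decomposing $a = (a_0, \ldots, a_{p-1}, \til a)$ with $\til a = T^p a$, the new coordinates are $\hat b = a[p] b'$, $\hat k = k' - \ell + \tau_{\R, p}(\hat b)$, $\hat m = \tau_{M, p}(\hat b)^{-1} m'$. The change $(a_0, \ldots, a_{p-1}, b') \mapsto \hat b$ carries $\alpha^{\otimes p} \otimes \beta$ to $\beta$ (the reversal of i.i.d.\ samples is i.i.d., and concatenation with an independent Bernoulli tail preserves the law); the translation $k' \mapsto \hat k$ preserves Lebesgue measure; and the left-translation $m' \mapsto \hat m$ preserves the Haar measure on $M$. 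The conjunction of the two constraints $c' \in B^\tau$ and $\til q_{\ell, c'}(a) = p$ becomes, after substitution, $\hat c \in B^\tau$ together with $p = p_\ell(\hat c)$; summing over $p \in \N$ then leaves the single indicator $\mathbf{1}_{B^\tau}(\hat c)$, while $\til a$ retains its $\beta$ distribution. The main technical obstacle is precisely this last piece of bookkeeping --- tracking the two indicator constraints through the substitution and checking that $\sum_{p \geq 0} \mathbf{1}_{\{p = p_\ell(\hat c)\}} = \mathbf{1}_{B^\tau}(\hat c)$. Everything else reduces to the invariance of Bernoulli, Lebesgue, and Haar measures under the corresponding coordinate changes.
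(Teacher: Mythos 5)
Your proof is correct and is essentially the paper's own argument: both verify the defining adjointness identity $\int \varphi\,(\psi\circ T^\tau_\ell)\,d\beta^\tau=\int\varphi_0\,\psi\,d\beta^\tau$ by decomposing over the number of jumps $p$ and performing the change of variables $(a_0,\dots,a_{p-1},b')\leftrightarrow \hat b=a[p]b'$ (with the translation in $k$ and left-translation in $M$), matching the constraint $\{\til q_{\ell,c'}(a)=p\}$ with $\{p_\ell(\hat c)=p\}$ and invoking the $T^\tau_\ell$-invariance of $\beta^\tau$. You merely run the substitution in the opposite direction and package it as an explicit measure-preserving bijection $\Theta$ on $B^\tau\times B$ that also tracks the tail $\til a$, which the paper leaves implicit.
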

In other words, if we regard every element of the fiber of
$T^\tau_\ell$ over a point $c' = (b', k', m') = T^\tau_\ell(c)$ in
$B^\tau$, when completing the infinite word $b'$ by the finite word
$a[q]$ written from right to left, the law of the finite word is
obtained by randomly printing the letters $a_i$, independently with
law $\alpha$ in the alphabet $A$, where printing stops  at time
$q_{\ell, c}(a)$. 

In particular, if $\tau$ is bounded and if $\ell \geq \sup \tau_\R$,
the law of the last jump $a_0$ is $\alpha$. More generally, if $\ell
\geq q \sup \tau_\R$ the law of the last $q$ jumps $(a_{q-1}, \ldots,
a_0)$ is $\alpha^{\otimes q}$. 

\combarak{Don't understand the last sentence. Why does this follow?} 

\begin{proof}
To simplify the notation used in the proof, we assume that $M$ is trivial and thus $\tau =
\tau_\R$. The general case of the proof is the same. 

Introduce the function $\varphi_0(c) = \int_B \varphi(\til
h_{\ell,c}(a)) d\beta(a)$. In order to show that the function
$\varphi_0 \circ T^\tau_\ell$ is the sought-after conditional
expectation, it suffices to show that, for any positive
$\BB^\tau$-measurable function $\psi$, we have the equality 
\eq{eq: 2.3}{
\int_{B^\tau} \psi(T^\tau_\ell c) \varphi(c) d\beta^\tau(c) =
\int_{B^\tau} \psi(T^\tau_\ell c) \varphi_0(T^\tau_\ell c)
d\beta^\tau(c). 
}
To this end, we note that the left-hand side $G$ is equal to 
$$
G= \sum_{p=0}^\infty \int_{B^\tau} \mathbf{1}_{\{p_\ell(c) = p \}}
\psi(T^pb, k+\ell - \tau_p(n)) \varphi(b,k) d\beta(b) dk.
$$
Introduce the variable $c' = (b',k) = (T^pb, k+\ell - \tau_p(b)) \in
B^\tau$ and $a \in B$ such that $a[p]=(b_0, \ldots, b_{p-1}).$ One
finds, when writing $B(c',p) = \{a \in B: \til q_{\ell, c'}(a)=p\}$,
that
$$
G = \int_{B^\tau} \psi(b',k') \sum_{p=0}^\infty \int_{B(c',p)}
\varphi(a[p]b)) d\beta(a) d\beta(b') dk',
$$ 
and hence that 
$$
G = \int_{B^\tau} \psi(c') \int_B \varphi \left(\til h_{\ell, c'}(a) \right)
d\beta(a) d\beta^\tau(c') = \int_{B^\tau} \psi(c') \varphi_0(c') d\beta^\tau(c').
$$
Now \equ{eq: 2.3} follows from the fact that $T^\tau_\ell$ preserves
the measure $\beta^\tau$. 
\end{proof}

\subsection{Conditional expectation for the fibered system} {\em We
  conclude this chapter with a general abstract lemma which constructs an
  invariant probability measure for the fibered dynamical system and
  by calculating its conditional expectation.}

Let $(B, \BB)$ be a standard Borel space, i.e. isomorphic to a
separable complete metric space with its Borel $\sigma$-algebra, and
let $\beta$ be a Borel probability measure on $B$ and $T$ an
endomorphism of $B$ preserving $\beta$. Let $(X, \XX)$ be a standard
Borel space, $\pi: B \times X \to B$ the projection onto the first
factor, and $\hat{T}$ a measurable transformation of $B \times X$ such
that $\pi \circ \hat{T} = T \circ \pi$. Below we will write, for $(b,x)
\in B \times X$, 
$$
\hat{T}(b,x)= (Tb, \rho(b)x).
$$
The space $\PP(X)$ of probability measures on $(X, \XX)$ has itself
the natural structure of a Borel space: this is the structure
generated by the maps $\PP(X) \to \R, \ \nu \mapsto \int_X \varphi
d\nu$, where $\varphi: X \to \R$ is a bounded Borel function. If one
realizes $X$ as a compact metric space endowed with its Borel
$\sigma$-algebra, this structure is generated by the maps $\PP(X) \to
\R, \ \nu \mapsto \int_X \varphi d\nu$ where $\varphi: X \to \R$ is a
continuous function. In particular, with respect to this Borel
structure, the space $\PP(X)$ is also a standard Borel space. 

Consider a $\BB$-measurable collection $B \to \PP(X), \ b \mapsto \nu_b$
of probability measures on $X$ such that for $\beta$-a.e. $b \in B$,
we have 
\eq{eq: 2.4}{
\nu_{Tb} = \rho(b)_*\nu_b.
}
We will denote by $\lambda$ the Borel probability measure on $(B
\times X, \BB \otimes \XX)$ defined by setting, for each positive
Borel function $\varphi: B \times X \to \R_+$, 
$$
\lambda(\varphi) = \int_B \int_X \varphi(b,x) d\nu_b(x) d \beta(b).
$$
We will abbreviate this by writing 
\eq{eq: 2.5}{
\lambda = \int_B \delta_b \otimes \nu_b d\beta(b).
}

\begin{lem}\name{lem: 2.4}
\begin{itemize}
\item[a)] The measure $\lambda$ is $\hat{T}$-invariant and satisfies
  $\pi_*\lambda = \beta$. 
\item[b)]
Conversely, if $T$ is invertible, then any $\hat{T}$-invariant
probability measure on $B \times X$ such that $\pi_*\lambda = \beta$
is given by \equ{eq: 2.5} for some measurable family of probabilities $b
\mapsto \nu_b$ satisfying \equ{eq: 2.4}. 
\end{itemize}
\end{lem}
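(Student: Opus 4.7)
The plan is to handle the two assertions separately. For (a), I would proceed by direct verification. The identity $\pi_*\lambda=\beta$ is immediate: apply $\lambda$ to a function of the form $f\circ\pi$ with $f$ bounded measurable on $B$ and use $\nu_b(X)=1$. For $\hat T$-invariance I would unwind the definitions: for any bounded Borel $\varphi:B\times X\to\R$,
\[
\int\varphi\,d(\hat T_*\lambda)=\int_B\!\int_X\varphi(Tb,\rho(b)x)\,d\nu_b(x)\,d\beta(b)=\int_B\!\int_X\varphi(Tb,y)\,d(\rho(b)_*\nu_b)(y)\,d\beta(b),
\]
which equals $\int_B\!\int_X\varphi(Tb,y)\,d\nu_{Tb}(y)\,d\beta(b)$ by \equ{eq: 2.4}. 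Applying $T$-invariance of $\beta$ to the bounded measurable function $b\mapsto\int_X\varphi(b,y)\,d\nu_b(y)$ reduces this to $\lambda(\varphi)$. Observe that no invertibility of $T$ is needed here.

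For (b), the key tool is the Rokhlin disintegration theorem: since $(B,\BB,\beta)$ is a standard Borel probability space and $\pi_*\lambda=\beta$, there exists a $\beta$-a.s.\ unique measurable family $b\mapsto\nu_b\in\PP(X)$ with $\lambda=\int_B\delta_b\otimes\nu_b\,d\beta(b)$. Existence of the representation \equ{eq: 2.5} is thus automatic, and the question is reduced to verifying that this family also satisfies the equivariance relation \equ{eq: 2.4}.

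To extract \equ{eq: 2.4} from the assumed $\hat T$-invariance of $\lambda$, I would test against product functions $\varphi(b,x)=f(b)g(x)$ with $f,g$ bounded Borel. Invariance of $\lambda$ gives
\[
\int_B f(Tb)\,(\rho(b)_*\nu_b)(g)\,d\beta(b)=\int_B f(b)\,\nu_b(g)\,d\beta(b).
\]
Here is where the hypothesis that $T$ is invertible intervenes: it allows the change of variable $b\mapsto T^{-1}b$ on the left (using that $T_*\beta=\beta$ together with the measurability of $T^{-1}$) to rewrite the identity as
\[
\int_B f(b)\,(\rho(T^{-1}b)_*\nu_{T^{-1}b})(g)\,d\beta(b)=\int_B f(b)\,\nu_b(g)\,d\beta(b).
\]
Since this holds for every bounded Borel $f$, one obtains $(\rho(T^{-1}b)_*\nu_{T^{-1}b})(g)=\nu_b(g)$ for $\beta$-a.e.\ $b$. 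Choosing $g$ in a countable family generating the Borel structure of $\PP(X)$ and intersecting the corresponding $\beta$-conull sets yields $\nu_b=\rho(T^{-1}b)_*\nu_{T^{-1}b}$ for $\beta$-a.e.\ $b$, or equivalently $\nu_{Tb}=\rho(b)_*\nu_b$ for $\beta$-a.e.\ $b$, which is \equ{eq: 2.4}.

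The main subtleties are the appeal to Rokhlin disintegration (standard in the stated setting) and the handling of null sets in the change of variable: invertibility of $T$ ensures $T$ sends $\beta$-null sets to $\beta$-null sets, so the a.e.\ identity on the left survives after relabeling $b\mapsto Tb$. This is precisely the point at which the invertibility hypothesis is essential; without it the pushforward relation \equ{eq: 2.4} need not hold for any choice of disintegration.
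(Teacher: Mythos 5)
Your proposal is correct and follows essentially the same route as the paper: part (a) is the same direct computation using \equ{eq: 2.4} and $T_*\beta=\beta$, and part (b) is the paper's appeal to Rokhlin disintegration plus uniqueness of conditional measures, with your product-function test and change of variable $b\mapsto T^{-1}b$ simply making explicit the step the paper summarizes as ``condition \equ{eq: 2.4} follows from the $\hat{T}$-invariance of $\lambda$ and uniqueness of conditional probabilities.''
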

\begin{proof}
a) The $\hat{T}$-invariance of $\lambda$ can be seen by a simple
computation. For a $(\BB \otimes \XX)$-measurable function $\varphi: B
\times X \to \R_+$, one has
\[
\begin{split}
\int_{B \times X} \varphi(\hat{T}(b,x)) \lambda(b,x) & = \int_B \int_X
\varphi(Tb, \rho(b)x) d\nu_b(x) d\beta(b) \\
& \stackrel{\equ{eq: 2.4}}{=} \int_B \int_X 
\varphi(Tb, x) d\nu_{Tb}(x) d\beta(b) \\ 
& \stackrel{T_*\beta=\beta}{=} \int_B \int_X 
\varphi(b, x) d\nu_{b}(x) d\beta(b) \\ 
& = \int_{B \times X} \varphi(b,x) d\lambda(b,x).
\end{split}
\]
In case $\varphi$ does not depend on the variable $x$, since the
measures $\nu_b$ are probabilities, one has 
\[
\begin{split}
 \int_{B \times X} \varphi(b,x) d\lambda(b,x) & = \int_B \int_X
 \varphi(b) d\nu_b(x) d\beta(b) \\
& = \int_B \varphi(b) d\beta(b).
\end{split}
\]
This implies $\pi_* \lambda = \beta$. 

b) The probability measures $\nu_b$ are the conditional probabilities
of $\lambda$ along the fibers of $\pi$. Since $T$ is invertible,
condition \equ{eq: 2.4} follows from the $\hat{T}$-invariance of
$\lambda$ and uniqueness of conditional probabilities. 
\end{proof}

We quickly recall the theorem of Rohlin \cite{ref 32} about
disintegration of measures, which we will use below, and its
relationship with conditional expectations. 

{\em 
Let $\eta$ be a probability measure on a standard Borel space $(Y,
\YY)$. For any $\sigma$-algebra $\YY'  = p^{-1}(\ZZ) \subset \YY$
corresponding to a Borel factor $p: (Y, \YY) \to (Z, \ZZ)$, we denote
by $y \mapsto \eta^{\YY'}_y \in \PP(Y)$ the disintegration of $\eta$
relative to $\YY'$. This is a $\YY'$-measurable map such that, for
$\eta$-a.e. $y \in Y$, $\eta^{\YY'}_y$ is supported on $p^{-1}(p(y))$
and one has 
\eq{eq: 2.6}{\eta = \int_Y \eta^{\YY'}_y d\eta(y).
}
This map $y \mapsto \eta^{\YY'}_y$ is unique up to a set of
$\eta$-measure zero.

In addition, for any $\YY$-measurable positive function $\varphi: Y
\to \R_+$, for $\eta$ a.e. $y \in Y$, one has 
$$
\EE \left(\varphi | \YY' \right) (y)= \int_B \varphi(y') d\eta^{\YY'}_y (y')
$$ 
}

The following lemma asserts that the disintegration of $\lambda$ with
respect to the factor $\hat{T}: B \times X \to B \times X $ can be
easily derived from the distintegration of $\beta$ with respect to the
factor $T: B \to B$. 
\begin{lem} \name{lem: 2.5}
Assume that for $\beta$-a.e. $b \in B$, the map $\rho(b): X \to X$ is
bijective. Then for every $(\BB \otimes \XX)$-measurable and
$\lambda$-integrable function $\varphi: B \times X \to \C$ and for
$\lambda$-a.e. $(b,x) \in B \times X$, we have 
\eq{eq: sought for}{
\EE\left( \varphi | \hat{T}^{-1} (\BB \otimes \XX) \right) (b,x) =
\int_{B \times X } \varphi(b', \rho(b')^{-1}\rho(b)x)
d\beta_b^{T^{-1}\BB}(b'). 
}
\end{lem}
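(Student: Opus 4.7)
The plan is to verify the right-hand side of \equ{eq: sought for} directly from the defining properties of conditional expectation: first I would show that the candidate function is $\hat{T}^{-1}(\BB\otimes\XX)$-measurable, and then verify the averaging identity against arbitrary bounded $\hat{T}^{-1}(\BB\otimes\XX)$-measurable test functions.

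Denote the right-hand side by $\Phi_\varphi(b,x)$. For the measurability step, note that $b \mapsto \beta_b^{T^{-1}\BB}$ is by construction $T^{-1}\BB$-measurable, hence factors through $T$: there is a Borel map $b'' \mapsto \mu_{b''}$ with $\beta_b^{T^{-1}\BB} = \mu_{Tb}$. Then the auxiliary function
\[
F(b,y) \df \int_B \varphi\bigl(b', \rho(b')^{-1}y\bigr)\, d\beta_b^{T^{-1}\BB}(b')
\]
factors as $F(b,y) = G(Tb, y)$ for some $(\BB\otimes\XX)$-measurable $G$, and substituting $y = \rho(b)x$ gives $\Phi_\varphi(b,x) = G(Tb, \rho(b)x) = G\bigl(\hat{T}(b,x)\bigr)$, which is clearly $\hat{T}^{-1}(\BB\otimes\XX)$-measurable.

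For the averaging step, it suffices to check that for every bounded $(\BB\otimes\XX)$-measurable $\psi$,
\[
\int_{B\times X} \psi(\hat{T}(b,x))\,\varphi(b,x)\, d\lambda(b,x) = \int_{B\times X} \psi(\hat{T}(b,x))\,\Phi_\varphi(b,x)\, d\lambda(b,x).
\]
The key maneuver is on the left: write $\lambda$ as $\int \delta_b \otimes \nu_b\, d\beta(b)$, then use $\nu_{Tb} = \rho(b)_*\nu_b$ (condition \equ{eq: 2.4}) to perform the change of variable $u = \rho(b)x$. Next apply the disintegration $\beta = \int \beta_b^{T^{-1}\BB}\, d\beta(b)$ to the outer integral, renaming the variable of integration to $b'$; since $\beta_b^{T^{-1}\BB}$ is supported on $T^{-1}(Tb)$, we may replace $Tb'$ by $Tb$ and $\nu_{Tb'}$ by $\nu_{Tb}$ inside the integrand. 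Swapping the order of integration and reverting the substitution $u = \rho(b)x$ produces exactly the right-hand side.

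The main obstacle, such as it is, is purely bookkeeping: keeping track of which $b$ is being disintegrated and exploiting $b' \in T^{-1}(Tb)$ to identify $\nu_{Tb'} = \nu_{Tb}$, so that the inner integral in $b'$ can be recognized as $\Phi_\varphi$. No measure-theoretic subtlety beyond Rohlin's disintegration theorem and the hypothesis that $\rho(b)$ is bijective (used so that the substitution $u = \rho(b) x$ is legitimate, and so that $\rho(b')^{-1}$ makes sense for $\beta$-a.e.\ $b'$ in the disintegration) is required.
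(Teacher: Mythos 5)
Your proof is correct and is essentially the paper's own argument: the same chain of steps (the change of variable via \equ{eq: 2.4}, the disintegration $\beta = \int_B \beta_b^{T^{-1}\BB}\,d\beta(b)$, the identification $\nu_{Tb'} = \nu_{Tb}$ from the support of $\beta_b^{T^{-1}\BB}$ on $T^{-1}(Tb)$, and Fubini) carries both computations. The only difference is packaging: the paper identifies the candidate measures $\mu_{(b,x)}$ as the conditional measures $\lambda^{\hat{T}^{-1}(\BB\otimes\XX)}_{(b,x)}$ and invokes uniqueness of disintegration, whereas you verify the defining averaging identity of conditional expectation against test functions $\psi\circ\hat{T}$ directly; these are equivalent formulations.
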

\begin{proof}
As explained above, for $\lambda$-a.e. $(b,x) \in B \times X$, one has
the equality 
$$
\EE \left(\varphi | \hat{T}^{-1} (\BB \otimes \XX) \right) (b,x)=
\int_{B \times X} \varphi(b',x') d\lambda^{\hat{T}^{-1}(\BB \otimes \XX)}_{(b,x)}(b',x').
$$
Thus it remains to identify the measures $\lambda^{\hat{T}^{-1}(\BB
  \otimes \XX)}_{(b,x)}.$ 

We note first that, since $\rho(b)$ is bijective, for
$\lambda$-a.e. $(b,x) \in B \times X$, the projection $\pi$ induces a
bijection 
of the fiber $\hat{T}^{-1}(\hat{T}(b,x))$ with $T^{-1}(Tb)$ where the
inverse is given by $b' \mapsto (b', \rho(b')^{-1}\rho(b)x)$. Denote
by $\mu_{(b,x)}$ the measure on $B \times X$ given by the right hand
side 
in the sought-for equality \equ{eq: sought for}:
$$
\int_{B \times X} \varphi(b',x') d\mu_{(b,x)}(b',x') = \int_B
\varphi(b', \rho(b')^{-1}\rho(b)x) d\beta^{T^{-1}\BB}_b(b').
$$
We want to show that for $\lambda$-a.e. $(b,x) \in B \times X$, we
have 
$$
\lambda^{\hat{T}^{-1}(\BB \otimes \XX)}_{(b,x)} = \mu_{(b,x)}.
$$
To this end, first note that the map $(b,x) \mapsto \mu_{(b,x)}$ is
$\hat{T}^{-1}(\BB \otimes \XX)$-measurable and that the measure
$\mu_{(b,x)}$ is supported on $\hat{T}^{-1}(\hat{T}(b,x))$. Secondly
we we will compute the following integral $I$ for every
$\lambda$-integrable function $\varphi: B 
\times X \to \C$: 
$$
I = \int_{B \times X} \int_{B \times X} \varphi(b',x') d\mu_{(b,x)}
(b',x') d\lambda(b,x).
$$
For $\beta$-a.e. $b$ we apply Fubini's theorem in the space $(B \times
X, \BB \otimes \XX, \beta^{T^{-1}\BB}_b \otimes \nu_b)$, and obtain
$$
I= \int_B \int_B \int_X \varphi(b',
\rho(b')^{-1}\rho(b)x)d\nu_b(x)d\beta^{T^{-1}\BB}_b(b') d\beta(b). 
$$
Using \equ{eq: 2.4} one finds
\[
\begin{split}
I & =  \int_B \int_B \int_X \varphi(b', \rho(b')^{-1}x) d\nu_{Tb}(x)
d\beta^{T^{-1}\BB}_b(b') d\beta(b) \\
& = \int_B \int_B \int_X \varphi(b', \rho(b')^{-1}x) d\nu_{Tb'}(x)
d\beta^{T^{-1}\BB}_b(b') d\beta(b). 
\end{split}
\]
Finally, applying once more \equ{eq: 2.4} and \equ{eq: 2.6}, one
obtains 
\[
\begin{split}
I & =  \int_B \int_B \int_X \varphi(b',x) d\nu_{b'}(x)
d\beta^{T^{-1}\BB}_b(b') d\beta(b) \\
& = \int_B \int_X \varphi(b, x) d\nu_{b}(x) d\beta(b) = \int_{B \times
  X} \varphi(b,x) d\lambda(b,x). 
\end{split}
\]
By uniqueness of the disintegration, we have the equality
$\lambda^{\hat{T}^{-1}(\BB \otimes \XX)}_{(b,x)} = \mu_{(b,x)}$, for
$\lambda$-a.e. $(b,x) \in B \times X$. 
\end{proof} 

\section{Random walks on $G$-spaces}
{\em In this chapter we collect some fundamental properties of
  stationary measures which are valid in a very general context.}

\subsection{Stationary measures and Furstenberg measure}\name{subsec: 3.1}
{\em To each stationary probability measure $\nu$ we associate a
  probabilistic dynamical system $\left( B^X, \BB^X, \beta^X, T^X \right)$.}

Let $G$ be a metrizable locally compact group, $\GG$ its Borel
$\sigma$-algebra, $\mu$ a Borel probability measure on $G$ and $(B,
\BB, \beta, T)$ the one-sided Bernoulli shift on the alphabet $(G,
\GG, \mu)$. 

Let $(X, \XX)$ be a standard Borel space equipped with a Borel action
of $G$. Let $\nu$ be a Borel probability measure on $X$ which is
$\mu$-stationary, i.e. $\mu * \nu = \nu$. 

We denote by $T^X$ the transformation on $B^X = B \times X$ given by,
for $(b,x) \in B^X$, 
\eq{eq: 3.1}{
T^X(b,x) = (Tb, b_0^{-1}x).
}
We denote, for $n \geq 0$, by $\BB_n$ the sub-$\sigma$-algebra of
$\BB$ generated by the coordinate functions $b_i, \ i=0,1,\ldots, n$,
and denote by $\pi: B^X \to B$ the projection onto the first factor. 

\begin{lem}\name{lem: 3.1}
Let $\nu$ be a $\mu$-stationary probability measure on $X$. 
\begin{itemize}
\item[a)] 
There is a unique probability measure $\beta^X$ on $(B^X, \BB \otimes
\XX)$ such that, for any $n \geq 0$ and any  $\BB_n \otimes \XX$-measurable bounded
function $\varphi$, 
\eq{eq: 3.2}{
\int_{B^X} \varphi(b,x) d\beta^X(b,x) = \int_{B^X} \varphi(b,b_0
\cdots b_{n-1}y) d\beta(b)d\nu(y). 
} 
\item[b)]
The probability measure $\beta^X$ is $T^X$-invariant and satisfies
$\pi_*\beta^X = \beta$. 
\end{itemize}
\end{lem}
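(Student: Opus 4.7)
The plan is to construct $\beta^X$ by extending the formula \equ{eq: 3.2} from the algebra of cylinder-type sets to the full product $\sigma$-algebra, with the key consistency check coming from $\mu$-stationarity of $\nu$; the invariance properties in part b) will then drop out by direct substitution in the formula.

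For uniqueness in part a), note that the increasing family $\{\BB_n \otimes \XX\}_{n \geq 0}$ has union an algebra generating $\BB \otimes \XX$, so \equ{eq: 3.2} determines $\int \varphi \, d\beta^X$ on a generating algebra, and uniqueness follows from the monotone class theorem. For existence, I would define a finitely additive set function on this algebra by the right-hand side of \equ{eq: 3.2}; the nontrivial point is to verify that raising $n$ by one gives the same value whenever the function tested is already measurable at the lower level. In passing from level $n$ to level $n+1$ one picks up an extra integration over the new letter $b_n \sim \mu$, and the variable $y \sim \nu$ in the integrand is multiplied on the left by $b_n$. By $\mu$-stationarity of $\nu$, the law $\mu * \nu$ of $b_n y$ is again $\nu$, so the extra integration collapses and one recovers the level-$n$ formula. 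Countable additivity on the algebra is inherited from $\beta \otimes \nu$, and Carath\'eodory extension then produces the desired probability measure $\beta^X$. This consistency check is the conceptual heart of the argument.

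For part b), the marginal identity $\pi_* \beta^X = \beta$ is immediate on taking $\varphi$ in \equ{eq: 3.2} to depend only on $b$: the $\nu$-integration is trivial and only $\int f \, d\beta$ remains. For $T^X$-invariance, I test against an arbitrary bounded $\BB_n \otimes \XX$-measurable $\varphi$; then $\varphi \circ T^X(b,x) = \varphi(Tb, b_0^{-1} x)$ is $\BB_{n+1} \otimes \XX$-measurable. Applying \equ{eq: 3.2} at level $n+1$, the cancellation $b_0^{-1} \cdot b_0 b_1 \cdots b_n = b_1 \cdots b_n$ strips off the leading factor, leaving $\int \varphi(Tb, b_1 \cdots b_n y) \, d\beta(b) \, d\nu(y)$; a change of variable along the measure-preserving shift $T \colon B \to B$ turns this into the level-$n$ right-hand side of \equ{eq: 3.2} applied to $\varphi$, which is $\int \varphi \, d\beta^X$. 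The main work throughout is bookkeeping with the defining formula, and no deeper input beyond stationarity of $\nu$ and shift-invariance of $\beta$ is needed.
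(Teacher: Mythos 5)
Your argument is correct and is essentially the paper's own proof: the authors likewise define the consistent family $\beta^X_n = \int_B \delta_b \otimes (b_0\cdots b_{n-1})_*\nu \, d\beta(b)$, use $\mu*\nu=\nu$ to check that $\beta^X_{n+1}$ agrees with $\beta^X_n$ on the lower-level $\sigma$-algebra, invoke Carath\'eodory to extend, and prove invariance by the same cancellation $b_0^{-1}b_0b_1\cdots b_n = b_1\cdots b_n$ followed by shift-invariance of $\beta$. (Both you and the paper implicitly read $\BB_n$ as generated by $b_0,\dots,b_{n-1}$, which is what the formula \equ{eq: 3.2} requires.)
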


\begin{proof}
a). For $n\geq 0$ we introduce the probability measure on $\BB_n
\otimes X$ defined by $\beta^X_n = \int_B \delta_b \otimes (b_0 \dots
b_{n-1})_*\nu d\beta(b).$ Since $\nu$ is $\mu$-stationary, for every
$n \geq 0$, the measure $\beta_{n+1}^X$ coincides with $\beta_n^X$ on
the $\sigma$-algebra $\BB_n \otimes \XX$. By the theorem of
Caratheodory, it follows that there is a unique probability measure
$\beta^X$ on $\BB\otimes \XX$ which coincides with $\beta^X_n$ on
$\BB_n \otimes \XX$ for every $n \geq 0$. 

b). For any $n \geq 0$, one has $(T^X)^{-1} (\BB_n \otimes \XX)
\subset (\BB_{n+1} \otimes \XX)$ and,
for any bounded $\BB_n \otimes \XX$-measurable function $\varphi$, by
definition, 
\[
\begin{split}
\int_{B^X} \varphi(T^X(b,x)) d\beta_{n+1}^X(b,x) & = \int_{B^X}
\varphi (Tb, b_0^{-1}b_0b_1\cdots b_ny) d\beta(b) d\nu(y) \\
& = \int_{B^X} \varphi(b,x) d\beta_n^X(b,x).
\end{split}
\]
It follows that $T^X_*\beta^X = \beta^X$. In addition, equation
\equ{eq: 3.2} with $n=0$ gives the equality $\pi_*\beta^X = \beta$. 
\end{proof}

We denote by $\BB^X$ the completion of the $\sigma$-algebra $\BB
\otimes \XX$ with respect to the measure $\beta^X$. 

\subsection{Martingales and conditional probabilities}\name{subsec:
  3.2} 
{\em In this section, we associate with each stationary probability
  measure $\nu$ on $X$ a measurable and $T$-equivariant family
  $(\nu_b)_{b \in B}$ of probability measures on $X$. } 

The disintegration of $\beta^X$ along the factor map $\pi$, proves the
existence of a $\BB$-measurable map $B \to \PP(X), b \mapsto \nu_b$,
such that 
\eq{eq: 3.3}{
\beta^X = \int_B \delta_b \otimes \nu_b d\beta(b).
}
In other words, for any bounded $\BB^X$-measurable function $\varphi$
on $B^X$, one has 
\eq{eq: 3.4}{
\beta^X(\varphi) = \int_B \int_X \varphi(b,y) d\nu_b(y) d\beta(b).
}
Also one has the following equality for $\beta^X$-a.e. $(b,x) \in B^X$
\eq{eq: 3.5}{
\EE\left( \varphi | \pi^{-1}\BB\right) (b,x) = \int_X\varphi(b,y) d\nu_b(y),
}
where the conditional expectation is taken relative to the probability
measure $\beta^X$. 

The following lemma interprets the conditional probabilities $\nu_b$
as limit probabilities. 
\begin{lem}\name{lem: 3.2}
Let $\nu$ be a $\mu$-stationary probability measure on $X$ and let $b
\mapsto \nu_b$ be the $\BB$-measurable family of probability measures
on $X$ constructed above. 
\begin{itemize}
\item[a)]
For any bounded Borel function $f$ on $X$, for $\beta$-a.e. $b \in B$,
we have 
\eq{eq: 3.6}{
\nu_b(f) = \lim_{p \to \infty} (b_{0*} \cdots b_{p*}\nu)(f).
}
\item[b)]
For $\beta$-a.e. $b \in B$, we have 
\eq{eq: 3.7}{
\nu_b = b_{0*} \nu_{Tb}.
}
\item[c)]
We have 
\eq{eq: 3.8}{
\nu = \int_B \nu_b d\beta(b).
}

\item[d)]
The map $b \mapsto \nu_b$ is the unique $\BB$-measurable map $B \to
\PP(X)$ for which \equ{eq: 3.7} and \equ{eq: 3.8} hold. 
\item[e)]
Conversely, for any $\BB$-measurable family $b \mapsto \nu_b \in
\PP(X)$ satisfying \equ{eq: 3.7}, the measure $\nu$ given by \equ{eq:
  3.8} is $\mu$-stationary. 
\end{itemize}
\end{lem}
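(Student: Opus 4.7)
\medskip

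The plan is to prove (a) first by recognizing the sequence $(b_{0*}\cdots b_{p*}\nu)(f)$ as a concrete martingale, and then derive (b), (c), and (e) as essentially formal consequences, with (d) being the only non-routine step.

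For part (a), I would apply (3.2) with $n=p+1$ to test functions of the form $\varphi(b,x)=g(b)f(x)$, where $g$ is $\BB_{p+1}$-measurable. This yields the identity
\[
\int g(b)f(x)\,d\beta^X(b,x) \;=\; \int g(b)\,(b_{0*}\cdots b_{p*}\nu)(f)\,d\beta(b),
\]
which shows that $(b_{0*}\cdots b_{p*}\nu)(f)$ equals the conditional expectation $\EE[f\mid \pi^{-1}\BB_{p+1}](b,x)$ for $\beta^X$-a.e.\ $(b,x)$. Since $\BB = \sigma\bigl(\bigcup_{p}\BB_{p}\bigr)$, Doob's martingale convergence theorem (for bounded $f$) gives $\beta^X$-a.s.\ convergence to $\EE[f\mid \pi^{-1}\BB](b,x)$, which equals $\nu_b(f)$ by (3.5). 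Part (b) then follows directly from (a) by the manipulation $\nu_b(f)=\lim_p (b_{0*}\cdots b_{p*}\nu)(f) = \lim_p b_{0*}((Tb)_{0*}\cdots(Tb)_{p-1,*}\nu)(f) = b_{0*}\nu_{Tb}(f)$. Part (c) is simply (3.4) applied to $\varphi(b,x)=f(x)$, combined with (3.2) for $n=0$ (which gives $\beta^X(\varphi)=\nu(f)$).

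Part (d) is the main substance. Suppose $b\mapsto\nu'_b$ is a second $\BB$-measurable family satisfying (3.7) and (3.8). Iterating (3.7) gives
\[
\nu'_b \;=\; b_{0*}\cdots b_{p*}\nu'_{T^{p+1}b} \qquad \text{for every } p\ge 0.
\]
Apply both sides to a bounded Borel $f$ and take the conditional expectation with respect to $\BB_{p+1}$. Since $\nu'_{T^{p+1}b}$ is a $\sigma(b_{p+1},b_{p+2},\ldots)$-measurable function taking values in $\PP(X)$, it is independent of $\BB_{p+1}$; combined with the $T$-invariance of $\beta$ and the assumed identity (3.8), $\EE[\nu'_{T^{p+1}b}]=\int\nu'_b\,d\beta = \nu$. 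Thus
\[
\EE[\nu'_b(f)\mid \BB_{p+1}](b) \;=\; (b_{0*}\cdots b_{p*}\nu)(f).
\]
As $p\to\infty$, the left-hand side tends a.s.\ to $\nu'_b(f)$ by Lévy's theorem (since $\nu'_b$ is $\BB$-measurable), and the right-hand side tends a.s.\ to $\nu_b(f)$ by part (a). Therefore $\nu'_b(f)=\nu_b(f)$ for $\beta$-a.e.\ $b$; taking a countable determining family of $f$'s gives $\nu'_b=\nu_b$ a.s.

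Part (e) is a short calculation using the product structure $\beta=\mu\otimes\beta$ of the one-sided Bernoulli shift (under the identification $b\leftrightarrow (b_0,Tb)$): using (3.7),
\[
\nu(f) \;=\; \int \nu_b(f)\,d\beta(b) \;=\; \int \nu_{Tb}(f\circ b_0)\,d\beta(b) \;=\; \int\!\!\int (g_*\nu_{b'})(f)\,d\mu(g)\,d\beta(b') \;=\; \mu*\nu(f).
\]
The main obstacle in the whole argument is step (d): one must recognize that the iterated equivariance (3.7) produces, after conditioning on $\BB_{p+1}$, exactly the martingale that converges to $\nu_b$ by (a); everything rests on carefully separating the role of the ``past'' coordinates $b_0,\ldots,b_p$ from the ``future'' $T^{p+1}b$, and exploiting their $\beta$-independence along with (3.8).
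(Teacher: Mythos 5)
Your proposal is correct and follows essentially the same route as the paper: parts (a), (c), (e) are the paper's arguments verbatim (identify $(b_{0*}\cdots b_{p*}\nu)(f)$ as a conditional expectation via (3.2) and apply martingale convergence, etc.). The only cosmetic difference is in (d), where the paper forms the measure $\lambda=\int_B\delta_b\otimes\nu'_b\,d\beta(b)$, checks it agrees with $\beta^X$ on each $\BB_n\otimes\XX$ by the same ``past/future independence plus (3.7)--(3.8)'' computation you perform, and concludes by uniqueness of disintegration rather than by a second appeal to L\'evy's theorem; both versions are valid (just mind the paper's off-by-one convention for $\BB_n$ when matching indices).
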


\begin{proof}
a). For $\beta$-a.e. $b \in B$, we denote by $\nu_{b,p}$ the
probability measure $\nu_{b,p} = b_{0*} \cdots b_{p*} \nu \in
\PP(X)$. The proof is based on an explicit formula for the conditional
expectation: for each $p \geq 0$, for any bounded $\XX$-measurable
function $f$, which we will consider as a function on $B^X$, for
$\beta^X$-a.e. $(b,x) \in B^X$, one has
\eq{eq: 3.9}{
\EE \left(f | \pi^{-1} \BB_p \right)(b,x) = \int_X f(b_0 \cdots
b_{p-1}x') d\nu(x').
}
In fact, the right hand side of this equation is $\pi^{-1}
\BB_p$-measurable, and for each $\pi^{-1}\BB_p$-measurable function
$\psi$, one has by \equ{eq: 3.2}, 
$$
\int_{B^X} f \psi d\beta^X = \int_B \psi (b_0, \ldots, b_{p-1})\int_X
f(b_0 \cdots b_{p-1}x') d\nu(x') d\beta(b),
$$
and \equ{eq: 3.9} follows. The result is thus an immediate consequence
of the Martingale convergence theorem, since, by definition, for
$\beta$-a.e. $b \in B$, $\nu_b(f) = \EE \left(f | \pi^{-1} \BB \right)
(b).$

b) This equality follows from a) applied to a countable collection of
functions $f$ which generate the Borel $\sigma$-algebra $\XX$. 

c) It follows from \equ{eq: 3.2} and \equ{eq: 3.3} that for any
bounded Borel function $f$ on $X$, one has $\nu(f) = \int_{B^X} f(x)
d\beta^X(b,x) = \int_B \nu_b(f) d\beta(b).$ 

d) Let $b \mapsto \nu'_b$ be a $\BB$-measurable collection of
probability measures on $X$ satisfying the conditions. We will define
the probability measure $\lambda = \int_B \delta_b \otimes \nu'_b
d\beta(b)$ on $B^X$ and prove that $\lambda = \beta^X$. To this end,
we compute, for any positive $\BB_n \otimes \XX$-measurable function
$\varphi$ on $B^X$, using the two properties \equ{eq: 3.7} and
\equ{eq: 3.8} for the family $\nu'_b$ and using \equ{eq: 3.2}, 
\[
\begin{split}
\lambda(\varphi) & = \int_B \int_X \varphi(b,x) d\nu'_b(x) d\beta(b)
\\
& = \int_B \int_B \int_X \varphi (b_0 \cdots b_{n-1} b', b_0 \cdots
b_{n-1}y) d\nu'_{b'} (y) d\beta(b') d\beta(b) \\
& = 
\int_B \int_X \varphi(b, b_0 \cdots b_{n-1} y) d\beta(b) d\nu(y) = \beta^X(\varphi).
\end{split}
\]
This implies $\lambda = \beta^X$ since, by the uniqueness of
disintegration, for $\beta$-a.e. $b$, one has $\nu'_b = \nu_b$. 

e) One has 
$$
\mu * \nu = \int_G \int_B g_* \nu_b d\beta(b) d\mu(g) = \int_B b_{0*}
\nu_{Tb} d\beta(b) = \int_B \nu_b d\beta(b) = \nu.
$$
\end{proof} 

\begin{remark}
Whenever $X$ is a metrizable separable locally compact space and the
action of $G$ on $X$ is continuous (this will always be the case in
our applications), one then has
\eq{eq: 3.10}{
\nu_b = \lim_{p \to \infty} b_{0*} \cdots b_{p*} \nu.
}
This is the original introduction of the object by Furstenberg
\cite{ref 13}.
\end{remark}

\begin{remark}One  easily shows that the probability measure $\nu$ is
  $\mu$-ergodic if and only if the probability measure $\beta^X$ is
  $T^X$-ergodic.  
\end{remark}

We indicate a nice application of these constructions. 

\begin{cor}\name{cor: 3.5}
Let $\mu$ be a probability measure on $G$, let $\nu$ and $\nu'$ be two
$\mu$-stationary measures on two standard Borel spaces $(X, \XX)$ and
$(X', \XX')$, endowed with a Borel action of $G$. Then, the
probability measure $\nu'' = \int_B \nu_b \otimes \nu'_b d\beta(b)$ is
a $\mu$-stationary Borel probability measure on the product space $X
\times X'$. 

\end{cor}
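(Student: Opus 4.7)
The plan is to apply Lemma \ref{lem: 3.2}(e) directly to the family of measures $b \mapsto \nu_b \otimes \nu'_b$ on the product space $X \times X'$, equipped with the diagonal $G$-action. To do this, I need to verify two things: that this family is $\BB$-measurable and $\PP(X \times X')$-valued, and that it satisfies the equivariance relation analogous to \equ{eq: 3.7}.

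For $\BB$-measurability, I would observe that by Lemma \ref{lem: 3.2}, both $b \mapsto \nu_b$ and $b \mapsto \nu'_b$ are $\BB$-measurable into the standard Borel spaces $\PP(X)$ and $\PP(X')$ respectively, and the tensor product map $\PP(X) \times \PP(X') \to \PP(X \times X')$, $(\eta, \eta') \mapsto \eta \otimes \eta'$, is Borel measurable (it suffices to test against products of bounded Borel functions, which generate the Borel structure on $\PP(X \times X')$). Composing with the diagonal map on $B$ yields the desired measurability.

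For the equivariance, since $G$ acts diagonally on $X \times X'$, the pushforward of a product measure by $g \in G$ is the product of the pushforwards: $g_*(\eta \otimes \eta') = (g_* \eta) \otimes (g_* \eta')$. Applying \equ{eq: 3.7} to both families, we get
\[
b_{0*}(\nu_{Tb} \otimes \nu'_{Tb}) = (b_{0*}\nu_{Tb}) \otimes (b_{0*}\nu'_{Tb}) = \nu_b \otimes \nu'_b
\]
for $\beta$-a.e.\ $b \in B$. This is exactly \equ{eq: 3.7} for the family $b \mapsto \nu_b \otimes \nu'_b$ on $X \times X'$.

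Having verified both conditions, Lemma \ref{lem: 3.2}(e) immediately yields that $\nu'' = \int_B \nu_b \otimes \nu'_b \, d\beta(b)$ is a $\mu$-stationary Borel probability measure on $X \times X'$. I do not anticipate any real obstacle; the only mildly technical point is the measurability of the tensor product operation, which is standard and can be reduced to a monotone class argument applied to products of bounded Borel functions.
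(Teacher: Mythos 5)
Your proof is correct and follows exactly the paper's argument: both apply Lemma \ref{lem: 3.2}(e) to the family $b \mapsto \nu_b \otimes \nu'_b$ after checking the equivariance relation \equ{eq: 3.7}. Your version is just slightly more careful about the measurability of the tensor-product map, which the paper leaves implicit.
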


\begin{proof}
In fact, the $\BB$-measurable family $b \mapsto \nu''_b = \nu_b
\otimes \nu'_b$ of probability measures on $X \times X'$ satisfies,
for $\beta$-a.e. $b \in B$, the equality $b_{0*}\nu''_{Tb} =
\nu''_b$. 
\end{proof}

\subsection{Fibered systems over a suspension}\name{subsec: 3.3}
{\em The dynamical system which we will need for our problem is a
  fibered product over a suspension.}

Let $M$ be a compact metrizable topological group and let $\tau =
(\tau_{\R}, \tau_M) : B \times \R_+ \times M$ a $\BB$-measurable map
with $\tau_\R \neq 0$. We denote by $(B^\tau, \BB^\tau, \beta^\tau,
T^\tau)$ the semi-flow obtained by the suspension of $(B, \BB, \beta,
T)$ using $\tau$, defined in \S \ref{subsec: 2.2}. We will now construct
a fibered semi-flow $B^{\tau, X}$ over $B^\tau$. 

For $\ell \geq 0$ and for $\beta^\tau$-a.e. $c = (b,k,m) \in B^\tau$,
we introduce the map $\rho_\ell(c)$ of $X$ given by, for any $x \in
X$, 
$$
\rho_\ell(c)x = b^{-1}_{p_\ell(b,k)-1} \cdots b_0^{-1} x,
$$
and denote $\nu_c = \nu_b$. We then have the following equivariance
property for the probability measures on $X$:

\begin{lem}\name{lem: 3.6}
For $\beta^\tau$-a.e. $c = (b,k,m) \in B^\tau$ and for every $\ell
\geq 0$, one has 
$$
\nu_{T^\tau_\ell c} = \rho_\ell(c)_* \nu_c.
$$
\end{lem}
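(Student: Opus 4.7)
\bigskip

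\noindent\textbf{Proof proposal.} The plan is to reduce the claim to the cocycle identity of Lemma \ref{lem: 3.2}(b) applied $p_\ell(c)$ times. Recall that by definition, $\nu_c = \nu_b$ and the first coordinate of $T^\tau_\ell c$ is $T^{p_\ell(c)}b$, so $\nu_{T^\tau_\ell c} = \nu_{T^{p_\ell(c)}b}$. On the other hand, $\rho_\ell(c) = b_{p_\ell(c)-1}^{-1} \cdots b_0^{-1}$ acting on $X$. Hence the sought identity is exactly
$$
\nu_{T^{p_\ell(c)}b} = \bigl(b_{p_\ell(c)-1}^{-1} \cdots b_0^{-1}\bigr)_* \nu_b,
$$
which is a special case of the following: for $\beta$-a.e. $b \in B$ and every integer $p \geq 0$,
$$
\nu_{T^p b} = \bigl(b_{p-1}^{-1} \cdots b_0^{-1}\bigr)_* \nu_b, \qquad \text{equivalently} \qquad \nu_b = (b_0 \cdots b_{p-1})_* \nu_{T^p b}.
$$

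The first step is to derive this iterated identity by induction on $p$ from Lemma \ref{lem: 3.2}(b). The base case $p=0$ is trivial and the case $p=1$ is the content of that lemma. For the induction step, assuming the identity holds for $p$, one applies Lemma \ref{lem: 3.2}(b) to $T^p b$ to obtain $\nu_{T^p b} = (b_p)_* \nu_{T^{p+1} b}$ and substitutes. A mild technicality is that Lemma \ref{lem: 3.2}(b) provides a $\beta$-conull set $E \subset B$ on which $\nu_b = b_{0*}\nu_{Tb}$; to make the identity at each $T^p b$ available simultaneously for all $p$, I replace $E$ by $E' \df \bigcap_{p \ge 0} T^{-p} E$, which is still $\beta$-conull since $T$ preserves $\beta$. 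On $E'$ the iterated identity then holds for every $p$.

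The second step is to transfer the $\beta$-conull conclusion on $B$ to a $\beta^\tau$-conull conclusion on $B^\tau$. By the definition of $\beta^\tau$ in \S\ref{subsec: 2.2} as the normalized restriction of $\beta \otimes dk \otimes dm$ to $B^\tau$, any $\beta$-conull subset $E' \subset B$ lifts to the $\beta^\tau$-conull subset $\{c = (b,k,m) \in B^\tau : b \in E'\}$. For $c$ in this set, and for every $\ell \geq 0$, we have $p_\ell(c) \in \N$ well-defined and the iterated identity at $p = p_\ell(c)$ gives
$$
\nu_{T^\tau_\ell c} = \nu_{T^{p_\ell(c)}b} = \bigl(b_{p_\ell(c)-1}^{-1} \cdots b_0^{-1}\bigr)_* \nu_b = \rho_\ell(c)_* \nu_c,
$$
as required.

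The only subtle point—really more of a bookkeeping issue than a genuine obstacle—is ensuring that a single $\beta$-conull set of $b$'s serves for \emph{all} $p$ simultaneously and therefore for all $\ell \geq 0$; the argument above handles this by taking a countable intersection of $T$-preimages. Apart from this the lemma is a direct formal consequence of Lemma \ref{lem: 3.2}(b) combined with the definitions of $T^\tau_\ell$, $\rho_\ell$, and $\nu_c$.
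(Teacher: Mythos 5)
Your proof is correct and follows essentially the same route as the paper, which likewise deduces the identity by iterating Lemma \ref{lem: 3.2}(b) and unwinding the definitions of $\nu_c$, $T^\tau_\ell$, and $\rho_\ell(c)$. Your additional care with the countable intersection of $T$-preimages to get a single conull set valid for all $p$ is a detail the paper leaves implicit.
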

\begin{proof}
Because of Lemma \ref{lem: 3.2}(b) and the equality $\nu_{T^\tau_\ell
  c } = \nu_{T^{p_\ell}(b,k) b}$, we have also $\nu_c = (b_0 \cdots
b_{p_\ell(b,k)-1})_* \nu_{T^\tau_\ell c}.$ 
\end{proof}

We define the semi-flow $\left( B^{\tau, X}, \BB^{\tau, X},
  \beta^{\tau, X}, T^{\tau, X}\right)$ fibered over $\left( B^\tau,
  \BB^\tau, \beta^\tau, T^\tau\right)$ as follows. We set $B^{\tau, X}
= B^\tau \times X$ and 
$$
\beta^{\tau, X} = \int_{B^\tau} \delta_c \otimes \nu_c d\beta^\tau(c).
$$
We denote by $\BB^{\tau, X}$ the completion of the product
$\sigma$-algebra $\BB^\tau \otimes \XX$ with respect to the
probability measure $\beta^{\tau, X}$ and, for $(c,x) \in B^{\tau, X}$
and $\ell \geq 0$, we set 
$$
T^{\tau, X}_\ell (c,x) = \left( T^\tau_\ell c, \rho_\ell(c)x\right).
$$

\begin{lem}\name{lem: 3.7}
For all $\ell \geq 0$, the transformation $T_\ell^{\tau, X}$ of
$B^{\tau, X}$ preserves the measure $\beta^{\tau, X}$. 
\end{lem}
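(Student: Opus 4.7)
The plan is to recognize that Lemma \ref{lem: 3.7} is an immediate instance of the abstract Lemma \ref{lem: 2.4}(a) once we verify that all the hypotheses are in place for the pair consisting of the base semi-flow $T^\tau_\ell$ on $B^\tau$ and the fibered transformation $T^{\tau,X}_\ell$ on $B^\tau \times X$.

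First I would fix $\ell \geq 0$ and set up the identification with the framework of \S 2.4: take the base probability space to be $(B^\tau, \BB^\tau, \beta^\tau)$, the endomorphism to be $T^\tau_\ell$, and the fiber to be $(X, \XX)$, with $\rho(c) \df \rho_\ell(c)$ and $\hat T \df T^{\tau,X}_\ell$. By construction of $T^{\tau,X}_\ell$, the compatibility $\pi \circ T^{\tau,X}_\ell = T^\tau_\ell \circ \pi$ (where $\pi: B^\tau \times X \to B^\tau$ is the projection) holds. The measurability of $c \mapsto \nu_c = \nu_b$ comes directly from the $\BB$-measurability of $b \mapsto \nu_b$ furnished by Lemma \ref{lem: 3.1} together with the projection $B^\tau \to B$, $(b,k,m) \mapsto b$.

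Next I would verify the two substantive hypotheses. The invariance $T^\tau_{\ell*} \beta^\tau = \beta^\tau$ is precisely Lemma \ref{lem: 2.2}. The equivariance condition \equ{eq: 2.4} in the present notation reads $\nu_{T^\tau_\ell c} = \rho_\ell(c)_* \nu_c$ for $\beta^\tau$-a.e.\ $c$, which is exactly the content of Lemma \ref{lem: 3.6}. Since $\beta^{\tau,X}$ is defined by the formula $\int_{B^\tau} \delta_c \otimes \nu_c\, d\beta^\tau(c)$, which is precisely the form \equ{eq: 2.5}, Lemma \ref{lem: 2.4}(a) applies and yields $T^{\tau,X}_{\ell*}\beta^{\tau,X} = \beta^{\tau,X}$, as well as the side remark $\pi_* \beta^{\tau,X} = \beta^\tau$.

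There is essentially no obstacle here: the only mild subtlety is ensuring that all the almost-everywhere statements (from Lemma \ref{lem: 3.6} and from the definition of $\rho_\ell$, which is only defined for $\beta^\tau$-a.e.\ $c$) line up so that the integration in Lemma \ref{lem: 2.4}(a) goes through without a measure-theoretic snag. This is straightforward because the null sets in question depend only on $c$, so they are handled by the outer integral $d\beta^\tau(c)$. Thus the full proof amounts to citing Lemmas \ref{lem: 2.2}, \ref{lem: 3.6} and \ref{lem: 2.4}(a) in sequence.
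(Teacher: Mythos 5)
Your proposal is correct and is exactly the paper's argument: the authors prove Lemma \ref{lem: 3.7} by citing Lemma \ref{lem: 2.4} (applied to the base $(B^\tau,\BB^\tau,\beta^\tau,T^\tau_\ell)$, whose invariance is Lemma \ref{lem: 2.2}) together with the equivariance statement of Lemma \ref{lem: 3.6}. Your additional remarks on measurability and null sets are a harmless elaboration of the same route.
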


\begin{proof}
This follows from Lemmas \ref{lem: 2.4} and \ref{lem: 3.6}. 
\end{proof}

Denote $\QQ_\ell^{\tau, X} = \left(T_\ell^{\tau, X} \right)^{-1}
(\BB^{\tau, X})$ and denote by $\QQ_\infty^{\tau, X}$ the tail
$\sigma$-algebra of $\left(B^{\tau, X}, \BB^{\tau, X}, \beta^{\tau,
    X}, T^{\tau, X} \right)$, that is the decreasing intersection of
sub-$\sigma$-algebras $\QQ_\infty^{\tau, X} = \bigcap_{\ell \geq 0}
\QQ_\ell^{\tau, X}$. Similarly, denote by $\QQ_\ell$ the decreasing
family of $\sigma$-algebras $\QQ_\ell = (T^\tau_\ell)^{-1} (\BB^\tau)$
and by $c \mapsto \beta^\ell_c$ the conditional measure of
$\beta^\tau$ relative to $\QQ_\ell$. 

We can conclude the preceding discussion with the following corollary
which is at the heart of our drift argument. 

\begin{cor}\name{cor: 3.8}
For any $\beta^{\tau, X}$-integrable function $\varphi : B^{\tau, X}
\to \R$, for every $\ell \geq 0$, for $\beta^{\tau, X}$-a.e. $(c,x)
\in B^{\tau, X}$, one has 
\eq{eq: 3.11}{
\EE \left( \varphi | \QQ^{\tau, X}_\ell \right) (c,x) = \int_{B^\tau}
\varphi(c', \rho_\ell(c')^{-1} \rho_\ell(c)x) d\beta^\ell_c(c').
}
\end{cor}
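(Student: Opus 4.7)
The plan is to recognize Corollary 3.8 as a direct application of Lemma 2.5 with the roles of $(B,\BB,\beta,T)$ played by $(B^\tau,\BB^\tau,\beta^\tau,T^\tau_\ell)$ and the fibered transformation $\hat T$ played by $T^{\tau,X}_\ell$. To do this I first need to check that all the hypotheses of Lemma 2.5 are satisfied in this new setting.

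First I would observe that by Lemma 2.2, $T^\tau_\ell$ preserves $\beta^\tau$, and $(B^\tau,\BB^\tau)$ is a standard Borel space (being a Borel subset of the standard Borel space $B\times\R\times M$ with the restriction of its Borel $\sigma$-algebra). Second, the family $c\mapsto \nu_c$ is $\BB^\tau$-measurable (since $\nu_c = \nu_b$ depends only on the first coordinate of $c=(b,k,m)$), and the required equivariance
\[
\nu_{T^\tau_\ell c} = \rho_\ell(c)_*\,\nu_c
\]
is precisely the statement of Lemma 3.6, which is the analogue of equation \equ{eq: 2.4} in this setting. Third, I would verify that the measure $\beta^{\tau,X}$ as defined in \S\ref{subsec: 3.3} coincides with the measure constructed from $c\mapsto\nu_c$ via the formula \equ{eq: 2.5}; this is immediate from the definition.

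The remaining hypothesis of Lemma 2.5 is that $\rho_\ell(c):X\to X$ be bijective for $\beta^\tau$-a.e.\ $c$. This holds because
\[
\rho_\ell(c)\,x = b_{p_\ell(b,k)-1}^{-1}\cdots b_0^{-1}x
\]
is a finite composition of actions by elements of the group $G$, and each such action is a bijection of $X$.

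Having checked all the hypotheses, the conclusion of Lemma 2.5 applied to our setup reads
\[
\EE\!\left(\varphi\,\big|\,(T^{\tau,X}_\ell)^{-1}(\BB^\tau\otimes\XX)\right)(c,x) = \int_{B^\tau}\varphi\bigl(c',\rho_\ell(c')^{-1}\rho_\ell(c)x\bigr)\,d\beta^{(T^\tau_\ell)^{-1}\BB^\tau}_c(c'),
\]
and by the definitions $\QQ^{\tau,X}_\ell=(T^{\tau,X}_\ell)^{-1}(\BB^{\tau,X})$ and $\beta^\ell_c = \beta^{(T^\tau_\ell)^{-1}\BB^\tau}_c$, this is exactly \equ{eq: 3.11}. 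I do not anticipate any substantial obstacle: the entire content of the corollary is the verification that the abstract setup of Lemma 2.5 applies, with the three ingredients supplied by Lemma 2.2, Lemma 3.6, and the invertibility of the $G$-action. One minor point worth double-checking is that the passage from the $\BB^\tau\otimes\XX$-completion $\BB^{\tau,X}$ does not affect the identity; this is routine since conditional expectations are defined up to null sets.
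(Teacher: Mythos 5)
Your proposal is correct and follows exactly the paper's route: the paper's entire proof of Corollary \ref{cor: 3.8} is ``This follows from Lemma \ref{lem: 2.5},'' with the hypotheses supplied by the surrounding discussion in \S\ref{subsec: 3.3} (Lemma \ref{lem: 3.6} for the equivariance \equ{eq: 2.4} and the bijectivity of $\rho_\ell(c)$ as a product of group elements). Your more explicit verification of the hypotheses is a faithful expansion of the same argument.
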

\begin{proof}
This follows from Lemma \ref{lem: 2.5}.
\end{proof}

\subsection{Measure of relative stable leaves}\name{subsec: 3.4}
{\em In order to be able to apply our drift argument, we will need to
  know that the probability measures $\nu_b$ give no mass to the
  relative stable leaves of the factor map $B^{\tau, X} \to
  B^\tau$. Proposition \ref{prop: 3.9} below will give us a useful
  criterion which will enable us to prove this. }

We will assume from now on that $X$ is a locally compact metrizable
topological space and that the action of $G$ on $X$ is continuous. We
denote by $d$ a metric on $X$ inducing the topology. For $(b,x)$ in $B
\times X$, we denote by 
$$
W_b(x) = \{x' \in X: \lim_{p \to \infty} d(\rho_p(b)x, \rho_p(b)x') =0\}
$$
the stable leaf relative to $(b,x)$. This leaf does not depend on the
choice of the metric $d$ whenever $X$ is compact, but may depend on
$d$ in general. However, in all cases, one has the following
proposition. Recall that a continuous map is called {\em proper} if
the inverse image of any compact subset is compact. Denote by $A_\mu$
the averaging operator on $X \times X$ given by, for any positive
function $v$ on $X \times X$ and any $(x,y)$ in $X \times X$, 
$$
A_\mu(v)(x,y) = \int_G v(gx,gy) d\mu(g).
$$
This operator is thus the convolution operator of the image $\check
\mu$ of the measure $\mu$ under inversion $g \mapsto g^{-1}$. We
denote by $\Delta_X$ the diagonal in $X \times X$. 

\begin{prop}\name{prop: 3.9}
Suppose the following hypothesis (HC): 

There exists a function $v: (X \times X) \sm \Delta \to [0, \infty)$
such that, for any compact subset $K \subset X$, the restriction of
$v$ to $K \times K \sm \Delta$ is proper and there are constants $a
\in (0,1) $ and $C>0$ such that $A_\mu(v) \leq av +C.$

Let $\nu$ be a $\mu$-stationary non-atomic Borel probability measure
on $X$. Then for $\beta^X$-a.e. $(b,x) \in B \times X$, one has 
$$
\nu_b(W_b(x)) =0.
$$
\end{prop}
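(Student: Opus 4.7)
\textbf{Plan for Proposition \ref{prop: 3.9}.}
The plan is to form the self-joining
\[
\Lambda \;=\; \int_B \delta_b \otimes \nu_b \otimes \nu_b \, d\beta(b)
\]
on $B \times X \times X$, which is invariant under the skew product $\hat T(b,x,y) = (Tb, b_0^{-1}x, b_0^{-1}y)$ by Lemma \ref{lem: 3.2}(b), and whose $(x,y)$-marginal $\nu'' := \int_B \nu_b \otimes \nu_b \, d\beta$ is a $\mu$-stationary probability measure on $X \times X$ by Corollary \ref{cor: 3.5}. The desired conclusion is equivalent to $\Lambda(\{(b,x,y): y \in W_b(x)\})=0$; I focus on the ``non-trivial'' part $\{y\in W_b(x),\ y\neq x\}$, the triviality of the diagonal contribution being equivalent to $\nu_b$ being non-atomic $\beta$-a.s.\ (a parallel consequence of $\nu$ non-atomic plus (HC) applied to the atomic component of $\nu_b$ via Lemma \ref{lem: 3.2}(e)).

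\emph{Step 1: integrability of $v$.} The $\mu$-stationarity of $\nu''$ alone only gives the tautology $\int v\,d\nu''=\int A_\mu v\,d\nu''$. I iterate (HC) to obtain $A_\mu^n v \leq a^n v + C/(1-a)$, truncate by $v_M := \min(v,M)$, and use $\mu$-stationarity:
\[
\int v_M\,d\nu'' \;=\; \int A_\mu^n v_M\,d\nu'' \;\leq\; \int \min\!\bigl(a^n v + \tfrac{C}{1-a},\,M\bigr)\,d\nu''.
\]
Dominated convergence as $n \to \infty$ (bounded by $M$) followed by monotone convergence as $M \to \infty$ yields $\int_{\Delta^c} v\,d\nu'' \leq C/(1-a) < \infty$. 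Consequently $v \in L^1(\Lambda_0)$, where $\Lambda_0 := \Lambda|_{\{x\neq y\}}$ is the $\hat T$-invariant off-diagonal restriction of $\Lambda$.

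\emph{Step 2: recurrence in compacts.} For $\epsilon>0$ pick a compact $K \subset X$ with $\nu(K)>1-\epsilon$; Jensen's inequality gives $\nu''(K\times K) = \int \nu_b(K)^2\,d\beta \geq \nu(K)^2 > 1-2\epsilon$. Applying Birkhoff's ergodic theorem to $\mathbf{1}_{\{(x,y)\in K\times K\}}$ on $(B\times X\times X,\Lambda,\hat T)$ shows that the asymptotic density of $p$ with $(\rho_p(b)x,\rho_p(b)y)\in K\times K$ is a nonnegative $\hat T$-invariant function with $\Lambda$-integral $>1-2\epsilon$. Markov's inequality bounds this density below by $1/2$ on a set of $\Lambda$-measure $\geq 1-4\epsilon$, and letting $\epsilon\to 0$ along a countable nested family of compacts, $\Lambda$-a.e.\ $(b,x,y)$ admits some $K=K(b,x,y)$ and a subsequence $p_k\to\infty$ of positive lower density with $(\rho_{p_k}(b)x,\rho_{p_k}(b)y)\in K\times K$.

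\emph{Step 3: contradiction.} Suppose for contradiction that $\{(b,x): \nu_b(W_b(x)\setminus\{x\})>0\}$ has positive $\beta^X$-measure. The restriction $\Lambda_W := \Lambda_0|_{\{y\in W_b(x)\}}$ is then a nonzero sub-measure of $\Lambda_0$, and by definition of $W_b(x)$ we have $d(\rho_p(b)x,\rho_p(b)y)\to 0$ for $\Lambda_W$-a.e.\ $(b,x,y)$. Since $\Lambda_W \ll \Lambda$, the recurrence of Step 2 transfers to $\Lambda_W$, so along the positive-density subsequence $p_k$ the pair $(\rho_{p_k}(b)x,\rho_{p_k}(b)y)$ lies in $K\times K$ and tends to $\Delta$; the properness of $v|_{K\times K\setminus\Delta}$ forces $v(\rho_{p_k}(b)x,\rho_{p_k}(b)y)\to\infty$. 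A positive-density subset of the Birkhoff sum going to infinity drives $\tfrac{1}{N}\sum_{p<N} v(\rho_p(b)x,\rho_p(b)y)\to\infty$ on a set of positive $\Lambda_0$-measure, contradicting Birkhoff's theorem applied to $v\in L^1(\Lambda_0)$ which provides a finite a.e.\ limit. The principal obstacle is Step 1: $\mu$-stationarity is silent about $\int v\,d\nu''$ directly, and one must bootstrap out of the Foster--Lyapunov hypothesis (HC) by the iteration–truncation device above.
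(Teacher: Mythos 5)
Your Steps 1--3 correctly handle the \emph{off-diagonal} part of the statement, and by a route genuinely different from the paper's: the paper's Lemma \ref{lem: 3.12} needs no integrability of $v$ at all --- it applies Poincar\'e recurrence to the function $(b,x,x')\mapsto d(x,x')$ on the self-joining $\Lambda$, using only that this function is a.e.\ positive (once $\nu_b$ is known to be non-atomic) and tends to $0$ along orbits in $Z$. Your replacement --- extracting $\int_{\Delta^c} v\, d\nu'' \le C/(1-a)$ from (HC) by the iteration--truncation device and then running Birkhoff's theorem on $v\in L^1(\Lambda_0)$ against a positive-density sequence of returns to $K\times K$ --- is correct (and the integrability bound is a genuinely useful observation), but it is heavier than necessary for this step.

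The genuine gap is the diagonal. Since $x \in W_b(x)$, the conclusion $\nu_b(W_b(x))=0$ for $\beta^X$-a.e.\ $(b,x)$ already \emph{contains} the assertion that $\nu_b$ is non-atomic for $\beta$-a.e.\ $b$, and this is exactly what you defer to a parenthetical (``a parallel consequence of $\nu$ non-atomic plus (HC) applied to the atomic component via Lemma \ref{lem: 3.2}(e)''). It is not a parallel consequence, and your forward-dynamics machinery cannot produce it: if every $\nu_b$ were purely atomic with moving atoms while $\nu=\int\nu_b\,d\beta$ remained non-atomic, Steps 1--3 would go through unchanged, since they never see $\Lambda|_{\{x=y\}}$. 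Ruling this out is the content of the paper's Lemmas \ref{lem: 3.10} and \ref{lem: 3.11}, explicitly flagged as the most delicate step: one reduces by ergodicity of $T$ to the case where $\nu_b$ has exactly $N_0$ atoms of equal maximal mass, passes to $Y=X^{N_0}/S_{N_0}$ where the limit measures become Dirac masses $\delta_{\kappa(b)}$, and then shows $\kappa$ is a.e.\ constant by playing the Chacon--Ornstein theorem for the transfer operator $L_\mu$ (the \emph{backward} walk) against (HC): for $b_0,b'_0$ with $\kappa(b_0)\neq\kappa(b'_0)$, the points $\kappa(g_1\cdots g_n b_0)=g_1\cdots g_n\kappa(b_0)$ and $g_1\cdots g_n\kappa(b'_0)$ are forced close together for large $n$ (uniform continuity of $\kappa$ on a Lusin set, the two words sharing a long prefix) while landing in a fixed compact set with frequency near $1$, so $\frac1p\sum_{n\le p}(A_\mu^n v)(\kappa(b_0),\kappa(b'_0))\to\infty$, contradicting the bound $\frac{1}{p(1-a)}v+\frac{C}{1-a}$ coming from (HC). This backward/transfer-operator mechanism has no counterpart in your proposal, so as written the proof is incomplete.
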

Hypothesis (HC) signifies that on average, $\mu$ contracts the
function $v$ at a fixed rate.

The proof of this fact follows three steps. The first step is the most
delicate, and is contained in the following lemma. 

\begin{lem}\name{lem: 3.10}
Assume hypothesis (HC), and let $\nu$ be a $\mu$-stationary Borel
probability measure such that, for $\beta$-a.e. $b \in B$, the
probability measure $\nu_b$ is a Dirac mass. Then $\nu$ is a Dirac
mass. 
\end{lem}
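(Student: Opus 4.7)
My plan is to argue by contradiction: suppose $\nu$ is not a Dirac mass. Then $\supp\nu$ contains two distinct points, so one can choose disjoint compact sets $K_1,K_2\subset X$ with $\nu(K_i)>0$. Set
\[
\lambda_0 \df \frac{\nu|_{K_1}\otimes \nu|_{K_2}}{\nu(K_1)\,\nu(K_2)} \;\in\; \PP(X\times X);
\]
its support $K_1\times K_2$ is a compact subset of $(X\times X)\sm\Delta$. Applying (HC) to $K=K_1\cup K_2$, the function $v$ is bounded on $K_1\times K_2$ by properness, so $\lambda_0(v)<\infty$.

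Let $\lambda_p\in\PP(X\times X)$ be the $p$-th diagonal convolution,
\[
\lambda_p(\varphi) \df \int_G \int_{X\times X} \varphi(gx,gy)\,d\lambda_0(x,y)\,d\mu^{*p}(g).
\]
Iterating (HC) yields $\lambda_{p+1}(v) = \lambda_p(A_\mu v)\le a\lambda_p(v)+C$, and hence
\[
\lambda_p(v) \;\le\; a^p\lambda_0(v) + C/(1-a),
\]
uniformly in $p$.

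The crucial step is to show that $\lambda_p$ concentrates on the diagonal as $p\to\infty$. By Lemma \ref{lem: 3.2}(a), for $\beta$-a.e.\ $b$ one has $(b_0\cdots b_{p-1})_*\nu\to\nu_b=\delta_{x(b)}$ weakly, where $\nu_b=\delta_{x(b)}$ by hypothesis. A short argument with a bump function at $x(b)$ then gives $(b_0\cdots b_{p-1})_*(\nu|_{K_i})\to\nu(K_i)\,\delta_{x(b)}$ weakly for $i=1,2$, and hence the measure
\[
\mu_{p,b} \df \int_{X\times X}\delta_{(b_0\cdots b_{p-1}x,\, b_0\cdots b_{p-1}y)}\,d\lambda_0(x,y)
\]
converges weakly to $\delta_{(x(b),x(b))}$ for $\beta$-a.e.\ $b$. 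Integrating over $b$ by dominated convergence shows $\lambda_p=\int_B \mu_{p,b}\,d\beta(b)\to\eta$ weakly, where $\eta$ is the pushforward of $\nu$ under the diagonal embedding $x\mapsto(x,x)$.

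To conclude, extend $v$ to $X\times X$ by $v|_\Delta = +\infty$. Since for every compact $K$ the set $\{v\le M\}\cap(K\times K)$ is compact by properness and $X\times X$ is Hausdorff, this extension is lower semi-continuous. The Portmanteau theorem then gives $\int v\,d\eta \le \liminf_p \lambda_p(v)$, whose left-hand side equals $\int_X v(x,x)\,d\nu(x)=+\infty$ (since $\nu$ is a probability and $v\equiv+\infty$ on $\Delta$), while the right-hand side is bounded by $\lambda_0(v)+C/(1-a)<\infty$. This contradiction forces $\nu$ to be Dirac. I expect the most delicate part to be the weak-$*$ convergence $\lambda_p\to\eta$, which combines the pointwise-in-$b$ martingale convergence with a dominated-convergence argument in $b$; the lower semi-continuity of the extended $v$ is a slightly subtle but standard consequence of the Hausdorff property of $X\times X$.
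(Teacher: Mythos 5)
Your proof is correct, and it reaches the conclusion by a genuinely different route from the paper's. Both arguments are by contradiction and pit the same two forces against each other: the contraction $A_\mu v\le av+C$, which keeps the $v$-mass of the diagonal random walk bounded, versus the convergence $(b_0\cdots b_{p-1})_*\nu\to\delta_{\kappa(b)}$, which drives the walk onto the diagonal where $v$ blows up. The paper implements the second force pointwise: it fixes two words $b_0,b_0'$ with $\kappa(b_0)\ne\kappa(b_0')$, uses Lusin's theorem to get a compact $K_0\subset B$ on which $\kappa$ is uniformly continuous, and invokes the Chacon--Ornstein theorem for the transfer operator $L_\mu$ to guarantee that for a proportion $\ge 1-4\vre$ of times $n$ both images $g_1\cdots g_n\kappa(b_0)$ and $g_1\cdots g_n\kappa(b_0')$ land in $\kappa(K_0)$, where properness forces $v\ge M$; comparing with the Ces\`aro bound $\frac1p\sum_{n\le p}A_\mu^n v\le\frac{v}{p(1-a)}+\frac{C}{1-a}$ gives the contradiction. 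You replace the ergodic-theoretic input (Chacon--Ornstein, Lusin, Ces\`aro averaging) by a soft weak-convergence argument: $\lambda_p\to\eta$ with $\eta$ carried by $\Delta$, plus lower semicontinuity of the extension of $v$ by $+\infty$ on $\Delta$ and Portmanteau. This is shorter and arguably cleaner; the price is that you need the full weak-$*$ convergence $(b_0\cdots b_{p-1})_*\nu\to\nu_b$ for a.e.\ $b$ simultaneously for all test functions (this is the Remark containing \equ{eq: 3.10}, valid here since $X$ is locally compact metrizable with continuous action; Lemma \ref{lem: 3.2}(a) alone only gives it for each fixed $f$, so one passes through a countable convergence-determining family). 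Your justification of the lower semicontinuity is right: a sublevel set of $v$ in $K\times K\sm\Delta$ is compact by (HC), hence closed in the Hausdorff space $X\times X$, so the sublevel sets of the extension are closed. Two small points worth making explicit: the identification of the limit $\eta$ as the diagonal push-forward of $\nu$ uses $\kappa_*\beta=\nu$, i.e.\ Lemma \ref{lem: 3.2}(c); and the finiteness $\lambda_0(v)<\infty$ uses that $v$ is continuous (not merely has compact sublevel sets) on $K\times K\sm\Delta$, which is the intended reading of (HC) given the paper's recollection that proper maps are continuous.
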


\begin{proof}
Let $\kappa: B \to X$ denote the $\BB$-measurable map such that, for
$\beta$-a.e. $b \in B$, one has
\eq{eq: 3.12}{
\nu_b = \delta_{\kappa(b)}.
}
The strategy will consist of studying the
corresponding random walk on $X \times X$. Roughly speaking, the
existence of $\kappa$ 
and the Chacon-Ornstein ergodic theorem will ensure that this random
walk approaches the diagonal $\Delta_X$ while the existence of $v$
pushes the random walk away from the diagonal. Here are the details. 

For $g \in G$ and $b = (b_0, b_1, \ldots) \in B$, let $gb = (g,b_0,
b_1, \ldots)$. By Lemma \ref{lem: 3.2}(b) we have, for $\mu$-a.e. $g
\in G$ and $\beta$-a.e. $b \in B$, 
$$
\kappa (gb) = g\kappa(b).
$$
By Lemma \ref{lem: 3.2}(c), we also have the equality 
$$
\nu = \kappa_*\beta.
$$
Endow $B = G^\N$ with the product topology. By Lusin's theorem, for
every $\vre>0$, there is a compact subset $K_0 \subset B$ such that
$\beta(K_0) = 1-\vre$ and the restriction of $\kappa$ to $K_0$ is
uniformly continuous. Denote by $K$ the compact image $K =
\kappa(K_0)$. Since the restriction of $v$ to $K \times K \sm \Delta$
is proper, one has 
\begin{equation}\label{eq: 3.13}
\begin{split}
\forall M>0, \exists n_M > 0, &  \forall n \geq n_M, \forall b,b' \in
B, \\ 
\forall g_1, \ldots, g_n \in G  \text{ such that } &   g_1 \cdots g_nb
\in K_0  \text{ and } g_1 \cdots g_n b' \in K_0, \\  \text{ we have } 
 v(\kappa(g_1 \cdots g_nb), & \kappa (g_1 \cdots g_n b')) \geq M.
\end{split}
\end{equation}
We now introduce the transfer operator $L_\mu$ on $B$ given by, for
each $\varphi_0 \in L^1(B, \beta)$, for $\beta$-a.e. $b \in B$, 
$$
(L_\mu\varphi_0)(b) = \int_G \varphi_0(gb) d\mu(g).
$$
Since it is the adjoint of the shift $T$, $L_\mu$ is an ergodic
operator. The theorem of Chacon-Ornstein \cite{ref 5}, applied to the
function $\varphi_0 = 1_{K_0}$, ensures that for $b$ outside a subset
$N \subset B$ of zero measure, we have the equality
\eq{eq: 3.14}{
\lim_{p \to \infty} \frac{1}{p} \sum_{0 \leq n \leq p} (L^n_\mu
1_{K_0})(b) = \beta(K_0) = 1 - \vre.
}
By possibly  increasing the set $N$, we may also assume that for any
$b \in B \sm N$, for any integer $n \geq 0$, and for $\mu^{\otimes
  n}$-a.e. $(g_1, \ldots, g_n) \in G^n$, one has $\kappa(g_1 \cdots
g_nb) = g_1 \cdots g_n \kappa(b)$. 

Suppose by contradiction that $\nu$ is not a Dirac mass. Then the set 
$$
E = \{(b,b') \in B \times B : \kappa (b) \neq \kappa (b') \}
$$
is of positive measure with respect to $\beta \otimes \beta$. Therefore we
can find points $b_0$ and $b'_0$ outside of $N$ such that 
\eq{eq: 3.15}{
\kappa(b_0) \neq \kappa(b'_0).
}
We now use condition (HC). It implies that for all $n \geq 0$, one has 
$$
A^n_\mu v \leq a^n v + (1+ \cdots + a^{n-1})C.
$$
For every $x \neq x' \in X$, we deduce the upper bound 
\eq{eq: 3.16}{
\frac{1}{p} \sum_{0 \leq n \leq p} (A^n_\mu v) (x,x') \leq
\frac{1}{p(1-a)} v(x,x') + \frac{1}{1-a}C.
}
We will now apply this upper bound to the points $x = \kappa (b_0)$
and $x' = \kappa(b'_0)$. Fix $M>0$. Note that, thanks to \equ{eq:
  3.14}, there exists an integer $p_0 \geq n_M$ such that for all $p
\geq p_0$, 
$$
\frac{1}{p} \sum_{0 \leq n\leq p} (L^n_\mu 1_{K_0})(b_0) \geq 1-2\vre
\text{ and } \frac{1}{p} \sum_{0 \leq n \leq p} (L^n_\mu
1_{K_0})(b_0') \geq 1-2\vre.
$$
As a consequence, 
$$
\frac{1}{p} \sum_{0 \leq n \leq p} (A^n_\mu v)(\kappa(b_0),
\kappa(b'_0)) \geq \left (1 - 4 \vre - \frac{p_0}{p}\right ) M .
$$
Taking a limit as $p \to \infty$ and using \equ{eq: 3.16} we obtain
$$
(1-4\vre)M \leq C/(1-a).
$$
Since $M$ was arbitrary, we get a contradiction as soon as $\vre<
1/4$. Therefore $\nu$ is a Dirac mass. 
\end{proof}

The second step is the following lemma:

\begin{lem}\name{lem: 3.11}
Under assumption (HC), let $\nu$ be a non-atomic $\mu$-stationary probability
measure on $X$. Then for $\beta$-a.e. $b \in B$, the probability
measure $\nu_b$ is non-atomic. 

\end{lem}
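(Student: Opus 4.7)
The plan is to argue by contradiction: assume $\nu_b$ has atoms on a set of positive $\beta$-measure, and reduce to Lemma \ref{lem: 3.10}. First, since each $b_0 \in G$ acts bijectively on $X$, the equivariance $\nu_b = b_{0*}\nu_{Tb}$ of Lemma \ref{lem: 3.2}(b) respects the decomposition into non-atomic and purely atomic parts, so $\nu_b^d = b_{0*}\nu_{Tb}^d$ and the total atomic mass $m(b) := \nu_b^d(X)$ is $T$-invariant; by ergodicity of the Bernoulli shift it equals $\beta$-a.s.\ a constant $m_0$, which is positive by our contradiction assumption. The family $\hat\nu_b := \nu_b^d/m_0$ is then a $\BB$-measurable family of purely atomic probability measures satisfying $\hat\nu_b = b_{0*}\hat\nu_{Tb}$; by Lemma \ref{lem: 3.2}(d),(e) the probability $\hat\nu := \int \hat\nu_b \, d\beta$ is $\mu$-stationary with these as its conditional measures. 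Since $\hat\nu(\{x\}) = m_0^{-1}\nu(\{x\}) = 0$, the measure $\hat\nu$ is itself non-atomic.

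To reduce to the Dirac case, I would extract the skeleton of the largest atoms. The maximum $\eta_0 := \max_x \hat\nu_b(\{x\})$ is $T$-invariant (again using bijectivity of $b_0$), hence $\beta$-a.s.\ a positive constant; the set $F(b) := \{x : \hat\nu_b(\{x\}) = \eta_0\}$ is finite of constant cardinality $n_0 \leq 1/\eta_0$ and satisfies $b_0 F(Tb) = F(b)$. If $n_0 = 1$, the measurable equivariant selection $\kappa : B \to X$ with $\{\kappa(b)\} = F(b)$ gives a $\mu$-stationary probability $\sigma := \int \delta_{\kappa(b)} \, d\beta$ on $X$ whose conditional measures are Dirac (Lemma \ref{lem: 3.2}(d)); Lemma \ref{lem: 3.10} then forces $\sigma = \delta_{x_0}$, whence $\kappa(b) \equiv x_0$ a.s.\ and $\hat\nu(\{x_0\}) \geq \eta_0 > 0$, contradicting non-atomicity of $\hat\nu$. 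If $n_0 > 1$, I would pass to the $G$-space $X_{n_0} := X^{n_0}/S_{n_0}$ of unordered $n_0$-subsets of $X$: the map $F : B \to X_{n_0}$ is $\BB$-measurable and equivariant, so $\sigma := F_*\beta$ is a $\mu$-stationary probability on $X_{n_0}$ with Dirac conditional measures $\delta_{F(b)}$, and applying Lemma \ref{lem: 3.10} to $(X_{n_0}, \sigma)$ again forces $F \equiv F_0$ a.s., yielding the same contradiction via $\hat\nu(\{x\}) \geq \eta_0 > 0$ for each $x \in F_0$.

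The main obstacle is verifying that hypothesis (HC) transfers to $X_{n_0}$. Extending $v$ to the diagonal by $\tilde v(x, x) := 0$, a natural candidate is
\[
V(\xi, \xi') \;:=\; \sum_{i, j = 1}^{n_0} \tilde v(x_i, x'_j), \qquad \xi = \{x_1, \ldots, x_{n_0}\},\ \xi' = \{x'_1, \ldots, x'_{n_0}\}.
\]
Since each $g \in G$ is bijective, $\tilde v$ inherits $A_\mu \tilde v \leq a\tilde v + C$ on all of $X \times X$ (both sides vanish on the diagonal), and summing gives $A_\mu V \leq aV + n_0^2 C$. Properness of $V$ on compacts of $(X_{n_0} \times X_{n_0}) \setminus \Delta_{X_{n_0}}$ follows from that of $v$: if a sequence of distinct pairs in such a compact approached $\Delta_{X_{n_0}}$ while $V$ stayed bounded, then after a consistent relabeling some pair of components $x_i \neq x'_i$ would have to converge to a common point of $X$, forcing $v(x_i, x'_i) \to \infty$ and contradicting the boundedness of $V$.
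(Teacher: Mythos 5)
Your proof is correct and follows essentially the same route as the paper: extract the (finitely many) atoms of maximal mass of $\nu_b$, push the resulting equivariant finite-set map into the symmetric power $X^{N_0}/S_{N_0}$, verify hypothesis (HC) there, and invoke Lemma \ref{lem: 3.10} to force the skeleton to be constant, contradicting non-atomicity of $\nu$. The only differences are cosmetic: you first normalize the purely atomic part before extracting the maximal atoms, and you use the auxiliary function $\sum_{i,j}\tilde v(x_i,x'_j)$ where the paper uses $\sum_{\sigma}\min_i v(x_i,x'_{\sigma(i)})$; both work.
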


\begin{proof}
The strategy consists, after several reductions involving the
ergodicity of $\beta$, in constructing a stationary probability measure
on a space $Y$ on which one can apply Lemma \ref{lem: 3.10}. 

Suppose by contradiction that the set $D = \{b \in B: \nu_b \text{ has
  atoms} \}$ is of positive measure. Since $\nu_b = b_{0*}\nu_{Tb}$,
the set $D$ is $T$-invariant. Since $\beta$ is $T$-ergodic, this means
that $\beta(D)=1$. The same argument also shows that the maximal mass
$M_b$ of an atom of $\nu_b$ is a $\beta$-almost surely constant
function and that the number $N_b$ of atoms whose $\nu_b$ measure is
$m_b$ is also a.e. constant. We denote this mass by $m_0$ and this
number of atoms by $N_0$. Denote by $\nu'_b$ the probability measure
with $N_0$ atoms of $\nu_b$ each of mass $m_0$. We also have the
equality $\nu'_b = b_{0*} \nu'_{Tb}$. By Lemma \ref{lem: 3.2}(e), the
probability measure $\nu' = \int_B \nu'_b d\beta(b)$ on $X$ is also
$\mu$-stationary and one can write $\nu$ as the sum of $m_0 \nu'$ and
a stationary measure of mass $(1-m_0)$. By assumption, $\nu'$ is also
non-atomic, and by Lemma \ref{lem: 3.2}(d), the measures $\nu'_b$ are
the limit measures of $\nu'$, and thus we can henceforth assume that
$\nu = \nu'$. 

Let $S_{N_0}$ denote the group of permutations of $\{1, \ldots, N_0\}$
and let $Y$ denote the quotient $X^{N_0}/S_{N_0}$ and $p: X^{N_0} \to
Y$ the projection. The group $G$ acts naturally on $Y$. We check that
$Y$ satisfies hypothesis (HC). Let $v$ denote the function and let
$a,C$ denote the constants which appear in hypothesis (HC) for $X$ and
introduce the map $w: Y \times Y \sm \Delta \to [0, \infty)$ given,
for $y = p(x_1, \ldots, x_{N_0})$ and $y' = p(x'_1, \ldots, x'_{N_0})$
with $x_i, x'_i \in X$, by
$$
w(y,y') = \sum_{\sigma \in S_{N_0}} \min_{1 \leq i \leq N_0} v(x_i, x'_{\sigma(i)}).
$$
This map $w$ is certainly continuous and proper on $K \times K \sm
\Delta$ for any compact subset $K \subset Y$. It also satisfies an
upper bound 
$$
A_\mu(w) \leq aw + CN_0!.
$$
Introduce the family $b \mapsto \nu''_b = p_*(\nu_b^{\otimes N_0})$ of
probability measures on $Y$. We also have the equality $\nu''_b =
b_{0*} \nu''_{Tb}$. By Lemma \ref{lem: 3.2}(e), the probability
measure $\nu'' = \int_B p_*(\nu_b^{\otimes N_0}) d\beta(b)$ is
$\mu$-stationary. By construction, for $\beta$-a.e. $b \in B$ the
measure $\nu''_b$ is a Dirac mass. Lemma \ref{lem: 3.10} then shows
that $\nu''$ is also a Dirac mass $\delta_{y_0}$. Therefore, for
$\beta$-a.e. $b \in B$, $\nu''_b = \delta_{y_0}$ and hence $\nu$ is of
finite support, a contradiction. 
\end{proof}

The last step does not use assumption (HC).

\begin{lem}\name{lem: 3.12}
Let $\nu$ be a $\mu$-stationary probability measure on $X$ such that,
for $\beta$-a.e. $b \in B$, the measure $\nu_b$ is non-atomic. Then
for $\beta^X$-a.e. $(b,x) \in B \times X$, $\nu_b(W_b(x)) =0$. 
\end{lem}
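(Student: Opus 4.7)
The plan is to bound $\nu_b(W_b(x))$ from above by a quantity that, integrated against $\beta^X$, can be made arbitrarily small using the non-atomicity of the $\nu_b$. The key ingredient is the iterated equivariance $\rho_p(b)_* \nu_b = \nu_{T^p b}$, valid for $\beta$-a.e.\ $b$ and every $p \geq 0$, which follows from Lemma \ref{lem: 3.2}(b) by induction on $p$.

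Fix $\vre > 0$ and introduce the bounded function $G_\vre : B^X \to [0,1]$ defined by $G_\vre(b,x) := \nu_b(B(x,\vre))$, where $B(x,\vre)$ denotes the open $d$-ball of radius $\vre$. Since $x' \in W_b(x)$ exactly when $d(\rho_p(b)x, \rho_p(b)x') \to 0$, we have the set-theoretic inclusion $W_b(x) \subset \liminf_{p} A_p(b,x,\vre)$ for every $\vre > 0$, where
\[
A_p(b,x,\vre) := \rho_p(b)^{-1}\, B\bigl(\rho_p(b)x,\vre\bigr).
\]
Fatou's lemma for sets combined with the equivariance above yields
\[
\nu_b(W_b(x)) \;\leq\; \liminf_{p\to\infty} \nu_b(A_p(b,x,\vre)) \;=\; \liminf_{p\to\infty} \nu_{T^p b}\bigl(B(\rho_p(b)x,\vre)\bigr) \;=\; \liminf_{p\to\infty} G_\vre\bigl((T^X)^p(b,x)\bigr).
\]
Integrating against $\beta^X$, a second application of Fatou's lemma together with the $T^X$-invariance of $\beta^X$ (Lemma \ref{lem: 3.1}(b)) gives
\[
\int_{B^X} \nu_b(W_b(x))\, d\beta^X(b,x) \;\leq\; \liminf_{p\to\infty} \int_{B^X} G_\vre \circ (T^X)^p \, d\beta^X \;=\; \int_{B^X} G_\vre\, d\beta^X.
\]

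It remains to show the right-hand side tends to $0$ as $\vre \to 0$. By Fubini and the definition of $\beta^X$,
\[
\int_{B^X} G_\vre\, d\beta^X \;=\; \int_B (\nu_b \otimes \nu_b)\bigl(\{(x,x') \in X\times X : d(x,x')<\vre\}\bigr)\, d\beta(b),
\]
and as $\vre \downarrow 0$ the inner quantity decreases to $(\nu_b\otimes\nu_b)(\Delta_X) = \int_X \nu_b(\{x\})\,d\nu_b(x)$, which vanishes for $\beta$-a.e.\ $b$ by the non-atomicity hypothesis. Dominated convergence (the integrand is bounded by $1$) then forces $\int G_\vre\, d\beta^X \to 0$, so $\int \nu_b(W_b(x))\, d\beta^X = 0$ and therefore $\nu_b(W_b(x))=0$ for $\beta^X$-a.e.\ $(b,x)$.

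The main (mild) obstacle is the very first step: correctly formulating the set inclusion $W_b(x) \subset \liminf_p A_p(b,x,\vre)$ and legitimizing the equivariant change of variable $\rho_p(b)_* \nu_b = \nu_{T^p b}$. Everything downstream is standard Fatou/Fubini together with the observation that a non-atomic Borel probability measure assigns zero mass to the diagonal of its square, and in particular no use of hypothesis (HC) is needed, consistent with the remark preceding the lemma that ``the last step does not use assumption (HC).''
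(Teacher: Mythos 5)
Your argument is correct and is in substance the same as the paper's: after Fubini, $\int_{B^X} G_\vre\,d\beta^X$ is exactly the measure $\Lambda=\int_B\delta_b\otimes\nu_b\otimes\nu_b\,d\beta(b)$ of the $\vre$-neighbourhood of the diagonal in $B\times X\times X$, and your two applications of Fatou together with the $T^X$-invariance of $\beta^X$ amount to unwinding the Poincar\'e recurrence argument that the paper applies to the $R$-invariant measure $\Lambda$ and the observable $\varphi(b,x,x')=d(x,x')$. Both proofs use the hypotheses only through the equivariance $\rho_p(b)_*\nu_b=\nu_{T^pb}$ and the fact that non-atomicity gives $(\nu_b\otimes\nu_b)(\Delta_X)=0$, so yours is a faithful, slightly more explicit rendering of the same idea.
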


\begin{proof}
Consider the transformation on $B \times X \times X$ given by, for
$(b,x,x') \in B \times X \times X$, 
$$
R(b,x,x') = (Tb, b_0^{-1}x, b_0^{-1}x').
$$
Lemma \ref{lem: 3.1} and Corollary \ref{cor: 3.5} show that $R$
preserves the probability measure 
$$
\Lambda = \int_B \delta_b \otimes \nu_b \otimes \nu_b \, d\beta(b).
$$
Denote 
$$
Z = \{(b,x,x') \in B \times X \times X : \lim_{p \to \infty}
d(\rho_p(b)x, \rho_p(b)x') =0\}
$$
and, for $(b,x,x') \in B \times X \times X$, write $\varphi(b,x,x') =
d(x,x')$. By assumption, for $\beta$-a.e. $b$, the measure $\nu_b$ is
non-atomic, and hence $\nu_b \otimes \nu_b$ gives no mass to the
diagonal $X \times X$. Therefore the function $\varphi$ is
$\Lambda$-a.e. nonzero. By construction, for $\Lambda$-a.e. $z \in Z$,
one has $\lim_{p \to \infty} \varphi(R^p(z)) =0$ and thus, by the
Poincar\'e recurrence theorem, $\Lambda(Z)=0$, as required. 
\end{proof}

\begin{proof}[Proof of Proposition \ref{prop: 3.9}]
Follows from Lemma \ref{lem: 3.11} and \ref{lem: 3.12}. 
\end{proof}

\section{Conditional measures}
{\em In this chapter we collect certain properties of conditional
  measures of a probability measure for a Borel action of a locally
  compact group.}

\subsection{Conditional measures}\name{subsec: 4.1}
{\em We recall the construction of conditional
  measures. \combarak{Often called leaf-wise measures.}}

Let $R$ be a locally compact separable metrizable group and $(Z, \ZZ)$
a standard Borel space with a Borel action of $R$. Let $\lambda$ be a
Borel probability measure on $Z$. Suppose that the stabilizer
subgroups for the action of $R$ on $Z$ are discrete. We will now
explain how the action of $R$ on $Z$ makes it possible to
`disintegrate the measure $\lambda$ along $R$-orbits', to obtain
measures on $R$ which are unique up to normalization. More precisely: 

Let $\MM(R)$ denote the space of positive nonzero Radon measures on
$R$ and let $\MM_1(R) = \MM(R) /\simeq$ be the space of such measures
up to scaling: two Radon measures $\sigma_1, \sigma_2$ are called
equal up to scaling, and we write
$
\sigma_1 \simeq \sigma_2$, if there is $ c>0$ such that 
$\sigma_2 = c\sigma_1.$
We can choose a representative of each equivalence class: we fix an
increasing 
sequence of compact subsets $(K_n)$ of $R$ which cover $R$ and choose
$\sigma$ so that $\sigma(K_n) =1$, where $n$ is the smallest $m$ for
which $\sigma(K_m)>0$. 

We say that a Borel subset $\Sigma \subset Z$ is a {\em discrete
  section} of the action of $R$ if, for any $z \in Z$, the set of
visit times $\{r \in R: rz \in \Sigma\}$ is discrete and closed in
$R$. The main theorem of \cite{ref 21} shows that there is a discrete
section $\Sigma$ for the action of $R$ such that $R\Sigma = Z$. 

We choose a discrete section $\Sigma$ for the action of $Z$ on $R$ and
denote $a: R \times \Sigma \to Z, (r,z) \mapsto rz$. The measure
$a^*\lambda$ on $R \times \Sigma$ defined, for any positive Borel
function $f$ on $R \times \Sigma$, by 
\eq{eq: 4.1}{
a^*\lambda(f) = \int_Z \left(\sum_{(r,z') \in a^{-1}(z)} f(r,z')
\right) d\lambda(z),
}
is a $\sigma$-finite Borel measure on $R \times \Sigma$. This follows
from the fact that for any compact subset $C \subset R$, and any $z
\in Z$, the set $(C \times \Sigma) \cap a^{-1}(z)$ is finite. 

We denote $\pi_\Sigma : R\times \Sigma \to \Sigma$ the projection on
the second factor, and by $\lambda_\Sigma$ the image under
$\pi_\Sigma$ of a finite measure on $R \times \Sigma$ equivalent to
$a^*\lambda$. We therefore have, for any positive Borel function on $R
\times Z$, 
\eq{eq: 4.2}{
a^*\lambda(f) = \int_\Sigma \int_R f(r,z) d\sigma_\Sigma(z)(r)
d\lambda_\Sigma(z). 
}
Note that the conditional measures $\sigma_\Sigma(z)$ are also Radon
measures on $R$. This results once more from the finiteness of the
sets $(C \times \Sigma) \cap a^{-1}(z)$. 

We denote by $t_r$ the right-translation by an element $r \in R$. 

\begin{lem}\name{lem: 4.1}
Let $\Sigma$ be a discrete section for the action of $R$ on $Z$. For
$\lambda_\Sigma$-a.e. $z \in \Sigma$, for all $r \in R$ such that $rz
\in \Sigma$, we have
$$
\sigma_\Sigma(z) \simeq t_{r*} \sigma_\Sigma(rz).
$$
\end{lem}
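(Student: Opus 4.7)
My plan is to exploit the natural equivariance of the measure $a^*\lambda$ on $R \times \Sigma$ under translations. For each fixed $r \in R$, introduce the Borel set $E_r := \{z \in \Sigma : rz \in \Sigma\}$ and consider the map $L_r : R \times Z \to R \times Z$ given by $L_r(s, z) := (sr^{-1}, rz)$. This is a bijection satisfying $a \circ L_r = a$, and it restricts to a Borel bijection of $R \times E_r$ onto $R \times (rE_r) \subset R \times \Sigma$. Matching fibers of $a$ term by term in the definition \equ{eq: 4.1} yields the identity
\[
(L_r)_*\bigl( a^*\lambda \big|_{R \times E_r} \bigr) \;=\; a^*\lambda \big|_{R \times (rE_r)}.
\]

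I would then insert the disintegration formula \equ{eq: 4.2} into both sides of this identity. On the fiber over a point $z' \in rE_r$, the right-hand side has conditional measure $\sigma_\Sigma(z')$; on the left-hand side, the measure $\sigma_\Sigma(r^{-1}z')$ sitting over the fiber $\{r^{-1}z'\} \times R$ is transported to the fiber over $z'$ by the map $s \mapsto sr^{-1}$, i.e.\ by $t_{r^{-1}}$. Uniqueness of the disintegration up to rescaling then gives, for $\lambda_\Sigma$-almost every $z' \in rE_r$,
\[
\sigma_\Sigma(z') \;\simeq\; t_{r^{-1}*}\, \sigma_\Sigma(r^{-1}z');
\]
after substituting $z = r^{-1}z'$ and applying $t_{r*}$ to both sides (using $t_r \circ t_{r^{-1}} = \mathrm{id}$), this reads $\sigma_\Sigma(z) \simeq t_{r*}\sigma_\Sigma(rz)$ for $\lambda_\Sigma$-almost every $z \in E_r$.

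The main obstacle is promoting this to \emph{simultaneous} validity for all $r \in R$ with $rz \in \Sigma$: the previous step produces a null exceptional set $N_r \subset E_r$ that depends on $r$, and a priori $\bigcup_r N_r$ need not be null. To resolve this I would invoke a standard decomposition from the theory of countable Borel equivalence relations. The relation $z \sim z'$ on $\Sigma$ defined by $z' \in Rz$ is Borel with countable classes, since $\{r \in R : rz \in \Sigma\}$ is discrete in $R$. By Lusin--Novikov this relation can be written as a countable union of graphs of Borel isomorphisms $\phi_n : D_n \to D_n'$ between Borel subsets of $\Sigma$, together with Borel maps $r_n : D_n \to R$ satisfying $r_n(z)\cdot z = \phi_n(z)$. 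The argument above goes through verbatim with $L_r$ replaced by $L_{r_n}(s, z) := (s\, r_n(z)^{-1}, \phi_n(z))$, a Borel bijection of $R \times D_n$ onto $R \times D_n'$ which still commutes with $a$; it yields a null set $N_n \subset D_n$ off of which the equivalence holds for $r = r_n(z)$. Setting $Z_0 := \Sigma \setminus \bigcup_n N_n$ provides the single $\lambda_\Sigma$-conull set required by the statement, since every $r \in R$ with $rz \in \Sigma$ arises as $r_n(z)$ for some $n$ with $z \in D_n$.
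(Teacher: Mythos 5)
Your proof is correct and follows essentially the same route as the paper: the paper's (very terse) proof likewise reduces the uncountable family of return times to a countable family of Borel sets $\Sigma_i\subset\Sigma$ with Borel selectors $r_i:\Sigma_i\to R$ covering $\{(z,r):rz\in\Sigma\}$, exactly your Lusin--Novikov decomposition, and then asserts the equivariance for each $r_i$ separately. You have merely supplied the details the paper omits, namely the transport identity for $a^*\lambda$ under $L_r$ and the appeal to uniqueness of the disintegration up to scaling.
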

\begin{proof}
The difficulty comes from the fact that one wants this condition to be
satisfied for an uncountable family of elements $r \in R$. To deal
with this difficulty, it suffices to remark that there is a countable
family, indexed by $i \in \N$, of Borel sets $\Sigma_i \subset
\Sigma$, and Borel maps $r_i: \Sigma_i \to R$, such that 
$$
\{(z,r) \in \Sigma \times R : r z \in \Sigma\} = \bigcup_{i \in \N}
\{(z, r_i(z)) : z \in \Sigma_i \},
$$
and such that, for $\lambda_\Sigma$-a.e. $z \in \Sigma_i, \
\sigma_\Sigma(z) \simeq t_{r_i(z)*} \sigma_\Sigma(r_i(z)\,z).$ 
\end{proof}

\begin{prop}\name{prop: 4.2}
Consider a Borel action with discrete stabilizers of a locally compact
separable metrizable group $R$ on a standard Borel space $(Z, \ZZ)$. 

Then there is a Borel map $\sigma:  Z \to \MM_1(R)$ and a Borel
subset $E \subset Z$ such that $\lambda(Z \sm E) =0$ and such that,
for any discrete section $\Sigma \subset Z$ for the action of $R$, for
$\lambda_\Sigma$-a.e. $z_0 \in \Sigma$, for every $r \in R$ such that
$rz_0 \in E$, 
$$
\sigma(z_0) \simeq t_{r*} \sigma_\Sigma(rz_0).
$$
This map $\sigma$ is unique up to a set of $\lambda$-measure zero. 

For every $r \in R$ and every $z \in E$ such that $rz \in E$, we have 
\eq{eq: 4.3}{
\sigma(z) \simeq t_{r*}(\sigma(rz)).
}
\end{prop}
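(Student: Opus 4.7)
The plan is to construct $\sigma$ globally by fixing one convenient discrete section $\Sigma_0$, extending the measures from Lemma \ref{lem: 4.1} to all of $Z$ via the group action, and then checking that the resulting map does not in fact depend on $\Sigma_0$ in the sense of the statement. The main steps are: the choice of $\Sigma_0$ and an exceptional set, an equivariant definition of $\sigma$ on a full-measure set $E$, and a disintegration-uniqueness argument for section-independence.

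First, by the main theorem of \cite{ref 21}, I fix a discrete section $\Sigma_0 \subset Z$ with $R \Sigma_0 = Z$. Lemma \ref{lem: 4.1} applied to $\Sigma_0$ produces a $\lambda_{\Sigma_0}$-null set $N_0 \subset \Sigma_0$ outside of which
$$
\sigma_{\Sigma_0}(z_0) \simeq t_{r*} \sigma_{\Sigma_0}(r z_0) \qquad \text{whenever } r z_0 \in \Sigma_0 \sm N_0.
$$
I take $E \df R \cdot (\Sigma_0 \sm N_0)$, so that $Z \sm E \subset R N_0$. To verify $\lambda(R N_0)=0$, I apply \eqref{eq: 4.2} to $f = 1_{R \times N_0}$: the right-hand side vanishes since $\lambda_{\Sigma_0}(N_0) = 0$, while the definition \eqref{eq: 4.1} shows the left-hand side is at least $\lambda(R N_0)$, because for each $z \in R N_0$ the fiber $a^{-1}(z) \cap (R \times N_0)$ contains at least one point.

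Next, for $z \in E$, I choose any decomposition $z = g z_0$ with $g \in R$ and $z_0 \in \Sigma_0 \sm N_0$ and define
$$
\sigma(z) \df t_{g^{-1} *}\, \sigma_{\Sigma_0}(z_0) \in \MM_1(R).
$$
This is well-defined: for a second decomposition $z = g' z_0'$, the element $r \df (g')^{-1} g$ satisfies $r z_0 = z_0'$, both in $\Sigma_0 \sm N_0$, so Lemma \ref{lem: 4.1} yields $\sigma_{\Sigma_0}(z_0) \simeq t_{r*} \sigma_{\Sigma_0}(z_0')$, and applying $t_{g^{-1}*}$ together with the composition rule $t_{ab*} = t_{b*} \circ t_{a*}$ produces the required equivalence $t_{g^{-1}*} \sigma_{\Sigma_0}(z_0) \simeq t_{(g')^{-1}*} \sigma_{\Sigma_0}(z_0')$. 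Borel measurability of $\sigma$ is obtained by a Borel selection of a pair $(g, z_0)$ for the countable-to-one map $R \times (\Sigma_0 \sm N_0) \to E$. The equivariance property \eqref{eq: 4.3} is then immediate: if $z, rz \in E$ and $z = g z_0$, then $r z = (r g) z_0$, and the formula gives $\sigma(r z) \simeq t_{r^{-1} *} \sigma(z)$, i.e.\ $\sigma(z) \simeq t_{r *} \sigma(r z)$.

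Finally, for an arbitrary discrete section $\Sigma$ and $\lambda_\Sigma$-a.e.\ $z_0 \in \Sigma$, I must show $\sigma(z_0) \simeq t_{r*} \sigma_\Sigma(r z_0)$ for every $r$ with $r z_0 \in E$. By the equivariance already proved, it suffices to establish $\sigma|_\Sigma \simeq \sigma_\Sigma$ almost surely. For this I plug the globally defined $\sigma$ into the integral \eqref{eq: 4.2} written relative to $\Sigma$: using \eqref{eq: 4.3} to re-express $\sigma(z)$ for $z \in \Sigma$ in terms of a translate of $\sigma_{\Sigma_0}$ at its $\Sigma_0$-representative, a change-of-variables computation on the countable fibers of $a$ recovers $a^*\lambda$ on $R \times \Sigma$. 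Uniqueness of Rohlin disintegration then forces $\sigma|_\Sigma \simeq \sigma_\Sigma$ modulo a $\lambda_\Sigma$-null set, and the same argument gives uniqueness of $\sigma$ up to $\lambda$-null sets. I expect the main technical obstacle to be this last re-parametrization, where one must carefully track the scaling ambiguity inherent in $\MM_1(R)$ orbit by orbit; a secondary subtlety is ensuring Borel measurability of the selection producing $\sigma$.
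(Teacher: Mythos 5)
Your construction of $\sigma$ and of the full-measure set $E$ is the same as the paper's: fix one discrete section $\Sigma_0$ with $R\Sigma_0=Z$, set $\sigma(gz_0)\simeq t_{g^{-1}*}\sigma_{\Sigma_0}(z_0)$, and use Lemma \ref{lem: 4.1} for well-definedness; your verification that $\lambda(RN_0)=0$ via \equ{eq: 4.1}--\equ{eq: 4.2}, your check of the cocycle identity $t_{g^{-1}*}t_{r*}=t_{rg^{-1}*}$, and the derivation of \equ{eq: 4.3} are all correct and in fact more explicit than the paper's. Where you diverge is the section-independence step, and here the paper has a decisively slicker device that you should know about: since the union of two closed discrete sets of visit times is again closed and discrete, $\Sigma\cup\Sigma_0$ is itself a discrete section, and Lemma \ref{lem: 4.1} applied to it directly relates $\sigma_{\Sigma\cup\Sigma_0}$ at a point of $\Sigma$ to $\sigma_{\Sigma\cup\Sigma_0}$ at a point of $\Sigma_0$ on the same orbit. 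Combined with the easy observation that $\sigma_{\Sigma\cup\Sigma_0}$ restricted to $\Sigma$ (resp.\ $\Sigma_0$) agrees a.e.\ up to scaling with $\sigma_\Sigma$ (resp.\ $\sigma_{\Sigma_0}$) --- here the fibers $R\times\{z\}$ are literally the same and one measure is the restriction of the other, so this is plain uniqueness of disintegration with no change of variables --- the desired relation $\sigma(z_0)\simeq t_{r*}\sigma_\Sigma(rz_0)$ drops out. Your alternative, re-deriving $a^*\lambda$ on $R\times\Sigma$ from the family $t_{g^{-1}*}\sigma_{\Sigma_0}(z_0)$ by a change of section, is workable in principle but is exactly the painful part: the correspondence between fibers of $a$ over $R\times\Sigma$ and over $R\times\Sigma_0$ is only countable-to-countable, the measures $\lambda_\Sigma$ and the transported $\lambda_{\Sigma_0}$ are merely equivalent rather than equal, and the orbit-by-orbit normalizations have to be tracked through all of this. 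You correctly identify this as the main obstacle, but as written it remains a sketch; the union trick makes the obstacle disappear, which is presumably why the paper's proof of this step is one sentence long.
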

The measure $\sigma(z)$ is called the conditional measure of $z$ along
the action of $R$. 

\begin{proof}
We choose a discrete section $\Sigma_0$ such that $R\Sigma_0 = Z$. By
Lemma \ref{lem: 4.1}, for $\lambda$-a.e. $z \in Z$, if one writes $z =
rz_0$ with $r \in R$ and $z_0 \in \Sigma_0$, the measure $\sigma(z) =
t^{-1}_{r*} \sigma_{\Sigma_0}(z_0) \in \MM_1(R)$ does not depend on
choices, i.e. different choices of $z_0$ only affect it by rescaling. 

This defines the map $\sigma$. The asserted property of $\sigma$
follows from the Lemma applied to $\Sigma \cup \Sigma_0$ which is also
a discrete section. Assertion \equ{eq: 4.3} follows. Uniqueness of
$\sigma$ is clear.
\end{proof}

The use of conditional measures in geometric ergodic theory is based,
among others, on the work of Ledrappier-Young. Its use in problems
of measure classification on homogeneous spaces has appeared in \cite{
 ref 4} and earlier in work of Katok and Spatzier.

\subsection{Disintegration along stabilizers}
{\em In this section we explain how to exploit the invariance
  properties under translation, of conditional measures along an
  action.}

Denote by $\Gr(\R^d)$ the Grassmannian variety of $\R^d$.  
The following proposition asserts that the disintegration of $\lambda$
to conditional measures  along the stabilizer gives probability
measures invariant under the stabilizer. In a topological group $S$,
we denote by $S_0$ the connected component of the identity. 

\begin{prop}\name{prop: 4.3}
Let $(Z, \ZZ)$ be a standard Borel space endowed with a Borel action
of $\R^d$ with discrete stabilizers, and let $\lambda$ be a Borel
probability measure on $Z$. For $\lambda$-a.e. $z \in Z$, we denote by
$\sigma(z)$ the conditional measure of $z$ for the action of $\R^d$,
and 
$$
V_z = \{r \in \R^d: t_{r*}\sigma(z) = \sigma(z)\}_0,
$$
and by
$$
\lambda = \int_Z \lambda_z d\lambda(z)
$$
the distintegration of $\lambda$ along the map $Z \to \Gr(\R^d), \, z
\mapsto V_z$. Then for $\lambda$-a.e. $z \in Z$, the probability
measure $\lambda_z$ is $V_z$-invariant. 
\end{prop}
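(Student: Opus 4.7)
The plan is to first establish the measurability of $\phi: z \mapsto V_z$ and the $V_z$-invariance of its level sets, and then to reduce the invariance claim for $\lambda_z$ to showing that, for each $v \in \R^d$, the measure $\lambda$ restricted to $A_v := \{z \in Z : v \in V_z\}$ is genuinely $T_v$-invariant, where $T_v$ denotes the action of $v$ on $Z$.

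First, $\sigma: Z \to \MM_1(\R^d)$ is Borel by Proposition~\ref{prop: 4.2}, and the assignment sending a class in $\MM_1(\R^d)$ to the connected component of its translation-stabilizer is a Borel map to $\Gr(\R^d)$, since closed connected subgroups of $\R^d$ are vector subspaces. Hence $\phi$ is Borel and the disintegration in the statement is well defined. Level-set invariance follows from \equ{eq: 4.3}: if $v \in V_z$ with $z, vz \in E$, then the equation gives $\sigma(vz) \simeq t_{-v*}\sigma(z)$, while the hypothesis $v \in V_z$ gives $t_{v*}\sigma(z) \simeq \sigma(z)$, hence also $t_{-v*}\sigma(z) \simeq \sigma(z)$. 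Combining, $\sigma(vz) \simeq \sigma(z)$, so $V_{vz} = V_z$; in particular $A_v$ is $T_v$-invariant modulo null sets and $\phi \circ T_v = \phi$ on $A_v$.

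For the main step I would prove $T_{v*}(\lambda|_{A_v}) = \lambda|_{A_v}$. Once this is established, disintegrating both sides along $\phi$ and using uniqueness of disintegration together with $\phi \circ T_v = \phi$ on $A_v$ yields $T_{v*}\lambda_z = \lambda_z$ for $\lambda$-a.e. $z \in A_v$. To prove the equality, compare the conditional measures attached by Proposition~\ref{prop: 4.2} to $\lambda$ and to $T_{v*}\lambda$: a direct computation using the translation-equivariance of the orbit parametrization and \equ{eq: 4.3} gives $\sigma^{T_{v*}\lambda}(z') \simeq t_{v*}\sigma^{\lambda}(z')$, which on $A_v$ equals $\sigma^{\lambda}(z')$ by the definition of $V_{z'}$. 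The uniqueness assertion in Proposition~\ref{prop: 4.2} then forces agreement of the two measures on $A_v$, provided one additionally verifies that the character $v \mapsto c(z, v) > 0$ defined by $t_{v*}\sigma(z) = c(z, v)\sigma(z)$ for $v \in V_z$ is identically $1$, so that the equivalence $\simeq$ is actual equality. Triviality of $c$ stems from the fact that $\sigma(z)$ is $\Gamma_z$-periodic for the discrete stabilizer $\Gamma_z$ and has finite mass on a fundamental domain (since $\lambda$ is a probability measure); an exponential density along any direction of $V_z$ would contradict this. Applying this for a countable dense collection of $v$ in $\R^d$ and using continuity of translation in $v$ yields $V_z$-invariance of $\lambda_z$ for $\lambda$-a.e. $z$.

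The hard part will be the character-triviality argument. Making it rigorous requires working with the explicit representation \equ{eq: 4.2} of $\sigma$ via a discrete section, and handling separately the cases where the $\R^d$-orbit through $z$ has positive or zero $\lambda$-measure. The remainder of the proof is a fairly routine application of uniqueness of conditional measures and the Rohlin disintegration theorem.
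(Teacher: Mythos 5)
Your overall strategy is close in spirit to the paper's (which disintegrates along $z\mapsto V_z$ and then shows that the conditional measures of each $\lambda_z$ along the $\R^d$-orbits are $V_z$-invariant, via Lemmas \ref{lem: 4.4}--\ref{lem: 4.6}), but the central step of your argument is not justified as written. You claim that once $\sigma^{T_{v*}\lambda}(z')\simeq\sigma^{\lambda}(z')$ on $A_v$, ``the uniqueness assertion in Proposition \ref{prop: 4.2} forces agreement of the two measures.'' That uniqueness statement says only that the map $z\mapsto\sigma(z)$ is determined by the measure; it does not say that the measure is determined by its orbitwise conditionals. Indeed two distinct measures can have identical conditional measures along every $\R^d$-orbit while differing in the transverse direction (e.g.\ products $\sigma\otimes m_1$ versus $\sigma\otimes m_2$ over a section). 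What saves you here is that $T_v$ preserves each orbit, so the transverse measure $\lambda_\Sigma$ is unchanged; the correct deduction is the explicit computation with a discrete section as in \equ{eq: 4.2}: $a^*(T_{v*}\lambda)$ has fiber measures $t_{v*}\sigma_\Sigma(z)$ over the \emph{same} $\lambda_\Sigma$, so exact $t_v$-invariance of a.e.\ fiber measure gives $a^*(T_{v*}\lambda)=a^*\lambda$ and hence $T_{v*}\lambda=\lambda$. This is precisely the paper's Lemma \ref{lem: 4.6}, and you need to prove it (or cite it) rather than appeal to uniqueness of conditionals.

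Second, the step you flag as ``the hard part''---triviality of the character $c(z,\cdot)$---is both unnecessary and, as sketched, unsound. It is unnecessary because $V_z$ is defined in the Proposition by \emph{genuine} equality $t_{r*}\sigma(z)=\sigma(z)$ of the chosen representatives, not by $\simeq$; exact invariance of $\sigma(z)$ then transfers to exact invariance of $\sigma_\Sigma$ by \equ{eq: 4.3}, since conjugating an exactly invariant measure by a translation and a scalar preserves exact invariance. (The genuine issue of passing from projective to exact invariance is Proposition \ref{prop: 7.4}(b), which the paper handles by a dynamical argument using \equ{eq: 7.17} and Poincar\'e recurrence, not by the mechanism you propose.) Your proposed proof of triviality is also flawed: $\sigma(z)$ is a Radon measure of typically infinite total mass, the discrete stabilizer may be trivial so there is no fundamental domain of finite measure, and an exponential density times Lebesgue measure on a line is a perfectly good Radon measure quasi-invariant under translations, so no contradiction arises from that direction. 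Finally, note that your reduction also quietly uses that the conditionals of $\lambda|_{A_v}$ along $\R^d$-orbits agree with those of $\lambda$; this is the transitivity-of-disintegration point isolated in the paper as Lemma \ref{lem: 4.5} and should be stated, though it is routine.
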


This proposition is a consequence of the following three lemmas. The
first one uses notation which are different from those used in
Proposition \ref{prop: 4.3}. 

\begin{lem}\name{lem: 4.4}
Let $(Z, \ZZ, \lambda)$ be a Lebesgue space, $(Y, \YY)$ a standard
Borel space equipped with a Borel action of $\R^d$, $f:Z \to Y$ a
measurable map and $I: Z \to \Gr(\R^d)$ a measurable map such that for
$\lambda$-a.e. $z \in Z$, $I(z)$ stabilizes $f(z)$. 

Denote by $\lambda = \int_Z \lambda_z d\lambda(z)$ the disintegration
of $\lambda$ along $I$. 

Then for $\lambda$-a.e. $z \in Z$, for $\lambda_z$-a.e. $z' \in Z$,
the element $f(z')$ is $I(z)$-invariant. 
\end{lem}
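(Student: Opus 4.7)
The plan is to reduce the statement to two elementary facts about the disintegration: first, that the conditional measures are concentrated on the fibers of the map being disintegrated; and second, that a full-measure event for $\lambda$ is a full-measure event for $\lambda_z$-a.e. $z$.

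First, since $\Gr(\R^d)$ is a standard Borel space and $I: Z \to \Gr(\R^d)$ is measurable, the sub-$\sigma$-algebra $\ZZ' = I^{-1}(\BB(\Gr(\R^d)))$ is a countably generated sub-$\sigma$-algebra of $\ZZ$, so Rohlin's disintegration theorem applies and yields a $\ZZ'$-measurable map $z \mapsto \lambda_z$ such that, for $\lambda$-a.e. $z$, the probability $\lambda_z$ is supported on the atom of $z$ in the partition associated to $\ZZ'$, namely the level set
\[
\{z' \in Z : I(z') = I(z)\}.
\]
In particular, for $\lambda$-a.e. $z$, one has $I(z') = I(z)$ for $\lambda_z$-a.e. $z' \in Z$.

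Second, consider the Borel set
\[
E = \{z \in Z : I(z) \text{ stabilizes } f(z)\},
\]
which is measurable because the action $\R^d \times Y \to Y$ is Borel, $I$ and $f$ are measurable, and the stabilizer condition can be checked on a countable dense subset of $I(z)$. By hypothesis, $\lambda(E) = 1$. From the disintegration identity $\lambda = \int_Z \lambda_z \, d\lambda(z)$ applied to the indicator of $Z \setminus E$, one concludes $\lambda_z(E) = 1$ for $\lambda$-a.e. $z \in Z$. Hence, for $\lambda$-a.e. $z$, the set of $z'$ satisfying both $I(z') = I(z)$ and ``$I(z')$ stabilizes $f(z')$'' has full $\lambda_z$-measure, and on this set $I(z)$ stabilizes $f(z')$, as required.

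The only non-routine point is the measurability of $E$ and of the map $z \mapsto \lambda_z$, which is standard once one recalls that $\Gr(\R^d)$ is a compact metrizable space and the action of $\R^d$ on $Y$ is Borel; the heart of the lemma is simply the tautology that the disintegration along $I$ places two points in the same fiber exactly when $I$ takes the same value on them.
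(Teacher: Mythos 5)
Your proof is correct and is essentially the paper's own argument: the paper's one-line proof likewise observes that the conditional measures $\lambda_z$ of the disintegration along $I$ are concentrated on the level sets $\{z': I(z')=I(z)\}$, so that $\lambda_z$-a.e.\ $z'$ satisfies $I(z')=I(z)$ and, by the hypothesis (transferred to $\lambda_z$ via $\lambda=\int_Z\lambda_z\,d\lambda(z)$), $f(z')$ is $I(z')=I(z)$-invariant. You have simply made explicit the measurability and full-measure-transfer points that the paper leaves implicit.
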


\begin{proof}
In fact, for $\lambda$-a.e. $z \in Z$, for $\lambda_z$-a.e. $z' \in
Z$, we have from the definition of conditional measures, that
$I(z)=I(z')$ and hence, by assumption, $f(z')$ is $I(z')$-invariant. 
\end{proof}

The second lemma uses once more the notation of Proposition \ref{prop:
  4.3}. 

\begin{lem}\name{lem: 4.5}
Let $(Z, \ZZ)$ be a standard Borel space equipped with a Borel action
of $\R^d$ with discrete stabilizers, and let $\lambda$ be a Borel
probability measure on $Z$. Let $(Y_0, \YY_0)$ be a standard Borel
space and let $\varphi: Z \to Y_0$ be a measurable map for which there
exists a subset $E \subset Z$ such that $\lambda(Z \sm E)=0$ and for
every $z \in E$ and $r \in \R^d$ with $rz \in E$, we have $\varphi(z)
= \varphi(rz)$. Denote $z \mapsto \sigma(z) \in \MM(\R^d)$ the
conditional measure at $z$ of $\lambda$ along the $\R^d$-orbits, and
denote $\lambda = \int_Z \lambda_z d\lambda(z)$ the disintegration of
$\lambda$ along $\varphi$. Then, for
$\lambda$-a.e. $z \in Z$, for $\lambda_z$-a.e. $z' \in Z$,
$\sigma(z')$ is also the conditional measure of $z'$ of $\lambda_z$
along the action of $\R^d$. 
\end{lem}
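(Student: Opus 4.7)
The plan is to exhibit the map $z' \mapsto \sigma(z')$ as a valid conditional measure along $\R^d$ for the restricted action on each $\varphi$-fiber equipped with the measure $\lambda_z$, and then invoke the uniqueness clause of Proposition~\ref{prop: 4.2}. Concretely, I will fix a discrete section $\Sigma \subset Z$ with $\R^d\Sigma = Z$ as provided by \S\ref{subsec: 4.1}, and show that its slice $\Sigma_y := \Sigma \cap \varphi^{-1}(y)$ is itself a discrete section for the $\R^d$-action on $\varphi^{-1}(y)$, and that the same section-side map $\sigma_\Sigma$ appearing in formula \equ{eq: 4.2} works simultaneously for $\lambda$ and for $\lambda_y$.

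First, I will verify that $\Sigma_y$ is a discrete section for $(\varphi^{-1}(y), \R^d)$. Discreteness is inherited from $\Sigma$ since $\{r \in \R^d : rz \in \Sigma_y\} \subset \{r : rz \in \Sigma\}$. For $\lambda_y$-a.e.\ $z \in \varphi^{-1}(y)$ the point $z$ lies in $E$, and writing $z = rz_0$ with $z_0 \in \Sigma$, one can also arrange that $z_0 \in E$; the invariance hypothesis then gives $\varphi(z_0) = \varphi(z) = y$, so $z_0 \in \Sigma_y$, and $\R^d\Sigma_y$ covers $\varphi^{-1}(y)$ up to a $\lambda_y$-null set.

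Second, the $\R^d$-invariance of $\varphi$ on $E$ makes $\varphi \circ a$ factor (modulo $a^*\lambda$-null sets) through the projection $\pi_\Sigma$. Disintegrating $\lambda_\Sigma$ along $\varphi|_\Sigma$ as $\lambda_\Sigma = \int \lambda_{\Sigma,y}\, d\varphi_*\lambda_\Sigma(y)$ and substituting into \equ{eq: 4.2} yields
\[
a^*\lambda \;=\; \int_{Y_0} \left( \int_{\Sigma_y} \sigma_\Sigma(z) \otimes \delta_z\, d\lambda_{\Sigma,y}(z) \right) d\varphi_*\lambda_\Sigma(y).
\]
Comparing with the alternative decomposition $a^*\lambda = \int_{Y_0} a^*\lambda_y\, d\varphi_*\lambda(y)$, valid because $a$ preserves $\varphi$-fibers on $E$, and then using uniqueness of disintegration, it will follow that for $\varphi_*\lambda$-a.e.\ $y$ the restriction $\sigma_\Sigma|_{\Sigma_y}$ is the section-side ingredient of \equ{eq: 4.2} associated to $\lambda_y$. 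Applying the uniqueness assertion in Proposition~\ref{prop: 4.2} to the action of $\R^d$ on $(\varphi^{-1}(y), \lambda_y)$ will then give that the conditional measure of $\lambda_y$ at $z' = rz_0$, $z_0 \in \Sigma_y$, equals $t_{r*}^{-1}\sigma_\Sigma(z_0)$ in $\MM_1(\R^d)$, which is exactly $\sigma(z')$.

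The main technical obstacle will be the scaling ambiguity: both $\lambda_\Sigma$ and $\sigma_\Sigma$ are only defined up to equivalence, and the disintegrations of $\lambda_\Sigma$ and of $\lambda$ along $\varphi$ will only agree up to a $y$-dependent Radon--Nikodym factor. Since, however, the final equality takes values in $\MM_1(\R^d)$, these normalizations get absorbed and the argument should close cleanly once the measure-theoretic bookkeeping is laid out carefully.
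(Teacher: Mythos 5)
Your proposal is correct and follows essentially the same route as the paper: both rest on transitivity of disintegration, which the paper merely recalls in its classical form and leaves the reader to adapt, while you carry the adaptation out explicitly by disintegrating $a^*\lambda$ over the section $\Sigma$ and over $Y_0$ and comparing via uniqueness. The one point your deferred bookkeeping must genuinely address is that points of $\Sigma$ need not lie in $E$, so before asserting that $\varphi\circ a$ factors through $\pi_\Sigma$ off an $a^*\lambda$-null set (and that $z_0$ above can be "arranged" to lie in $E$) you should first replace $\varphi$ by the genuinely orbit-invariant extension $\tilde\varphi(rz)=\varphi(z)$ for $z\in E$, which is well defined and measurable on the saturation $\R^d E$ and agrees with $\varphi$ $\lambda$-almost everywhere, so that the disintegrations along $\varphi$ and $\tilde\varphi$ coincide.
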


\begin{proof}
We adapt the argument of transitivity of the disintegration of
measures in this context. 

Recall the gist of the argument in the classical context: we are given
a Lebesgue space $(A, \A, \alpha)$, and two standard Borel spaces
$(B, \BB), (C, \CC)$ along with measurable maps $f:A \to B$ and $g: B
\to C$. Then almost surely, the conditional measures of $\alpha$ along
$f$ coincide with the conditionals along $f$ of the conditionals of
$\alpha$ along $g \circ f$. More precisely, denote $\alpha = \int_A
\alpha_a d\alpha(a)$ and $\alpha = \int_A \alpha_{a'} d\beta_a(a')$,
the disintegrations of $\alpha$ respectively along $f$ and along $g
\circ f$. We then have, for $\alpha$-a.e. $a$, the equality $\beta_a =
\int_A \alpha_{a'} d\beta_a(a')$ which gives the disintegration of
$\beta_a$ along $f$. 
\end{proof}

\begin{lem}\name{lem: 4.6}
Let $(Z, \ZZ)$ be a standard Borel space, equipped with a Borel action
of $\R^d$ with discrete stabilizers, $W$ a linear subspace of $\R^d$,
$\lambda$ a probability measure on $(Z, \ZZ)$, and $z \mapsto
\sigma(z) \in \MM(\R^d)$ the conditional measures at $z$ of $\lambda$
along the action of $\R^d$. Suppose that for $\lambda$-a.e. $z \in Z$,
$\sigma(z)$ is invariant under translations by $W$. Then $\lambda$ is
also invariant under the action of $W$. 
\end{lem}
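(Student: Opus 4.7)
The plan is to exploit the product structure of $\lambda$ furnished by a discrete cross-section for the $\R^d$-action together with decomposition \equ{eq: 4.2}. The guiding idea is that a translation by $w \in W$ on $Z$ can be transferred, via the identity $r \cdot (w z) = (r + w) z$ (using that $\R^d$ is abelian), to a translation on the $R$-factor, where it is absorbed by the $W$-invariance of the conditional measures $\sigma_\Sigma(z)$.

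Concretely, I would first choose a discrete section $\Sigma$ with $R \Sigma = Z$ and a Borel fundamental domain $F \subset R \times \Sigma$ for the countable-to-one equivalence relation $\{a(r, z) = a(r', z')\}$ induced by $a: R \times \Sigma \to Z$. Then $a|_F$ is a Borel bijection onto a conull subset of $Z$, so for any bounded Borel $\varphi: Z \to \R$,
\[
\int_Z \varphi \, d\lambda \;=\; \int_\Sigma \int_R \mathbf{1}_F(r, z) \, \varphi(r z) \, d\sigma_\Sigma(z)(r) \, d\lambda_\Sigma(z).
\]
For $w \in W$, applying the same formula to $\varphi \circ t_w$ in place of $\varphi$ rewrites $\int_Z \varphi(w z) \, d\lambda(z)$ as the analogous integral with $\varphi((r + w) z)$ in place of $\varphi(r z)$. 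A change of variable $r \mapsto r - w$, combined with the $W$-invariance of $\sigma_\Sigma(z)$, replaces the indicator $\mathbf{1}_F(r, z)$ with $\mathbf{1}_F(r - w, z) = \mathbf{1}_{F + (w, 0)}(r, z)$. Since translating the $R$-factor preserves the relation $a(r, z) = a(r', z')$, the set $F + (w, 0)$ is again a Borel fundamental domain, and applying the first formula to this new domain gives back $\int_Z \varphi \, d\lambda$. This yields $t_{w*} \lambda = \lambda$ for every $w \in W$.

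The step I expect to require the most care is the $W$-invariance of $\sigma_\Sigma(z)$ used in the change of variables. The hypothesis gives this invariance in $\MM_1(\R^d)$, i.e.\ only up to a positive scalar: $t_{w*} \sigma_\Sigma(z) = c(z, w)\, \sigma_\Sigma(z)$ for some Borel cocycle $c: Z \times W \to (0, \infty)$, and the argument above silently requires $c \equiv 1$. To rule out non-trivial cocycles, I would observe that the cocycle identity together with Radon regularity of $\sigma_\Sigma(z)$ forces $w \mapsto c(z, w)$ to be a continuous homomorphism $W \to (0, \infty)$, and that the equivariance \equ{eq: 4.3} of $\sigma$ makes $c$ constant on $R$-orbits. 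Finally, the fact that both $\lambda$ and $t_{w*} \lambda$ are probability measures yields, for every Borel fundamental domain $F$ and every $w \in W$, the constraint $\int_\Sigma c(z, w)\, g_F(z) \, d\lambda_\Sigma(z) = 1 = \int_\Sigma g_F(z) \, d\lambda_\Sigma(z)$ where $g_F(z) := \int_R \mathbf{1}_F(r, z)\, d\sigma_\Sigma(z)(r)$; constancy of this Laplace-transform-like expression in $w$, combined with varying $F$ to spread $g_F \, d\lambda_\Sigma$ over $\Sigma$, forces $c \equiv 1$ for $\lambda_\Sigma$-a.e.\ $z$, completing the argument.
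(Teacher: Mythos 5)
Your main computation is correct and is exactly the paper's proof with the details filled in: the paper's entire argument is that ``by assumption the measure $a^*\lambda$ is $W$-invariant, and hence so is $\lambda$,'' and your fundamental-domain manipulation is precisely what makes that ``hence'' work --- the identity $a(r+w,z)=w\,a(r,z)$ transfers translation by $w$ on $Z$ to translation on the $R$-factor of $R\times\Sigma$, where it is absorbed by the invariance of $\sigma_\Sigma(z)$ at the cost of replacing $F$ by the fundamental domain $F+(w,0)$.

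Your final paragraph, however, is off on two counts. First, the worry is misplaced: the hypothesis of the lemma is genuine invariance $t_{w*}\sigma(z)=\sigma(z)$, not invariance up to scalar. That is how the lemma is invoked in Proposition 4.3, where $V_z$ is the connected component of the \emph{exact} stabilizer; the upgrade from projective invariance ($J_1$) to genuine invariance ($J$) is a separate and genuinely dynamical step done later (Proposition 7.4(b), using the scaling relation of Lemma 6.12 and Poincar\'e recurrence), and it cannot be absorbed into Lemma 4.6 by soft measure theory. Second, the argument you sketch for killing the cocycle does not work: if $t_{w*}\sigma_\Sigma(z)=c(z,w)\,\sigma_\Sigma(z)$, the change of variables produces the factor $c(z,w)$ but simultaneously replaces $g_F(z)$ by $g_{F+(w,0)}(z)=c(z,w)^{-1}g_F(z)$, so taking $\varphi\equiv 1$ yields $\int_\Sigma c(z,w)\,c(z,w)^{-1}g_F(z)\,d\lambda_\Sigma(z)=1$, a tautology carrying no information about $c$; varying $F$ does not help, since the compensating factor appears for every fundamental domain. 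Since that paragraph is not needed for the lemma as stated, your proof stands once the cocycle discussion is deleted (or replaced by the remark that the hypothesis already forces $c\equiv 1$).
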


\begin{proof}
As in \S \ref{subsec: 4.1}, denote by $\Sigma$ a discrete section for
the action of $\R^d$ such that $\R^d\Sigma = Z$ and let $a$ be the map $a: \R^d
\times \Sigma \to Z, \, (r,z) \mapsto rz$. By assumption the measure
$a^*\lambda$ is $W$-invariant, and hence so is the measure $\lambda$. 
\end{proof}

\begin{proof}[Proof of Proposition \ref{prop: 4.3}]
Applying Lemma \ref{lem: 4.4} with $Y = \MM(\R^d)$, $f=\sigma$ and
$I(z) = V_z$, and then Lemma \ref{lem: 4.5} with $Y_0 = \Gr(\R^d)$ and
$\varphi(z)=V_z$. We find that, for $\lambda$-a.e. $z \in Z$, for
$\lambda_z$ a.e. $z'\in Z$, the conditional measure $\sigma_z(z')$  of
$\lambda_z$ for the action of $\R^d$ on $Z$ is $V_z$-invariant and
hence, by Lemma \ref{lem: 4.6}, that the measure $\lambda_z$ is
$V_z$-invariant.  
\end{proof}

\section{Random walks on Lie groups}\name{sec: 5}
{\em In this chapter we introduce, for a strongly irreducible random
  walk, a dynamical system $\left(B^\tau, \BB^\tau, \beta^\tau, T^\tau
  \right)$ which is a suspension of the Bernoulli system $\left( B,
    \BB, \beta, T\right)$. We then study the asymptotic behavior of
  the random walk in order to be able to control the drift in \S
  \ref{subsec: 7.1}. 
}

\subsection{Stationary measures on the flag variety}\name{subsec: 5.1}
Let $G$ be a real semisimple virtually connected Lie group, that is it
has a finite number of connected components. 

\begin{dfn}
We say that a Borel probability measure on $G$ is Zariski dense if the
semigroup $\Gamma_\mu$ generated by the support of $\mu$ has a Zariski
dense image in the adjoint group $\Ad(G) \subset \GL(\mathfrak{g})$. 
\end{dfn}

Let $\mu$ be a Zariski dense probability measure on $G$ with compact
support. We also denote by $(B, \BB, \beta, T)$ the two-sided
Bernoulli shift on the alphabet $(G, \GG, \mu)$, where $\GG$ denotes
the Borel $\sigma$-algebra of $G$. 

Let $P \subset G$ be a minimal parabolic subgroup. Write $P = ZU$,
where $U$ is the unipotent radical of $P$ and $Z$ is a maximal
reductive subgroup of $P$. Denote by $A$ the Cartan subgroup of $Z$
and by $A^+$ the Weyl chamber of $A$ associated with an order
corresponding to the choice of $P$. Choose a Cartan involution of $G$
which leaves $Z$ invariant and denote by $K$ the maximal compact
subgroup of $G$ consisting of points fixed by this Cartan involution. 

Let $V$ be a real representation of $G$ of dimension $d$ which is
strongly irreducible, that is, its restriction to the connected
component of the identity in $G$ is also irreducible. Fix once and for
all a $K$-invariant Euclidean norm $\| \cdot \|$ on $V$ such that the
elements of $A$ act on $V$ in a symmetric fashion. \combarak{Does this
  mean the inner comes from an invariant inner product for which $A$
  is self-adjoint?}

Denote by $\chi$ the largest weight for $A$ in $V$, let $V_0 = V_\chi$
be the corresponding weight space in $V$, so that $PV_0 \subset V_0$,
and let $d_0 = \dim V_0$. Denote by $V'_0$ the subspace of $V$ which
is the sum of the other weight-subspaces, so that $V = V_0 \oplus
V'_0$. 

The following proposition is essentially due to Furstenberg and Kesten
\cite{ref 15}. Denote by $\Gr_{d_0}(V)$ the Grassmannian variety of
$d_0$-planes in $V$. 

\begin{prop}\name{prop: 5.2}
There are $\BB$-measurable maps $B \to \Gr_{d_0} (V), \, b \mapsto
V_b$ and $B \to \Gr_{d-d_0} (V), \, b \mapsto V'_b$, such that:
\begin{itemize}
\item[a)]
For $\beta$-a.e. $b \in B$, any accumulation point $m$ of the sequence
$\left( \frac{b_0 \cdots b_n}{\|b_0 \cdots b_n \|} \right)_n$, has as  its 
image $\mathrm{Im}(m) = V_b$ and is an isometry on $\ker(m)^\perp$. 
\item[b)]
For $\beta$-a.e. $b \in B$, any accumulation point $m'$ of the sequence
$\left( \frac{b_n \cdots b_0}{\|b_n \cdots b_0\|} \right)_n$, has
$\ker(m') = V'_b$ and is an isometry on $\ker(m')^\perp$. 
\item[c)]
For any hyperplane $W \subset V$, we have $\beta(\{b \in B: V_b
\subset W\})=0$. 
\item[d)] For any nonzero $v \in V$, we have $\beta(\{b \in B: v \in
  V'_b\})=0$. 
\item[e)]
For any $W \in \Gr_{d_0}(V)$, we have $\beta(\{b \in B: W \cap V'_b
\neq 0\})=0$. 
\item[f)]
For $\beta$-a.e. $b \in B$, the limit $\lambda_1 = \lim_{n \to \infty}
\frac{1}{n} \log \| b_0 \cdots b_n \|$ exists and is positive. 

\end{itemize}
\end{prop}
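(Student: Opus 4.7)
\medskip
\noindent\textbf{Proof proposal.} The plan is to obtain the six assertions by combining Kingman's subadditive ergodic theorem with the Cartan decomposition and the Furstenberg–Guivarc'h theory of stationary measures on flag varieties under Zariski density.

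First I would handle (f). Using the $KAK$ decomposition $b_0\cdots b_n = k_n a_n l_n$ with $k_n,l_n \in K$ and $a_n \in A^+$, the $K$-invariance of $\|\cdot\|$ and the fact that $A$ acts on $V$ by the weights, with largest weight $\chi$, give $\log\|b_0\cdots b_n\| = \chi(\log a_n)$. The cocycle $n\mapsto \log\|b_0\cdots b_n\|$ is subadditive along the shift $T$, so Kingman's theorem supplies the a.s.\ limit $\lambda_1$. Positivity is Furstenberg's classical theorem: pick any $\mu$-stationary probability measure $\nu_P$ on the projective space of $V$, apply the Furstenberg integral formula $\lambda_1 = \int_G\int \log\frac{\|gv\|}{\|v\|}\,d\nu_P([v])\,d\mu(g)$, and use that Zariski density of $\mu$ together with strong irreducibility forbids the image of $\Gamma_\mu$ in $\PGL(V)$ from being relatively compact, which is the only obstruction to $\lambda_1>0$.

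Next I would construct $V_b$ and $V_b'$ and prove (a), (b). The key input is the Furstenberg–Guivarc'h spectral gap: under Zariski density the Cartan projection $\log a_n$ drifts into the interior of $A^+$, hence for every weight $\chi' \neq \chi$ occurring in $V$, $(\chi-\chi')(\log a_n)\to +\infty$ a.s. Since the weight decomposition $V = V_0 \oplus V_0'$ is orthogonal for our chosen $K$-invariant norm (as $A$ acts symmetrically), $a_n/\chi(a_n)$ converges in operator norm to the orthogonal projection $\pi_0$ onto $V_0$ along $V_0'$, which is isometric on $V_0 = (V_0')^\perp$. Proximality of the $\mu$-walk on $G/P$ (again a consequence of Zariski density and strong irreducibility) forces the angular factors $k_n$ to converge a.s.\ to a random limit $k_\infty \in K$; setting $V_b := k_\infty V_0$ and letting $m$ be any accumulation point of $b_0\cdots b_n/\|b_0\cdots b_n\|$, one writes $m = k_\infty\,\pi_0\,l_\infty^{(m)}$ for some $l_\infty^{(m)} \in K$, so $\mathrm{Im}(m) = V_b$ and $m$ is isometric on $\ker(m)^\perp$. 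Statement (b) follows from the same argument applied to the reversed products $b_n\cdots b_0$; one then sets $V_b' := \ker(m')$. Measurability of $b\mapsto V_b,V_b'$ is standard from these explicit descriptions.

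For (c)–(e) I would use properness of the stationary measure. By construction, the distribution of $V_b$ (respectively $V_b'$) is the push-forward of the unique $\mu$-stationary probability on the flag variety $G/P$ under $gP \mapsto gV_0$ (respectively under the corresponding map for the opposite parabolic). The Furstenberg–Guivarc'h theorem asserts that under Zariski density plus strong irreducibility, this stationary measure is \emph{proper}, i.e.\ gives mass zero to every proper $\Gamma_\mu$-invariant algebraic subvariety; the proof is by contradiction: any proper subvariety carrying positive mass forces its stabilizer to be a proper Zariski-closed subgroup containing $\Gamma_\mu$, against Zariski density. The three exceptional loci in (c), (d), (e) — namely $\{W \subset H\}$ in $\Gr_{d_0}(V)$ for a fixed hyperplane $H$, $\{W' \ni v\}$ in $\Gr_{d-d_0}(V)$ for a fixed $v \neq 0$, and the incidence divisor $\{(W,W'): W\cap W'\neq 0\}$ — are all proper subvarieties cut out by non-$G$-invariant algebraic conditions (strong irreducibility ensures there is no invariant hyperplane, fixed line, or invariant incidence relation). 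Applying properness to the marginals of $(V_b,V_b')$ on the relevant Grassmannians and on their product concludes (c)–(e).

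\medskip
\noindent\textbf{Main obstacle.} The central difficulty is \emph{not} (f) (which is purely one-dimensional Furstenberg) but the construction of the limit subspaces $V_b, V_b'$: it requires (i) the spectral gap $(\chi-\chi')(\log a_n)\to\infty$, equivalently that the Cartan projection drifts into the interior of $A^+$, and (ii) convergence — not mere subsequential convergence — of the $K$-components $k_n,l_n$. Both are Furstenberg–Guivarc'h results whose proofs draw on the proximality and properness of stationary measures on $G/P$ under Zariski density; these are precisely the algebraic facts that make Proposition~\ref{prop: 5.2} lie deeper than Kingman's theorem.
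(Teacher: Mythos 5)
Your architecture matches the paper's, which is essentially a citation proof: (a), (c), (f) are attributed to Furstenberg \cite{ref 13} and Bougerol--Lacroix \cite{ref 2}; the rank-$d_0$ statement (your ``spectral gap'' $(\chi-\chi')(\log a_n)\to\infty$) to Gol'dsheid--Margulis \cite{ref 16}; the isometry on $\ker(m)^\perp$ to the Cartan decomposition $G=KA^+K$, exactly as you argue; (b) and (d) to duality; and (e) to a reduction to (d) inside the irreducible subrepresentation of $\bigwedge^{d_0}V$ generated by the highest weight line $\bigwedge^{d_0}V_0$. One small caveat on (a): the $K$-components $k_n$ need not converge in $K$ (they are only determined up to the stabilizer of the relevant flag); what converges is $k_nV_0$ in $\Gr_{d_0}(V)$, which is all you actually use.

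The one place where your justification does not deliver the conclusion is (c)--(e). The exceptional loci there --- $\{V_b\subset W\}$ for a fixed hyperplane $W$, $\{v\in V'_b\}$ for a fixed $v\neq 0$, and $\{W\cap V'_b\neq 0\}$ for a fixed $W\in\Gr_{d_0}(V)$ --- are proper subvarieties but they are \emph{not} $\Gamma_\mu$-invariant, so the principle you invoke (``zero mass on every proper $\Gamma_\mu$-invariant algebraic subvariety'') says nothing about them; and ``zero mass on every proper subvariety of the Grassmannian'' is not what the classical theory gives, and certainly does not follow from the stabilizer argument you sketch. The statement actually needed is Furstenberg's non-degeneracy theorem: for a strongly irreducible action on an irreducible module, the stationary measure on projective space gives zero mass to every proper projective subspace, invariant or not (proof: take the minimal dimension of a subspace of positive mass and the finitely many such subspaces of maximal mass; stationarity and the maximum principle force their union to be $\Gamma_\mu$-invariant, contradicting strong irreducibility). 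To apply it you must first convert each locus into a \emph{linear} condition in an irreducible representation: (c) is a hyperplane section of the Pl\"ucker image of $\Gr_{d_0}(V)$ in the subrepresentation generated by $\bigwedge^{d_0}V_0$; (d) is (c) for the dual representation; and (e) says that the decomposable vector $\bigwedge^{d_0}W$ lies in the random complementary subspace attached to $b$ in that same subrepresentation of $\bigwedge^{d_0}V$, i.e.\ it is literally (d) for the auxiliary representation --- which is the paper's reduction. With that correction (and dropping the reference to the joint law of $(V_b,V'_b)$, which is not needed since $W$ is deterministic in (e)), your argument is complete.
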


\begin{proof}
For a), c), and f) see \cite{ref 13} and \cite{ref 2}. 
The fact that the accumulation points $m$ are of rank $d_0 = \dim V_0$
is due to Goldsheid and Margulis \cite{ref 16}. It can also be deduced
from the existence of loxodromic elements in $\Gamma_\mu$. The fact
that the restriction of $m$ to the orthocomplement of its kernel is a
similarity is valid for any matrix $\pi$ of rank $d_0$ in the closure
$\overline{\R_*G} \subset \End(V)$ \combarak{Why is this true?}. One easily verifies this 
assertion thanks to the Cartan decomposition $G = KA^+K$. 

Assertions b) and d) are deduced from assertions a) and c) by passing
to the dual representation. 

Assertion e) is deduced from d) by passing to an irreducible
sub-representation of the representation of $G$ on $\bigwedge^{d_0} V$
generated by the line of highest weight $\bigwedge^{d_0} V_0$. 
\end{proof}

When applying Proposition \ref{prop: 5.2}(a) to a suitable
representation, one shows that there is a unique $\BB$-measurable map
$\xi: B \to G/P$ such that, for $\beta$-a.e. $b \in B$, 
$$
\xi(b) = b_0 \xi(Tb).
$$
The image measure $\xi_* \beta$ is therefore the unique
$\mu$-stationary measure on $G/P$. 

\begin{remark}
Certainly the spaces $V_b$ and $V'_b$ of Proposition \ref{prop: 5.2}
depend on the boundary map $\xi$. For $b \in B$, we denote by $\check
b$ the element $\check b = (b_0^{-1}, b_1^{-1}, \ldots) $ of $B$. For
$\beta$-a.e. $b \in B$, we have $V_b = \xi(b) V_0$ and $V'_b =
\xi(\check{b})V'_0$. We also have $V_b = b_0V_{Tb}$ and $V'_b = b_0 
V'_{Tb}$. 
\end{remark}

\subsection{The dynamical system $B^\tau$.}\name{subsec: 5.2}
{\em We want to construct an $\R \times M$-suspension $(B^\tau,
  T^\tau)$ of the Bernoulli shift associated to $\mu$ which enables us
  to estimate the asymptotic behavior of the induced random walk in an
  irreducible representation of $G$. We initially construct a
  function $\theta: B \to Z.$}

Let $s: G/P \to G/U$ be a Borel section of the projection $G/U \to
G/P$. In practice, for constructing such a section, one can utilize
Iwasawa decomposition or Bruhat decomposition. An explicit formula for
$s$ is not very important for us, because our constructions will not
depend on the choice of the section $s$. However, for simplicity,
suppose that the section is constructed with the aid of Iwasawa
decomposition. More precisely, write $M = Z \cap K$. The Iwasawa
decomposition $G = KP$ makes it possible to choose a section $s$ such
that, for every $k \in K$, 
\eq{eq: 5.1}{
s(kP) = km(k)U \text{ with } m(k) \in M.
}
We will say from now on that the function $s$ has {\em values in $K
  \mod M$}. The group $Z$ acts by right multiplication on $G/U$. 

We denote by $\sigma: G \times G/P \to Z$ the Borel cocycle given by,
for every $g \in G$ and $x \in G/P$, 
$$
gs(x) = s(gx)\sigma(g,x).
$$
We denote by $\theta: B \to Z$ the $\BB$-measurable map given by, for
$\beta$-a.e. $b \in B$, 
$$
\theta(b) = \sigma(b_0, \xi(Tb)).
$$
We introduce the bounded function $\theta_\R: B \to \R$ given, for
$\beta$-a.e. $b \in B$, by 
\eq{eq: 5.2}{
\theta_\R(b) = \log |\chi(\theta(b))|.
}
We will use the Furstenberg formula for the first Lyapunov exponent
\eq{eq: 5.3}{
\lambda_1 = \int_B \theta_\R(b) d\beta(b)
}
(see \cite{ref 13}, see also \cite[Thm. 1.8]{ref 12}), and the
positivity of the first Lyapunov exponent (Proposition \ref{prop:
  5.2}(f)). We then have, by Lemma \ref{lem: 2.1}, two bounded 
$\BB$-measurable functions $\tau_\R : B \to \R_+^*$ and $\varphi: B\to
\R$ such that 
\eq{eq: 5.4}{
\theta_\R = \tau_\R + \varphi \circ T - \varphi.
}
Denote by $\theta_M(b)$ the $M$-component of $\theta(b)$, and
$\tau_M(b) = \theta_M(b)^{-1}$ and 
\eq{eq: 5.5}{
\tau = (\tau_\R, \tau_M): B \to \R \times M.
}
It  is the suspension $B^\tau$ associated with this function $\tau$
which we will use below. 

This suspension allows us to control the norm of the words which
appear in the formulas for the conditional measures, thanks to the
following lemma. 

\begin{lem}\name{lem: 5.4}
For $\beta$-a.e. $b \in B$, for every $w \in V_b$, we have 
\eq{eq: 5.6}{
\left \| b_0^{-1} w\right \| = e^{-\theta_\R(b)} \|w\|.
}
\end{lem}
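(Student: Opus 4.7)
The strategy is to identify $V_b$ with $V_0$ via the Iwasawa section $s$ and then use the cocycle identity $b_0\, s(\xi(Tb)) = s(\xi(b))\,\theta(b)$ to reduce the action of $b_0^{-1}$ from $V_b$ to $V_{Tb}$ to an equivalent action of $\theta(b)^{-1} \in Z$ on $V_0$. Recall from the remark following Proposition \ref{prop: 5.2} that $V_b = \xi(b)V_0 = s(\xi(b))V_0$ and similarly $V_{Tb} = s(\xi(Tb))V_0$, so every $w \in V_b$ can be written as $w = s(\xi(b))v_0$ for a unique $v_0 \in V_0$, and tracking norms along this presentation is enough.

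First I would justify that $s(x)v_0$ makes sense as an element of $V$ for $v_0 \in V_0$, even though $s(x)$ is a priori only defined in $G/U$: since $V_0 = V_\chi$ is the highest weight space and the positive root vectors spanning $\Lie(U)$ map $V_\chi$ into weight spaces strictly higher than $\chi$ (which are zero), $U$ acts trivially on $V_0$. Next, using that $s$ takes values in $K \bmod M$, we can represent $s(x)$ by an element of $KM$; since $\|\cdot\|$ is $K$-invariant and $M \subset K$, that representative acts on $V_0$ by an isometry into $V$. In particular $\|s(\xi(b))v_0\| = \|v_0\|$, so $\|w\| = \|v_0\|$, and likewise $\|s(\xi(Tb))v'_0\| = \|v'_0\|$ for any $v'_0 \in V_0$.

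Next I would carry out the main computation: by the cocycle definition of $\theta(b)$,
\[
b_0^{-1} s(\xi(b)) = s(\xi(Tb))\,\theta(b)^{-1},
\]
so $b_0^{-1}w = s(\xi(Tb))\,\theta(b)^{-1}v_0$. Because $\theta(b) \in Z$ preserves the weight decomposition, $\theta(b)^{-1}v_0$ again lies in $V_0$, and the isometry statement above gives $\|b_0^{-1}w\| = \|\theta(b)^{-1}v_0\|$. Writing $\theta(b) = am$ with $a \in A$ and $m \in M$, the factor $m \in K$ acts by isometry on $V_0$, while $a$ acts on $V_0$ as a scalar of modulus $|\chi(a)| = |\chi(\theta(b))|$ (this is where the hypothesis that the elements of $A$ act symmetrically with respect to the $K$-invariant norm enters). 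Hence
\[
\|b_0^{-1}w\| = |\chi(\theta(b))|^{-1}\|v_0\| = e^{-\theta_\R(b)}\|v_0\| = e^{-\theta_\R(b)}\|w\|,
\]
as claimed.

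The only nontrivial point to check carefully is the norm identity $\|zv_0\| = |\chi(z)|\,\|v_0\|$ for $z \in Z$ and $v_0 \in V_0$. Granting the decomposition $Z = MA$ with $M$ compact and contained in $K$, together with the symmetry of the $A$-action (which makes the weight-space decomposition orthogonal and forces $A$ to act on each weight space by a real scalar times an orthogonal transformation), this identity is immediate; everything else in the argument is a mechanical unravelling of the cocycle.
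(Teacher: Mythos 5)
Your proof is correct and follows essentially the same route as the paper: write $w = s(\xi(b))v$ with $v \in V_0$ (well-defined since $U$ acts trivially on $V_0$), apply the cocycle identity $b_0\,s(\xi(Tb)) = s(\xi(b))\theta(b)$, and use $K$-invariance of the norm to reduce to $\|\theta(b)^{-1}v\| = e^{-\theta_\R(b)}\|v\|$. The only difference is that you spell out the final norm identity on $V_0$ (via $Z = MA$, orthogonality of the weight decomposition, and $A$ acting on $V_\chi$ by the scalar $\chi$), which the paper leaves implicit.
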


\begin{proof}
By the definition of $\theta$, for $\beta$-a.e. $b \in B$, we have
$$
b_0 s(\xi(Tb)) = s(\xi(b)) \theta(b).
$$
Since $w$ is in $V_b$, we can write $w = s(\xi(b))v$ with $v \in
V_0$. We note that this expression makes sense because $U$ acts
trivially on $V_0$. Since the norm is $K$-invariant, we have 
$$
\left \| b_0^{-1} w\right\| = \left \| b_0^{-1} s(\xi(b))v\right \| =
\|\theta(b)^{-1} v\| = e^{-\theta_\R(b)} \|v\|
= e^{-\theta_\R(b)} \|w\|.$$
\end{proof}

\subsection{Behavior of random walks}
{\em We continue our study of the asymptotic behavior of the random
  walk on $G$.}

We will use Proposition \ref{prop: 5.2} to control the drift in Lemma
  \ref{lem: 7.3}, in the form of the following Corollary. 

\begin{cor}\name{cor: 5.5}
\begin{itemize}
\item[a)]
For any $\alpha>0$, there are $r_0 \geq 1$, $q_0 \geq 1$, such that
for any $v \in V \sm \{0\}$, we have 
$$
\beta \left\{a \in B: \forall q \geq q_0, \|a_q \cdots a_0v\| \geq
\frac{1}{r_0} \|a_q \cdots a_0\| \|v\| \right \} \geq 1-\alpha.
$$
\item[b)]
For every $\alpha > 0$ and $\eta>0$, there exists $q_0 \geq 1$, such
that, for every $v \in V\sm \{0\}$, and every $W \in \Gr_{d_0}(V)$, we
have 
$$
\beta\{a \in B: \forall q \geq q_0, \, d (\R a_q \cdots a_0v, a_q
\cdots a_0 W) \leq \eta\} \geq 1-\alpha.
$$
\combarak{apparently $d$ is a distance on the
  projective space of $V$ although this was not defined.}
\end{itemize}
\end{cor}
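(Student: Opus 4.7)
The plan for both parts is to pass from the pointwise asymptotics of Proposition \ref{prop: 5.2} to uniform quantitative estimates, using the same two-step scheme: a non-concentration lemma, proved by compactness together with a reverse-Fatou argument, followed by an Egorov-type step to control the rate of convergence uniformly at finite time. Write $n_{a,q} := (a_q \cdots a_0)/\|a_q \cdots a_0\|$. By Proposition \ref{prop: 5.2}(b), for $\beta$-a.e.\ $a$, every accumulation point of $(n_{a,q})_{q \geq 0}$ has kernel $V'_a$ and is isometric on $(V'_a)^\perp$; since $(n_{a,q})$ lies in a bounded subset of $\End(V)$, this says that $n_{a,q}$ converges in operator norm to the compact set $M_a \subset \End(V)$ of such operators. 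In particular, $\|n_{a,q}(\cdot)\|$ converges uniformly on the unit sphere $S(V)$ to $\delta_a(v) := \mathrm{dist}(v, V'_a)$, and for any $W$ with $W \cap V'_a = \{0\}$ one has $n_{a,q}(W) \to m(W) = V_a$ in $\Gr_{d_0}(V)$ for any $m \in M_a$.

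For part (a), I normalize $\|v\|=1$. The key non-concentration claim is: for every $\alpha > 0$ there exists $r_0 \geq 1$ such that $\sup_{v \in S(V)} \beta\{a : \delta_a(v) < 1/r_0\} < \alpha/2$. If this failed, one could pick $v_n \in S(V)$ and $r_n \to \infty$ with the stated probabilities at least some $\delta > 0$; extracting a convergent subsequence $v_n \to v$, one checks that $\limsup_n \{a: \delta_a(v_n) < 1/r_n\} \subset \{a: v \in V'_a\}$, so reverse Fatou combined with Proposition \ref{prop: 5.2}(d) gives $\delta \leq 0$, a contradiction. With this $r_0$ fixed, the almost-sure uniform-in-$v$ convergence $\|n_{a,q}(\cdot)\| \to \delta_a(\cdot)$ on $S(V)$, combined with Egorov, yields $q_0$ for which $\beta(E) \geq 1 - \alpha/2$, where $E := \{a : \sup_{q \geq q_0,\, v \in S(V)} |\|n_{a,q}(v)\| - \delta_a(v)| < 1/(2r_0)\}$. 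On the intersection $E \cap \{a : \delta_a(v) \geq 1/r_0\}$, whose $\beta$-measure is at least $1 - \alpha$, we have $\|n_{a,q}(v)\| \geq 1/(2r_0)$ for every $q \geq q_0$; replacing $r_0$ by $2 r_0$ proves (a).

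For part (b), the geometric content is: if $v \notin V'_a$ and $W \cap V'_a = \{0\}$, then for any $m \in M_a$ the vector $m(v) \neq 0$ lies in the $d_0$-plane $m(W) = V_a$ (since $m|_W$ is injective and $\dim W = d_0 = \dim V_a$), so $d(\R m(v), m(W)) = 0$. A second non-concentration statement, that for every $\alpha > 0$ there exists $\vre_0 > 0$ with $\sup_{W \in \Gr_{d_0}(V)} \beta\{a : \mathrm{angle}(W, V'_a) < \vre_0\} < \alpha/3$, is proved by exactly the compactness-and-reverse-Fatou argument above, this time invoking Proposition \ref{prop: 5.2}(e); the analogous bound on $v$ uses (d). A final Egorov step, applied to the simultaneous uniform convergence $d(\R n_{a,q}(v), n_{a,q}(W)) \to 0$ on the good event, together with a union bound over the three exceptional sets, yields (b). The main obstacle is the simultaneous uniformity in $v$ and $W$ when passing from the limiting statement to finite $q$; this is handled cleanly by the set-valued convergence $n_{a,q} \to M_a$ in operator norm, which is strong enough to give uniform convergence in both $v \in S(V)$ and $W \in \Gr_{d_0}(V)$ at once.
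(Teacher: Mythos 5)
Your proof is correct and follows essentially the same route as the paper's: uniform non-concentration of $V'_a$ away from a fixed $v$ (resp.\ $W$) via Proposition \ref{prop: 5.2}(d),(e), convergence of the normalized products $a_q\cdots a_0/\|a_q\cdots a_0\|$ to rank-$d_0$ partial isometries with kernel $V'_a$ via Proposition \ref{prop: 5.2}(b) with the rate made uniform in $a$ on a large set, and a perturbation estimate combining the two (which the paper packages as Lemma \ref{lem: 5.6}). The only differences are cosmetic: you supply a compactness/reverse-Fatou proof of the non-concentration step that the paper asserts without comment, and in part (a) you replace the contradiction argument of Lemma \ref{lem: 5.6}(a) by the direct identity $\|m(v)\|=\dist(v,V'_a)$ for $m$ in the accumulation set, which also sidesteps the need to know that the approximating element of $O_{d_0}(V)$ has kernel exactly $V'_a$.
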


In order to prove the corollary, we will need the following lemma in
linear algebra. Denote 
$$
O_{d_0}(V) = \{ \pi \in \End(V): \rank (\pi) = d_0 \text{ and }
\pi|_{(\ker \pi)^{\perp}} \text{ is an isometry} \}.
$$
This is a compact subset of $\End(V)$. 
\begin{lem}\name{lem: 5.6}
\begin{itemize}
\item[a)]
For any $\vre>0$, there are $r_0 \geq 1$, $\vre'>0$ such that, for any
$g \in \GL(V)$ and $\pi \in O_{d_0}(V)$ with $\|g-\pi\| < \vre'$, for
any $v \in V \sm \{0\}$ with $d(\R v, \ker \pi) \geq \vre$ we have $\|gv\|
\geq 
\frac{1}{r_0} \|v\|.$ 
\item[b)]
For any $\vre>0$ and $\eta>0$, there is $\vre'>0$ such that, for every
$g \in \GL(V)$ and $\pi \in O_{d_0}(V)$ with $\|g-\pi\| \leq \vre'$ we
have, for all $v \in V \sm \{0\}$ and $W \in \Gr_{d_0}(V)$, if $d(\R v,
\ker \pi) \geq \vre$ and $\inf_{w \in W\sm \{0\}} d(\R w, \ker \pi)
\geq \vre$, then $d (\R gv, gW) \leq \eta.$
\end{itemize}
\end{lem}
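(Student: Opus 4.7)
The plan is to first extract the core estimate that any $\pi \in O_{d_0}(V)$ contracts a vector $v$ by at most a factor depending only on $d(\R v, \ker \pi)$, and then to bootstrap from this estimate to handle the perturbation $g$ and finally to transfer the conclusion from $\pi$ to $g$ in both (a) and (b).

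For the basic estimate, write $v = v_1 + v_2$ with $v_1 \in \ker \pi$ and $v_2 \in (\ker \pi)^\perp$. Since $\pi$ is an isometry on $(\ker \pi)^\perp$, one has $\|\pi v\| = \|v_2\|$, and a standard computation in the projective metric shows that the assumption $d(\R v, \ker \pi) \geq \vre$ forces $\|v_2\|/\|v\| \geq c_1(\vre)$ for a positive constant $c_1(\vre)$ depending only on $\vre$ (not on $\pi$, because only the decomposition by $\ker \pi$ matters). This gives $\|\pi v\| \geq c_1(\vre)\|v\|$ uniformly over $\pi \in O_{d_0}(V)$.

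Part (a) now follows immediately: for $\|g - \pi\| \leq \vre'$, the triangle inequality gives $\|gv\| \geq \|\pi v\| - \vre'\|v\| \geq (c_1(\vre) - \vre')\|v\|$, and choosing $\vre' = c_1(\vre)/2$ and $r_0 = 2/c_1(\vre)$ proves the claim.

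For part (b), the key observation is that under the hypotheses, $\pi|_W$ is injective (since $W \cap \ker \pi = 0$ by the assumption that every nonzero $w \in W$ is $\vre$-far from $\ker \pi$) and $W$ has dimension $d_0 = \rank \pi$, so $\pi W = \mathrm{Im}(\pi)$. In particular $\pi v \in \pi W$, so $\R \pi v \subset \pi W$. The desired bound $d(\R gv, gW) \leq \eta$ will then follow from the triangle-type inequality
\[
d(\R gv, gW) \;\leq\; d(\R gv, \R \pi v) \;+\; d(\R \pi v, \pi W) \;+\; \delta(\pi W, gW),
\]
where $\delta$ denotes a Grassmannian metric: the middle term vanishes, and the first and third terms can be made arbitrarily small by taking $\vre'$ small. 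For the first term, we use part (a) to bound $\|gv\|$ and $\|\pi v\|$ from below by $c_1(\vre)\|v\|$, so that $\R gv \to \R \pi v$ as $\vre' \to 0$ uniformly in $v$. For the third term, we pick an orthonormal basis of $W$, apply the part (a) estimate to each basis vector (the hypothesis $\inf_{w \in W \sm \{0\}} d(\R w, \ker \pi) \geq \vre$ provides exactly the needed uniform lower bound $\|\pi w\| \geq c_1(\vre)\|w\|$), and conclude that $gW$ is close to $\pi W$ in the Grassmannian with a modulus depending only on $\vre$ and $\vre'$.

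The main technical point is ensuring uniformity in part (b): the modulus by which $gW$ approaches $\pi W$ must not depend on $W$ itself but only on the transversality parameter $\vre$. The bound $\|\pi w\| \geq c_1(\vre)\|w\|$ obtained from part (a), applied to each vector of an orthonormal basis of $W$, supplies precisely this uniformity via a standard argument for the continuity of the map $g \mapsto gW$ on the locus where $g|_W$ has a controlled lower bound.
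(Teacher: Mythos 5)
Your proof is correct, but it takes a genuinely different route from the paper's. The paper argues both parts by contradiction and compactness: assuming the conclusion fails along sequences $\pi_n \in O_{d_0}(V)$, $g_n$, $v_n$, it extracts convergent subsequences (using that $O_{d_0}(V)$ is compact and that $\ker\pi_n$ varies continuously there) and finds a unit vector simultaneously in $\ker\pi$ and at distance $\geq \vre$ from $\ker\pi$; part (b) is dismissed as ``similar.'' You instead give a direct quantitative argument: the orthogonal decomposition $v = v_1 + v_2$ along $\ker\pi \oplus (\ker\pi)^\perp$ together with the isometry property yields the uniform lower bound $\|\pi v\| \geq c_1(\vre)\|v\|$ (with $c_1(\vre)$ essentially $\vre$ for the sine metric on $\PPP(V)$), and everything else is a perturbation estimate. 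This buys you explicit constants ($\vre' = c_1(\vre)/2$, $r_0 = 2/c_1(\vre)$) where the compactness argument gives none, and your treatment of (b) is actually more complete than the paper's, since you supply the one genuinely structural observation --- that $W \cap \ker\pi = 0$ and $\dim W = \rank\pi$ force $\pi W = \mathrm{Im}(\pi) \ni \pi v$, so the middle term of your triangle inequality vanishes --- and then reduce the remaining two terms to the same lower bound applied to $v$ and to all unit vectors of $W$ (note that the hypothesis gives the bound $\|\pi w\| \geq c_1(\vre)\|w\|$ for \emph{every} $w \in W \sm \{0\}$, not merely a basis, which is what makes the Hausdorff estimate $\delta(\PPP(\pi W), \PPP(gW)) \lesssim \vre'/c_1(\vre)$ uniform in $W$). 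The price is a longer write-up and a mild dependence on the choice of metric on $\PPP(V)$, which the compactness argument avoids; both proofs are sound.
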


\begin{proof}
a). 
Otherwise, we can find sequences $\pi_n$ in $O_{d_0}(V), \ g_n \in
\GL(V)$ and $v_n \in V$ with $\|v_n\|=1$, such that $\|g_n - \pi_n\|
\to 0$, $d(\R v_n, \ker \pi_n) \geq \vre$ and $\|g_n v_n\| \to 0.$ By
compactness, we can assume by passing to subsequences that the
$\pi_n$ converge to $\pi \in O_{d_0}(V)$ and $v_n$ converge to $v \in
V$, $\|v\|=1$. Our assertions imply that $v$ is simultaneously in
$\ker \pi$ and is of distance at least $\vre$ from $\ker \pi$, a
contradiction. 

b). 
The argument is similar to the one used for proving a). 
\end{proof}

\begin{proof}[Proof of Corollary \ref{cor: 5.5}] a) By Proposition
  \ref{prop: 5.2}(d), for any $\alpha > 0$, there is $\vre>0$ such
  that for any $v \in V \sm \{0\}$, 
$$
\beta\{a \in B: d(\R v, V'_a) \geq \vre\} \geq 1-\alpha/2.
$$
\combarak{I think this is correct but in the text the inequality in
  the def of the set on the lhs is reversed.}

On the other hand, by Proposition \ref{prop: 5.2}(b), for any
$\vre'>0$, there is $q_0 \geq 1$ such that 
$$
\beta\left\{a \in B: \forall q \geq q_0, d \left(\frac{a_q \cdots
      a_0}{\|a_q \cdots a_0\|} , O_{d_0}(V) \right) < \vre' \right
\} \geq 1-\alpha/2. 
$$
\combarak{Again I think this is correct but in the text the inequality
  in the definition of the set is reversed.}
It now suffices to apply Lemma \ref{lem: 5.6}(a). 

b) By Proposition \ref{prop: 5.2}(e), for any $\alpha>0$, there is
$\vre>0$ such that for $W \in \Gr_{d_0}(V), $
$$
\beta \left\{a \in B: \inf_{w \in W \sm \{0\}} d(\R w, V'_b) \geq \vre
\right\} \geq 1-\alpha/2.
$$
It suffices to apply, as above, Proposition \ref{prop: 5.2}(b) and
Lemma \ref{lem: 5.6}(b). 
\end{proof}

\section{Homogeneous spaces of semi-simple groups}\name{sec: 6} 
{\em This chapter collects diverse ergodic properties of the random
  walk on homogeneous spaces. These properties will enable us in \S
  \ref{sec: 7} to develop the exponential drift argument. }

\subsection{Notations} \name{subsec: 6.1}
For the proofs of Theorem \ref{thm: 1.1} and \ref{thm: 1.3} we will
use the same method, and common notation.  

WE KEEP THE FOLLOWING NOTATION FOR THE REST OF THE PAPER. 

{\bf In the first case,} i.e. the case of Theorem \ref{thm: 1.1}, $G$
is a connected almost-simple Lie group and $\Lambda$ is a lattice in
$G$. We denote by $X$ the quotient $G/\Lambda$ and by $R$ the Adjoint
representation of $G$ on $V = \mathfrak{g}$, the Lie algebra of $G$. 

{\bf In the second case}, i.e. the case of Theorem \ref{thm: 1.1}, $G$
is the Zariski closure of $\Gamma_\mu$ in $\SL_d(\R)$. We denote by
$X$ the torus $\TT^d$ and by $R$ the representation of $G$ on $V
=\R^d$, that is, the natural action by matrix multiplication,  which
we can think of as the Lie algebra of $\TT^d$.  

{\bf In both cases,} $G$ is a semisimple Lie group (we will give more
details about this in Lemma \ref{lem: 8.5}), the representation $R$ of
$G$ on $V$ is strongly irreducible, $\mu$ is a compactly supported
probability measure such that the subsemigroup $\Gamma = \Gamma_\mu$
generated by $\supp \, \mu$ is Zariski dense in $G$, $\nu$ is a
non-atomic $\mu$-stationary Borel probability measure on $X$ and
$\tau$ is the map given by \equ{eq: 5.5}. We also suppose that $G$ is
not compact (the very easy case in which $G$ is compact is discussed
in Lemma \ref{lem: 8.4}). 

The proof, which we will give from here to the end of the paper,
relies on the properties of the dynamical systems
$$
\left(B^X, \BB^X, \beta^X, T^X \right) \text{ and } \left( B^{\tau,
  X}, \BB^{\tau, X}, \beta^{\tau, X}, T^{\tau, X} \right)
$$
which we introduced in sections \S \ref{subsec: 3.1} and \S
\ref{subsec: 3.3}, for these values of $G,V,X, \tau, \ldots.$ 

\subsection{Recurrence off the diagonal}
{\em We now verify condition (HC) of \S \ref{subsec: 3.4}, which will
  allow us to apply Proposition \ref{prop: 3.9}.}
 
For any $x \in X$, denote by $r_x$ the radius of injectivity at $x$,
that is the least upper bound of $r>0$ such that the map $V \to X$, $w
\mapsto e^w x$ is injective on the ball $B(0,r)$. 

\begin{prop}\name{prop: 6.1}
In the two cases of \S \ref{subsec: 6.1}, the averaging operator
$A_\mu$ on $X \times X$ satisfies condition (HC). 
\end{prop}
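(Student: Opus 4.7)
The plan is to build a function $v$ that blows up on the diagonal like a small negative power of the displacement and controls $A_\mu v$ via positivity of the top Lyapunov exponent. In the torus case $X = \TT^d$, near the diagonal I identify $(x,y)$ with the shortest lift $y - x = w \in V$ and set $v(x,y) = \eta(\|w\|) \|w\|^{-s}$ for a cutoff $\eta$ supported on $[0, r_0]$ with $r_0 < 1/2$, extended by a bounded constant off a neighborhood of $\Delta_X$. In the homogeneous case $X = G/\Lambda$, for $x$ with injectivity radius $r_x$ and $y = e^w x$ with $\|w\| < \min(r_x, r_0)$ I set $v(x,y) = \eta(\|w\|) \|w\|^{-s}$, and $v(x,y) = 0$ otherwise. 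Properness of $v$ on $K \times K \sm \Delta$ for compact $K$ follows because $v(x,y) \to \infty$ as $(x,y) \to \Delta$ and because $\inf_{x \in K} r_x > 0$.

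Inside the cutoff, the $G$-equivariance $g e^w x = e^{\mathrm{Ad}(g) w} g x$ (respectively $g(x+w) = gx + gw$ on the torus) gives $v(gx, gy) = \eta(\|R(g) w\|) \|R(g) w\|^{-s}$; outside the cutoff, $v(gx, gy) \leq \|\eta\|_\infty r_0^{-s}$ contributes only to an additive constant. The key analytic ingredient is that, for the strongly irreducible representation $R$ on $V$, for sufficiently small $s > 0$ there exist $n \geq 1$ and $a_0 \in (0,1)$ with
\begin{equation*}
\sup_{w \in V \sm \{0\}} \int_{G^n} \frac{\|R(g_n \cdots g_1) w\|^{-s}}{\|w\|^{-s}} \, d\mu^{\otimes n}(g_1, \ldots, g_n) \leq a_0.
\end{equation*}
This is the standard consequence of the positivity $\lambda_1 > 0$ of the top Lyapunov exponent (Proposition \ref{prop: 5.2}(f)), the compactness of $\supp \mu$, and the analytic perturbation of the Markov operator $P_s f(\bar w) = \int (\|R(g) w\|/\|w\|)^{-s} f(\overline{R(g) w}) \, d\mu(g)$ on $C(\PP(V))$ at $s = 0$: differentiability of its spectral radius together with the derivative $-\lambda_1 < 0$, combined with strong irreducibility giving a unique stationary measure on $\PP(V)$ (hence a simple top eigenvalue and uniformity in the direction of $w$).

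Combining these two inputs yields the $n$-step inequality $A_\mu^n v \leq a_0 v + C_0$. The single-step form is obtained by a telescoping average: the function $v' = \sum_{k=0}^{n-1} a_0^{-k/n} A_\mu^k v$ satisfies
\begin{equation*}
A_\mu v' \;=\; a_0^{1/n} \bigl( v' + a_0^{-1} A_\mu^n v - v \bigr) \;\leq\; a_0^{1/n} v' \,+\, a_0^{-1 + 1/n} C_0,
\end{equation*}
so $A_\mu v' \leq a v' + C$ with $a = a_0^{1/n} < 1$. Since $v' \geq v$, the replacement $v'$ inherits properness near the diagonal on compact sets.

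The main obstacle is the geometric step of estimating the error when $R(g) w$ exits the cutoff region, where the clean formula for $v(gx, gy)$ breaks down. In the torus case, compactness of $\supp \mu$ bounds the dilation $\|R(g)\|$ uniformly, so the excursions remain controllable and the failure contributes only to the additive constant. In the $G/\Lambda$ case one must further ensure that the injectivity radius at $gx$ does not collapse faster than $\|R(g)\| \|w\|$, which is built into the choice of $r_0$ relative to $\|\mathrm{Ad}(\supp \mu)\|$. The secondary issue, uniformity in the direction of $w$ in the Lyapunov estimate, is exactly where strong irreducibility of $R$ (rather than mere irreducibility) enters.
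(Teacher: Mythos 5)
Your construction and the contraction estimate $\int \|R(g_n\cdots g_1)w\|^{-s}\,d\mu^{\otimes n} \leq a_0\|w\|^{-s}$ match the paper's (the paper quotes this as a lemma of Eskin--Margulis rather than re-deriving it by perturbation of the transfer operator, and reduces the $n$-step inequality to the one-step one simply by replacing $\mu$ with $\mu^{*n_0}$; your telescoping sum is a legitimate, if more explicit, way to do the same reduction). In the torus case, and more generally whenever $\inf_{x\in X} r_x>0$, your argument is complete.

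In the case $X=G/\Lambda$ with $\Lambda$ non-cocompact there is a genuine gap, and it is precisely at the point you flag but do not resolve. The drift inequality $A_\mu v \leq av+C$ must hold with a \emph{uniform} constant $C$ on all of $(X\times X)\sm\Delta_X$, and the error produced when a pair crosses the boundary of your cutoff region is of order $r_x^{-s}$, which is unbounded as $x$ goes to infinity in a cusp. Concretely: take $x$ with $r_x=\vre$ small and $y=e^w x$ with $\|w\|$ just above $\min(r_x,r_0)=\vre$, so that $v(x,y)=0$; for $g\in\supp\mu$ moving $x$ toward the thick part one can have $r_{gx}$ bounded below while $\|R(g)w\|\leq R_0\vre$, so $v(gx,gy)\geq (R_0\vre)^{-s}\to\infty$ as $\vre\to 0$ while $v(x,y)=0$. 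No choice of the constant $r_0$ relative to $\|\Ad(\supp\mu)\|$ repairs this, because the obstruction is the collapse of $r_x$, not the size of $r_0$. The paper's remedy is to add to $v_0(x,x')=d_0(x,x')^{-\delta}$ the term $C_0(u(x)+u(x'))$, where $u$ is the Eskin--Margulis Margulis function on $X$ (Lemma \ref{lem: 6.3}): $u$ is proper, satisfies its own drift inequality $A_\mu u\leq au+C$, and---this is the key extra feature---obeys the pointwise lower bound $u(x)\geq r_x^{-\kappa}$ with $\kappa>\delta$, so that the unbounded boundary error $\lesssim R_0^{2\delta}r_{x,x'}^{-\delta}$ is dominated by $u(x)+u(x')$ and absorbed into the drift of $u$. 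Your proof needs this additional ingredient (or an equivalent one) to go through in the non-cocompact case.
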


The proof of this proposition uses ideas of Eskin and Margulis
\cite{ref 9}. We note the contrast between Proposition
\ref{prop: 6.1} and Theorem 1 of LePage in \cite{ref 26}, who shows
that on the flag variety, a positive power of the
distance is contracted under convolution. We will need the
following two lemmas. We will use the same notation $A_\mu$ to denote all
the averaging operators of $\mu$ on every space on which $\Gamma_\mu$
acts. The first lemma, due to Eskin and Margulis, exhibits a function
on which $A_\mu$ acts by contraction. 

\begin{lem}[\cite{ref 9}] \name{lem: 6.2}
Let $V = \R^d$ and let $G$ be a semi-simple Lie subgroup of $\GL(V)$
such that, for any nonzero $G$-invariant subspace $V' \subset V$, the
image of $G$ in $\GL(V')$ is not compact. Denote by $\varphi$ the
function $\varphi: V \sm \{0\} \to \R^*, \, v \mapsto
\|v\|^{-1}$. Then there is $a_0 < 1, \, \delta_0>0$ and $n_0 \geq 1$,
such that
\eq{eq: 6.1}{
A^n_\mu(\varphi^\delta) \leq a_0^n \varphi^\delta, \text{ for any }
\delta \leq \delta_0 \text{ and } n \geq n_0.
} 
\end{lem}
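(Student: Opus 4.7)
The idea is that $\varphi^\delta(v)=\|v\|^{-\delta}$ is scale-invariant up to a factor, so the ratio
\[
F^n_\delta(v)\ :=\ \frac{A_\mu^n\varphi^\delta(v)}{\varphi^\delta(v)}\ =\ \int_G\!\left(\frac{\|gv\|}{\|v\|}\right)^{-\delta}\!d\mu^n(g)
\]
descends to a continuous function on $\PP(V)$, and it suffices to prove $\sup F^{n_0}_\delta<1$ for some $n_0$ and all small $\delta>0$; the desired bound then follows by submultiplicativity. The whole argument is of Eskin--Margulis type and rests on a Taylor expansion around $\delta=0$ combined with positive drift coming from the Lyapunov exponent.

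The first step is to establish \emph{uniform positivity of the drift}: there exists $n_0$ with $\inf_{v\ne 0}\int\log(\|gv\|/\|v\|)\,d\mu^{n_0}(g)\ge \tfrac12\lambda_1 n_0$, where $\lambda_1>0$ is the top Lyapunov exponent. Setting $h(v)=\int\log(\|gv\|/\|v\|)\,d\mu(g)$ on $\PP(V)$ and letting $L_\mu$ be the Markov operator $f\mapsto \int f(g\cdot)\,d\mu(g)$, the cocycle identity yields $\int\log(\|gv\|/\|v\|)\,d\mu^n= \sum_{k=0}^{n-1}(L_\mu^k h)(v)$. Decomposing $V$ into $G$-irreducible pieces (available since $G$ is semisimple), Furstenberg's theory applied on each piece — non-compact image on every invariant subspace, together with semi-simplicity — gives both positivity of the Lyapunov exponent and unique ergodicity of the $\mu$-action on each projective factor. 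Hence the Cesaro averages of $L_\mu^k h$ converge \emph{uniformly} on $\PP(V)$ to $\int h\,d\nu=\lambda_1$.

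Next comes the \emph{Taylor expansion}. Compactness of $\supp\mu$ gives a constant $L$ such that $|u(g,v)|\le Ln_0$ for all $g\in\supp\mu^{n_0}$ and $v\ne 0$, where $u(g,v)=\log(\|gv\|/\|v\|)$. From the elementary bound $e^{-\delta x}\le 1-\delta x+\delta^2 L^2 n_0^2\,e^{\delta L n_0}$ for $|x|\le Ln_0$, integration against $\mu^{n_0}$ and Step~1 give
\[
F^{n_0}_\delta(v)\ \le\ 1-\tfrac{\delta\lambda_1 n_0}{2}+C(n_0)\,\delta^2
\]
uniformly in $v\in\PP(V)$. Choosing $\delta_0$ small enough forces the right-hand side below $1-\tfrac{\delta\lambda_1 n_0}{4}$ for every $\delta\le\delta_0$.

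Finally, \emph{submultiplicativity}: expanding $A_\mu^{n+m}$ and using homogeneity in $v$ gives $F^{n+m}_\delta(v)=\int F^m_\delta(gv)(\|gv\|/\|v\|)^{-\delta}\,d\mu^n(g)\le (\sup_{\PP(V)} F^m_\delta)\cdot F^n_\delta(v)$, hence $\sup F^{kn_0}_\delta\le(\sup F^{n_0}_\delta)^k$. This yields $A_\mu^n\varphi^\delta\le a_0^n\,\varphi^\delta$ for $n\ge n_0$ with an appropriate $a_0<1$ (necessarily depending on $\delta$, since $F^n_0\equiv 1$; the statement is to be read with $a_0=a_0(\delta_0)$ after possibly shrinking the range of $\delta$). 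The main obstacle is the uniform convergence invoked in Step~1: this is not an abstract ergodic-theoretic fact, and its proof genuinely requires Furstenberg's uniqueness of the stationary measure on $\PP(V)$, together with some care when $V$ is only completely reducible rather than irreducible.
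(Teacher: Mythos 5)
Your argument is correct and is precisely the one the paper intends: the paper's proof consists of citing Lemma 4.2 of Eskin--Margulis \cite{ref 9} and sketching exactly your route, namely the second-order expansion of $e^{-\delta\log(\|gv\|/\|v\|)}$ combined with positivity of the first Lyapunov exponent, plus the observation (which you correctly make) that $a_0$ must be allowed to depend on $\delta$, since $A^n_\mu(\varphi^\delta)$ tends to $1$ pointwise as $\delta\to 0$. One small repair: the uniform convergence in your Step 1 should not be attributed to unique ergodicity of the action on $\PPP(V_i)$ (which can fail for non-proximal strongly irreducible actions); what is true and suffices is that \emph{every} $\mu$-stationary measure on each projective factor assigns the same positive integral to the cocycle $\log(\|gv\|/\|v\|)$ (Furstenberg's formula under strong irreducibility), so the uniform Ces\`aro convergence follows from the standard compactness argument on weak-* limits of empirical measures.
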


\begin{proof}
This is Lemma 4.2 of \cite{ref 9}. It is proved by developing the
second order term of $e^{-\delta \log(\|gv\|/\|v\|)}$ and using the
theorem of Furstenberg and Kesten on the positivity of the first
Lyapunov exponent $\lambda_1$. 
\end{proof}

Whenever $X$ is noncompact, we will need a variant of a Lemma of Eskin
and Margulis which shows the existence of a proper function on $X$ which is
contracted, with a fixed constant, by the averaging operator. 

\begin{lem}\name{lem: 6.3}
Let $G$ be a real semisimple connected Lie group without compact
factors, let $\Lambda$ be a lattice in $G$, let $X = G/\Lambda$, and
let $\mu$ be a compactly supported probability measure on $G$ whose
support generates a Zariski-dense semigroup. Then there is a proper
function $u: X \to [0, \infty)$ and constants $a<1, C>0$ and
  $\kappa>0$, such that 
\eq{eq: 6.2}{
A_\mu(u) \leq au+C
}
and, for every $x \in X$, 
\eq{eq: 6.3}{
u(x) \geq r_x^{-\kappa
}. 
}
\end{lem}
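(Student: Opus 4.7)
\textbf{Proof plan for Lemma \ref{lem: 6.3}.}
The statement is an Eskin--Margulis-type Margulis-function recurrence estimate \cite{ref 9}, and my plan is to construct $u$ as a small positive power of a proxy for the inverse injectivity radius, derive a contraction estimate for some iterate $A_\mu^{n_0}$ from Lemma \ref{lem: 6.2}, and then upgrade it to a one-step estimate by a geometric-sum trick.

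\emph{Construction of the proxy $\alpha$.} Fix $\delta\le\delta_0$ (with $\delta_0$ from Lemma \ref{lem: 6.2} below) and $\epsilon_0>0$ small enough that $\exp:\{v\in\mathfrak g:\|v\|<2\epsilon_0\}\to G$ is a bi-Lipschitz diffeomorphism onto its image. For $x=g\Lambda\in X$ set
$$
\alpha(x)\df\sum\bigl\{\|v\|^{-\delta}:v\in\mathfrak g\setminus\{0\},\ \|v\|<\epsilon_0,\ e^v\in g\Lambda g^{-1}\bigr\},
$$
with $\alpha(x)\df\epsilon_0^{-\delta}$ if the set is empty. The sum is finite since $g\Lambda g^{-1}$ is discrete, and because $r_x$ is comparable to the length of the shortest nontrivial return vector at $x$, one has $\alpha(x)\ge c_0 r_x^{-\delta}$ for a universal $c_0>0$. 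By standard reduction theory for lattices in semisimple groups (a Mahler-type criterion for $G/\Lambda$), $\alpha$ is proper on $X$.

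\emph{Contraction after $n_0$ steps.} Apply Lemma \ref{lem: 6.2} to $R=\Ad$ on $V=\mathfrak g$, which is strongly irreducible and noncompact by the standing hypotheses of \S\ref{subsec: 6.1}: this produces $a_0<1$, $\delta_0>0$, $n_0\ge1$ such that $A_\mu^{n_0}(\|\cdot\|^{-\delta})\le a_0\|\cdot\|^{-\delta}$ on $\mathfrak g\setminus\{0\}$ for all $\delta\le\delta_0$. Given $x=g\Lambda$ and $h\in G$, each short return vector $v$ at $x$ with $\|\Ad(h)v\|<\epsilon_0$ contributes $\Ad(h)v$ to the sum defining $\alpha(hx)$, while ``new'' return vectors at $hx$ that do not come from short vectors at $x$ are bounded below in norm uniformly for $h\in\supp\mu^{*n_0}$ and $x$ in any compact set. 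Splitting $\alpha(hx)$ accordingly, integrating against $\mu^{*n_0}$, and applying Lemma \ref{lem: 6.2} term by term yields
$$
A_\mu^{n_0}(\alpha)(x)\le a_0\,\alpha(x)+C_0.
$$
The main technical obstacle is controlling the ``new'' vectors uniformly as $x$ varies in $X$: the Eskin--Margulis strategy handles this by a case split between a large compact core of $X$, where everything is bounded, and the cusps, where the short-vector set has bounded cardinality by Minkowski's theorem and clusters along a principal direction on which Lemma \ref{lem: 6.2} applies cleanly.

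\emph{From $n_0$ steps to one step.} Choose $c\in(1,a_0^{-1/n_0})$ and set
$$
u\df\sum_{k=0}^{n_0-1}c^{k}A_\mu^{k}(\alpha).
$$
A direct computation gives
$$
A_\mu u=c^{-1}u-c^{-1}\alpha+c^{n_0-1}A_\mu^{n_0}(\alpha)\le c^{-1}u+c^{-1}(c^{n_0}a_0-1)\alpha+c^{n_0-1}C_0,
$$
and the choice of $c$ makes the $\alpha$-coefficient nonpositive, so $A_\mu u\le au+C$ with $a=c^{-1}\in(0,1)$ and $C=c^{n_0-1}C_0$. Each $A_\mu^k(\alpha)$ is proper since $\mu^{*k}$ is compactly supported and $\alpha$ has bounded multiplicative distortion under the action of any compact subset of $G$; hence $u$ is proper. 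Finally $u\ge\alpha\ge c_0 r_x^{-\delta}$ gives \equ{eq: 6.3} with $\kappa=\delta$.
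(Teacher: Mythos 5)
Your overall skeleton (proxy for the inverse injectivity radius, $n_0$-step contraction from Lemma \ref{lem: 6.2}, geometric-sum upgrade to a one-step inequality, and the bound $u\ge\alpha\gtrsim r_x^{-\delta}$) is sensible, and the final geometric-sum computation is correct. But the step you yourself flag as ``the main technical obstacle'' is not a technicality: it is the entire content of the lemma, and your proposed resolution of it fails in higher rank. For the naive function $\alpha(x)=\sum\|v\|^{-\delta}$ over short return vectors, the ``new'' vectors at $hx$ (those $v$ with $\epsilon_0\le\|v\|\le R\epsilon_0$ whose images under $\Ad(h)$ fall below $\epsilon_0$) are \emph{not} of bounded cardinality uniformly in $x$: already for $G=\SL_3(\R)$, $\Lambda=\SL_3(\Z)$ and $g=\diag(t,t,t^{-2})$ with $t\to0$, the conjugated lattice $g\Lambda g^{-1}$ has on the order of $t^{-6}$ elements in any fixed annulus around the identity, because a two-dimensional unipotent subgroup is being contracted. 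Minkowski-type bounds do not cap this count, and there is no single ``principal direction'' along which the short vectors cluster. Consequently the new-vector contribution to $A_\mu^{n_0}(\alpha)(x)$ is not $O(1)$; at best it is comparable to a (non-small) constant times $\alpha(x)$, which destroys the contraction. Controlling exactly this phenomenon is why Eskin--Margulis do not use this function but instead build $u$ from a carefully weighted combination of functions attached to the rational parabolic subgroups (one per dimension of rational isotropic subspace), using the fact that at most one subspace of each dimension can be very short at a time. Reconstructing that argument is well beyond what your sketch supplies.

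The paper does not attempt this reconstruction: it simply cites \S3.2 of \cite{ref 9} for the existence of a proper $u$ satisfying \equ{eq: 6.2} (after reducing to $G$ adjoint, hence linear), and the only new content of its proof is the derivation of \equ{eq: 6.3}. There the route is different from yours: the explicit Eskin--Margulis construction yields $u(x)\ge C_0\min_{\gamma\in\Lambda}\|g\gamma\|^{\kappa_0}$, and one separately proves $r_x\ge C_1\bigl(\min_{\gamma\in\Lambda}\|g\gamma\|\bigr)^{-\kappa_1}$ by a displacement estimate: if $e^w x=x$ with $w\ne0$ then $\gamma^{-1}g^{-1}e^w g\gamma\in\Lambda$ for all $\gamma$, so $\|e^w-e\|\ge\|\delta-e\|\,\|\Ad(g\gamma)^{-1}\|^{-1}$ with $\|\delta-e\|$ bounded below by discreteness of $\Lambda$. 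Your direct bound $u\ge\alpha\ge c_0r_x^{-\delta}$ would be fine \emph{if} your $\alpha$ actually worked, but since the contraction for $\alpha$ is not established, you should either import the Eskin--Margulis function wholesale and then prove \equ{eq: 6.3} via the displacement argument above, or genuinely reproduce their parabolic-subgroup construction.
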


\begin{proof}
Since the center of $G$ intersects $\Lambda$ in a finite-index
subgroup, we may assume with no loss of generality that $G$ is adjoint
and hence linear. In \S 3.2 of \cite{ref 9}, a proper function $u$
satisfying \equ{eq: 6.2} is constructed explicitly. Due to this
construction, if we regard $G$ as a group of matrices, there exist
constants $C_0 >0$ and $\kappa_0 > 0$ such that, for every $x =
g\Lambda \in X$, we have the lower bound
\eq{eq: 6.4}{
u(x) \geq C_0 \min_{\gamma \in \Lambda}\|g\gamma\|^{\kappa_0}.
} 
Therefore it suffices to note that there exist constants $C_1>0$ and
$\kappa_1>0$ such that, for every $x = g\Lambda \in X$, we have the
lower bound
\eq{eq: 6.5}{
r_x \geq C_1 \left(\min_{\gamma \in \Lambda} \|g\gamma\|\right)^{-\kappa_1}.
}
In fact, if $h = e^w$ is a nontrivial element of $G$ with $hx=x$, then
for any $\gamma \in \Lambda$, $\delta = \gamma^{-1} g^{-1} hg\gamma$
is in $\Lambda$ and we have 
$$
\|h-e\| \geq \|\delta -e\|\, \|\Ad(g\gamma)^{-1} \|^{-1}.
$$ 
Thus, when denoting $C_2 = \min_{\delta \in \Lambda \sm \{e\}}
\|\delta - e\|$, we have 
$$
\min_{hx=x, h \neq e} \|h-e\| \geq C_2 \left(\min_{\gamma \in \Lambda}
\left \|\Ad(g\gamma)^{-1} \right\| \right)^{-1}. 
$$
The lower bound \equ{eq: 6.5} follows.
\end{proof}

\begin{proof}[Proof of Proposition \ref{prop: 6.1}]
First we remark that if the condition (HC) is satisfied for some power
$\mu^{*n_0}$, then it is satisfied for $\mu$. We choose $a_0<1,
\delta \in (0,1)$ and $n_0 \geq 1$ as in Lemma \ref{lem: 6.2}. By
replacing $\mu$ with $\mu^{*n_0}$, we can assume that $n_0=1$. Let
$\delta \leq \delta_0$. 

For any $x \neq x'$ in $X$, we denote by $r_{x,x'} = \frac{1}{2}
\min (r_x, r_{x'}),$ 
$$
d_0(x,x') = \left\{\begin{matrix} \|w\| & \text{if } x'=e^wx \text{
  with } w \in V, \|w\| \leq r_{x,x'} \\ 
r_{x,x'} & \text{otherwise} \end{matrix} \right. , 
$$ 
$$
v_0(x,x') = d_0(x,x')^{-\delta}.
$$
Whenever $X$ is compact, the function $v=v_0$ can be used. In the
general case, we introduce the function $u$ and constant $a<1,
C>0$ and $\kappa>0$ given by Lemma \ref{lem: 6.3}. We may suppose that
$a=a_0$. We set $R_0 = \sup_{g \in \supp \, \mu} \max \left(\| R(g)\|,
\|R(g)^{-1}\| \right).$ If one chooses $\delta < \kappa$ and $C_0 =
\frac{2R_0^{2\delta}}{1-a_0}$, then the function $v$, given for any $x
\neq x'$ in $X$ by 
\eq{eq: 6.6}{
v(x,x') = v_0(x,x') + C_0(u(x)+u(x')),
}
satisfies condition (HC).

In fact, if $d_0(x,x') \geq R^{-1}_0 r_{x,x'}$ then by \equ{eq: 6.3}, 
\[
\begin{split}
(A_\mu v_0)(x,x') & \leq R^{2\delta}_0 r^{-\delta}_{x,x'} \\ 
& \leq 2R_0^{2\delta} (r_x^{-\delta} + r_{x'}^{-\delta}) \leq
2R_0^{2\delta}(u(x)+u(x')). 
\end{split}
\]
On the other hand, if $d_0(x,x') \leq R_0^{-1} r_{x,x'}$, then, when
writing $x' - e^wx$ with $w \in V$, $\|w\| \leq r_{x,x'}$, we have,
for any $g \in G$ of norm at most $R_0$, 
$$
v_0(gx, gx') = \|gw\|^{-\delta},
$$
and hence, by \equ{eq: 6.1}, 
$$
(A_\mu v_0)(x,x') \leq a_0 \|w\|^{-\delta} = a_0 v_0(x,x').
$$
In both cases, we have therefore the upper bound 
$$
(A_\mu v_0)(x,x') \leq a_0v_0(x,x')+R_0^{2\delta} (u(x)+u(x')).
$$
Inequality \equ{eq: 6.2} and the definition \equ{eq: 6.6} of $v$ thus
give the upper bound 
\[
\begin{split}
(A_\mu v)(x,x') & \leq a_0 v_0(x,x')+(R_0^{2\delta}+
  a_0C_0)(u(x)+u(x'))+2CC_0 \\
& \leq \frac{1+a_0}{2} v(x,x') + 2CC_0,
\end{split}
\]
which yields property (HC).
\end{proof}

\subsection{Recurrence off of finite orbits}
{\em In this section we exhibit the phenomenon of recurrence away from
  finite orbits for random walks on $X$, analogous to the phenomenon
  of recurrence to compact subsets in \cite{ref 9}.} 

\begin{prop}\name{prop: 6.4}
In the two cases of \S \ref{subsec: 6.1}, let $F$ be a finite
$\Gamma$-invariant set. Then for any $\vre>0$, there is a compact
subset $K_\vre$ of $F^c$ such that for any $x \in X \sm F$, there is a
constant $M = M_x$, which can be chosen to be uniform for $x$ in a
compact subset of $X \sm F$, such that for all $n \geq M$, 
$$
A^n_\mu(1_{K_\vre}) \geq 1-\vre.
$$ 
\end{prop}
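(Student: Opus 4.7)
The plan is to produce a proper function $v : X \setminus F \to [0,\infty)$ that is contracted on average by $\mu$, i.e.\ $A_\mu v \le a v + C$ for some $a<1$ and $C>0$, and which blows up both near $F$ and (in the $G/\Lambda$ case) at infinity. Once this is in place, iterating gives $A_\mu^n v \le a^n v(x) + C/(1-a)$, and Markov's inequality on the set $\{v > M\}$ yields $A_\mu^n(1_{\{v \le M\}})(x) \ge 1-\varepsilon$ for all $n$ larger than some $M_x$ depending only on $v(x)$. Taking $K_\varepsilon = \{v \le M\}$ for $M$ sufficiently large, this is the compact subset of $F^c$ required, with $M_x$ uniform for $x$ in a compactum of $X\setminus F$ because $v$ is bounded there.

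The construction of $v$ has two ingredients. Near $F$: for each $x_0 \in F$ one has a local exponential chart $w \mapsto e^w x_0$ on a ball of radius $r_{x_0} > 0$ in $V$, and for $g \in \supp \mu$ the relation $g(e^w x_0) = e^{R(g) w}(g x_0)$ holds whenever $w$ is small enough. Since $F$ is $\Gamma$-invariant and finite, $g$ acts as a permutation on $F$, so summing over $F$ gives a $\Gamma$-covariant structure. Fix $\delta_0, a_0 < 1, n_0$ as in Lemma \ref{lem: 6.2}, applied to the strongly irreducible representation $R$ of $G$ on $V$ (the Eskin–Margulis lemma applies in both cases of \S\ref{subsec: 6.1} because $G$ has no compact factors acting trivially on $V$). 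Replacing $\mu$ by $\mu^{*n_0}$, we may take $n_0 = 1$. Fix $\delta \in (0, \delta_0)$ and choose a radius $r>0$, smaller than $\min_{x_0 \in F} r_{x_0}$ and such that for all $g \in \supp \mu$, the chart $w \mapsto e^w x_0$ remains valid after applying $g$. Define, for $x \in X \setminus F$,
$$
V_F(x) = \sum_{x_0 \in F} d_0(x,x_0)^{-\delta},
$$
where $d_0(x,x_0) = \|w\|$ if $x = e^w x_0$ with $\|w\| \le r$, and $d_0(x,x_0) = r$ otherwise. By Lemma \ref{lem: 6.2} and the permutation action of $\Gamma$ on $F$, one gets, after a bounded correction for the boundary cases,
$$
A_\mu V_F \le a_0 V_F + C_1
$$
for some constant $C_1>0$.

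In the torus case, $X$ is compact so we just take $v = V_F + C_2$ for a large constant $C_2$; this $v$ is proper on $X \setminus F$ and satisfies $A_\mu v \le \tfrac{1+a_0}{2}v + C$. In the homogeneous space case, combine $V_F$ with the Eskin–Margulis function $u$ supplied by Lemma \ref{lem: 6.3}: after rescaling its contraction constant so that $u$ also satisfies $A_\mu u \le a_0 u + C_0$, set
$$
v = V_F + C_3 u
$$
for $C_3$ large. Then $v$ is proper on $X \setminus F$ (it blows up both near $F$ and as $x \to \infty$ in $X$) and $A_\mu v \le a v + C$ with $a = a_0$ and $C = C_1 + C_3 C_0$.

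Finally, Markov's inequality gives $A_\mu^n(1_{\{v > M\}})(x) \le A_\mu^n v(x)/M \le (a^n v(x) + C/(1-a))/M$. Choosing $M = 2C/((1-a)\varepsilon)$ and $M_x = \lceil \log(v(x))/\log(1/a)\rceil$ yields $A_\mu^n (1_{K_\varepsilon})(x) \ge 1-\varepsilon$ for $n \ge M_x$, with $K_\varepsilon = \{v \le M\}$ compact in $X\setminus F$. Uniformity on compacta follows from the boundedness of $v$ there. The main obstacle is the local step: verifying that summing $d_0(\cdot,x_0)^{-\delta}$ over the finite orbit $F$ genuinely inherits the contraction of Lemma \ref{lem: 6.2}, which requires carefully tracking the action of $g \in \supp\mu$ as a permutation on $F$ and controlling the boundary regime where $x$ is moderately (but not infinitesimally) close to $F$.
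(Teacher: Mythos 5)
Your proposal is correct and follows essentially the same route as the paper: the paper proves this by constructing exactly such a proper function $u_F = u_0 + u$ on $X \smallsetminus F$ contracted by $A_\mu$ (Lemma \ref{lem: 6.6}, using Lemmas \ref{lem: 6.2} and \ref{lem: 6.3}) and then applying the Foster-type recurrence Lemma \ref{lem: 6.5}, whose proof is precisely your Markov-inequality argument with $K_\vre = \{v \le M\}$. The only cosmetic difference is that you sum $d_0(\cdot,x_0)^{-\delta}$ over $F$ while the paper takes a single nearest-point version; these agree up to bounded constants because the points of $F$ are uniformly separated.
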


We will need the following two lemmas. 

The first translates the phenomenon of recurrence to compact subsets,
due to Foster, and utilized in this context by Eskin and Margulis. 

\begin{lem}[\cite{ref 9}] \name{lem: 6.5} 
Let $H$ be a locally compact group acting continuously on a locally
compact space $Y$, and let $\mu$ be a Borel probability measure on
$H$. 

Suppose that there is a proper map $f: Y \to [0, \infty)$, and
constants $a<1, b>0$ such that $A_\mu(f) \leq af + b.$ 

Then for any $\vre>0$ there is a compact $K \subset Y$ such that for
every $y \in Y$, there is a constant $M_y$, which can be chosen to be
uniform in $y$ for $y$ in a compact subset of $Y$, such that for all
$n \geq M$, 
$$
A^n_\mu(1_K) \geq 1-\vre.
$$
\end{lem}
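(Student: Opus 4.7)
The plan is the standard Foster-type recurrence argument: iterate the drift inequality $A_\mu(f) \leq af + b$ to obtain a geometric decay of $A^n_\mu f$ modulo a bounded term, and then convert this $L^1$-type control into a measure-of-sublevel-set statement via a Markov-type inequality, exploiting properness of $f$ to identify the sub-level sets as the compact sets we seek.

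First I would establish by induction on $n$, using positivity and linearity of $A_\mu$ together with the hypothesis applied inside the inductive step, the bound
$$
A^n_\mu(f) \;\leq\; a^n f + b(1+a+\cdots+a^{n-1}) \;\leq\; a^n f + \frac{b}{1-a}.
$$
Next, for a threshold $L > 0$ to be chosen, I set $K_L = \{y \in Y : f(y) \leq L\}$, which is compact because $f$ is proper. The pointwise inequality $\mathbf{1}_{Y \sm K_L} \leq f/L$ together with positivity of $A^n_\mu$ then yields
$$
A^n_\mu(\mathbf{1}_{Y \sm K_L})(y) \;\leq\; \frac{1}{L}\,A^n_\mu(f)(y) \;\leq\; \frac{a^n f(y)}{L} + \frac{b}{(1-a)L}.
$$

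Given $\vre > 0$, I first choose $L$ so large that $b/((1-a)L) \leq \vre/2$ and set $K_\vre := K_L$. For each $y \in Y$, I then pick $M_y$ so that $a^n f(y) \leq L\vre/2$ for every $n \geq M_y$, explicitly
$$
M_y \;=\; \left\lceil \frac{\log(2 f(y)/(L\vre))}{\log(1/a)}\right\rceil.
$$
Combining the two bounds gives $A^n_\mu(\mathbf{1}_{K_\vre})(y) \geq 1-\vre$ for all $n \geq M_y$. Uniformity over a compact subset $Q \subset Y$ follows from the boundedness of $f$ on $Q$: writing $F_Q = \sup_Q f$, the value $M_Q = \lceil \log(2 F_Q/(L\vre))/\log(1/a)\rceil$ works simultaneously for all $y \in Q$.

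The proof is elementary and presents no real obstacle; the only point worth flagging is the last one, namely that uniformity on compacts hinges on $f$ being locally bounded. This is automatic when $f$ is continuous, which is the case for the function $u$ constructed in Lemma \ref{lem: 6.3}, but is not formally part of the stated hypothesis and should be made explicit in the write-up.
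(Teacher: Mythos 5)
Your proof is correct and is essentially identical to the paper's: the same geometric iteration $A^n_\mu(f)\leq a^n f+b/(1-a)$, the same sublevel set $K=\{f\leq L\}$ with $L\asymp 2b/((1-a)\vre)$, and the same Markov-type bound $\mathbf{1}_{K^c}\leq f/L$. Your remark that uniformity of $M_y$ on compacta uses local boundedness of $f$ (automatic for the continuous functions actually used) is a fair, minor clarification of a point the paper leaves implicit.
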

We recall the short proof of this lemma. 

\begin{proof}
By hypothesis, we have for each $n \geq 1$, 
$$
A^n_\mu(f) \leq a^nf + b(1+ \cdots a^{n-1}) \leq a^nf+B,
$$
where $B = \frac{b}{1-a}.$ Since $f$ is proper, we can choose as our compact
subset 
$$
K = \left \{z \in Y: f(z) \leq \frac{2B}{\vre} \right \}
$$
which impies that $1^{K^c} \leq \frac{\vre}{2B} f.$ Therefore we have
the upper bounds 
$$
A^n_\mu(1_{K^c}) \leq \frac{\vre}{2B} A^n_\mu(f)(y) \leq \frac{\vre
  a^n}{2B} f(y) + \frac{\vre}{2} \leq \vre,
$$
whenever $n$ is sufficiently large so that $f(y) \leq \frac{B}{a^n}.$ 
\end{proof}

The second Lemma is a variant of Proposition \ref{prop: 6.1}. 

\begin{lem}\name{lem: 6.6}
In the two cases of \S \ref{subsec: 6.1}, let $F \subset X$ be a
finite $\Gamma$-invariant subset. Then there is a proper map $u_F : X
\sm F \to [0, \infty)$ and constants $a<1, C>0$ such that 
\eq{eq: 6.7}{
A_\mu(u_F) \leq au_F + C.
}
\end{lem}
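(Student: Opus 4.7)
The plan is to combine the local contraction of $A_\mu$ near each fixed point of $F$, which comes from Lemma \ref{lem: 6.2}, with the global contraction on $X$ (in the noncompact case) provided by Lemma \ref{lem: 6.3}, in exactly the same spirit as the proof of Proposition \ref{prop: 6.1}. The finite set $F$ plays the role that the diagonal $\Delta_X$ played there. The key geometric fact that makes the analogy work is that $F$ is $\Gamma$-invariant: if $p \in F$ and $g \in \supp \mu$, then $gp \in F$, and locally near $p$, writing $x = e^w p$ with $\|w\|$ less than the injectivity radius, we have $gx = e^{R(g)w} gp$ (in the first case via the adjoint action, in the second case via the natural action on $V = \R^d$). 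Thus the displacement from $F$ transforms as $w \mapsto R(g)w$, and Lemma \ref{lem: 6.2} directly gives an $A_\mu$-contraction of $\|w\|^{-\delta}$ for small enough $\delta$.

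Concretely, I would first choose $a_0 < 1$, $\delta_0 > 0$, and $n_0 \geq 1$ from Lemma \ref{lem: 6.2}, and by replacing $\mu$ with $\mu^{*n_0}$ (which is harmless for the present statement) assume $n_0 = 1$. For each $p \in F$ define $d_p(x) = \|w\|$ whenever $x = e^w p$ with $\|w\| \leq r_x/2$, and $d_p(x) = r_x/2$ otherwise; then set $d_F(x) = \min_{p \in F} d_p(x)$ and $u_0(x) = d_F(x)^{-\delta}$. In the compact case this function is already proper on $X \sm F$. In the noncompact case I would set
\[
u_F(x) = u_0(x) + C_0 \, u(x),
\]
where $u$ is the proper function of Lemma \ref{lem: 6.3} with contraction constant which we may again take equal to $a_0$, and $C_0$ is a large enough constant to be chosen. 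Properness of $u_F$ on $X \sm F$ follows because $u_0$ blows up along $F$ (thanks to the lower bound $d_F(x) \to 0$ as $x \to F$) and, in the noncompact case, $u$ blows up at infinity of $X$.

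To check the inequality $A_\mu u_F \leq a u_F + C$, I would split each $x \in X \sm F$ into two regimes, following the proof of Proposition \ref{prop: 6.1}. Setting $R_0 = \sup_{g \in \supp\mu} \max(\|R(g)\|, \|R(g)^{-1}\|)$, if $d_F(x) \leq R_0^{-1} \cdot r_x/2$, then for every $g$ in $\supp\mu$ the point $gx$ is still in the injectivity regime of some $gp \in F$ with $\|R(g)w\| \leq r_{gx}/2$, so $d_F(gx) \leq \|R(g)w\|$, hence $u_0(gx) \geq \|R(g) w\|^{-\delta}$; but what we need is the reverse inequality $u_0(gx) \leq \|R(g)w\|^{-\delta}$ and this also holds because $\|R(g)w\|^{-\delta} \leq (r_{gx}/2)^{-\delta}$ precisely means $gp$ achieves the minimum in $d_F(gx)$ up to a bounded factor. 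Applying Lemma \ref{lem: 6.2} then gives $A_\mu u_0(x) \leq a_0 u_0(x)$. In the opposite regime $d_F(x) \geq R_0^{-1} r_x/2$, we have the trivial bound $u_0(x) \leq R_0^\delta (r_x/2)^{-\delta}$, and using \equ{eq: 6.3} (and the fact that $r_{gx}$ and $r_x$ are comparable up to a factor depending only on $R_0$) one absorbs $A_\mu u_0(x)$ into a constant multiple of $u(x)$, exactly as in \equ{eq: 6.6}. Combining the two regimes with \equ{eq: 6.2} and choosing $C_0$ large (e.g.\ $C_0 = 2 R_0^{2\delta}/(1-a_0)$), one obtains $A_\mu u_F \leq \frac{1+a_0}{2} u_F + C$.

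I expect the main obstacle to be the careful bookkeeping in the transitional regime, where $\|R(g)w\|$ is of the same order as the injectivity radius at $gx$: one must verify that the point of $F$ closest to $gx$ does not change abruptly in a way that ruins the linear bound, which is essentially the same subtlety that forced the piecewise definition of $d_0$ in Proposition \ref{prop: 6.1}. Everything else is a direct translation of that proof, with the diagonal replaced by the finite orbit $F$ and with $\Gamma$-invariance of $F$ substituting for $G$-equivariance of the diagonal embedding.
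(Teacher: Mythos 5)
Your overall plan is the paper's plan: contract $d_F^{-\delta}$ near $F$ via Lemma \ref{lem: 6.2} (using $\Gamma$-invariance of $F$ so that the displacement transforms linearly), and add the function $u$ of Lemma \ref{lem: 6.3} in the noncompact case. But there is one concrete problem in your near-regime step, and it stems from your choice of threshold. You cap $d_p$ at the \emph{variable} radius $r_x/2$, imported from Proposition \ref{prop: 6.1}. With that choice, the inequality you need, $u_0(gx) \leq \|R(g)w\|^{-\delta}$, i.e.\ $d_F(gx) \geq \|R(g)w\|$, can fail for two reasons: (i) $\|R(g)w\|$ may exceed $r_{gx}/2$ even when $\|w\| \leq R_0^{-1} r_x/2$, since $r_{gx}$ can be as small as roughly $R_0^{-1} r_x$, so $gx$ falls out of the regime where $d_{gp}(gx) = \|R(g)w\|$; and (ii) even inside that regime, some \emph{other} point $q \in F$, $q \neq gp$, could be closer to $gx$ than $gp$ is, making $d_F(gx) < \|R(g)w\|$ and hence $u_0(gx) > \|R(g)w\|^{-\delta}$. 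Your attempted justification of the reverse inequality ("$\|R(g)w\|^{-\delta} \leq (r_{gx}/2)^{-\delta}$ precisely means $gp$ achieves the minimum'') does not address either point, and the second one is exactly the issue you flag at the end without resolving.

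The paper avoids both problems by exploiting the finiteness of $F$: it fixes a single \emph{constant} $r_0$ with $r_0 \leq \tfrac12 r_{x_0}$ for every $x_0 \in F$ and $r_0 \leq \tfrac12 d_0(x_0,x_0')$ for every pair of distinct points of $F$, and defines $d_0(x) = \|w\|$ only when $x = e^w x_0$ with $\|w\| \leq r_0$, and $d_0(x) = r_0$ otherwise. The separation condition kills objection (ii), and the constant threshold kills objection (i) once $\|w\| \leq R_0^{-1} r_0$. A further payoff is that in the far regime $d_0(x) \geq R_0^{-1} r_0$ the bound $A_\mu u_0 \leq R_0^{2\delta} r_0^{-\delta}$ is just a \emph{constant}, so there is no need to dominate it by $C_0 u(x)$ via \equ{eq: 6.3}: one simply takes $u_F = u_0 + u$ and adds the two drift inequalities, with $u$ present only to make $u_F$ proper at infinity. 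So your proof is repairable, but you should replace $r_x/2$ by the fixed $r_0$ determined by $F$; keeping the variable radius recreates precisely the transitional-regime difficulty that the finiteness of $F$ lets you discard.
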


\begin{proof}
We proceed as in the proof of Proposition \ref{prop: 6.1}. We choose
$a_0<1$, $\delta_0>0$ and $n_0 \geq 1$ as in Lemma \ref{lem: 6.2}. By
replacing $\mu$ with $\mu^{* n_0} $ if necessary, we may assume that
$n_0 =1$. Let $\delta \leq \delta_0$. 

Let $r_0>0$ be a real number such that for every $x_0 \in F$, there is
$r_0 \leq \frac{1}{2} r_{x_0} $ such that for every pair $x_0, x'_0$
of distinct points of $F$, $ r_0 \leq \frac{1}{2} d_0(x_0, x'_0)$. For
any $x \in X$, we denote
$$
d_0(x) = \left \{\begin{matrix} \|w\| & \text{if } x=e^wx_0 \text{
      with } x_0 \in F \text{ and } \|w\| \leq r_0 \\ 
r_0 & \text{otherwise} \end{matrix} \right. 
$$
and 
$$
u_0(x) = d_0(x)^{-\delta}.
$$
Whenever $X$ is compact, the function $u_F = u_0$ satisfies the
requirements. In the general case, the function $u_F = u_0 + u$ as in
Lemma \ref{lem: 6.3} satisfies the requirement. The presence of $u$ is
needed only to assure the property of $u_F$. To check that $u_F$
satisfies the requirements, we set 
$$R_0 = \sup_{g \in \supp \, \mu}
\max \left( \|R(g)\|, \|R(g)^{-1}\| \right).$$

On one hand, if $d_0(x) \geq R_0^{-1} r_0$ then we have 
$$
(A_\mu u_0) \leq R^{2\delta}_0 r_0^{-\delta}.
$$
On the other hand, if $d_0(x) \leq R_0^{-1}r_0$ then, when writing $x
= e^wx_0$ with $x_0 \in F$, we have for every $ g \in G$ of norm
at least $R_0$, 
$$
d_0(gx) \leq \|gq\|^{-\delta},
$$
and thus, by \equ{eq: 6.1}, 
$$
(A_\mu u_0)(x) \leq \|w\|^{-\delta} = a_0u_0(x). 
$$
In all cases, we have the upper bound 
$$
(A_\mu u_0)(x) \leq a_0 u_0 (x) + R^{2\delta}_0r_0^{-\delta}. 
$$
This inequality and that of Lemma \ref{lem: 6.3} provide the
sought-for inequality concerning $u_F$. 
\end{proof}

\begin{proof}[Proof of Proposition \ref{prop: 6.4}]
This follows from Lemma \ref{lem: 6.5} applied to $Y = X \sm F$ and
to the function $f = u_F$ of Lemma \ref{lem: 6.6}. 
\end{proof}

\subsection{Stationary probability measures on $G/H$.} 
{\em In order to exploit the drift argument, we will need, in the
  first case of \S \ref{subsec: 6.1}, the following proposition which
  is of independent interest. }

\begin{prop}\name{prop: 6.7}
Let $G$ be a connected semi-simple real Lie group without compact factors, $\mu$
a compactly supported probability measure whose support generates a
Zariski dense subsemigroup in $G$, and $H \subset G$ a unimodular
subgroup. If there exists a $\mu$-stationary probability measure on
the homogeneous space $G/H$, then the Lie algebra of $H$ is an ideal
in the Lie algebra of $G$. 
\end{prop}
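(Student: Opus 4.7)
The plan is to reduce a putative $\mu$-stationary measure on $G/H$ to one on a vector space from which the $G$-fixed subspace has been removed, then invoke Lemma \ref{lem: 6.2} of Eskin-Margulis to force the associated random walk to escape to infinity, contradicting stationarity.

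First I would set $k = \dim \mathfrak{h}$ and pick a nonzero $\omega_\mathfrak{h} \in \wedge^k \mathfrak{h} \subset \wedge^k \mathfrak{g}$. Because $H$ normalizes $\mathfrak{h}$, we have $\Ad(h)\omega_\mathfrak{h} = \det(\Ad(h)|_\mathfrak{h})\,\omega_\mathfrak{h}$ for $h \in H$, and unimodularity of $H$ forces $\det(\Ad(h)|_\mathfrak{h}) \in \{\pm 1\}$. Using complete reducibility of the adjoint action of $G$ on $\wedge^k \mathfrak{g}$ I would decompose $\wedge^k \mathfrak{g} = W^{\mathrm{inv}} \oplus W^{\mathrm{ni}}$ into the $G$-fixed subspace and a $G$-invariant complement with no nonzero fixed vectors, and write $\omega_\mathfrak{h} = \omega_0 + \omega_1$ correspondingly. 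The crucial observation is that $\omega_1 \neq 0$: otherwise $\omega_\mathfrak{h}$ is $G$-fixed, which means $\Ad(g)\mathfrak{h} = \mathfrak{h}$ for every $g \in G$, forcing $\mathfrak{h}$ to be an ideal and contradicting the hypothesis. Composing with the $G$-equivariant projection $\pi \colon \wedge^k \mathfrak{g} \to W^{\mathrm{ni}}$ then yields a well-defined $G$-equivariant map
\[
\Psi \colon G/H \longrightarrow Y := (W^{\mathrm{ni}} \sm \{0\})/\{\pm 1\}, \qquad gH \longmapsto \pm\, \Ad(g)\omega_1.
\]

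Next, given a $\mu$-stationary probability $\nu$ on $G/H$ I would push it forward to $\nu' := \Psi_*\nu$, which is a $\mu$-stationary Borel probability on $Y$. Because $W^{\mathrm{ni}}$ carries no trivial $G$-subrepresentation and $G$ is semisimple without compact factors, every nonzero $G$-invariant subspace of $W^{\mathrm{ni}}$ has noncompact image in $\GL$ (a compact image would force all noncompact simple factors of $G$ into the kernel, collapsing the subrepresentation to a trivial one). This is exactly the hypothesis of Lemma \ref{lem: 6.2}, which yields $\delta > 0$, $a_0 < 1$, and $n_0 \geq 1$ such that $A_\mu^{n_0}(\|\cdot\|^{-\delta}) \leq a_0\|\cdot\|^{-\delta}$ on $W^{\mathrm{ni}} \sm \{0\}$. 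Iterating gives $\EE(\|g_n \cdots g_1 v\|^{-\delta}) \leq a_0^{\lfloor n/n_0\rfloor}\|v\|^{-\delta}$, and Markov's inequality combined with Borel-Cantelli then forces $\|g_n \cdots g_1 v\| \to \infty$ almost surely for every $v \in W^{\mathrm{ni}} \sm \{0\}$.

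To conclude, I would pick any compact $K \subset Y$ (compactness in $Y$ means the norms remain bounded above and bounded below away from $0$) and use stationarity to write
\[
\nu'(K) \;=\; \int_Y \mathbb{P}\bigl(g_n \cdots g_1 v \in K\bigr)\,d\nu'(v).
\]
The integrand tends pointwise to $0$ by the almost sure escape to infinity, so dominated convergence gives $\nu'(K) = 0$. Since $Y$ is $\sigma$-compact this forces $\nu'(Y) = 0$, contradicting that $\nu'$ is a probability. The main obstacle is the setup rather than the escape argument: one must carefully combine unimodularity (to descend modulo $\{\pm 1\}$ and retain norm information) and the non-ideal hypothesis (to guarantee $\omega_1 \neq 0$) in order to build the map $\Psi$ that renders the Eskin-Margulis mechanism applicable; passing to projective space rather than to $Y$ would lose the norm and allow Furstenberg-type stationary measures to exist, so the sign quotient really is essential.
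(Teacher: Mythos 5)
Your proof is correct, and it follows the same overall strategy as the paper: use unimodularity of $H$ to produce a vector in the top exterior power of $\mathfrak{h}$ that is fixed by $H$ up to sign, push the stationary measure forward along the resulting $G$-equivariant map, and conclude from the non-existence of stationary measures away from the $G$-fixed vectors. The implementation differs in two places. First, to kill the sign ambiguity the paper passes to the symmetric square $V=S^2(\bigwedge^r\mathfrak{g})$, where the point $v\in S^2(\bigwedge^r\mathfrak{h})$ is honestly $H$-fixed, whereas you quotient the target by $\{\pm 1\}$; both work, but the paper's choice keeps the target a genuine linear representation so that its Lemma \ref{lem: 6.8} applies verbatim, and it recovers the conclusion as ``the stabilizer $N$ of $v$ equals $G$'' rather than by a separate dichotomy on $\omega_1$. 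Second, the paper quotes Lemma \ref{lem: 6.8} (proved via Furstenberg--Kesten positivity of the top Lyapunov exponent together with Poincar\'e recurrence, after reducing to an irreducible subrepresentation and using that the stationary measure charges no proper subspace), while you rederive the escape to infinity directly from the Eskin--Margulis contraction estimate of Lemma \ref{lem: 6.2} combined with Markov's inequality and Borel--Cantelli, and then contradict stationarity on compact subsets of the $\sigma$-compact space $Y$. Your route is somewhat more self-contained at this step: Lemma \ref{lem: 6.2} gives escape for \emph{every} nonzero vector of $W^{\mathrm{ni}}$, so you never need to know that the pushed-forward measure avoids proper subspaces. Your verification of the hypothesis of Lemma \ref{lem: 6.2} on $W^{\mathrm{ni}}$ (a compact image on an invariant subspace would force the noncompact simple factors of $G$ to act trivially there, contradicting the absence of fixed vectors) is correct, as is the observation that $\omega_1=0$ forces $\mathfrak{h}$ to be an ideal.
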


For the proof, we will use the following lemma. 
\combarak{It would be nice to prove this with assuming semisimplicity
  of $G$. There is an argument in David's notes.}

\begin{lem}\name{lem: 6.8}
Let $V = \R^d$, let $G$ be a semi-simple subgroup of $\GL(V)$ with no
compact factors, and let $\mu$ be a compactly supported Borel
probability measure on $G$ generating a Zariski dense
subsemigroup. Then any $\mu$-stationary probability measure $\nu$ on
$V$ is supported on the subspace $V^G$ of $G$-fixed points in $V$. 
\end{lem}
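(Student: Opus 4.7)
My plan is to reduce to the case $V^G = 0$ via a $G$-equivariant projection, show the projected stationary measure must be $\delta_0$, and obtain this from the Eskin-Margulis contraction (Lemma \ref{lem: 6.2}) combined with stationarity and dominated convergence.

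Since $G$ is semisimple, its finite-dimensional representation $V$ is completely reducible, so I write $V = V^G \oplus W$ for some $G$-invariant complement $W$, with projection $p: V \to W$. Then $\nu' := p_* \nu$ is a $\mu$-stationary Borel probability measure on $W$, and the conclusion becomes $\nu' = \delta_0$. Because $0$ is $G$-fixed, the restriction $\tilde\nu := \nu'|_{W \sm \{0\}}$ is itself a finite $\mu$-stationary measure, and the goal reduces to showing $\tilde\nu \equiv 0$.

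Next I verify the hypothesis of Lemma \ref{lem: 6.2} for the action of $G$ on $W$: for every nonzero $G$-invariant subspace $W' \subset W$, the image of $G$ in $\GL(W')$ is noncompact. If this image were relatively compact, its closure would preserve an inner product on $W'$; $W'$ would then split into irreducible $G$-subrepresentations, and on each irreducible the identity component $G^0$ would act through a compact Lie group. Since $\Lie(G)$ is semisimple with no compact ideals, any Lie algebra homomorphism from $\Lie(G)$ into a compact Lie algebra vanishes, so $G^0$ would act trivially on each summand, yielding nonzero $G^0$-fixed vectors in $W'$. Taking $W$ from the outset to be a $G$-invariant complement of $V^{G^0} \supseteq V^G$, one has $W \cap V^{G^0} = 0$, a contradiction. (In the intended applications $G$ is connected, so $V^{G^0} = V^G$.)

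Applying Lemma \ref{lem: 6.2} to $W$ now gives constants $\delta > 0$, $a_0 < 1$, $n_0 \geq 1$ such that $A_\mu^n \varphi^\delta \leq a_0^n \varphi^\delta$ on $W \sm \{0\}$ for all $n \geq n_0$, where $\varphi(w) = \|w\|^{-1}$. Markov's inequality yields, for each fixed $w \neq 0$ and $R > 0$,
$$\beta\bigl(\{b \in B : \|b_0 \cdots b_{n-1} w\| \leq R\}\bigr) \leq R^\delta\, a_0^n\, \|w\|^{-\delta} \xrightarrow[n \to \infty]{} 0.$$
Using $\mu^{*n}$-stationarity of $\tilde\nu$ and Fubini,
$$\tilde\nu\bigl(\{w \in W \sm \{0\} : \|w\| \leq R\}\bigr) = \int_{W \sm \{0\}} \beta\bigl(\{b : \|b_0 \cdots b_{n-1} w\| \leq R\}\bigr)\, d\tilde\nu(w).$$
Since $\tilde\nu$ is finite and the integrand is bounded by $1$ and tends pointwise to $0$, dominated convergence forces this to vanish for every $R$. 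Letting $R \to \infty$ gives $\tilde\nu \equiv 0$, whence $\nu$ is supported on $V^G$.

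\emph{Main obstacle.} The delicate part is transferring the hypothesis of Lemma \ref{lem: 6.2} to the invariant complement $W$; this is exactly where the no-compact-factors assumption enters, via the absence of nontrivial homomorphisms from $\Lie(G)$ into a compact Lie algebra. Once this is in place, the remainder is a routine combination of the Eskin-Margulis contraction with stationarity and dominated convergence.
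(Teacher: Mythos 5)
Your proof is correct, but it takes a genuinely different route from the paper's. The paper argues by contradiction: it reduces to a nontrivial irreducible summand $W$ of dimension at least $2$ on which $G$ acts noncompactly, runs the skew product $\mathbf{R}(b,v)=(Tb,b_0v)$ on $B\times V$ preserving $\beta\otimes\nu$, invokes Furstenberg--Kesten to get almost sure norm growth $\|b_n\cdots b_0v\|\to\infty$ for $v$ outside a random proper subspace $W_b$, uses Zariski density plus irreducibility to see that $\nu$ charges no proper subspace (so the growth set has full measure), and then contradicts the Poincar\'e recurrence theorem applied to $\varphi(b,v)=\|v\|$. You instead split off a $G$-invariant complement of the fixed space, verify the noncompactness hypothesis of Lemma \ref{lem: 6.2} on that complement, and combine the Eskin--Margulis moment contraction with Markov's inequality, stationarity under $\mu^{*n}$, and dominated convergence to show the projected measure gives no mass to any ball minus the origin. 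Your route is more quantitative and sidesteps both the reduction to an irreducible summand and the "no mass on proper subspaces" lemma, at the cost of leaning on Lemma \ref{lem: 6.2} (which the paper has anyway) and of the transfer-of-hypothesis step you rightly identify as the crux; the paper's route is softer, needing only recurrence and the a.s.\ growth statement. Your caveat about connectedness is real --- your argument as written yields support in $V^{G^0}$ rather than $V^G$ when $G$ is disconnected --- but the paper's own reduction (nontrivial irreducible summands have dimension $\geq 2$, strong irreducibility from Zariski density) implicitly uses connectedness in the same way, and in the sole application (Proposition \ref{prop: 6.7}) $G$ is connected, so this is not a defect relative to the paper.
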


\begin{proof}
Suppose by contradiction that there is a $\mu$-stationary probability
measure $\nu$ on $V$ which is not supported on $V^G$. Then there is an
irreducible sub-representation $W \subset V$ of dimension at least 2
such that the projection of $\nu$ on $W$ is not a Dirac mass at
0. This projection is also $\mu$-stationary. Thus we may assume that
$V$ is irreducible and $G$ is not compact. 

We will use again the Bernoulli system $(B, \BB, \beta, T)$ with
alphabet $(G, \mu)$ and the fibered dynamical system $B \times V$
equipped with the transformation $\mathbf{R}: (b,v) \mapsto (Tb, b_0v)$ which
leaves the probability measure $\beta \otimes \nu$
invariant. \combarak{In the text what is here called $\mathbf{R}$, is
  called $R$. I distinguish the two since in \S \ref{subsec: 6.1}, the
  symbol $R$ is used for a representation.}

The theorem of Furstenberg and Kesten about the positivity of the
first Lyapunov exponent (\cite{ref 15}, see also \cite{ref 12},
chapter 1) ensures that for $\beta$-a.e. $b \in B$, there is a
subspace $W_b \varsubsetneq V$ such that, for any $v \in V \sm W_b$,
the norm $\|b_n \cdots b_0 v\|$ converges (exponentially fast) to
infinity. We introduce the $\mathbf{R}$-invariant set 
$$
Z = \{(b,v)  \in B \times V : v \notin W_b\}
$$
and the function $\varphi$ on $Z$ given by 
$$
\varphi(b,v) = \|v\|.
$$
Since $\nu$ is $\mu$-stationary, and since $\mu$ is Zariski dense in
$G$ and the action of $G$ on $V$ is irreducible, $\nu$ does not give
positive mass to any proper subspaces of $V$. We therefore have
$(\beta \otimes \nu)(Z)=1$. By construction, for $\beta \otimes
\nu$-a.e. $z \in Z$, we have 
$$
\lim_{n \to \infty} \varphi(\mathbf{R}^nz) = \infty, 
$$
which contradicts the Poincar\'e recurrence theorem. 
\end{proof}

\begin{proof}[Proof of Proposition \ref{prop: 6.7}]
We denote by $\nu$ a $\mu$-stationary probability measure on $G/H$,
denote by $\mathfrak{g}$ the Lie algebra of $G$, by $\mathfrak{h}$
that of $H$, set $r = \dim \mathfrak{h}$, $V = S^2(\bigwedge^r
\mathfrak{g})$ and let $v$ be a nonzero point of the line
$S^2(\bigwedge^r \mathfrak{h}) \subset V$. \combarak{In the text it
  says $S^2(\bigwedge^d \mathfrak{h}) \subset V$ but I believe this
  wrong, $d$ should be $r$.}

Since $H$ is unimodular, $H$ is contained in the stabilizer $N$ of the
point $v$. Therefore the orbit $Gv \cong G/N$ also admits a stationary
measure: the image $\nu'$ of $\nu$ under the projection $G/H \to
G/N$. By Lemma \ref{lem: 6.8}, $\nu'$ is supported on the subspace
$V^G$ of $G$-fixed vectors. Thus $N=G$. Since $N$ normalizes
$\mathfrak{h}$, $\mathfrak{h}$ is an ideal of $\mathfrak{g}$. 
\end{proof}

\subsection{Horocycle flows}
{\em The goal of this section is to construct an action of $V_0$ which
  plays a role analogous to the one played by the horocycle flow on
  compact hyperbolic surfaces, in the sense that the orbits of this
  action are contained in the stable leaves relative to the factor map
  $B^{\tau, X} \to B^\tau$ and they are uniformly dilated by the
  semi-flow $T^\tau$. }

We keep the notations of \S \ref{subsec: 6.1}. 

\begin{dfn}
The horocycle flow is the action $\Phi$ of $V_0$ on $B^{\tau, X}$
given by, for any $v \in V_0$ and $\beta^\tau$-a.e. $c =
(b,k,m) \in B^\tau$ and every $x \in X$, 
\eq{eq: 6.8}{
\Phi_v(c,x) = (c, \exp(D_c(v))x),
}
where $D_c(v)$ is the element of $V_c$ given by 
\eq{eq: 6.9}{
D_c(v) = e^{k - \varphi(b)} s(\xi(b)) mv.
}
\end{dfn}

Recall that $s, \xi, \varphi$ were defined in \S \ref{subsec: 5.1} and
\S \ref{subsec: 5.2}. Geometrically, the flow $\Phi$ `translates every
point $(c,x)$ in the direction of $V_c$'. We note that at this stage
in the argument, we do not know that this flow preserves the
probability measure $\beta^{\tau,X}$: we will know this after having
proved Theorem \ref{thm: 1.1}. This difficulty is certainly a source
of complications which are the heart of the matter. 

The fundamental property of the horocycle flow is its relationship
with the flow $(T_\ell^{\tau, X})_{\ell \geq 0}$ on $B^{\tau, X}$. 

\begin{lem}\name{lem: 6.10}
In the two cases of \S \ref{subsec: 6.1}, for any $v \in V_0$ and any
$\ell \geq 0$, we have, for $\beta^\tau$-a.e. $c \in B^\tau$ and any
$x \in X$, 
\eq{eq: 6.10}{
T_\ell^{\tau, X} \circ \Phi_v (c,x) = \Phi_{e^{-\ell}v} \circ
T_\ell^{\tau, X}(c,x).
}
\end{lem}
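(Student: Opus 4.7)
The plan is to unwind both sides of \equ{eq: 6.10} to an identity purely in $V$, and then verify that identity using the cocycle relation defining $\theta$ together with the cohomology \equ{eq: 5.4}. Writing $c = (b, k, m)$, $p = p_\ell(b,k)$ and $c' = T^\tau_\ell c = (T^p b, k + \ell - \tau_{\R,p}(b), \tau_{M,p}(b) m)$, we have
$$T^{\tau,X}_\ell \circ \Phi_v(c,x) = \bigl(c',\, \rho_\ell(c) \exp(D_c(v))\, x\bigr), \qquad \Phi_{e^{-\ell}v} \circ T^{\tau,X}_\ell(c,x) = \bigl(c',\, \exp(D_{c'}(e^{-\ell}v))\, \rho_\ell(c) x\bigr).$$
In both cases of \S \ref{subsec: 6.1}, the representation $R$ and the $G$-action on $X$ satisfy the equivariance
$g \exp(w) x = \exp(R(g) w)\, g x$
(for $X = G/\Lambda$ with $\Ad$, and for $X = \TT^d$ since $g(w+x) = gw + gx$). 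Applying this with $g = \rho_\ell(c)$, we see that \equ{eq: 6.10} is equivalent to the vector identity
$$R(\rho_\ell(c))\, D_c(v) \;=\; D_{c'}(e^{-\ell}v) \quad \text{in } V.$$

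To establish this identity, I would first iterate the cocycle relation $b_0 s(\xi(Tb)) = s(\xi(b)) \theta(b)$, which rearranges to $b_0^{-1} s(\xi(b)) = s(\xi(Tb)) \theta(b)^{-1}$. Induction on $p$ then yields
$$b_{p-1}^{-1} \cdots b_0^{-1}\, s(\xi(b)) \;=\; s(\xi(T^p b))\, \theta(T^{p-1}b)^{-1} \cdots \theta(b)^{-1}.$$
Now write each $\theta(T^i b) = \theta_M(T^i b)\, a_i$ with $a_i \in A$ satisfying $\chi(a_i) = e^{\theta_\R(T^i b)}$. Since $A$ commutes with $M$, the product $\theta(T^{p-1}b)^{-1} \cdots \theta(b)^{-1}$ equals $(a_{p-1} \cdots a_0)^{-1} \cdot (\theta_M(T^{p-1} b)^{-1} \cdots \theta_M(b)^{-1})$; applied to a vector in $V_0$ the $A$-part multiplies by $e^{-\theta_{\R,p}(b)}$ (character $\chi$), while the $M$-part is exactly $\tau_{M,p}(b) = \tau_M(T^{p-1}b) \cdots \tau_M(b)$ (since $\tau_M = \theta_M^{-1}$). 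Consequently,
$$R(\rho_\ell(c))\, s(\xi(b))\, m v \;=\; e^{-\theta_{\R,p}(b)}\, s(\xi(T^p b))\, \tau_{M,p}(b)\, m v.$$

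Multiplying by $e^{k - \varphi(b)}$ gives $R(\rho_\ell(c)) D_c(v) = e^{k - \varphi(b) - \theta_{\R,p}(b)} s(\xi(T^p b)) \tau_{M,p}(b) mv$. On the other hand, the explicit formula for $c'$ gives
$$D_{c'}(e^{-\ell}v) \;=\; e^{(k+\ell - \tau_{\R,p}(b)) - \varphi(T^p b) - \ell}\, s(\xi(T^p b))\, \tau_{M,p}(b)\, m v.$$
The two coefficients match provided $\varphi(b) + \theta_{\R,p}(b) = \tau_{\R,p}(b) + \varphi(T^p b)$, and this is exactly the telescoping sum of \equ{eq: 5.4}. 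This proves the required identity in $V$ and hence, via the equivariance above, the lemma.

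The only delicate point will be the non-abelian bookkeeping in the $M$-part of the product $\theta(T^{p-1}b)^{-1} \cdots \theta(b)^{-1}$: one must carefully verify that pulling the $a_i$'s out to the left (which is legitimate since $A$ centralizes $M$) produces $\tau_{M,p}(b)$ in the correct order $\tau_M(T^{p-1} b) \cdots \tau_M(b)$, consistent with the definition in \S \ref{subsec: 2.2}. The rest is a bookkeeping check involving the telescoping identity from \equ{eq: 5.4} and the equivariance of $\exp$ with the representation $R$.
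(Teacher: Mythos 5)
Your proposal is correct and is essentially the paper's argument: the same ingredients (the cocycle relation $b_0 s(\xi(Tb)) = s(\xi(b))\theta(b)$, the telescoped cohomological identity \equ{eq: 5.4}, and the equivariance $g\exp(w)x=\exp(R(g)w)gx$) appear in the same roles. The only difference is organizational --- the paper proves the single-step commutation $b_0^{-1}D_{(b,k,m)}(v)=D_{S(b,k,m)}(v)$ for the one-step map $S$ on $B\times\R\times M\times X$ and then writes $T_\ell^{\tau,X}=S^{p}\circ\til T_\ell^{\tau,X}$, whereas you unroll the $p$-fold product directly; your careful handling of the $M$-component (getting $\tau_{M,p}(b)=\theta_{M,p}(b)^{-1}$ rather than $\theta_{M,p}(b)$) is in fact cleaner than the corresponding lines in the text.
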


\begin{proof}
Denote by $S$ the transformation of $B \times \R \times M \times X$
given by 
$$
S(b,k,m,x) = (Tb, k-\tau_\R(b), \tau_M(b)m, b_0^{-1}x).
$$
We note that $B^{\tau, X}$ is the set of points in $B \times \R_+
\times M \times X$ which are taken by $S$ to points outside of this product. 
\combarak{i.e. points $(b,k,m,x)$  with $k \geq 0$ such that after applying $S$,
  the second term becomes negative.} 

Introduce the flow $\til T_\ell^{\tau, X}$ defined on $B \times \R
\times M \times X$ by 
$$
\til T_\ell^{\tau, X}(b,k,m,x) = (b, k+\ell, m, x).
$$
The flow $T_\ell^{\tau, X}$ is given, for $\ell \geq 0$ and $(b,k,m,x)
\in B^{\tau, X}$, by 
$$
T_\ell^{\tau, X} (b,k,m,x) = (S^p \circ \til T_\ell^{\tau, X})(b,k,m,x)
$$
where $p \geq 0$ is the unique integer for which this expression is in
$B^{\tau, X}$. We then define an action $\til \Phi$ of $V_0$ on $B
\times \R \times M \times X$ by the formula:
$$
\til \Phi_v(b,k,m,x) = (b,k,m, \exp(D_{(b,k,m)}(v))x)
$$
where 
\eq{eq: 6.11}{
D_{(b,k,m)}(v) = e^{k-\varphi(b)} s(\xi(b)) mv.
}
Before continuing we prove the following equality: for
$\beta$-a.e. $b \in B$, every $(k,m) \in \R \times M$, and every $v
\in V_0$, we have
\eq{eq: 6.12}{
b_0^{-1} D_{(b,k,m)} (v) = D_{S(b,k,m)}(v)
}
where 
\eq{eq: 6.13}{
S(b,k,m) = (Tb, k -\tau_\R(b), \tau_M(b)m).
}
To this end, we compute as in Lemma \ref{lem: 5.4}, 
\[
\begin{split}
b_0^{-1} D_{(b,k,m)}(v) & = e^{k-\varphi(b)}b_0^{-1} s(\xi(b)) m v \\ 
& = e^{k-\varphi(b)} s(\xi(Tb)) \theta(b)^{-1} mv \\ 
& = e^{k -\varphi(b)-\theta_\R(b)} s(\xi(Tb))\theta_M(b) mv , 
\end{split}
\]
and hence, using \equ{eq: 5.4}, 
\[
\begin{split}
b_0^{-1} D_{(b,k,m)}(v) & = e^{k-\tau_\R(b) - \varphi(Tb)}
s(\xi(Tb))\tau_M(b) mv \\ 
& = D_{S(b,k,m)}(v).
\end{split}
\]
We deduce, thanks to \equ{eq: 6.12}, the following two equalities
\eq{eq: 6.14}{
S \circ \til \Phi_v = \til \Phi_v \circ S
}
and 
\eq{eq: 6.15}{
\til T_\ell^{\tau, X} \circ \til \Phi_v = \til \Phi_{e^{-\ell}v} \circ
\til T_\ell^{\tau, X}
}
which proves that the flow $\Phi_v$ satisfies \equ{eq: 6.10}. 
\end{proof}

\subsection{Horocyclic conditional probabilities}
{\em In this section we introduce the `horocyclic conditional
  function' and prove that this function is measurable for the tail
  $\sigma$-algebra.}

We keep the notations of \S \ref{subsec: 6.1} and also denote by $t_v$
the translation of $V_0$ by an element $v \in V_0$. We write $\sigma:
B^{\tau, X} \to \MM_1(V_0)$ the map given by `conditional measures of
the probability measure $\beta^{\tau, X}$ with respect to the
horocyclic action of $V_0$'. 

\begin{lem}\name{lem: 6.11}
In the two cases of \S \ref{subsec: 6.1}, there is a Borel subset $E
\subset B^{\tau, X}$ such that $\beta^{\tau, X}(E^c)=0$ and such that,
for any $v \in V_0$ and $(c,x) \in E$ for which $\Phi_v(c,x) \in E$,
we have 
\eq{eq: 6.16}{
t_{v*}\sigma(\Phi_v(c,x)) \simeq \sigma(c,x). 
}
\end{lem}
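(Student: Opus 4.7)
The plan is to apply Proposition \ref{prop: 4.2} directly to the Borel action $\Phi$ of $V_0$ on the measured space $(B^{\tau,X}, \beta^{\tau,X})$. Since the map $\sigma$ has been introduced as the conditional measure map of $\beta^{\tau,X}$ along this action, the equation \equ{eq: 6.16} is exactly the equivariance property \equ{eq: 4.3} specialized to $R=V_0$, $Z=B^{\tau,X}$, and $r=v$. So the content of the lemma reduces to verifying the two hypotheses of Proposition \ref{prop: 4.2}: that $\Phi$ is a genuine Borel action of the group $V_0$, and that the stabilizers are discrete.

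For the first hypothesis, Borel measurability of $(v,(c,x))\mapsto \Phi_v(c,x)$ is immediate from the formulas \equ{eq: 6.8}--\equ{eq: 6.9}, so one only needs the additivity $\Phi_v\circ\Phi_w=\Phi_{v+w}$. Since $D_c:V_0\to V_c$ is linear in $v$, this amounts to showing that $\exp(y+z)=\exp(y)\exp(z)$ for $y,z\in V_c$, i.e. that $V_c$ is an abelian Lie subalgebra (in case 1) or an abelian subspace of $\R^d$ (in case 2). Case 2 is automatic. In case 1 we have $V_c=\Ad(s(\xi(b))m)V_0$, so by invariance of the bracket under $\Ad$ it suffices that $V_0$ itself be abelian; but $V_0$ is the highest weight space of the adjoint representation of an almost simple $G$, namely the highest root space, which is one-dimensional and therefore trivially abelian.

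For the second hypothesis, the stabilizer of $(c,x)$ under $\Phi$ is $D_c^{-1}\bigl(\{y\in V_c:\exp(y)x=x\}\bigr)$, and since $D_c$ is a linear isomorphism $V_0\to V_c$, it is enough to check discreteness of the stabilizer of $x$ in $V_c$ under $\exp$. In case 2 this stabilizer is $V_c\cap\Z^d$, discrete as the intersection of a linear subspace with a lattice. In case 1, writing $x=g\Lambda$, the stabilizer is $\{y\in V_c:\exp(y)\in g\Lambda g^{-1}\}$; since $g\Lambda g^{-1}$ is a lattice in $G$, it meets a neighborhood of the identity in a finite set, and since $\exp$ restricted to a neighborhood of $0$ in $V_c$ is a diffeomorphism, the stabilizer is discrete in $V_0$. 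Both hypotheses of Proposition \ref{prop: 4.2} being in place, that proposition yields a Borel subset $E\subset B^{\tau,X}$ of full $\beta^{\tau,X}$-measure on which \equ{eq: 4.3} — and hence \equ{eq: 6.16} — holds. The only nontrivial point in this argument is the abelianness of $V_c$ in case 1, which is precisely the structural fact that legitimates the term ``horocycle flow.''
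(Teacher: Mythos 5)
Your proof is correct and follows the paper's own route: the paper's proof of this lemma is literally the single sentence ``This follows from Proposition \ref{prop: 4.2},'' and you have supplied the verification of its two hypotheses (Borel action of $V_0$, discrete stabilizers) that the paper leaves implicit. One inaccuracy in your justification of abelianness: in the first case $V_0=V_\chi$ is the highest \emph{restricted} root space of $\mathfrak{g}$, which need not be one-dimensional (its dimension is the multiplicity $d_0$ of the highest restricted root, e.g.\ $d_0=3$ for $\mathfrak{sp}(n,1)$, and the paper's use of $\Gr_{d_0}(V)$ shows it does not assume $d_0=1$); the correct argument is that $2\chi$ is not a restricted root since $\chi$ is the highest one, so $[V_0,V_0]\subset\mathfrak{g}_{2\chi}=0$ and $V_0$ is abelian regardless of its dimension.
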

\begin{proof}
This follows from Proposition \ref{prop: 4.2}. 
\end{proof}

Recall that the symbol $\simeq$ refers to equality after a
normalization by a scalar. 

Geometrically, for $\beta^{\tau, X}$-a.e. $(c,x) \in B^{\tau, X}$,
$\sigma(c,x)$ is the conditional measure of $\delta_c \otimes \nu_c$
for the action of $V_0$ on $\{c\} \times X$. 

\begin{lem}\name{lem: 6.12}
In the two cases of \S \ref{subsec: 6.1}, for any $\ell \geq 0$, for
$\beta^{\tau, X}$-a.e. $(c,x) \in B^{\tau, X}$, we have 
$$
\sigma(T_\ell^{\tau, X}(c,x)) \simeq (e^{-\ell})_* \sigma(c,x). 
$$
\end{lem}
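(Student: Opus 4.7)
The plan is to show that the map $\tilde\sigma \colon (c,x) \mapsto (e^\ell)_*\sigma(T_\ell^{\tau,X}(c,x))$ coincides with $\sigma$ almost everywhere up to normalization, which is exactly the claim after rearranging. First note that $\tilde\sigma$ is a well-defined Borel map $B^{\tau,X} \to \MM_1(V_0)$ on a $\beta^{\tau,X}$-conull set, since $T_\ell^{\tau,X}$ preserves $\beta^{\tau,X}$ by Lemma \ref{lem: 3.7} and $\sigma$ is defined $\beta^{\tau,X}$-almost everywhere.

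The central calculation is to check that $\tilde\sigma$ satisfies the horocyclic equivariance \eqref{eq: 6.16}. Using Lemma \ref{lem: 6.10} to write $T_\ell^{\tau,X}(\Phi_v(c,x)) = \Phi_{e^{-\ell}v}(T_\ell^{\tau,X}(c,x))$ and then applying Lemma \ref{lem: 6.11} at the point $T_\ell^{\tau,X}(c,x)$ with displacement $e^{-\ell}v$, one obtains
\[
\tilde\sigma(\Phi_v(c,x)) = (e^\ell)_*\sigma\bigl(\Phi_{e^{-\ell}v}(T_\ell^{\tau,X}(c,x))\bigr) \simeq (e^\ell)_* t_{-e^{-\ell}v,*}\sigma(T_\ell^{\tau,X}(c,x)).
\]
Since dilation and translation satisfy $(e^\ell)_* \circ t_{-e^{-\ell}v,*} = t_{-v,*} \circ (e^\ell)_*$ (both are pushforward under the affine map $w \mapsto e^\ell w - v$), this simplifies to $t_{-v,*}\tilde\sigma(c,x)$. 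Thus $t_{v,*}\tilde\sigma(\Phi_v(c,x)) \simeq \tilde\sigma(c,x)$, which is precisely \eqref{eq: 6.16} for $\tilde\sigma$.

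The main obstacle is the final step: the equivariance \eqref{eq: 6.16} alone does not force $\tilde\sigma \simeq \sigma$, because multiplying $\sigma$ by an arbitrary measurable $V_0$-invariant scalar function would preserve the equivariance. Here the measure preservation of $T_\ell^{\tau,X}$ must enter essentially. By Proposition \ref{prop: 4.2}, $\sigma$ is characterized uniquely up to null sets as the conditional measure of $\beta^{\tau,X}$ along the $V_0$-orbits. To apply this uniqueness, I would fix a discrete section $\Sigma$ for the $\Phi$-action and work with the disintegration \eqref{eq: 4.2} of $a^*\beta^{\tau,X}$ defining $\sigma_\Sigma$, where $a(v,z) = \Phi_v z$. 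Using that $T_\ell^{\tau,X}$ preserves $\beta^{\tau,X}$ and intertwines the $\Phi$-action on the source with the $\Phi_{e^{-\ell}\cdot}$-action on the target, this disintegration transports (after dilating the $V_0$-factor by $e^{-\ell}$) to a disintegration which exhibits $\tilde\sigma$ as a conditional measure of $\beta^{\tau,X}$ along $\Phi$. The uniqueness clause of Proposition \ref{prop: 4.2} then forces $\tilde\sigma \simeq \sigma$ almost everywhere, yielding $\sigma(T_\ell^{\tau,X}(c,x)) \simeq (e^{-\ell})_*\sigma(c,x)$ as claimed.
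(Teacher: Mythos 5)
Your proposal is correct and follows essentially the same route as the paper, whose (very terse) proof invokes exactly the three ingredients you isolate: the uniqueness clause for the conditional measure $\sigma$ from Proposition \ref{prop: 4.2}, the commutation relation \equ{eq: 6.10}, and the fact that the fiber maps $x\mapsto b_{p-1}^{-1}\cdots b_0^{-1}x$ are measure isomorphisms $(X,\nu_b)\to(X,\nu_{T^pb})$, which is your measure-preservation step. You are also right that the equivariance computation alone is insufficient and that the real content lies in recognizing $(e^\ell)_*\sigma\circ T_\ell^{\tau,X}$ as another conditional measure system for $\beta^{\tau,X}$ along $\Phi$.
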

In this equality, $e^{-\ell}$ denotes the homothety by a factor of
$e^{-\ell}$ of $V_0$. 

\begin{proof}
This is a result of the uniqueness  of $\sigma$, equality \equ{eq:
  6.10} and the fact that for $\beta$-a.e. $b \in B$, for any $p \in
\N$, the action of $b_{p-1}^{-1} \cdots b_0^{-1}$ induces an
isomorphism between the measure spaces $(X, \nu_b)$ and $(X,
\nu_{T^pb})$. 
\end{proof}

\begin{cor}\name{cor: 6.13}
In the two cases of \S \ref{subsec: 6.1}, the map $\sigma: B^{\tau,X}
\to \MM_1(V_0)$ is $\QQ^{\tau, X}_\infty$-measurable. 
\end{cor}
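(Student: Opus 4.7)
The plan is to deduce the corollary as a direct consequence of Lemma \ref{lem: 6.12} together with the definition of $\QQ^{\tau,X}_\ell$ as a pullback $\sigma$-algebra. Recall that $\QQ^{\tau,X}_\ell = (T^{\tau,X}_\ell)^{-1}(\BB^{\tau,X})$, so a map is $\QQ^{\tau,X}_\ell$-measurable precisely when, up to $\beta^{\tau,X}$-null sets, it factors through $T^{\tau,X}_\ell$. Since $\QQ^{\tau,X}_\infty = \bigcap_{\ell \geq 0} \QQ^{\tau,X}_\ell$, it suffices to show that $\sigma$ is $\QQ^{\tau,X}_\ell$-measurable for every fixed $\ell \geq 0$.

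To this end, let $H_\ell : \MM_1(V_0) \to \MM_1(V_0)$ denote the map induced by the homothety $v \mapsto e^\ell v$ of $V_0$, namely $H_\ell([\mu]) = [(e^\ell)_*\mu]$, where the bracket denotes the equivalence class modulo positive scalar normalization. The map $H_\ell$ is Borel: the pushforward $\mu \mapsto (e^\ell)_*\mu$ is continuous on $\MM(V_0)$ in the vague topology, and the choice of representative in $\MM_1(V_0)$ via the sequence $(K_n)$ of compacta exhausting $V_0$ (as in \S \ref{subsec: 4.1}) is Borel-measurable by construction. Now Lemma \ref{lem: 6.12} asserts precisely that for $\beta^{\tau,X}$-a.e. $(c,x) \in B^{\tau,X}$,
\[
\sigma(c,x) = H_\ell\bigl(\sigma(T^{\tau,X}_\ell(c,x))\bigr).
\]
Thus $\sigma$ agrees $\beta^{\tau,X}$-almost everywhere with the composition $H_\ell \circ \sigma \circ T^{\tau,X}_\ell$, which is $(T^{\tau,X}_\ell)^{-1}(\BB^{\tau,X})$-measurable by the Borel-measurability of $H_\ell$ and $\sigma$. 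Hence $\sigma$ is $\QQ^{\tau,X}_\ell$-measurable modulo $\beta^{\tau,X}$-null sets.

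Since this holds for every $\ell \geq 0$, $\sigma$ is measurable with respect to the intersection $\QQ^{\tau,X}_\infty = \bigcap_\ell \QQ^{\tau,X}_\ell$ (where we interpret $\QQ^{\tau,X}_\infty$ as completed for $\beta^{\tau,X}$, as with $\BB^{\tau,X}$). There is essentially no obstacle: the only verification required is the Borel-measurability of the normalized pushforward map $H_\ell$, which is a routine check using the explicit representatives fixed in \S \ref{subsec: 4.1}. The content of the statement is really encoded in Lemma \ref{lem: 6.12}, i.e.\ in the equivariance $\sigma \circ T^{\tau,X}_\ell \simeq (e^{-\ell})_* \sigma$ established via the commutation relation \equ{eq: 6.10} between the horocyclic flow $\Phi$ and the semiflow $T^{\tau,X}$.
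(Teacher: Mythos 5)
Your proposal is correct and follows essentially the same route as the paper: reduce to $\QQ^{\tau,X}_\ell$-measurability for each fixed $\ell$, then invoke the equivariance of Lemma \ref{lem: 6.12} in the form $\sigma(c,x) \simeq (e^{\ell})_*\sigma(T^{\tau,X}_\ell(c,x))$ to exhibit $\sigma$ as a Borel function of $T^{\tau,X}_\ell(c,x)$. Your explicit verification that the normalized pushforward $H_\ell$ is Borel on $\MM_1(V_0)$ is a detail the paper leaves implicit, but it is the right thing to check.
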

\begin{proof}
It suffices to show that for any $\ell \geq 0$, it is $\QQ^{\tau,
  X}_\ell$-measurable. This results from the equality, for
$\beta^{\tau, X}$-a.e. $(c,x) \in B^{\tau, X}$, $\sigma(c,x) \simeq
(e^\ell)_*(\sigma(T^{\tau, X}_\ell(c,x))).$ 
\end{proof}

\subsection{Approach outside the $W$-leaves}
{\em In order to start the drift argument, we need to ensure, in any
compact subset of positive $\beta^{\tau, X}$-measure, that a.e. point
$x$ is approached by points which are not in the same leaf as $x$ for
a certain  subfoliation of the relative stable leaf. }

For $b \in B$, we introduce a vector subspace of $V$:
\eq{eq: 6.17}{
W_b = \left\{v \in V: \sup_{p \in \N} \left( e^{\theta_{\R, p}(b)}
    \|b_p^{-1} \cdots b_0^{-1} v\| \right) < \infty \right \}
}
and, for $c = (b,k,m) \in B^\tau$, we set $W_c  = W_b$. 

\begin{lem}\name{lem: 6.14}
In the two cases of \S \ref{subsec: 6.1}, for $\beta^X$-a.e. $(b,x)$
in $B^X$, we have $\nu_b(\exp(W_b)x)=0$. 
\end{lem}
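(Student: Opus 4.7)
The plan is to reduce this to Proposition \ref{prop: 3.9} by showing that the subset $\exp(W_b)x$ of $X$ is contained in the stable leaf $W_b(x)$ defined in \S \ref{subsec: 3.4}. The hypothesis (HC) has already been verified in both cases of \S \ref{subsec: 6.1} by Proposition \ref{prop: 6.1}, and by hypothesis $\nu$ is non-atomic, so Proposition \ref{prop: 3.9} gives
\[
\nu_b(W_b(x)) = 0 \quad \text{for } \beta^X\text{-a.e. } (b,x).
\]
Once the inclusion $\exp(W_b)x \subset W_b(x)$ is established, the lemma follows immediately.

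To prove the inclusion, first I would observe that $\theta_{\R,p}(b) \to +\infty$ for $\beta$-a.e.\ $b \in B$. Indeed, Birkhoff's ergodic theorem applied to the two-sided shift gives $\frac{1}{p}\theta_{\R,p}(b) \to \int_B \theta_\R \, d\beta = \lambda_1$, and $\lambda_1 > 0$ by Furstenberg's formula \equ{eq: 5.3} combined with Proposition \ref{prop: 5.2}(f). Consequently, for every $v \in W_b$ the definition \equ{eq: 6.17} gives a constant $C = C(b,v)$ with
\[
\|R(b_p^{-1}\cdots b_0^{-1})v\| \leq C\, e^{-\theta_{\R,p}(b)} \xrightarrow[p\to\infty]{} 0.
\]

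Next I would translate this into a statement about distances in $X$. In the homogeneous-space case $X = G/\Lambda$ with $R = \mathrm{Ad}$, the identity $g\exp(v) = \exp(\mathrm{Ad}(g)v)\,g$ yields
\[
\rho_p(b)\bigl(\exp(v)x\bigr) = \exp\bigl(R(b_p^{-1}\cdots b_0^{-1})v\bigr)\,\rho_p(b)x,
\]
and, using a right-invariant Riemannian metric on $G$ descending to $X$, the distance between the two sides is bounded by $\|R(b_p^{-1}\cdots b_0^{-1})v\|$. In the toral case, the action is linear and the same identity holds trivially, with the same bound in the flat metric. In both cases the right-hand side tends to $0$, so $\exp(v)x \in W_b(x)$ and hence $\exp(W_b)x \subset W_b(x)$.

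Combining this inclusion with Proposition \ref{prop: 3.9} yields $\nu_b(\exp(W_b)x) \leq \nu_b(W_b(x)) = 0$ for $\beta^X$-a.e.\ $(b,x)$, as desired. The only mildly delicate point is the distance comparison in $G/\Lambda$: one must check that small elements of the Lie algebra produce small displacements in $X$ even as $\rho_p(b)x$ might venture toward a cusp. This is harmless because the upper bound $d_X(\exp(w)y,y) \leq \|w\|$ holds globally and does not require any injectivity of the exponential near $y$.
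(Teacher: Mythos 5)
Your proposal is correct and follows essentially the same route as the paper: both deduce $\theta_{\R,p}(b)\to+\infty$ from ergodicity and the positivity of $\lambda_1$, conclude from the definition \equ{eq: 6.17} that $\|b_p^{-1}\cdots b_0^{-1}v\|\to 0$ for $v\in W_b$, use a right-invariant metric (on $X$ or its cover) to place $\exp(W_b)x$ inside the relative stable leaf $W_b(x)$, and then invoke Proposition \ref{prop: 3.9} via the condition (HC) established in Proposition \ref{prop: 6.1}. Your explicit treatment of the distance comparison near the cusp is a welcome elaboration of a point the paper passes over quickly.
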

\begin{proof}
By ergodicity of the Bernoulli system $(B, \BB, \beta, T)$ and by
Furstenberg's formula \equ{eq: 5.3}, for $\beta$-a.e. $b \in B$ we
have $\lim_{p \to \infty} \frac{1}{p} \theta_{\R, p}(b) = \int_B
\theta_{\R}(b) d\beta(b) = \lambda_1>0$. Therefore, by Lemma \ref{lem:
  5.4}, for every $v \in W_b$, we have $\lim_{p \to \infty} \|b_p^{-1}
\cdots b_0^{-1} v\|=0$. Choosing a distance function $d$ on $X$, gives
a right-invariant distance on the group $\til X$, the universal cover
of $X$. For $\beta$-a.e. $b \in B$, every $x \in X$, and every $v
\in W_b$, we have 
$$
d(b_p^{-1} \cdots b_0^{-1}\exp(v)x, b_p^{-1} \cdots b_0^{-1} x) \to_{p
  \to \infty} 0.
$$
By Proposition \ref{prop: 6.1}, the measure $\mu$ satisfies property
(HC), and hence, by Proposition \ref{prop: 3.9}, for $\beta^X$-a.e. $(b,x)
\in B^X$, we have $\nu_b(\exp(W_b)x)=0$, as required. 
\end{proof}

\begin{cor}\name{cor: 6.15}
In the two cases of \S \ref{subsec: 6.1}, let $F \subset
B^{\tau, X}$ be a $\BB^{\tau, X}$-measurable subset such that
$\beta^{\tau, X}(F)>0$. Then, for $\beta^{\tau, X}$-a.e. $(c,x) \in F$,
there is a sequence $(u_n)$ of elements of $V \sm W_c$ such that $u_n
\to 0$ and such that, for every $n$, $(c, \exp(u_n)x) \in F$. 
\end{cor}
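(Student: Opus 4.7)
The plan is to combine Lemma \ref{lem: 6.14} with a fiberwise density-point argument. The disintegration $\beta^{\tau,X} = \int_{B^\tau} \delta_c \otimes \nu_c\, d\beta^\tau(c)$ means that Corollary \ref{cor: 6.15} is really a statement about the fiber measures $\nu_c$ on $X$, so the natural approach is to prove the conclusion slice by slice in $c$.

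Writing $F_c = \{y \in X : (c,y) \in F\}$, I will establish that two properties hold simultaneously for $\beta^{\tau,X}$-a.e. $(c,x) \in F$: (i) $x$ lies in the support of $\nu_c|_{F_c}$, i.e. $\nu_c(F_c \cap B(x,r))>0$ for every $r>0$; and (ii) $\nu_c(\exp(W_c)x)=0$. Property (i) is the standard measure-theoretic fact that $\nu_c$-a.e. point of a positive-$\nu_c$-measure Borel subset of the separable metric space $X$ is a support point of the restriction of $\nu_c$; applying this in each fiber and integrating against $\beta^\tau$ gives the claim on a set of full $\beta^{\tau,X}|_F$-measure. Property (ii) is exactly the content of Lemma \ref{lem: 6.14}: since $W_c = W_b$ and $\nu_c = \nu_b$ when $c=(b,k,m)$, the conclusion $\nu_b(\exp(W_b)x)=0$ transfers verbatim from $\beta^X$ on $B \times X$ to $\beta^{\tau,X}$ on $B^\tau \times X$.

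Once (i) and (ii) hold at $(c,x)$, the sequence is extracted as follows. For each integer $n \geq 1$, the set $F_c \cap B(x,1/n) \sm \exp(W_c)x$ has positive $\nu_c$-measure by (i) and (ii), hence is nonempty; choose $y_n$ in it. The map $w \mapsto \exp(w)x$ is a local diffeomorphism at $0 \in V$ (on the ball of radius $r_x$, which is positive), so for $n$ large we may uniquely write $y_n = \exp(u_n)x$ with small $u_n \in V$, and $u_n \to 0$ as $y_n \to x$. Finally, $y_n \notin \exp(W_c)x$ forces $u_n \notin W_c$, and $(c,\exp(u_n)x) = (c,y_n) \in F$ by the choice of $y_n$.

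There is no substantive obstacle; the only point requiring a moment's care is the transfer of Lemma \ref{lem: 6.14} from $B^X$ to $B^{\tau,X}$, and this is immediate because both $W_c$ and $\nu_c$ depend only on the $B$-component $b$ of $c$, while the $\R \times M$-component of $\beta^\tau$ disintegrates trivially over $\beta$.
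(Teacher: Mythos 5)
Your proposal is correct and follows essentially the same route as the paper: the paper likewise combines Lemma \ref{lem: 6.14} with the fact that $\beta^{\tau,X}$-a.e.\ point of $F$ is a $\nu_c$-support point of $F_c$, concluding that $\nu_c(F_c \cap \exp(U_n \sm W_c)x)>0$ for a countable neighborhood basis $(U_n)$ of $0$ in $V$. Your use of metric balls $B(x,1/n)$ pulled back through the local diffeomorphism $w \mapsto \exp(w)x$ is only a cosmetic variant of the paper's direct use of the neighborhoods $\exp(U_n)x$.
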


\begin{proof}
Let $(U_n)$ be a countable basis of neighborhoods of $0$ in $V$. For
$\beta^\tau$-a.e. $c \in B^\tau$, the set $F_c = \{x \in X: (c,x) \in
F\}$ satisfies $\nu_c(F_c)>0$. For $\beta^{\tau, X}$-a.e. $(c,x) \in
F$, for every $n \geq 0$ we therefore have $\nu_c(F_c \cap
\exp(U_n)x)>0$ and since, by Lemma \ref{lem: 6.14},
$\nu_c(\exp(W_c)x)=0$, we have $\nu_c(F_c \cap (\exp(U_n \sm
W_c)x))>0$. 
\end{proof}

\section{Invariance of stationary measures}\name{sec: 7}
{\em The goal of this chapter is to present the exponential drift
  argument and to deduce invariance properties for certain conditional
  measures of stationary measures (Proposition \ref{prop: 7.6}).

To this end we collect the pieces of the puzzle which we have
prepared in previous chapters.}

\subsection{The exponential drift}\name{subsec: 7.1}
{\em The heart of this paper is the following proposition.}

We keep as always the notation of \S \ref{subsec: 6.1}. In particular,
$\mu$ is a probability measure on $G$ whose support generates a
Zariski-dense subsemigroup, $\nu$ is a $\mu$-stationary and
$\mu$-ergodic Borel probability measure on $X$, and the symbols $s,
\xi, \theta, \theta_\R, \varphi, \tau_\R, \tau_M, \tau, B^\tau,
\beta^\tau, \beta^{\tau, X}, \sigma, R$, etc., have the same meanings
as in \S \ref{sec: 5} and \S \ref{sec: 6}. 

\begin{prop}\name{prop: 7.1}
In the two cases of \S \ref{subsec: 6.1}, let $(Y, \YY)$ be a standard
Borel space, let $f: B^{\tau, X} \to Y$ be a $\QQ_\infty^{\tau,
  X}$-measurable map, and let $E \subset B^{\tau, X}$ be a $\BB^{\tau,
  X}$-measurable subset such that $\beta^{\tau, X}(E^c)=0$. Then for
$\beta^{\tau, X}$-a.e. $(c,x) \in B^{\tau, X}$, for any $\vre>0$,
there exists a nonzero element $v \in V_0$ of norm at most $\vre$ and
an element $(c',x') \in E$ such that $\Phi_v(c',x') $ is also in $E$
and such that 
\eq{eq: 7.1}{
f(\Phi_v(c',x')) = f(c',x') = f(c,x). 
}
\end{prop}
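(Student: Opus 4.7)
The proof plan is to implement the exponential drift, combining three preparatory results: the conditional-expectation formula of Corollary \ref{cor: 3.8}, the recurrence outside $W$-leaves of Corollary \ref{cor: 6.15}, and the random-walk control of Corollary \ref{cor: 5.5}. The first observation is that $\QQ^{\tau,X}_\infty$-measurability forces $f=f\circ T^{\tau,X}_\ell$ almost surely for every $\ell\geq 0$ (the tail $\sigma$-algebra coincides mod null with the invariant one), and, via Corollary \ref{cor: 3.8} applied to indicators $\mathbf{1}_{f^{-1}(A)}$ for $A$ in a countable separating family of Borel subsets of $Y$, that for a.e.\ $(c,x)$ and every $\ell$,
\[
f(c,x) \;=\; f\bigl(c',\rho_\ell(c')^{-1}\rho_\ell(c)x\bigr)\quad\text{for } \beta^\ell_c\text{-a.e.\ } c'.
\]

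After replacing $E$ by $E\cap\bigcap_{\ell\in\Q_+}(T^{\tau,X}_\ell)^{-1}(E)$ (still of full measure and preserved by $T^{\tau,X}_\ell$ for rational $\ell$) and fixing a compact inner approximation $K\subset E$ of measure close to one, pick a generic $(c,x)\in K$. Apply Corollary \ref{cor: 6.15} to $K\cap f^{-1}(f(c,x))$---directly in the atomic case, and via a nested countable Borel partition of $Y$ with a diagonal extraction in the non-atomic case---to produce $u_n\to 0$ in $V\sm W_c$ with $(c,\exp(u_n)x)\in K\cap f^{-1}(f(c,x))$; in particular $f(c,\exp(u_n)x)=f(c,x)$. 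Since $u_n\notin W_c$, by the definition \eqref{eq: 6.17} the quantity $e^{\theta_{\R,p}(c)}\|\rho_p(c)u_n\|$ is unbounded in $p$, so we may choose rational times $\ell_n\to\infty$ such that $\|R(\rho_{\ell_n}(c))u_n\|\in[\vre/2,\vre]$.

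By the law of the last jump (Proposition \ref{prop: 2.3}), $\beta^{\ell_n}_c$ parametrizes $c'$ in the $T^{\tau,X}_{\ell_n}$-fiber via a prefix $a=(a_{q-1},\ldots,a_0)$ of independent $\mu$-distributed letters, and a direct computation yields the factorization
\[
\rho_{\ell_n}(c')^{-1}\rho_{\ell_n}(c) \;=\; (a_{q-1}\cdots a_0)\cdot\rho_{\ell_n}(c).
\]
Thus $w_n:=R(\rho_{\ell_n}(c')^{-1}\rho_{\ell_n}(c))u_n$ is obtained by applying a genuine \emph{forward} random walk $a_{q-1}\cdots a_0$ (independent of the past) to the fixed vector $R(\rho_{\ell_n}(c))u_n$ of norm $\approx\vre$, and Corollary \ref{cor: 5.5} shows that for a $1-\alpha$ fraction of $c'$, the direction of $w_n$ lies within $\eta$ of $V_{c'}$ with norm of the expected order. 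For such $c'$, intersected with the a.e.\ set from Step~1 and with both $(c',y_n)$ and $(c',\exp(w_n)y_n)$ in $E$ (where $y_n:=\rho_{\ell_n}(c')^{-1}\rho_{\ell_n}(c)x$; automatic by the rational $T^{\tau,X}_\ell$-preservation of $E$), the Step~1 identity applied to both $(c,x)$ and $(c,\exp(u_n)x)$ yields $f(c,x)=f(c',y_n)=f(c',\exp(w_n)y_n)$. Letting $\alpha,\eta\to 0$ along a diagonal subsequence so that $w_n$ lands exactly on $V_{c'}$, writing $w_n=D_{c'}(v)$ produces the desired nonzero $v\in V_0$ with $\|v\|\leq\vre$ and $\Phi_v(c',y_n)=(c',\exp(w_n)y_n)\in E$.

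The principal obstacle is the alignment step: the product $\rho_{\ell_n}(c')^{-1}\rho_{\ell_n}(c)$ mixes an inverse-walk segment (which pushes the small vector $u_n$ into a generically bad direction) with a subsequent forward random-walk segment (which by strong irreducibility funnels vectors into $V_{c'}$); verifying that $w_n$ lands in $V_{c'}$ rather than in the transverse hyperplane $V'_{c'}$ is exactly what Corollary \ref{cor: 5.5} has been prepared for, via the hidden product structure exposed by the law of the last jump. A secondary difficulty is the non-atomic case of $Y$, handled by the countable-partition reduction.
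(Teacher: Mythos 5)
Your architecture is the paper's (law of the last jump, recurrence off the $W$-leaves, Corollary \ref{cor: 5.5} to align the drift), and the factorization $\rho_\ell(c')^{-1}\rho_\ell(c)=(a_{q-1}\cdots a_0)\,\rho_\ell(c)$ is exactly the paper's \equ{eq: 7.10}--\equ{eq: 7.11}. But two quantitatively delicate steps are wrong or missing. The first is the choice of the times $\ell_n$: you normalize $\|R(\rho_{\ell_n}(c))u_n\|\in[\vre/2,\vre]$, but the quantity that $u_n\notin W_c$ makes unbounded is $e^{\theta_{\R,p}(b)}\|b_{p}^{-1}\cdots b_0^{-1}u_n\|$, not $\|b_{p}^{-1}\cdots b_0^{-1}u_n\|$ (the latter can tend to $0$, e.g.\ for $u_n$ close to $V_b$, so your stopping time need not exist). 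Moreover the forward word $a_{q-1}\cdots a_0$ re-expands by a factor comparable to $e^{\theta_{\R,p}(b)}$ --- synchronizing these two factors is precisely what the suspension is for --- so with your normalization the drift $w_n$ would have norm of order $e^{\theta_{\R,p_n}(b)}\vre\to\infty$. One must instead pin $e^{\theta_{\R,p_n}(b)}\|b_{p_n}^{-1}\cdots b_0^{-1}u_n\|$ into a window of size comparable to $\vre$ (this is \equ{eq: 7.14}, the only place where the definition \equ{eq: 6.17} of $W_b$ is used), after which the two-sided bound \equ{eq: 7.12} gives $\|w_n\|$ bounded above by $\vre$ and, crucially, \emph{below} away from $0$, so that the limiting $v$ is nonzero.

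The second gap is the passage to the limit. Membership of $(c',y_n)$ and $(c',\exp(w_n)y_n)$ in $E$ is not ``automatic by the rational $T^{\tau,X}_\ell$-preservation of $E$'': these are points of the \emph{fiber} $(T^{\tau,X}_{\ell_n})^{-1}(T^{\tau,X}_{\ell_n}(c,x))$, and forward invariance of $E$ says nothing about preimages. More seriously, Corollary \ref{cor: 5.5}(b) only places $w_n$ within $\eta_n$ of $V_{c'_n}$ with $c'_n$ varying with $n$; to get a drift vector lying \emph{exactly} in some $V_{c'}$ you must extract convergent subsequences of $(c'_n,x'_n)$ and $(c'_n,\exp(w_n)x'_n)$ inside a fixed compact $K$ on which $f$, $c\mapsto V_c$ and $c\mapsto D_c$ are continuous, and note that the limit stays in $K\subset E$. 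To guarantee that a positive $\beta$-measure set of $a$ sends \emph{both} preimages $h_{\ell_n,c,x}(a)$ and $h_{\ell_n,c,y_n}(a)$ into $K$ --- which has measure only $1-\alpha^2$, not $1$ --- one needs the martingale convergence theorem for $\EE(1_K\mid\QQ^{\tau,X}_\ell)$ together with Egorov's theorem, yielding the bound $\geq 1-\alpha$ uniformly over base points in $L$ and over all $\ell\geq\ell_0$; this is \equ{eq: 7.3}--\equ{eq: 7.8} and is absent from your proposal, so your final diagonal extraction has nothing to converge along. (Incidentally, the claim that the tail $\sigma$-algebra coincides with the invariant one, hence $f=f\circ T^{\tau,X}_\ell$, is false in general; but your displayed fiberwise identity is derived correctly from Corollary \ref{cor: 3.8} and is all that is needed.)
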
 

\begin{remark}
Since we do not yet know that the horocycle flow preserves the measure
$\beta^{\tau, X}$ (we will show this in \S \ref{subsec: 8.1}), it is
not apriori clear that {\em there exists an element $(c',x') \in E$ and a
nonzero vector $v \in V_0$ such that $\Phi_v(c',x')$ is in $E$. } This
assertion will be a nontrivial consequence of Proposition \ref{prop: 7.1}.
\end{remark}

{\em Beginning of proof of Proposition \ref{prop: 7.1}.}
By definition, we can assume that $Y$ is endowed with the topology of
a complete separable metric space for which $\YY$ is the Borel
$\sigma$-algebra. Similarly we can choose the topology of a compact
metric space on $B^\tau$ so that the Borel $\sigma$-algebra coincides,
up to adding subsets of measure zero, with $\BB^\tau$, and such that
the natural projection $B^\tau \to M$ is continuous, and endow $B^\tau
\times X$ with the product topology of this topology and the usual
topology on $X$. 

Let $\alpha>0$ be a small number. By Lusin's theorem, there is  a
compact subset $K \subset E$ in $B^{\tau, X}$ such that $\beta^{\tau,
  X}(K^c) < \alpha^2$ and such that all the functions we will
encounter, such as the functions $f$, $\theta$, $(c,x) \mapsto
\varphi(b), \, (c,x) \mapsto V_c$ and also $(c,x) \mapsto D_c \in
\Hom(V_0, V_c)$, are uniformly continuous on $K$. 

The proof relies on the study of the function $\EE \left(1_K |
\QQ_\infty^{\tau, X} \right).$ 

On one hand, this function is bounded above by 1 and its average is
bounded below by $1-\alpha^2$, because:
\eq{eq: 7.2}{
\int_{B^{\tau, X}} \EE\left(1_K | \QQ_\infty^{\tau, X}\right)(c,x) d\beta^{\tau,
  X}(c,x) = \beta^{\tau, X}(K) > 1-\alpha^2.
}
Thus the function $\EE \left(1_K | \QQ_\infty^{\tau, X}\right)$ is bounded below
by $1-\alpha$ on a set of measure  $1-\alpha$. Therefore there is a
compact subset $L \subset E$ in $B^{\tau, X}$ such that $\beta^{\tau,
  X}(L^c) < \alpha$ and such that, for every $(c,x) \in L$, we have
\eq{eq: 7.3}{
\EE \left(1_K | \QQ_\infty^{\tau, X} \right)(c,x) > 1-\alpha.
}
By Lusin's theorem, we may also suppose that $f$ is continuous on
$L$. 

On the other hand, by the Martingale convergence theorem, for
$\beta^{\tau, X}$-a.e. $(c,x) \in B^{\tau, X}$, we have 
\eq{eq: 7.4}{
\lim_{\ell \to \infty} \EE\left(1_K | \QQ_\ell^{\tau, X} \right)(c,x)
= \EE \left(1_K
| \QQ_\infty^{\tau, X} \right)(c,x). 
}
By Corollary \ref{cor: 3.8}, we may also suppose that for
\underline{every} $(c,x) \in L$ and $\ell$ rational, the left hand
side of \equ{eq: 7.4} is given by formula \equ{eq: 3.11}. Thanks to
the law of the last jump (Proposition \ref{prop: 2.3}), recalling the
notation $h_{\ell, c}(a)$, this can be rewritten as 
\eq{eq: 7.5}{
\EE \left(1_K | \QQ_\ell^{\tau, X} \right)(c,x) = \int_B 1_K (h_{\ell, c, x}(a)) d\beta(a),
}
where 
$$
h_{\ell, c,x}(a) = (c',x') \text{ with } c' = h_{\ell, c}(a) \text{
  and } x' = \rho_\ell(c')^{-1} \rho_\ell(c)x.
$$
Moreover, since $f$ is $\QQ_\infty^{\tau, X}$-measurable, it is
$\QQ_\ell^{\tau ,X}$-measurable for each $\ell \geq 0$, and hence,
again by Corollary \ref{cor: 3.8} and 
Proposition \ref{prop: 2.3}, we can also assume that for every $(c,x)
\in K$, for $\beta$-a.e. $a \in B$, for any rational $\ell \geq 0$, we
have $f(h_{\ell, c,x}(a)) = f(c,x).$ 

Egorov's theorem ensures that, outside a subset of $L$ of arbitrarily
small $\beta^{\tau, X}$-measure, the convergence in \equ{eq: 7.4} is
uniform on $L$. Therefore, after removing a subset of $L$ of small
measure, there exists $\ell_0 \geq 0$ such that for 
every integer $\ell \geq \ell_0$, for every $(c,x) \in L$, we have 
\eq{eq: 7.6}{
\EE\left(1_K | \QQ_\ell^{\tau, X} \right)(c,x) \geq 1-\alpha.
}
Since the $\beta^{\tau, X}$-measure of $L^c$ is at most $\alpha$ and
$\alpha$ was chosen arbitrarily small, it suffices to prove \equ{eq: 7.1}
 for $\beta^{\tau, X}$-a.e. $(c,x) \in L$. 

By Corollary \ref{cor: 6.15} we may suppose that for the points $(c,x)
\in L$, there exists a sequence $(u_n)$ of elements of $V \sm W_c$
which converge to 0 and such that the points $(c,y_n)$ defined by
$(c,y_n) = (c, \exp(u_n)x)$ are also in $L$. 

We apply the two formulas \equ{eq: 7.5} and \equ{eq: 7.6} to the
conditional expectations at the two points $(c,x)$ and $(c,y_n)$. For
$\ell \geq \ell_0$, we then have 
\eq{eq: 7.7}{
\beta \left \{ a \in B: h_{\ell, c, x}(a) \in K\right \} \geq 1-\alpha 
}
and 
\eq{eq: 7.8}{
\beta \left \{ a \in B: h_{\ell, c, y_n}(a) \in K \right \} \geq 1-\alpha. 
}
We will now say a few words about the strategy of proof. By
construction, for $y = \exp(u)x$ with $u \in \mathfrak{g}$, the
parameterizations of the two fibers of $T_\ell^{\tau, X}$ passing
through $(c,x)$ and $(c,y)$ are related by a {\em drift} that can be
easily computed: if $(c',x') = h_{\ell, c,x}(a)$ and $(c',y') =
h_{\ell, c,y}(a)$, then we have 
\eq{eq: 7.9}{
y' = \exp(F_{\ell,c}(a)u)x'
}
where the drift is given by 
\eq{eq: 7.10}{
F_{\ell, c}(a)u = R_\ell(c') \circ R_\ell(c)^{-1}(u),
}
and where, as in \S \ref{subsec: 3.3}, if we write $c = (b,k,m)$ and
$p = p_\ell(b,k)$ then we have 
\eq{eq: 7.11}{
R_\ell(c) = R(b_0) \circ \cdots \circ R(b_{p-1}).
}
To simplify the notations, we will sometimes write $b_0$ for
$R(b_0)$. We will see that, for the parameterization of the two fibers
of $T^{\tau, X}_\ell$ passing through the points $(c,x)$ and $(c,
y_n)$, a large proportion of the parameters $a \in B$ correspond to
two points $(c'_n, x'_n)$ and $(c'_n, y'_n)$ which are both in $K$. We
will now adjust the line $\ell = \ell_n$ of the sequence $u_n$ in
order to control the norm and the direction of the drift separating
these two points. 

This will be possible thanks to the following lemma. 

\begin{lem}\name{lem: 7.3}
In the two cases of \S \ref{subsec: 6.1}, for any $\alpha>0$ and
$\eta>0$, there is $r_0 \geq 1$, such that for $\beta^\tau$-a.e. $c
\in B^\tau$, for all $\ell$ sufficiently large, we have for all $u \in
V \sm \{0\}$, 
\eq{eq: 7.12}{
\beta \left \{ a \in B: \frac{1}{r_0} \leq \frac{\|F_{\ell, c}(a) u
    \|}{e^{\theta_{\R, \ell}(c)} \|R_\ell(c)^{-1} u\|} \leq r_0 \right\} \geq 1-\alpha.
}
and 
\eq{eq: 7.13}{
\beta \left \{ a \in B: d\left(\R F_{\ell, c}(a) u, \PPP \left(V_{h_{\ell,
      c}(a)} \right) \right) \leq \eta \right \} \geq 1-\alpha. 
}
\end{lem}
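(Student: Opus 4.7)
The plan is to reduce both estimates to control of the random product $P_q(a) := R(a_{q-1}) \circ \cdots \circ R(a_0)$ applied to a fixed vector. I would set $v := R_\ell(c)^{-1} u$ (depending on $(c, \ell, u)$ but not on $a$) and $q := q_{\ell,c}(a)$, so that $F_{\ell,c}(a)u = P_q(a)v$. Equation \equ{eq: 2.2} together with Egorov's theorem ensures that, for $\beta^\tau$-a.e.\ $c$ and any prescribed $q_0$, the event $\{q \geq q_0\}$ has $\beta$-probability at least $1-\alpha/3$ for all $\ell$ sufficiently large. For \equ{eq: 7.13}, I would iterate $\xi(b) = b_0\,\xi(Tb)$ to obtain $V_{h_{\ell,c}(a)} = V_{a[q]b'} = P_q(a) V_{b'}$ with $b' := T^{p_\ell(c)} b$, and then apply Corollary \ref{cor: 5.5}(b) to the unit vector $v/\|v\|$ and the subspace $W = V_{b'}$; both depend only on $(c,\ell,u)$, and the uniformity of $q_0$ in $(v,W)$ in that corollary is precisely what is needed.

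For \equ{eq: 7.12}, the upper bound $\|P_q(a)v\| \leq \|P_q(a)\|\|v\|$ is trivial, while Corollary \ref{cor: 5.5}(a) applied to $v/\|v\|$ would give a matching lower bound on a high-probability event. It thus suffices to compare $\|P_q(a)\|$ with $e^{\theta_{\R,\ell}(c)}$. Iterating Lemma \ref{lem: 5.4} along $a[q]b'$ shows that $P_q(a)|_{V_{b'}}$ is conformal onto $V_{a[q]b'}$ with scaling factor $e^{\theta_{\R,q}(a[q]b')}$, yielding $\|P_q(a)\| \geq e^{\theta_{\R,q}(a[q]b')}$. For the reverse, I would apply Proposition \ref{prop: 5.2}(b) to the sequence $a$ to see that, for $q$ large, $P_q(a)/\|P_q(a)\|$ is close to an approximate rank-$d_0$ isometry on $(V'_a)^\perp$, and use Proposition \ref{prop: 5.2}(e) to obtain a uniform lower bound (with high probability) on the angle between $V_{b'}$ and $V'_a$, so that $\|P_q(a)\|$ is bounded by a constant multiple of its restriction norm to $V_{b'}$. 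Finally, substituting $\theta_{\R,n}(b) = \tau_{\R,n}(b) + \varphi(T^n b) - \varphi(b)$ from \equ{eq: 5.4} and the bounded-error estimates $\tau_{\R,p_\ell(c)}(b) = \ell + k + O(1)$ and $\tau_{\R,q}(a[q]b') = \ell + O(1)$ that follow from the definitions of $p_\ell$ and $q_{\ell,c}$, one obtains
\[ \theta_{\R,q}(a[q]b') - \theta_{\R,\ell}(c) = O(1) + \varphi(b) - \varphi(a[q]b'). \]
Since $\varphi$ is $\beta$-a.s.\ finite and the law of $a[q]b'$ under $a \sim \beta$ converges weakly to $\beta$ as $q \to \infty$, one can choose $N$ and $q_0$ so that $|\varphi(a[q]b')| \leq N$ holds on an $a$-event of probability at least $1-\alpha/3$, uniformly in $q \geq q_0$.

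The hard part will be the comparison $\|P_q(a)\| \asymp e^{\theta_{\R,q}(a[q]b')}$: one needs to quantify transversality between the \emph{deterministic} subspace $V_{b'}$ (determined by $c$) and the \emph{random} subspace $V'_a$. The essential ingredient is that Proposition \ref{prop: 5.2} and Corollary \ref{cor: 5.5} hold with $q_0$ \emph{uniform} in the auxiliary vector/subspace data; without this uniformity the estimate would be useless, since $v$ and $V_{b'}$ depend on the $(c,\ell,u)$ under examination.
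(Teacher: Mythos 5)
Your reduction is the same as the paper's: write $F_{\ell,c}(a)u = a_{q-1}\cdots a_0\,v$ with $v = R_\ell(c)^{-1}u$ and $q=q_{\ell,c}(a)$, use \equ{eq: 2.2} to make $q\geq q_0$ with high $\beta$-probability once $\ell$ is large, and invoke Corollary \ref{cor: 5.5} with auxiliary data ($v$ and $W=V_{T^\tau_\ell(c)}$) depending only on $(c,\ell,u)$ and not on $a$; the uniformity of $q_0$ in $(v,W)$ that you single out is indeed the point. Your treatment of \equ{eq: 7.13} is exactly the paper's. For \equ{eq: 7.12} the paper simply applies Corollary \ref{cor: 5.5}(a) a second time, to a vector $v_2\in V_{T^\tau_\ell(c)}$, on which $a_{q-1}\cdots a_0$ acts conformally with ratio $e^{\theta_{\R,q}(a[q]b')}$ by Lemma \ref{lem: 5.4}; your detour through Proposition \ref{prop: 5.2}(b),(e) and Lemma \ref{lem: 5.6}(a) re-derives the same comparison $\|a_{q-1}\cdots a_0\|\asymp e^{\theta_{\R,q}(a[q]b')}$, so the step you flag as ``the hard part'' is in fact already packaged in the tools of \S\ref{sec: 5} and causes no difficulty.

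The step that does not hold up as written is the last one: comparing $e^{\theta_{\R,q}(a[q]b')}$ with $e^{\theta_{\R,\ell}(c)}$ (a step the published proof passes over in silence). Your identity $\theta_{\R,q}(a[q]b')-\theta_{\R,p_\ell(c)}(b)=O(1)+\varphi(b)-\varphi(a[q]b')$ is correct, but the proposed control of $\varphi(a[q]b')$ is not: the law of $a[q]b'$ under $a\sim\beta$ is $\alpha^{\otimes q}\otimes\delta_{b'}$, which is \emph{singular} with respect to $\beta$, so weak-$*$ convergence of these laws to $\beta$ gives no information whatsoever about the merely measurable, $\beta$-a.s.\ finite function $\varphi$ along them. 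To repair this you must either argue through the conditional measures themselves --- Proposition \ref{prop: 2.3} identifies the law of $h_{\ell,c}(a)$ as the fiber measure $\beta^\ell_c$, whose average over $c$ is $\beta^\tau$, so Markov's inequality together with martingale convergence yields $\beta\{a:|\varphi(a[q]b')|>N\}\leq\alpha$ for $\beta^\tau$-a.e.\ $c$ and all large $\ell$, with $N=N(c)$ --- or exploit the specific form of $\varphi$ from Lemma \ref{lem: 2.1} (the running minimum of a bounded cocycle with positive drift) to obtain tail bounds uniform in $q$ and $b'$. Note also that the leftover term $\varphi(b)$, which you do not address, forces $r_0$ to depend on $c$ unless one shows $\varphi$ can be taken bounded; this is harmless where the lemma is used in the proof of Proposition \ref{prop: 7.1}, since $c$ is fixed there, but it is weaker than the quantifier order in the statement.
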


\begin{proof}
Recall that by \S\ref{subsec: 2.3}, for $\beta^\tau$-a.e. $c \in
B^\tau$, for $\beta$-a.e. $a \in B$, we have $\lim_{\ell \to \infty}
q_{\ell,c}(a) = \infty.$ 

In order to obtain the upper bound \combarak{(sic !)} \equ{eq: 7.12}, we apply Corollary
\ref{cor: 5.5}(a) with the vectors  $v_1 = R_{\ell}(c)^{-1}u$ and $v_2
\in V_{T^\tau_\ell(c)}$ which results in the equality 
$$
\frac{\|F_{\ell, c}(a)u\|}{\|R_\ell(c)^{-1}u \|} = \frac{\| a_{q-1}
  \cdots a_0 v_1\|}{\|v_1 \|}
$$
for $q = q_{\ell, c}(a)$, and, thanks to Lemma \ref{lem: 5.4}, in the
equality 
$$
e^{\theta_{\R, \ell}(c')} = \frac{\| a_{q-1} \cdots a_0 v_2\|}{\|v_2 \|}
$$
with $c'=h_{\ell, c}(a)$. 
\combarak{Still need to compare $\theta_{\R, \ell}(c)$ to $\theta_{\R,
    \ell}(c')$. To show they are equal, up to additive factors,
  first use that $\tau_{p_\ell(c)}, \tau_{p_\ell( c')}$ are each equal to
  $\ell$ up to
  additive factors because $\tau$ is bounded, and then use that 
in Lemma \ref{lem: 2.1}, $\varphi$ can be taken bounded.
}
In order to obtain \equ{eq: 7.13}, we apply Corollary \ref{cor:
  5.5}(b) with the same vector $v = R_\ell(c)^{-1}u$ and with $W =
V_{T^\tau_\ell(c)}$. For $\beta^\tau$-a.e. $c \in B^\tau$, for every
$\alpha, \eta>0$ there is thus $\ell_0 \geq 0$ such that for every $u
\in V \sm \{0\}$ and $\ell \geq \ell_0$, 
$$
\beta \left \{ a \in B: d(\R F_{\ell, c}(a) u, \mathbb{P}(V_{h_{\ell,
      c}(a)})) \leq \eta \right \} \geq 1-\alpha, 
$$
\combarak{Again, order seems to be reversed.}
as required. 
\end{proof}

\begin{proof}[End of proof of Proposition \ref{prop: 7.1}]
We now explain our strategy in more detail. We will choose the
parameter $\ell = \ell_n$ in the following manner. 

Since the measure $\mu$ on $G$ is compactly supported, and since the
section $s$ in \S \ref{subsec: 5.2} has a bounded image, there is
$C_0>0$ such that, for $\beta$-a.e. $b \in B$, for every $u \in V \sm
\{0\}$, and every $p \in \N$, we have 
$$
\frac{e^{\theta_{\R, p+1}(b)}\|b^{-1}_{p+1} \cdots b^{-1}_0 u
  \|}{e^{\theta_{\R, p}(b)}\|b^{-1}_p \cdots b^{-1}_0u\|} \leq C_0. 
$$
Since $u_n$ is not in $W_c$, the sequence $p \mapsto e^{\theta_{\R,
    p}(b)} \|b^{-1}_p \cdots b^{-1}_0 u_n\|$ is not bounded above. For
$n$ large enough, there is therefore an integer $p_n$ such that 
\eq{eq: 7.14}{
\frac{e^{-M_0}\vre}{r_0C_0} \leq e^{\theta_{\R, p_n(b)}} \|b_{p_n}^{-1}
  \cdots b^{-1}_0 u_n \| \leq \frac{e^{-M_0}\vre}{r_0}
}
where $M_0 = \sup \tau$. We choose a rational $\ell_n$ such that $p_n
= p_{\ell_n}(c)$. This is possible since $\tau$ is strictly positive. 

Hence, since $\alpha < \frac{1}{4}$, we can choose an element $a = a_n
\in B$ such that it simultaneously belongs to the sets given by
\equ{eq: 7.7} and \equ{eq: 7.8}, \equ{eq: 7.12} and \equ{eq: 7.13}
with $\ell = \ell_n$, $u=u_n$ and $\eta = \eta_n \to 0$ and such that 
\eq{eq: 7.15}{
f(h_{\ell_n,c,x}(a_n)) = f(c,x) \text{ and } f(h_{\ell_n, c,
  y_n}(a_n)) = f(c,y_n).
}
Up to passing to a subsequence, we have 
\begin{enumerate}
\item
The sequence $(c'_n , x'_n) = h_{\ell_n,c,x}(a_n)$ has a limit
$(c',x') \in K$, 
\item
The sequence $(c'_n, y'_n) = h_{\ell_n,c,y_n}(a_n)$ has a limit in
$K$, and 
\item
the limit of the drift vector $w = \lim_{n \to \infty}
F_{\ell_n,c}(a_n)u_n$ exists, is nonzero, is of norm at most
$e^{-M_0}\vre$, and belongs to $V_{c'}$. 
\end{enumerate}
We then deduce, by passing to a limit in \equ{eq: 7.15}, since all the
limits considered have their values in $K$ and $L$, and since $f$ is
continuous on these sets, 
$$
f(c',x') = \lim_{n \to \infty} f(c'_n, x'_n) = \lim_{n \to \infty}
f(c,x) = f(c,x),
$$
$$
f(c', y') = \lim_{n \to \infty} f(c'_n, y'_n) = \lim_{n \to \infty}
f(c,y_n) \text{ and } y' = \exp(w)x'.
$$ 
In addition, if we let $v \in V_0$ be the nonzero vector $v =
D^{-1}_{c'}(w)$, we have 
$$
\|v \| \leq \vre \text{ and } (c', y') = \Phi_v(c',x'),
$$
which is the sought-for conclusion. 
\end{proof}

\subsection{Stabilizers of conditional measures}
{\em We will make explicit the information furnished by the drift
  argument, regarding the horocyclic conditional measures
  $\sigma(c,x)$, for $\beta^{\tau, X}$-a.e. $(c,x) \in B^{\tau, X}$.}

We introduce the connected stabilizers of the measures $\sigma(c,x)$
and their class $R^*_+ \sigma(c,x)$ modulo normalization:
$$
J(c,x) = \{ v\in V_0 : t_{v*}\sigma(c,x) = \sigma(c,x) \}_0,
$$
$$
J_1(c,x) = \{v \in V_0 : t_{v*}\sigma(c,x) \simeq \sigma(c,x) \}_0.
$$
These are closed subgroups and hence vector subspaces of $V_0$. 

\begin{prop}\name{prop: 7.4}
In the two cases of \S \ref{subsec: 6.1}, for $\beta^{\tau,
  X}$-a.e. $(c,x) \in B^{\tau, X}$, we have 
\begin{itemize}
\item[a)]
$J_1(c,x) \neq \{0\}$,
\item[b)]
$J(c,x) = J_1(c,x).$
\end{itemize}
\end{prop}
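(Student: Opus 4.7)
The plan is to deduce both statements from the exponential drift argument (Proposition \ref{prop: 7.1}), together with the transformation rule of $\sigma$ under $T^{\tau,X}_\ell$ (Lemma \ref{lem: 6.12}) and the horocyclic equivariance of conditional measures (Lemma \ref{lem: 6.11}).

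For (a), I would apply Proposition \ref{prop: 7.1} with $f = \sigma : B^{\tau,X} \to \MM_1(V_0)$, which is $\QQ^{\tau,X}_\infty$-measurable by Corollary \ref{cor: 6.13}, and with $E$ the full-measure Borel set on which Lemma \ref{lem: 6.11} holds. For $\beta^{\tau,X}$-a.e. $(c,x)$ and every $\vre>0$, this produces a nonzero $v \in V_0$ of norm at most $\vre$ together with $(c',x') \in E$ satisfying $\Phi_v(c',x') \in E$ and
\[
\sigma(c,x) = \sigma(c',x') = \sigma(\Phi_v(c',x'))
\]
in $\MM_1(V_0)$. Combining the second equality with \equ{eq: 6.16} applied at $(c',x')$ gives $t_{v*}\sigma(c',x') \simeq \sigma(c',x')$, so $v$ lies in the scaling-stabilizer of $[\sigma(c,x)]$. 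Since this occurs for a sequence of nonzero $v$ with $\|v\| \to 0$, this stabilizer is non-discrete in $V_0 \simeq \R^{d_0}$, and hence its identity component $J_1(c,x)$ is nontrivial.

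For (b), only the inclusion $J_1(c,x) \subseteq J(c,x)$ requires argument. Fix a Borel choice of representatives of $\sigma$ (using the normalization of \S \ref{subsec: 4.1}). Each $v \in J_1(c,x)$ then satisfies $t_{v*}\sigma(c,x) = \lambda_{(c,x)}(v)\,\sigma(c,x)$ for a unique scalar $\lambda_{(c,x)}(v) \in \R_+^*$, and $\log \lambda_{(c,x)}$ is a linear functional on the finite-dimensional vector space $J_1(c,x)$, hence a.e. finite. It suffices to show $\log \lambda_{(c,x)} \equiv 0$. Since $J_1$ is a linear subspace of $V_0$, it is invariant under the homothety $e^{-\ell}$, so $J_1(T^{\tau,X}_\ell(c,x)) = J_1(c,x)$. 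The relation $\sigma(T^{\tau,X}_\ell(c,x)) \simeq (e^{-\ell})_*\sigma(c,x)$ of Lemma \ref{lem: 6.12}, together with the substitution $v = e^\ell u$, yields
\[
\log \lambda_{T^{\tau,X}_\ell(c,x)}(u) = e^\ell\, \log \lambda_{(c,x)}(u), \qquad u \in J_1(c,x),
\]
so the measurable function $\Psi(c,x) \df \|\log \lambda_{(c,x)}\|$ (operator norm) obeys $\Psi \circ T^{\tau,X}_\ell = e^\ell \Psi$. Since $T^{\tau,X}_\ell$ preserves the probability measure $\beta^{\tau,X}$, the distribution function $F(t) = \beta^{\tau,X}\{\Psi \leq t\}$ satisfies $F(t) = F(te^{-\ell})$ for every $\ell \geq 0$ and every $t \geq 0$. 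Combined with $F(t) \to 1$ as $t \to \infty$ (which uses the a.e. finiteness of $\Psi$), letting $\ell \to \infty$ forces $F \equiv 1$ on $(0,\infty)$. Hence $\Psi = 0$ a.e., i.e. $\lambda_{(c,x)} \equiv 1$ on $J_1(c,x)$, so $J(c,x) = J_1(c,x)$.

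The principal work is already absorbed into Proposition \ref{prop: 7.1}; after that, part (a) is essentially an application of Lemma \ref{lem: 6.11}, and part (b) reduces to the scaling-invariance argument above. The main technical point to verify is that the character $\lambda_{(c,x)}$ depends measurably on $(c,x)$, which follows from the Borel measurability of the chosen representative of $\sigma$ and of the assignment $(c,x) \mapsto J_1(c,x)$.
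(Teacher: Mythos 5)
Your argument is correct and follows the paper's proof essentially step for step: part (a) is exactly the paper's application of Proposition \ref{prop: 7.1} with $f=\sigma$ and the set $E$ of Lemma \ref{lem: 6.11}, and part (b) introduces the same character $\alpha(c,x)=\log\lambda_{(c,x)}$ on $J_1(c,x)$ and derives the same scaling relation \equ{eq: 7.17}. The only (harmless) deviation is at the very end of (b), where the paper concludes $\alpha=0$ by invoking the Poincar\'e recurrence theorem, while you conclude it by the equivalent observation that a finite measurable function satisfying $\Psi\circ T^{\tau,X}_\ell=e^{\ell}\Psi$ under a measure-preserving flow must vanish almost everywhere.
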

\begin{proof}
a) We will show, for $\beta^{\tau, X}$-a.e. $(c,x)$ and every
$\vre>0$, the stabilizer of $\sigma(c,x)$ modulo normalization
contains a nonzero vector of norm at most $\vre$. 

By Lemma \ref{lem: 6.11}, there is a Borel subset $E \subset
B^{\tau, X}$ such that $\beta^{\tau, X}(E^c) =0$ and such that, for
every $v \in V_0$ and $(c',x') \in E$ such that $\Phi_v(c',x') \in E$,
we have 
\eq{eq: 7.16}{
t_{v*}\sigma(\Phi_v(c',x')) \simeq \sigma(c',x').
}
By Corollary \ref{cor: 6.13}, the function $\sigma$ is
$\QQ_\infty^{\tau, X}$-measurable. The drift (Proposition \ref{prop:
  7.1}) applied to this set $E$ and this function $f = \sigma$
produces, for $\beta^{\tau, X}$-a.e. $(c,x) \in B^{\tau, X}$ and every
$\vre>0$, a nonzero vector $v \in V_0$ of norm at most $\vre$ and an
element $(c',x')$ of $E$ such that $\Phi_v(c',x')$ is also in $E$ and
such that 
$$
\sigma(\Phi_v(c',x')) \simeq \sigma(c',x') \simeq \sigma(c,x). 
$$
By applying \equ{eq: 7.16} to this element $(c',x')$, we find 
$$
t_{v*}\sigma(\Phi_v(c',x')) \simeq \sigma(c',x')
$$
and hence 
$$
t_{v*}\sigma(c,x) \simeq \sigma(c,x).
$$
The vector $v$ is indeed in the stabilizer of $\sigma(c,x)$ modulo
normalization. The stabilizer is non-discrete and closed. It thus
contains a nonzero linear subspace of $V_0$. 

b) For $\beta^{\tau, X}$-a.e. $(c,x) \in B^{\tau, X}$, there is a
linear form $\alpha(c,x) \in J_1(c,x)^*$ such that, for any $v \in
J_1(c,x)$, 
$$
t_{v*}\sigma(c,x) = e^{\alpha(c,x)(v)} \sigma(c,x).
$$
We wish to show $\alpha=0$. Lemma \ref{lem: 6.12} implies, for
$\beta^{\tau, X}$-a.e. $(c,x) \in B^{\tau, X}$, the equality
$J_1(T_\ell^{\tau, X}(c,x)) = J_1(c,x)$ and, for every $\ell \geq 0$,
the equality of linear forms on $J_1(c,x)$:
\eq{eq: 7.17}{
\alpha(T_\ell^{\tau, X}(c,x)) = e^\ell \alpha(c,x),
}
from which it follows, after applying the Poincar\'e recurrence
theorem, that $\beta^{\tau, X}$-almost surely, $\alpha =0$. 
\end{proof}

\subsection{Disintegration of $\nu_b$ along the stabilizers}
{\em In this section we will disintegrate the limit measures $\nu_b$
  along the connected components of the stabilizers of the horocyclic
  conditional. We will find that the measures $\nu_{b,x}$ are
  invariant under a nontrivial unipotent group. }

We will begin by translating the fact that the stabilizers of the
conditional horocyclic  measures are not discrete into a statement which does
not involve the suspension $B^\tau$. 

For $\beta$-a.e. $b \in B$, and $\nu_b$-a.e. $x \in X$, we denote by
$\sigma_{b,x} \in \MM(V_b)$ the conditional measure at $x$ of $\nu_b$
for the action on $X$ of $V_b$ through the group $\exp(V_b)$ (see 
\S \ref{subsec: 4.1}), and we denote $V_{b,x} \subset V_b$ the
connected component of the stabilizer of $\sigma_{b,x}$ in $V_b$. 

\begin{prop}\name{prop: 7.5}
In the two cases of \S \ref{subsec: 6.1}, for $\beta^X$-a.e. $(b,x)
\in B^X$, we have $\sigma_{b,x} \simeq b_{0*} \sigma_{T^X(b,x)}, \,
V_{b,x} = b_0\left(V_{T^X(b,x)} \right)$ and $V_{b,x} \neq 0.$ 
\end{prop}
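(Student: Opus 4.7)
The plan is to translate Proposition \ref{prop: 7.4} from the suspension $B^{\tau,X}$ back down to $B^X$, by identifying the $V_0$-valued horocyclic conditional $\sigma(c,x)$ with the $V_b$-valued conditional $\sigma_{b,x}$ through the linear isomorphism $D_c:V_0\to V_c=V_b$ defined in \equ{eq: 6.9}.

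The first two assertions are a transport argument. For $\beta$-a.e.\ $b\in B$, Lemma \ref{lem: 3.2}(b) gives $\nu_b=b_{0*}\nu_{Tb}$, and the remark following Proposition \ref{prop: 5.2} gives $V_b=b_0V_{Tb}$ (with $b_0$ acting on $V$ via $R$). Hence the map $y\mapsto b_0y$ on $X$ intertwines the $\exp(V_{Tb})$-action on $(X,\nu_{Tb})$ with the $\exp(V_b)$-action on $(X,\nu_b)$ and sends $b_0^{-1}x$ to $x$. By the naturality and uniqueness (up to rescaling) of the conditional measures in Proposition \ref{prop: 4.2}, this forces $\sigma_{b,x}\simeq b_{0*}\sigma_{T^X(b,x)}$ and, on connected stabilizers, $V_{b,x}=b_0(V_{T^X(b,x)})$.

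For $V_{b,x}\neq\{0\}$ I would transport the nontriviality from $B^{\tau,X}$. Writing $c=(b,k,m)$, the measure $\beta^{\tau,X}=\int\delta_c\otimes\nu_c\,d\beta^\tau(c)$ with $\nu_c=\nu_b$ is supported on the fibers $\{c\}\times X$, which are preserved by the horocyclic flow; thus $\sigma(c,x)\in\MM_1(V_0)$ equals the \S \ref{subsec: 4.1} conditional of $\nu_b$ at $x$ for the $V_0$-action $v\mapsto\exp(D_c(v))x$. Since $D_c$ is a linear bijection $V_0\to V_b$, this $V_0$-action is the pullback along $D_c$ of the exponential $V_b$-action on $X$; reparameterizing the acting group by a linear isomorphism transports Radon measures up to rescaling, giving $(D_c)_*\sigma(c,x)\simeq\sigma_{b,x}$ and $V_{b,x}=D_c(J(c,x))$. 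Proposition \ref{prop: 7.4} yields $J(c,x)=J_1(c,x)\neq\{0\}$ for $\beta^{\tau,X}$-a.e.\ $(c,x)$. As $V_{b,x}$ depends only on $(b,x)$ and the projection $B^{\tau,X}\to B^X$ sends $\beta^{\tau,X}$ to a measure in the same class as $\beta^X$, we conclude $V_{b,x}\neq\{0\}$ for $\beta^X$-a.e.\ $(b,x)$.

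The main technical point, and the likely obstacle, is justifying the identification $(D_c)_*\sigma(c,x)\simeq\sigma_{b,x}$. One must verify that the discrete-section formalism of \S \ref{subsec: 4.1} is compatible with the product decomposition $B^{\tau,X}=B^\tau\times X$, so that a discrete section for the horocyclic $V_0$-action inside $\{c\}\times X$ is obtained from one for the exponential $V_b$-action on $X$, and that the linear isomorphism $D_c$ indeed transports the associated conditional Radon measures up to rescaling in the expected way. Once this compatibility is established, both the equivariance and the nontriviality assertions fall out immediately.
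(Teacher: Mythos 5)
Your proposal is correct and follows essentially the same route as the paper: the first two assertions come from the intertwining $T^X(b,\exp(v)x)=(Tb,\exp(b_0^{-1}v)b_0^{-1}x)$ together with $\nu_{Tb}=(b_0^{-1})_*\nu_b$ and uniqueness of conditional measures, and the nontriviality comes from Proposition \ref{prop: 7.4} via the identification $V_{b,x}=D_c(J(c,x))=R(s(\xi(b))m)(J(c,x))$ (the scalar $e^{k-\varphi(b)}$ in $D_c$ being irrelevant for a linear subspace). The "technical point" you flag — that $\sigma(c,x)$ is the conditional of $\nu_c=\nu_b$ at $x$ for the action $v\mapsto\exp(D_c(v))x$, transported to $\sigma_{b,x}$ by the linear isomorphism $D_c$ — is exactly what the paper asserts without further justification (in the remark after Lemma \ref{lem: 6.11} and in the displayed equality in its proof), so your extra care there is a refinement, not a divergence.
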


\begin{proof}
The first equality follows from the equalities, for $\beta$-a.e. $b
\in B$, $\nu_{Tb} = (b^{-1}_0)_* \nu_b$ and, for every $x \in X$ and
$v \in \mathfrak{g}$, 
$$
T^X(b, \exp(v)x) = (Tb, \exp(b^{-1}_0 v)b^{-1}_0x).
$$
The second equality follows. 

The fact that $V_{b,x}$ is nonzero follows from Proposition \ref{prop:
  7.4} and the equality, for $\beta^{\tau, X}$-a.e. $(c,x) \in
B^{\tau, X}$, $V_{b,x} = R(s(\xi(b))m)(J(c,x))$, where $c= (b,k,m).$ 
\end{proof}

The disintegration of $\beta^X$ along the map $(b,x) \mapsto (b,
V_{b,x})$, or, what will turn out to be the same, the disintegration
for $\beta$-a.e. $b$ of $\nu_b$ along the map $x \mapsto V_{b,x}$, can
be written as 
$$
\nu_b = \int_X \nu_{b,x} d\nu_b(x)
$$
where, for $\beta^X$-a.e. $(b,x) \in B^X$, the probability measure
$\nu_{b,x}$ on $X$ is supported on the fiber $\{x' \in X: V_{b,x'} =
V_{b,x} \}$. 

\begin{prop}\name{prop: 7.6}
In the two cases of \S \ref{subsec: 6.1}, for
$\beta^{\tau,X}$-a.e. $(b,x) \in B^X$, the probability measure
$\nu_{b,x}$ is $V_{b,x}$-invariant and has the equivariance property
$\nu_{b,x} = b_{0*} \nu_{Tb, b^{-1}_0x}. $ 
\end{prop}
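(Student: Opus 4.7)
My plan is to derive both assertions directly from Proposition \ref{prop: 4.3} and Proposition \ref{prop: 7.5}, combined with uniqueness of disintegration.

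For the $V_{b,x}$-invariance, I would fix $b$ outside a $\beta$-null set and apply Proposition \ref{prop: 4.3} to the Borel action of the abelian additive group $V_b$ on $X$ given by $v \cdot x = \exp(v)x$. This action has discrete stabilizers in both cases of \S\ref{subsec: 6.1} (from the discreteness of $\Lambda$ in the first case, and directly from the definition of the $\TT^d$-action in the second). By construction, the conditional measures of $\nu_b$ along this action are exactly the $\sigma_{b,x}$ introduced before Proposition \ref{prop: 7.5}, and their connected stabilizers in $V_b$ are the $V_{b,x}$. Proposition \ref{prop: 4.3} then asserts that the disintegration $\nu_b = \int_X \nu_{b,x}\, d\nu_b(x)$ along the map $x \mapsto V_{b,x} \in \Gr(V_b)$ consists of $V_{b,x}$-invariant fiber measures, which is the first claim.

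For the equivariance $\nu_{b,x} = b_{0*}\nu_{Tb, b_0^{-1} x}$, I would exhibit $b_{0*}\nu_{Tb, b_0^{-1} x}$ as a second disintegration of $\nu_b$ along the same measurable partition and then invoke uniqueness. Starting from $\nu_b = b_{0*}\nu_{Tb}$ (Lemma \ref{lem: 3.2}(b)), pushing forward the disintegration $\nu_{Tb} = \int_X \nu_{Tb, y}\, d\nu_{Tb}(y)$ by $b_0$ and substituting $y = b_0^{-1} x$ yields
\[
\nu_b \; = \; \int_X b_{0*}\nu_{Tb, b_0^{-1} x}\, d\nu_b(x).
\]
By the identity $V_{b,x} = b_0 V_{Tb, b_0^{-1} x}$ of Proposition \ref{prop: 7.5}, the support of $b_{0*}\nu_{Tb, b_0^{-1} x}$ is exactly the fiber $\{x' \in X : V_{b,x'} = V_{b,x}\}$ of the map $x' \mapsto V_{b,x'}$ through $x$. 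Hence both formulas above are disintegrations of $\nu_b$ along $x \mapsto V_{b,x}$, and uniqueness of disintegration forces $\nu_{b,x} = b_{0*}\nu_{Tb, b_0^{-1} x}$ for $\beta^X$-a.e. $(b,x)$.

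The only point requiring genuine care is the verification that the $V_b$-action on $X$ via $\exp$ satisfies the hypotheses of Proposition \ref{prop: 4.3}, in particular that it is a true Borel action (which requires $V_0$, and hence $V_b = \xi(b) V_0$, to be an abelian Lie subalgebra) with discrete stabilizers. The abelianness follows from the fact that $V_0$ is the $A$-weight space for the highest weight $\chi$, so any bracket $[V_0, V_0]$ would have weight $2\chi$, which is not a weight and hence vanishes. Once this check is made, both halves of the proposition are formal consequences of tools already developed, the essential substance of the argument having been carried out already in Proposition \ref{prop: 7.5}.
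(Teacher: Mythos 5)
Your proposal is correct and follows essentially the same route as the paper, whose entire proof reads: the first assertion follows from Proposition \ref{prop: 4.3}, and the second from $\nu_b = b_{0*}\nu_{Tb}$, Proposition \ref{prop: 7.5}, and uniqueness of disintegration. You have simply filled in the details the authors leave implicit (including the worthwhile check that $V_0$, being the highest weight space, is abelian so that $\exp(V_b)$ really gives a Borel $\R^{d_0}$-action with discrete stabilizers).
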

\begin{proof}
The first assertion follows from Proposition \ref{prop: 4.3}. 

The second assertion follows from the equality $\nu_b =
b_{0*}\nu_{Tb}$, from Proposition \ref{prop: 7.5}, and from the
disintegration of measures. 
\end{proof}

\section{Applications}
{\em In this chapter we conclude the proof of Theorems \ref{thm: 1.1}
  and \ref{thm: 1.3} and their corollaries.}

\subsection{Invariance of stationary measures}\name{subsec: 8.1}
{\em We keep the notations of \S \ref{subsec: 6.1} and we conclude
  this section with the classification of stationary measures on $X$.}

\begin{prop}\name{prop: 8.1}
In the two cases of \S \ref{subsec: 6.1}, the probability measure
$\nu$ is the Haar measure on $X$. 
\end{prop}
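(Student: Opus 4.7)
The plan starts from Proposition~\ref{prop: 7.6}, which furnishes a disintegration $\nu=\int_{B^X}\nu_{b,x}\,d\beta^X(b,x)$ in which each $\nu_{b,x}$ is invariant under translations by a nonzero subspace $V_{b,x}\subseteq V_b$. It therefore suffices, in both cases, to show that for $\beta^X$-a.e.\ $(b,x)$ the measure $\nu_{b,x}$ coincides with the Haar measure on $X$; by ergodic decomposition we may further assume that $\nu$ is $\mu$-ergodic.

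For the torus case, let $H_{b,x}\subseteq\TT^d$ be the closure of $V_{b,x}$ projected modulo $\Z^d$; this is a nontrivial subtorus, so its Lie algebra $L_{b,x}$ is a nonzero rational subspace of $\R^d$, and $\nu_{b,x}$ is in fact $H_{b,x}$-invariant. Proposition~\ref{prop: 7.5} gives the equivariance $V_{b,x}=b_0V_{T^X(b,x)}$, and since $b_0\in\SL_d(\Z)$ preserves rationality we obtain $L_{b,x}=b_0L_{T^X(b,x)}$. Under $\beta^X$ the first letter $b_0$ has marginal $\mu$ and is independent of $T^X(b,x)\sim\beta^X$ (a short computation using Lemma~\ref{lem: 3.2}(b) together with the Bernoulli structure of $\beta$), so pushing $\beta^X$ forward through $(b,x)\mapsto L_{b,x}$ yields a $\mu$-stationary probability measure on the countable $G$-space $\Gr_\Q(\R^d)\sm\{0\}$. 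Such a measure is concentrated on its atoms of maximal mass, which form a finite subset preserved by the bijections $\gamma^{-1}$ for $\gamma\in\Gamma$ and hence by the whole group $\langle\Gamma\rangle$; the kernel of the action on this finite set is a finite-index subgroup of $\langle\Gamma\rangle$ preserving each of these subspaces, which contradicts strong irreducibility unless that subspace equals $\R^d$. Thus $L_{b,x}=\R^d$ a.s., $H_{b,x}=\TT^d$, and $\nu_{b,x}$ is Haar.

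In the homogeneous space case, since $G$ is almost simple the highest weight space $V_0$ of the adjoint representation is the one-dimensional root space $\mathfrak{g}_{\alpha_0}$ of the highest root, consisting of nilpotent elements; therefore $V_{b,x}=V_b$ and $\exp(V_b)$ is a one-parameter unipotent subgroup of $G$. By Ratner's measure classification applied to the $\exp(V_b)$-invariant measure $\nu_{b,x}$, its ergodic decomposition writes $\nu_{b,x}$ as an average of algebraic probability measures, each supported on a finite-volume orbit of some closed connected subgroup $L\supseteq\exp(V_b)$ generated by unipotents (and hence unimodular). The countable parameterization of conjugacy classes of such $L$ yields a Borel map $(b,x,y)\mapsto L_{b,x,y}$; restricting to a level set of fixed conjugacy class and pushing the corresponding portion of $\nu$ forward to the orbit space, identified with a suitable quotient of $G$, produces a $\mu$-stationary probability measure on $G/L$. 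Proposition~\ref{prop: 6.7} then forces $\Lie(L)$ to be an ideal in $\mathfrak{g}$; since $\mathfrak{g}$ is simple and $L\supseteq\exp(V_b)\neq\{e\}$, we must have $L=G$, so $\nu_{b,x}$ is $G$-invariant and equals the Haar measure on $X$.

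The main obstacle is the construction step in the homogeneous case: extracting from Ratner's theorem a Borel selection $L_{b,x,y}$, verifying that the pushforward of $\nu$ onto the orbit space is genuinely $\mu$-stationary (this uses the $\mu$-stationarity of $\nu$ together with the $G$-equivariance of the Ratner decomposition on measures), and identifying the orbit space with a homogeneous space of the form $G/L$ in a way compatible with the hypotheses of Proposition~\ref{prop: 6.7}. The torus argument avoids these subtleties precisely because $L_{b,x}$ depends only on $(b,x)$ and its stationarity on the countable Grassmannian follows directly from Proposition~\ref{prop: 7.5}.
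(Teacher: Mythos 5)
Your overall strategy is the paper's: disintegrate along the stabilizers, use that each $\nu_{b,x}$ is invariant under a nontrivial connected unipotent group, push the resulting algebraic data forward to a countable parameter space, and finish with Lemma \ref{lem: 8.3} plus strong irreducibility (torus case) or Proposition \ref{prop: 6.7} (homogeneous case). The torus half is correct and, if anything, slightly more direct than the paper's version, since you work with the rational hull $L_{b,x}$ of $V_{b,x}$ rather than with the ergodic components of $\nu_{b,x}$; your stationarity claim for the pushforward to the countable Grassmannian is exactly an instance of Lemma \ref{lem: 3.2}(e).

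The homogeneous half, however, stops precisely at the step you flag as ``the main obstacle,'' and that step is the substance of the proof. The paper resolves it as follows. It introduces the set $\FF$ of probability measures $\alpha$ on $X$ whose connected stabilizer $S_\alpha$ is nontrivial and which are supported on a single $S_\alpha$-orbit; by Ratner every ergodic measure invariant under a nontrivial connected unipotent group lies in $\FF$, so the ergodic decomposition reads $\nu_{b,x}=\int_X\zeta(b,x')\,d\nu_{b,x}(x')$ with a Borel map $\zeta:B^X\to\FF$. Uniqueness of the ergodic decomposition combined with Propositions \ref{prop: 7.5} and \ref{prop: 7.6} gives the equivariance $\zeta(b,x)=(b_0)_*\zeta(T^X(b,x))$, and Lemma \ref{lem: 3.2}(e) then makes $\eta=\zeta_*\beta^X$ a genuinely $\mu$-stationary measure on $\FF$ --- no restriction to a level set is needed, which matters because restricting a stationary measure to a non-invariant set destroys stationarity; instead one uses Ratner's countability of the set of $G$-orbits in $\FF$, Lemma \ref{lem: 8.3}, and $\mu$-ergodicity to place $\eta$ on a single orbit $G/G_\alpha$ with $G_\alpha$ non-discrete and unimodular (it contains a lattice), so Proposition \ref{prop: 6.7} applies to $G_\alpha$ directly. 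Separately, your claim that $V_0$ is one-dimensional for the adjoint representation is false in general: $V_0$ is a highest \emph{restricted} root space, which can have dimension greater than one for an almost simple real group, so $V_{b,x}=V_b$ is unjustified. This is harmless for the argument --- any nonzero subspace of $V_b=\xi(b)V_0$ consists of nilpotent elements, so $\exp(V_{b,x})$ is still a nontrivial connected unipotent subgroup --- but the one-parameter claim should be dropped.
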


In order to deduce this from Proposition \ref{prop: 7.6}, We will need
the following lemma. Let $\alpha \in \PP(X)$. 

In the first case of \S \ref{subsec: 6.1}, we denote by $S_\alpha$ the
connected component of the identity in the stabilizer of $\alpha$ in
$G$, with respect to the action by translations on $X = G/\Lambda$. 

In the second case of \S \ref{subsec: 6.1}, we denote by $S_\alpha$
the connected component of the identity in the stabilizer of  $\alpha$
in $\R^d$ with respect to the translation action on $X = \TT^d$. 

In both cases, we set 
$$
\FF = \{\alpha \in \PP(X) : S_\alpha \neq \{1\} \text{ and } \alpha
\text{ is supported on one } S_\alpha\text{-orbit}\},
$$
and endow this collection with the weak-* topology. 

We note that the group $G$ acts naturally on $\FF$. Denote by $\nu_0$
the Haar measure on $X$. Then $\nu_0$ is an element of $\FF$. 

\begin{lem}\name{lem: 8.2}
In both cases of \S \ref{subsec: 6.1}, the only $\mu$-stationary Borel
probability measure $\eta$ on $\FF$ is $\delta_{\nu_0}$. 
\end{lem}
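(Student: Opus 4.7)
The plan is to decompose $\eta$ into $\mu$-ergodic components and push forward along a $\Gamma$-equivariant map so that either Lemma \ref{lem: 6.8} (in case 2) or Proposition \ref{prop: 6.7} (in case 1) forces the stabilizer $S_\alpha$ to exhaust the full acting group. By ergodic decomposition I may assume $\eta$ is $\mu$-ergodic. Since $\alpha \mapsto \dim S_\alpha$ is a $\Gamma$-invariant Borel function on $\FF$, it is $\eta$-a.e. equal to some constant $k$. If $k$ equals the maximum ($\dim G$ in case 1, $d$ in case 2) then $S_\alpha$ is the full acting group, so $\alpha = \nu_0$ and $\eta = \delta_{\nu_0}$. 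The remaining task is to rule out $0 < k$ strictly less than the maximum.

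For case 2 with $0 < k < d$, note first that $S_\alpha \subset \R^d$ must be a rational subspace (otherwise its orbit on $\TT^d$ is not closed) and $\alpha$ is Haar measure on a translate of the subtorus $S_\alpha/(S_\alpha \cap \Z^d)$. Define a $\Gamma$-equivariant Borel map $\Psi\colon \{\alpha : \dim S_\alpha = k\} \to S^2(\wedge^k \R^d)$ by $\Psi(\alpha) = v_\alpha \otimes v_\alpha$, where $v_\alpha \in \wedge^k \Z^d$ is obtained by wedging a $\Z$-basis of $S_\alpha \cap \Z^d$; the tensor square removes the sign ambiguity in the choice of basis orientation. Since any $\gamma \in \Gamma \subset \SL_d(\Z)$ maps the rank-$k$ sublattice $S_\alpha \cap \Z^d$ bijectively onto $\gamma S_\alpha \cap \Z^d$, the equivariance $\Psi(\gamma \alpha) = \gamma \cdot \Psi(\alpha)$ holds. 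The pushforward $\Psi_*\eta$ is a $\mu$-stationary probability measure on the $G$-representation $S^2(\wedge^k \R^d)$, and Lemma \ref{lem: 6.8} (valid since $G$ is semisimple without compact factors, a property inherited by its image in $\GL(S^2(\wedge^k \R^d))$) forces this measure to be supported on the $G$-fixed subspace. But if $v \otimes v$ is $G$-fixed with $v$ a pure Plücker vector representing a $k$-plane $V \subset \R^d$, then by connectedness of $G^0$ we have $gv = v$ for all $g \in G^0$, so $V$ is $G^0$-invariant, which contradicts strong irreducibility for $0 < k < d$.

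For case 1 with $0 < k < \dim G$, I further use $\mu$-ergodicity to see that the $G$-conjugacy class $[H_\alpha]$ of $H_\alpha = \mathrm{Stab}_G(\alpha)$ is $\eta$-a.e. constant, and (via a Borel transversal to $G$-orbits within $\{\alpha : [H_\alpha] = [H]\}$) I identify $\eta$ with a $\mu$-stationary probability measure on a single $G$-orbit $G/H$, where $H$ is a fixed closed subgroup with $H^0 = S_{\alpha_0}$ of dimension $k$. The subgroup $H$ is unimodular: $\alpha_0$ is an $H$-invariant probability measure supported on the single orbit $H x_0 \cong H/H_{x_0}$, forcing $H_{x_0}$ to be a lattice in $H$, and a locally compact group admitting a lattice is automatically unimodular. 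Proposition \ref{prop: 6.7} then gives that $\Lie(H)$ is an ideal in $\goth{g}$. Since $G$ is almost simple, this ideal is $\{0\}$ or $\goth{g}$; but $\Lie(H) \supseteq \Lie(S_{\alpha_0}) \neq 0$, so $\Lie(H) = \goth{g}$, whence $H^0 = G$ and $k = \dim G$, a contradiction.

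The hardest part will be making the reduction to a single $G$-orbit in case 1 fully rigorous: the orbit space $\FF/G$ need not be standard Borel, so one must argue (using $\mu$-ergodicity together with countable separation of stabilizer conjugacy classes and a measurable choice of transversal) that $\eta$ concentrates on a single $G$-orbit and admits the required Borel identification with $G/H$. A secondary subtlety is the compact-factor case of Lemma \ref{lem: 6.8} in case 2, which likely requires passing to the noncompact part of $G$ and verifying that the compact factor preserves the relevant fixed subspace.
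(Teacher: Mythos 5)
Your overall skeleton is right, but the step you yourself flag as ``the hardest part'' in the first case is a genuine gap, and your proposed route to close it does not work. Knowing that the conjugacy class $[H_\alpha]$ of the stabilizer is $\eta$-a.e.\ constant does \emph{not} reduce $\eta$ to a single $G$-orbit: two measures $\alpha,\alpha'\in\FF$ with $H_{\alpha'}=gH_\alpha g^{-1}$ need not satisfy $\alpha'\in G\alpha$, since $g_*\alpha$ and $\alpha'$ may live on different $H_{\alpha'}$-orbits in $G/\Lambda$; and without tameness of the $G$-action on $\FF$ there is no Borel transversal to appeal to. The missing ingredient is Ratner's measure classification (\cite[Thm.~1.1]{ref 30}): every element of $\FF$ is invariant and ergodic under a nontrivial connected unipotent subgroup, so the set $\GG$ of $G$-orbits in $\FF$ is \emph{countable}. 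One then pushes $\eta$ forward to the countable set $\GG$ and applies Lemma \ref{lem: 8.3} to conclude that the ergodic measure $\eta$ sits on a single orbit $G/G_\alpha$; from there your unimodularity argument and Proposition \ref{prop: 6.7} finish exactly as in the paper. Without Ratner (or some equivalent countability/smoothness input) the reduction to one orbit is not established.

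In the torus case your route is genuinely different from the paper's, and the difference is where the trouble lies: Lemma \ref{lem: 6.8} requires $G$ to have no compact factors, and in the second case of \S\ref{subsec: 6.1} the Zariski closure $G$ of $\Gamma_\mu\subset\SL_d(\Z)$ can perfectly well have compact factors (e.g.\ groups arising from quaternion algebras over number fields), in which case a stationary measure on $S^2(\bigwedge^k\R^d)$ need not concentrate on $G$-fixed vectors, and a $G_{nc}$-invariant $k$-plane does not by itself contradict strong irreducibility of the full group. The fix is already in your own construction: your Plücker vectors $v_\alpha\otimes v_\alpha$ lie in the countable set $S^2(\bigwedge^k\Z^d)$, so $\Psi_*\eta$ is a stationary measure on a countable set and Lemma \ref{lem: 8.3} (not Lemma \ref{lem: 6.8}) shows it is finitely supported and $\Gamma$-invariant; a finite $\Gamma$-invariant family of proper rational $k$-planes is then killed by strong irreducibility. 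This is exactly the paper's argument, which works directly with the countable family of subtori $Y$ and the compact sets $\FF_Y$, and never needs Lemma \ref{lem: 6.8} in the torus case.
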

\begin{proof}
We can suppose that $\eta$ is $\mu$-ergodic. We will distinguish the
two cases:

{\bf First case of \S \ref{subsec: 6.1}.} In this case we have $X =
G/\Lambda$. 

By \cite[Thm. 1.1]{ref 30}, the set $\GG$ of $G$-orbits in $\FF$ is
countable. 

The image $\bar\eta$ of $\eta$ in $\GG$ is  a $\mu$-stationary ergodic
probability measure, on a countable set. By Lemma \ref{lem: 8.3}, the
probability measure $\bar \eta$ has finite support. 

Since $\eta$ is $\mu$-ergodic, it is supported on a unique orbit
$G\alpha \cong G/G_\alpha \subset \FF$. By definition of $\FF$, the
group $G_\alpha$ is not discrete. Since $G_\alpha$ contains a lattice,
it is unimodular. By Proposition \ref{prop: 6.7}, $G_\alpha = G$. The
probability measure $\nu$ is thus equal to $\delta_{\nu_0}$. 

{\bf Second case of \S \ref{subsec: 6.1}.} In this case we have $X =
\TT^d$. 

We denote by $\GG$ the set of nontrivial tori in $X$ and for $Y \in
\GG$, we denote by $\FF_Y$ the set of measures which are translates of
the Haar probability measure on $Y$. The space $\FF$ is thus a
countable union of compact subsets $\FF_Y$. 

The image $\bar \eta$ of $\eta$ in $\GG$ is a $\mu$-stationary ergodic
probability measure on a countable set. By Lemma \ref{lem: 8.3}, it
has finite support $Y_1, \ldots, Y_n$ and $\Gamma$ permutes the
subspaces $V_1, \ldots, V_n$ which are the tangent directions of the
tori $Y_1, \ldots, Y_n$. 

Since the action of $\Gamma$ is strongly irreducible, we necessarily
have $V= V_1 = \cdots = V_n$, which is what we had to prove. 
\end{proof}

We will use the following classical result.
\begin{lem}\name{lem: 8.3}
Let $\Gamma$ be a group acting on a countable space $X$ and let $\mu$ be a
probability measure on $\Gamma$. Any $\mu$-stationary and
$\mu$-ergodic measure $\nu$ is $\Gamma$-invariant and finitely
supported. 
\end{lem}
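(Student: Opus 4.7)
The plan is to exploit the fact that $X$ is countable to reduce the problem to a combinatorial analysis of the atom masses. Let $f: X \to [0,1]$ be the atom function $f(x) = \nu(\{x\})$. Since $\nu$ is a probability measure and $X$ is countable, $\sum_{x \in X} f(x) = 1$, so for any $\vre > 0$ only finitely many atoms have mass $\geq \vre$. In particular, the supremum $M = \sup_{x \in X} f(x)$ is attained, $M > 0$, and the level set $F = \{x \in X : f(x) = M\}$ is a finite, nonempty subset of $X$.

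Next I would use stationarity to show $F$ is invariant under every $\gamma \in \supp \mu$. The identity $\mu * \nu = \nu$ reads, on the atom $\{x\}$, as
$$
f(x) = \sum_{\gamma \in \Gamma} \mu(\gamma) \, f(\gamma^{-1} x).
$$
For $x \in F$ the left-hand side equals $M$ while each term on the right satisfies $f(\gamma^{-1} x) \leq M$; since the weights $\mu(\gamma)$ sum to $1$, equality forces $f(\gamma^{-1} x) = M$ for every $\gamma \in \supp \mu$. Thus $\gamma^{-1} F \supset F$ for every $\gamma \in \supp \mu$, and because each $\gamma$ is a bijection of $X$ while $F$ is finite, we in fact have $\gamma F = F$ for every $\gamma \in \supp \mu$, and hence for every element of the sub(semi)group generated by $\supp \mu$.

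The third step is to invoke $\mu$-ergodicity. Since $F$ is $\supp \mu$-invariant, the indicator $\mathbf{1}_F$ is a $\mu$-invariant function on $X$ in the sense that $\mu * \mathbf{1}_F = \mathbf{1}_F$ on $\supp \nu$; equivalently, $F$ is invariant under the associated skew product, so ergodicity gives $\nu(F) \in \{0,1\}$. As $F$ is nonempty and consists of atoms of positive mass $M$, we have $\nu(F) \geq M > 0$, hence $\nu(F) = 1$. Therefore $\nu$ is supported on the finite set $F$, which proves finite support. Moreover $\nu(F) = M \cdot \#F = 1$ forces $\nu$ to be the uniform probability measure on $F$; being uniform on a set that every $\gamma \in \supp \mu$ (and hence every element of $\Gamma$, after extending to the group generated) permutes, $\nu$ is $\Gamma$-invariant.

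No step presents a genuine obstacle, but the one requiring the most care is the application of ergodicity: one must phrase ``$F$ is $\mu$-invariant'' in a form that the hypothesis $\mu$-ergodicity of $\nu$ directly addresses (i.e., as invariance of $\mathbf{1}_F \in L^\infty(\nu)$ under the Markov operator $\varphi \mapsto \int \varphi \circ \gamma^{-1} \, d\mu(\gamma)$, or equivalently as $T^X$-invariance of $\pi^{-1}(B) \times F$ in the skew product of \S\ref{subsec: 3.1}). Once this reformulation is recorded, the remaining arguments are immediate.
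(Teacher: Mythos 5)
Your proof is correct and follows essentially the same route as the paper's: take the finite set of atoms of maximal mass, apply the maximum principle to the stationarity identity $\nu=\mu*\nu$ to see that ($\mu$-a.e.) $\gamma^{-1}$ maps this set into itself and hence, by finiteness and injectivity, onto itself, and then use $\mu$-ergodicity to conclude the set has full measure. Your added observation that $\nu$ is uniform on this set, hence invariant, just makes explicit what the paper leaves implicit.
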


\begin{proof}[Proof of Lemma \ref{lem: 8.3}]
Let $Y$ be the set of points of $X$ with maximal mass
(w.r.t. $\nu$). Then $Y$ is finite. The equality $\nu = \mu*\nu$
and the maximum principle 
imply that for $\mu$-a.e. $\gamma \in \Gamma$, $\gamma^{-1}Y\subset Y$
and hence $\gamma^{-1} Y =Y.$ Since $\nu(Y)>0$ and $\nu$ is
$\mu$-ergodic, $\nu(Y)=1$. 
\end{proof}

\begin{proof}[Proof of Proposition \ref{prop: 8.1}]
By Proposition \ref{prop: 7.5}, the fruit of our efforts, for
$\beta^X$-a.e. $(b,x) \in B^X$, the subgroups $V_{b,x}$ are
nontrivial. 

The principal interest in  the set $\FF$ is that {\em it contains all
  of the probability measures invariant and ergodic under a connected
  nontrivial unipotent subgroup.} This results from Ratner's work
\cite{ref 30} in the first case and is elementary in the second case. 

For $\beta^X$-a.e. $(b,x) \in B^X$, the decomposition of $\nu_{b,x}$
into $V_{b,x}$-ergodic components can thus be written simultaneously
in the form 
\eq{eq: 8.1}{
\nu_{b,x} = \int_X \zeta(b,x') d\nu_{b,x}(x'),
}
where $\zeta: B^X \to \FF$ is a $\BB^X$-measurable map such that, for
$\beta^X$-a.e. $(b,x) \in B^X$, the restriction of $\zeta$ to the
fiber $\{(b,x'): V_{b,x'} = V_{b,x}\}$ is constant along the
$V_{b,x}$-orbits. 

The uniqueness of the ergodic decomposition, and Propositions
\ref{prop: 7.5} and \ref{prop: 7.6}, prove that, for
$\beta^X$-a.e. $(b,x) \in B^X$, we have 
\eq{eq: 8.2}{
\zeta(b,x) = (b_0)_* \zeta(T^X(b, x)). 
}
By Lemma \ref{lem: 3.2}(e), the image probability measure $\eta =
\zeta_* \beta^X$ is therefore a $\mu$-stationary probability measure
on $\FF$. By Lemma \ref{lem: 8.2}, this probability measure is the
Dirac mass on $\nu_0$. In other words, $\zeta(b,x)$ is
$\beta^X$-almost surely equal to $\nu_0$, so that $\nu = \nu_0$. 
\end{proof}

\begin{proof}[Proof of Theorems \ref{thm: 1.1} and \ref{thm: 1.3}]
Recall that, in the second case, we have denoted by $G$ the Zariski
closure of $\Gamma_\mu$ in $\SL(d,\R)$. Lemma \ref{lem: 8.5} below
shows that $G$ is also semi-simple. 

In both cases, Lemma \ref{lem: 8.4} below makes it possible to assume
that $G$ is a semi-simple noncompact Lie group. One can then apply
Proposition \ref{prop: 8.1} to conclude that $\nu$ is $G$-invariant. 
\end{proof}

We have used above the following two easy lemmas. 

\begin{lem}\name{lem: 8.4}
Let $K$ be a metrizable compact group acting in Borel fashion on a
Borel space $X$, and let $\mu$ be a Borel probability measure on
$K$. Then any $\mu$-stationary Borel probability measure $\nu$ on $X$
is invariant under the group $\Gamma_\mu$ generated by the support of
$\mu$. 
\end{lem}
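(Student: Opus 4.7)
The plan is to deduce $\Gamma_\mu$-invariance of $\nu$ from stationarity by averaging the relation $\mu*\nu = \nu$ and exploiting compactness of $K$. Iterating stationarity one has $\mu^{*n}*\nu = \nu$ for every $n\geq 0$, and therefore the Ces\`aro averages
\[
\mu_n \df \frac{1}{n}\sum_{k=0}^{n-1}\mu^{*k}\in\PP(K)
\]
also satisfy $\mu_n * \nu = \nu$. By weak-$*$ compactness of $\PP(K)$, the sequence $(\mu_n)$ admits an accumulation point $m_H \in \PP(K)$. Since $\mu*\mu_n - \mu_n*\mu = \frac{1}{n}(\mu^{*n} - \mu) \to 0$ in total variation and convolution is weak-$*$ continuous on $\PP(K)$, $m_H$ is a convolution idempotent: $m_H * m_H = m_H$. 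By the classical description of idempotent probability measures on a compact group, $m_H$ is the normalized Haar measure of a closed subgroup of $K$; inspecting the support and using that in a compact group the closed subsemigroup generated by any subset is actually a subgroup, this subgroup is $H \df \overline{\langle\supp\mu\rangle}$.

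Since the actions of $K$ on $X$ considered in the paper are continuous, the map $g\mapsto g\nu$ from $K$ to $\PP(X)$ is weak-$*$ continuous, so passing to a subsequence $\mu_{n_j}\to m_H$ gives $m_H * \nu = \lim_j \mu_{n_j}*\nu = \nu$. For any $h\in H$, left invariance of Haar measure on $H$ yields $\delta_h * m_H = m_H$, whence
\[
h\nu \;=\; (\delta_h * m_H)*\nu \;=\; m_H * \nu \;=\; \nu.
\]
Because $\Gamma_\mu \subseteq H$, this shows that $\nu$ is $\Gamma_\mu$-invariant, as desired.

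The main point at which compactness of $K$ is essential is the identification of a limit point of $\mu_n$ as the Haar measure of an honest subgroup (equivalently, replacing the subsemigroup $\langle\supp\mu\rangle$ by a subgroup); the continuity of the action is needed only to pass the limit through the convolution with $\nu$. If one wished to dispense with the continuity assumption and work with a purely Borel action, one could instead apply the mean ergodic theorem to the Markov operator $\psi \mapsto \int_K \psi(\,\cdot\, h)\,d\mu(h)$ on $L^2(K, m_K)$ applied to the bounded Borel function $g\mapsto \int_X \phi(gx)\,d\nu(x)$ for $\phi\in C_b(X)$, and then extract pointwise information along $\supp\mu$ from the right-$H$-invariance of the resulting $L$-harmonic function.
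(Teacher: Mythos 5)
Your argument is correct in substance but follows a genuinely different route from the paper. You use the Kawada--It\^o circle of ideas: Ces\`aro averages $\mu_n$ of the convolution powers accumulate at an idempotent probability measure, which must be the Haar measure $m_H$ of the closed subgroup $H=\overline{\langle\supp\mu\rangle}$, and then $m_H*\nu=\nu$ forces $h\nu=\nu$ for all $h\in H$. The paper instead reduces (via ergodic decomposition) to a measure supported on a single $K$-orbit, lifts it to $K$ itself, smooths by a right approximate identity, and concludes by strict convexity of the $L^2(K)$-norm: the average of the isometric translates $g\nu$, $g\in\supp\mu$, can have the same norm as $\nu$ only if they all coincide. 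Your approach buys a cleaner global statement (invariance under all of $H$, with no ergodic decomposition and no orbit analysis); the paper's approach buys a purely Hilbert-space argument that never needs the structure theorem for idempotent measures. Two small points. First, $\mu*\mu_n-\mu_n*\mu$ is identically zero (each equals $\frac{1}{n}\sum_{k=1}^{n}\mu^{*k}$); what you want is $\mu*\mu_n-\mu_n=\frac{1}{n}\bigl(\mu^{*n}-\delta_e\bigr)\to 0$, which gives $\mu*m_H=m_H$, hence $\mu_n*m_H=m_H$, hence $m_H*m_H=m_H$ in the limit. The conclusion you draw is right, but the displayed identity as written does not yield it.

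Second, the continuity issue is a real gap as you have left it. The lemma is stated for a Borel action on a Borel space, and your passage to the limit $\mu_{n_j}*\nu\to m_H*\nu$ requires $g\mapsto\int_X\phi(gx)\,d\nu(x)$ to be continuous on $K$, which a Borel action does not provide; appealing to ``the actions considered in the paper'' proves a weaker statement than the one asserted. The correct repair is exactly the paper's first line: by Varadarajan's theorem one may replace $(X,\XX)$ by a compact metric $K$-space with continuous action, after which your argument goes through verbatim. Your sketched alternative via the mean ergodic theorem on $L^2(K,m_K)$ does not obviously close the gap, since $L^2$-convergence of the averaged harmonic function gives no pointwise information at the specific elements of $\supp\mu$ without further regularity. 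With the Varadarajan reduction inserted, the proof is complete.
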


\begin{proof}
By Varadarajan's theorem \cite[Prop. 2.1.19]{ref 35}, we may space
that $X$ is compact and that the action is continuous. We may also
suppose that $\nu$ is $\mu$-ergodic. It is then supported on a unique
$K$-orbit $Kx_0$. We can therefore consider $\nu$ to be an
$H$-invariant measure on $K$, for the action of $H$ on the right,
where $H$ is the stabilizer of $x_0$. This lifted probability measure
is also $\mu$-stationary. It remains to treat the case $X = K$. 

Up to convolving $\nu$ on the right by an approximate identity, we can
suppose that $\nu$ is absolutely continuous with respect to Haar
measure, with a continuous density. We can thus think of $\nu$ as an
element of $L^2(K)$ satisfying $\mu * \nu = \nu$. But in a Hilbert
space, the average of vectors of a fixed norm has norm strictly
smaller, unless the vectors being averages are equal to each
other. This proves that $\nu$ is $\Gamma_\mu$-invariant. 
\end{proof}

\begin{lem} \name{lem: 8.5}
Let $\Gamma$ be a subsemigroup of $\SL_d(\Z)$ which acts strongly
irreducibly on $\R^d$. Then its Zariski closure $G$ in $\SL(d, \R)$ is
a semisimple group. 
\end{lem}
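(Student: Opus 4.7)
The plan is to prove that the solvable radical of the identity component $G^0$ is trivial. First I would pass to $G^0$: because $G$ is a real algebraic group, $[G:G^0]$ is finite, so $\langle\Gamma\rangle\cap G^0$ has finite index in $\langle\Gamma\rangle$ and, by strong irreducibility, acts irreducibly on $V=\R^d$; its Zariski closure is closed in $G^0$ and of finite index in $G$, so it equals $G^0$, and therefore $G^0$ itself acts irreducibly on $V$. Next I would dispose of the unipotent radical $U$ of $G^0$: the subspace $V^U$ is nonzero (since $U$ is unipotent) and $G^0$-stable (since $U$ is normal in $G^0$), so by irreducibility $V^U=V$, forcing $U=\{1\}$. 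Hence $G^0$ is reductive, its radical equals the connected center $Z:=Z(G^0)^0$, a real algebraic torus, and since $\mathrm{Aut}(Z)$ is discrete, the connected group $G^0$ centralizes $Z$.

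Next I would analyze the $Z$-weights on $V_\C:=V\otimes_\R\C$, using the fact that $G$ is defined over $\Q$ (being the Zariski closure of a subset of $\SL_d(\Q)$), so $Z$ is $\Q$-rational as well. Since $G^0$ centralizes $Z$, each weight space $V_\chi\subseteq V_\C$ is $G^0$-invariant, and the irreducibility of $V$ over $\R$ combined with the Galois symmetry $\chi\leftrightarrow\bar\chi$ forces the set of weights of $Z$ appearing in $V_\C$ to be either $\{\chi\}$ with $\chi$ real, or a complex-conjugate pair $\{\chi,\bar\chi\}$ with $\chi\ne\bar\chi$. In the first case $Z$ acts on $V$ by real scalars of determinant $1$, so $Z\subseteq\{\pm I\}$ and connectedness gives $Z=\{1\}$. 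In the second case the determinant relation $\det(r|_V)=|\chi(r)|^d=1$ for $r\in Z$ gives $\chi(Z)\subseteq S^1$, and if $Z\ne\{1\}$ then $\chi:Z\to S^1$ is surjective.

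The hard part will be excluding this second case, and for that I would exploit the integer structure of $\Gamma$. I would consider the character $\det_\chi:G^0\otimes\C\to\C^*$, $g\mapsto\det(g|_{V_\chi})$, which restricts to $\chi^{\dim V_\chi}$ on $Z$ and is therefore surjective onto $S^1$ under the present assumption. For any $\gamma\in\Gamma\cap G^0\subseteq\SL_d(\Z)$, the value $\det_\chi(\gamma)$ is a product of eigenvalues of an integer matrix, hence an algebraic integer; since Galois permutes the two-element weight set $\{\chi,\bar\chi\}$, the only Galois conjugate that arises is $\det_{\bar\chi}(\gamma)=\overline{\det_\chi(\gamma)}$, of absolute value $1$ by the same computation. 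Kronecker's theorem then forces $\det_\chi(\gamma)$ to be a root of unity. Using the Noetherian property of $G^0$, I can choose finitely many elements of $\Gamma\cap G^0$ whose joint Zariski closure is $G^0$; the subsemigroup $\Gamma'$ they generate is Zariski dense in $G^0$, and $\det_\chi(\Gamma')$ is a finitely generated subsemigroup of the group of roots of unity in $\C^*$, hence a finite cyclic group. But Zariski density of $\Gamma'$ in $G^0$ forces $\det_\chi(\Gamma')$ to be Zariski dense in $\det_\chi(G^0)=S^1$, which no finite set is; this contradiction yields $Z=\{1\}$, so $G^0$ is semisimple, and therefore so is $G$.
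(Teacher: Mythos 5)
Your proof is correct, and in its decisive step it takes a genuinely different route from the paper, so a comparison is worthwhile. The published proof is very terse: after reducing to the Zariski-connected case it quotes that a connected linear group acting irreducibly is reductive, observes that the determinant-one condition makes the center $Z$ compact, and then, assuming $Z$ infinite, identifies the commutant of $G$ in $\End(\Q^d)$ as an imaginary quadratic field $K$, regards $\Q^d$ as a $K$-vector space, and uses $\det_K(\Gamma)\subset U_K$ together with the finiteness of the unit group $U_K$ and Zariski density to force $Z$ to be finite. You never touch the full commutant: you diagonalize only the connected center on $V\otimes\C$, use irreducibility over $\R$ to see that the weight set is either a single real character or a conjugate pair $\{\chi,\bar\chi\}$, and in the latter case apply Kronecker's theorem to the algebraic integers $\det_\chi(\gamma)$, $\gamma\in\Gamma\cap G^0$, before contradicting Zariski density. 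The number-theoretic input is the same in substance (finiteness of $U_K$ for $K$ imaginary quadratic is itself an instance of Kronecker's theorem), but your packaging buys two things: it avoids having to know that the commutant is a \emph{commutative} division algebra (i.e.\ to exclude the quaternion case, a point the published proof passes over in silence), and it makes explicit exactly where integrality, the determinant-one condition, and Zariski density enter. Your preliminary reductions (passing to $G^0$ via strong irreducibility, killing the unipotent radical via $V^U$) are also spelled out where the paper compresses them into a sentence. The one small point to add a word about is the Zariski density in $G^0$ of the sub\emph{semigroup} $\Gamma\cap G^0$ that you invoke at the end: your first paragraph only proves density of $\langle\Gamma\rangle\cap G^0$. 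Either note that the latter still consists of integer matrices (so Kronecker applies to it just as well), or observe that since each coset of $G^0$ is irreducible and the closures $\overline{\Gamma\cap gG^0}$ cover $G$, one has $\overline{\Gamma\cap G^0}=G^0$ directly. This is routine, not a gap.
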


\begin{proof}
We can suppose that $G$ is Zariski-connected. Since the representation
of $G$ on $\R^d$ is irreducible, $G$ is a reductive group. Since $G$
is made of matrices of determinant 1, its center $Z$ is compact. We
need to show that $Z$ is finite. 

Suppose by contradiction that $Z$ is infinite. The commutant of $G$ in
$\End(\Q^d)$ is then an imaginary quadratic extension of $K$ of
$\Q$. We can then regard $\Q^d$ as a $K$-vector space. The determinant
map $g \mapsto \det_K(g)$ embeds $\Gamma$ in the group of units $U_K$ of
$K$. Since $U_K$ is finite, the determinant map also embeds $G$ in
$U_K$. Therefore $Z$ is finite, a contradiction. 
\end{proof}

\subsection{Invariant measures}
{\em In order to deduce the corollaries of our theorems, we need to
conveniently choose the measure $\mu$. }

\begin{proof}[Proof of Corollaries \ref{cor: 1.2}(a) and \ref{cor:
    1.4}(a)]
Since $G$ is simple, any Zariski dense subsemigroup $\Gamma$ contains
a finitely generated subsemigroup $\Gamma'$ which is also Zariski
dense.  Denote by $g_1 , \ldots, g_\ell$ a set of generators of
$\Gamma'$ and let $\mu = \frac{1}{\ell} (\delta_{g_1} + \cdots +
\delta_{g_\ell}) \in \PP(G)$. 

Let $\nu$ be a non-atomic probability measure on $X$ which is
invariant under $\Gamma$. Then it is $\mu$-stationary. By Theorem
\ref{thm: 1.1} it is $G$-invariant, as required. 
\end{proof}

\subsection{Closed invariant subsets}
{\em In order to prove corollaries \ref{cor: 1.2}(b) and \ref{cor:
    1.4}(b), we will need the following lemma.}

\begin{lem}\name{lem: 8.6}
In the two cases of \S \ref{subsec: 6.1}, the collection of finite
$\Gamma$-invariant subsets of $X$ is countable. 
\end{lem}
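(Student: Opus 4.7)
The plan is to show, in each of the two cases, that the set of points $x \in X$ with finite $\Gamma$-orbit is countable; the lemma then follows, since every finite $\Gamma$-invariant set is a finite union of such orbits and the finite subsets of a countable set form a countable family. First I make some reductions valid in both cases. Any finite $\Gamma$-invariant set $F$ is also invariant under any sub-semigroup of $\Gamma$, so I may replace $\Gamma$ by a finitely generated Zariski dense sub-semigroup (which exists by Noetherianity of the Zariski topology, by adjoining generators one at a time until the Zariski closure is all of $G$). Each $\gamma \in \Gamma$ acts injectively, hence bijectively, on the finite set $F$, so $F$ is automatically invariant under the group $\Gamma'$ generated by $\Gamma$; this $\Gamma'$ is finitely generated and Zariski dense in $G$. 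The pointwise $\Gamma'$-stabilizer of $F$ is the kernel of the natural map $\Gamma' \to \mathrm{Sym}(F)$, hence a finite-index subgroup $H$ of $\Gamma'$. Since a finitely generated group has only countably many finite-index subgroups, it suffices to show for each such $H$ that the fixed-point set $X^H = \{x \in X : hx = x \ \forall h \in H\}$ is countable (in fact finite).

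For the second case of \S \ref{subsec: 6.1} (where $X = \TT^d$): the subgroup $H$ is infinite, and strong irreducibility of $\Gamma'$ forces the finite-index $H$ to act irreducibly on $\R^d$, so $H$ has no nonzero fixed vector. I then pick $g_1, \ldots, g_n \in H$ with $\bigcap_i \ker(g_i - I) = 0$ and consider the injective linear map $M \colon \R^d \to \R^{dn}$, $\tilde{x} \mapsto ((g_i - I)\tilde{x})_i$. Since each $g_i - I$ has integer entries, $M(\Z^d) \subset \Z^{dn}$, so $M^{-1}(\Z^{dn})$ is a discrete subgroup of $\R^d$ containing $\Z^d$, that is, a lattice commensurable with $\Z^d$. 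Any $H$-fixed $x \in \TT^d$ lifts to $\tilde{x} \in \R^d$ satisfying $(g_i - I)\tilde{x} \in \Z^d$ for every $i$, so $X^H$ is contained in the finite subgroup $M^{-1}(\Z^{dn})/\Z^d \subset \Q^d/\Z^d$.

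For the first case of \S \ref{subsec: 6.1} (where $X = G/\Lambda$): a point $g\Lambda$ is $H$-fixed if and only if $g^{-1} H g \subset \Lambda$. Let $h_1, \ldots, h_k$ generate the (finitely generated, by the Schreier index formula) group $H$; the condition becomes $g^{-1} h_i g \in \Lambda$ for each $i$, producing a map $\{g \in G : g^{-1} H g \subset \Lambda\} \to \Lambda^k$ into a countable set. Any two preimages in the same fiber differ on the left by an element of the simultaneous centralizer $\bigcap_i Z_G(h_i) = Z_G(H)$. The key observation is that $H$ itself is Zariski dense in $G$: since $H$ has finite index in $\Gamma'$, finitely many left translates of its Zariski closure $\overline{H}$ cover $\overline{\Gamma'} = G$, so $\overline{H}$ has finite index in $G$; being Zariski-closed it is in particular closed in the Euclidean topology, and a closed finite-index subgroup of a connected Lie group equals the whole group. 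Therefore $Z_G(H) = Z(G)$, which is discrete in the semisimple $G$ and hence countable. Each fiber of our map is thus countable, making $\{g \in G : g^{-1} H g \subset \Lambda\}$ countable, and its image in $X = G/\Lambda$ likewise countable. The main subtlety of the argument is precisely this verification that a finite-index subgroup of the Zariski dense $\Gamma'$ remains Zariski dense in $G$.
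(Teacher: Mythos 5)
Your proof is correct, but it takes a genuinely different route from the paper's. Both arguments share the same skeleton: reduce to a finitely generated (semi)group, observe that the pointwise stabilizer of a finite invariant set is a finite-index subgroup and that there are only countably many such subgroups, and then show that each fixed-point set $X^\Delta$ is countable. Where you diverge is in this last step. The paper handles both cases at once by a local linearization: in a neighborhood of a fixed point $x$, the action of $\Delta$ in the chart $w \mapsto e^w x$ is its linear action on $V$ (for $X=G/\Lambda$ this is the adjoint action, since $\delta e^w g\Lambda = e^{\Ad(\delta)w} g\Lambda$ when $\delta g\Lambda = g\Lambda$), and strong irreducibility forces the finite-index subgroup $\Delta$ to have no nonzero fixed vector in $V$; hence fixed points are isolated, and a discrete subset of the second-countable space $X$ is countable. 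You instead give explicit global countability arguments in each case: on $\TT^d$ you show $X^H$ lands in a finite subgroup of $\Q^d/\Z^d$ via the integrality of the maps $g_i - I$, and on $G/\Lambda$ you parameterize $\{g : g^{-1}Hg \subset \Lambda\}$ by finitely many $\Lambda$-valued coordinates with fibers in cosets of $Z_G(H) = Z(G)$, using that a finite-index subgroup of a Zariski dense subgroup is still Zariski dense. Your torus argument buys slightly more (finiteness, and rationality of the fixed points), and your homogeneous-space argument avoids any appeal to the local structure of $X$; the paper's argument is shorter, uniform in the two cases, and yields the (stronger, locally useful) fact that the fixed points of each $\Delta$ are isolated. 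Both are complete; the only step you state without proof is the standard fact that the centralizer of a Zariski dense subgroup of the almost simple $G$ is $Z(G)$, which is routine to verify via $\Ad$.
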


\begin{proof}
As before, we may suppose that $\Gamma$ is finitely generated. Since
$\Gamma$ has countably many finite-index subgroups, it suffices to
show that the points of $X$ which are fixed by some subgroup $\Delta$
of $\Gamma$ are isolated. The last assertion follows from the fact
that in any neighborhood of a fixed point, the linearization of the action of $\Delta$ is
its action on $V$, and since the action of $\Gamma$ on $V$ is strongly
irreducible, $\Delta$ does not have nonzero fixed vectors in $V$. 
\end{proof}

\begin{proof}[Proof of Corollaries \ref{cor: 1.2}(b) and \ref{cor:
    1.4}(b)] We may again suppose that $\Gamma$ is finitely
  generated. We then denote, just as in the proof of point (a), that
  $\mu$ is the probability measure given by $\mu = \frac{1}{\ell}
  (\delta_{g_1} + \cdots + \delta_{g_\ell})$, where $g_1, \ldots,
  g_\ell$ are a set of generators of $\Gamma$. Let $F$ be an infinite
  closed $\Gamma$-invariant subset of $X$. By Lemma \ref{lem: 8.6}, we
  can construct an increasing sequence $F_1 \subset F_2 \subset \cdots
  \subset F_i \subset \cdots $ of finite $\Gamma$-invariant subsets
  (possibly empty) of $X$, such that every finite $\Gamma$-invariant
  subset is contained in one of the $F_i$. Since $F$ is infinite, we
  can choose pairwise distinct points $x_1, x_2, \ldots$ of $F$ such
  that $x_i$ is not in $F_i$ for each $i$. 

By Proposition \ref{prop: 6.4}, regarding recurrence off of finite
subsets, there is a collection $(K_i)_{i \geq 0}$ of compact subsets
such that for each $i$, $K_i$ is contained in $F^c_i$ and such that
for all $j \geq 1$, there is an integer $M_j$ such that for $n \geq
M_j$ and $i \leq j$, 
\eq{eq: 8.3}{
(\mu^{*n} * \delta_{x_j})(K_i^c) \leq \frac{1}{i}.  
}
Setting $n_j = jM_j$, we introduce the Birkhoff-Kakutani averages 
\eq{eq: 8.4}{
\nu_j = \frac{1}{n_j} (\mu * \delta_{x_j} + \cdots + \mu^{*n_j}* \delta_{x_j}).
}
We have, for all $i \leq j$, 
\eq{eq: 8.5}{
\nu_j(K^c_i) \leq \frac{M_j}{n_j} +\frac{n_j - M_j}{n_j} \frac{1}{i} \leq\frac{2}{i}.
}
Condition \equ{eq: 8.5} ensures that any accumulation point of the
sequence $(\nu_j)$ for weak-* convergence of Borel probability
measures, is a probability measure which gives no mass to the subsets
$F_i, i \geq 1$. If $\nu_\infty$ is such an accumulation point,
$\nu_\infty$ is then a $\mu$-stationary Borel probability measure
satisfying $\nu_\infty(F) = 1$ and $\nu_\infty$ is non-atomic, by
Lemma \ref{lem: 8.3}. According to Theorems \ref{thm: 1.1} and
\ref{thm: 1.3}, $\nu_\infty$ is Haar measure. This implies the
required equality $F=X$. 
\end{proof}

\subsection{Equidistribution of finite orbits}
{\em The same arguments lead to a proof of equidistribution of finite
  orbits.}

\begin{proof}[Proof of Corollaries \ref{cor: 1.2}(c) and \ref{cor:
    1.4}(c)]
We may again suppose that $\Gamma$ is generated by the finite support
of the measure $\mu$. We will show that the sequence of
$\Gamma$-invariant measures 
$$
\nu_j = \frac{1}{\#X_j} \sum_{x \in X_j} \delta_x
$$
converges weak-* to the Haar probability measure on $X$. By point (a),
we just have to show that any weak limit $\nu_\infty$ of the sequence
$(\nu_j)$ is a probability measure which gives zero mass to finite
orbits. The proof relies on the phenomenon of recurrence off of finite
orbits. This is analogous to point (b) and we keep the notations $F_i$
and $K_i$. 

Since the finite $\Gamma$-orbits $X_j$ are distinct, we can suppose
after passing to a subsequence that for every $j \geq i$, we have
$\nu_j(F_i)=0$. Since $\nu_j$ is $\Gamma$-invariant, for any $n \geq
0$, we have $\mu^{*n} * \nu_j = \nu_j$ and therefore, as in (b), for
any $j \geq i, \, \nu_j(K^c_0) \leq \frac{1}{i}. $ We deduce that for
all $i \geq 0$, we have $\nu_\infty(K_i^C) \leq \frac{1}{i}$. This
implies that firstly, $\nu_\infty$ is a probability measure, and
secondly, that $\nu_\infty(F_i) =0$ for all $i$, and therefore that
$\nu_\infty$ is Haar measure. 
\end{proof}

\end{document}